\pgfplotsset{compat=1.15}
\newtheorem{theo}{Theorem}[section]
\newtheorem{cor}[theo]{Corollary}
\newtheorem{prop}[theo]{Proposition}
\newtheorem{lemma}[theo]{Lemma}
\theoremstyle{remark}{}
\newtheorem{rmk}[theo]{Remark}
\theoremstyle{definition}
\newtheorem{ex}[theo]{Example}
\newtheorem{defn}[theo]{Definition}
\newtheorem*{hurwitz}{Hurwitz conditions}
\newtheorem*{HauptMoi}{Refined Haupt's theorem}
\newtheorem*{GKM}{Gallo Kapovich Marden}
\newtheorem{obstruction}{Obstruction}
\newcommand{\Hol}{\mathrm{Hol}}
\newcommand{\C}{\mathbb C}
\newcommand{\R}{\mathbb R}
\newcommand{\id}{\mathrm{id}}
\newcommand{\Id}{\mathrm{Id}}
\newcommand{\homeo}[1]{\mathrm{Homeo}_+ (#1)}
\newcommand{\sw}{\mathrm{sw}}
\newcommand{\PSL}[1]{\mathrm{PSL}_2 (#1)}
\newcommand{\SL}[1]{\mathrm{SL}_2 (#1)}
\newcommand{\Mod }{\mathrm{Mod}}
\newcommand{\CP}{\mathbb{CP}^1}
\newcommand{\Hom}{\mathrm{Hom}}
\newcommand{\SO}{\mathrm{SO}_3(\R)}
\newcommand{\SU}{\mathrm{SU}(2)}
\newcommand{\Aut}{\mathrm{Aut}^+}
\newcommand{\K}{\mathrm{K}}
\newcommand{\A}[1]{\mathfrak A_{#1}}
\newcommand{\Sy}[1]{\mathfrak S_{#1}}
\newcommand{\Z}{\mathbb Z}
\newcommand{\D}{\mathrm D}
\newcommand{\Sp}{\mathrm{Sp}}
\tiny\color{gray},
\title[Holonomy of complex projective structures]{Holonomy of complex projective structures on surfaces with prescribed branch data}
\author{Thomas Le Fils}
\date{}
\begin{document}
\begin{abstract}
We characterize the representations of the fundamental group of a closed surface to $\mathrm{PSL}_2(\mathbb C)$ that arise as the holonomy of a branched complex projective structure with fixed branch divisor. In particular, we compute the holonomies of the spherical metrics with prescribed integral conical angles and the holonomies of affine structures with fixed conical angles on closed surfaces.
\end{abstract}
\maketitle

\section{Introduction}\label{Introduction}
\subsection{Holonomy of branched projective structures}
For each $g\geqslant 0$ and $n \geqslant 0$, we fix $\Sigma_{g,n}$ a connected compact oriented surface of genus $g$ with $n$ boundary components and $\Gamma_{g,n}$ a fundamental group of $\Sigma_{g,n}$. We will be most interested in the case $n=0$ and we let $\Gamma= \Gamma_{g,0}$ and $\Sigma = \Sigma_{g,0}$.

A \textit{branched projective structure} on $\Sigma$ is the datum of a branched atlas with values in $\CP$ and transition maps in $\PSL \C$. By branched we mean that our charts have the local form $z\mapsto z^{n+1}$ for some integer $n\geqslant 0$, called the \emph{order} of the point corresponding to $0$ in this chart. A point of order at least $1$ is called a \textit{branch point}. The analytic continuation of one of the charts in the universal cover of $\Sigma$ yields a \emph{developing map} that is $\rho$-equivariant for a unique homomorphism $\rho : \Gamma\to \PSL \C$ called the \textit{holonomy} of the projective structure. The conjugacy class of this holonomy does not depend on the choice of the chart and thus gives a map defined on the set $\mathcal {BP}$ of branched projective structures on $\Sigma$
$$\mathrm{Hol} : \mathcal {BP}\to \Hom(\Gamma, \PSL\C )/\PSL\C. $$
The map $\Hol$ and in particular its restriction $\Hol_{|\mathcal P}$ to the set $\mathcal P$ of projective structures without any branch point has been extensively studied. For example, it is known that $\Hol_{|\mathcal P}$ is a local homeomorphism but not a covering map, see \cite{hejhal}. It is also known since Poincar\' e \cite{poincare}, see also \cite{Loray}, that the restriction of $\Hol$ to the subset of $\mathcal P$ of projective structures with a fixed underlying complex structure is injective.
 A celebrated theorem of Gallo, Kapovich and Marden \cite{GKM} characterizes the image of $\Hol_{|\mathcal P}$, see also \cite{Moi1}.
\begin{GKM}
Suppose $g\geqslant 2$.
The image of $\mathcal P$ by $\Hol$ is the set of nonelementary representations that lift to $\SL \C$. The image of the set of projective structures with a single branch point, of order $1$, is the set of nonelementary representations that do not lift to $\SL \C$.
\end{GKM}
In this article, we adress a problem raised in \cite[Problem 12.6.1]{GKM}: ``make precise and optimize the connection between branching divisor and monodromy''. 
The set $\mathcal {BP}$ is naturally stratified by the sets $\mathcal P(n_1, \ldots, n_k)$ of branched projective structure with $k$ branch points of orders $n_1, \ldots, n_k$.
The aim of this article is thus to characterize the image of the strata $\mathcal P(n_1, \ldots, n_k)$ by $\Hol$.

%
%
%

Recall that an elementary representation is a representation $\rho\in \Hom(\Gamma, \PSL \C)$ yielding an action with a finite orbit on $\mathbb H^3$ or on $ \CP$. In the first case we say that $\rho$ is \emph{spherical} and we will assume without loss of generality up to conjugating $\rho$, that it takes its values in $\mathrm{PSU}_2\simeq \SO$. In the second case, if $\rho(\Gamma)$ fixes a point in $\CP$ we say that $\rho$ is \emph{affine} and if it moreover fixes a horosphere we say that $\rho$ is \emph{Euclidean}. We will assume without loss of generality that this point is $\infty\in \CP$ and thus that $\rho$ takes its values in $\mathrm{Aff}(\C) = \{az + b \mid a\in \C^*, b\in \C\}$. If $\rho(\Gamma)$ fixes globally a pair of points in $\CP$, we say that $\rho$ is \emph{dihedral} and we will assume that this pair is $\{0, \infty\}$. We refer to \cite[Chapter 5]{Ratcliffe} for more information on elementary subgroups of $\PSL \C$.
%
%

We wish to \emph{geometrize} a given $\rho\in \Hom(\Gamma, \PSL \C)$, that is to find a projective structure $X\in \mathcal P(n_1, \ldots, n_k)$ such that $\rho =\Hol(X)$. There are some obstructions to being the holonomy of such a $X$ that we will now explicit. Indeed suppose that $\rho = \Hol(X)$.
It is proven in \cite{GKM} that  $\rho$ lifts to $\SL \C$ if and only if $\sum_i n_i$ is even.
\begin{obstruction}\label{obstrSW}
The sum $\sum_i n_i$ must be even if $\rho$ lifts to $\SL \C$ and odd otherwise.
\end{obstruction}
There exist some surgeries on projective structures, such as bubbling (see \cref{bubbling}), that allow us to add an even number of branch points to some projective structures. In particular we will see in \cref{SNE} that it is a consequence of the theorem of Gallo, Kapovich and Marden cited above that a nonelementary representation of $\Hom(\Gamma, \PSL \C)$ is in $\Hol(\mathcal P(n_1, \ldots, n_k))$ if $\sum_i n_i$ has the required parity.
Thus we now focus on elementary representations.
As a consequence of the Gauss-Bonnet formula, we have another obstruction on $\sum_i n_i$ when $\rho$ is elementary.
\begin{obstruction}\label{obstrMin}
If $\rho$ is elementary, then we must have $\sum_i n_i\geqslant 2g-2$.
If moreover $\rho$ is spherical, then $\sum_i n_i\geqslant 2g - 1$.
\end{obstruction}

The image of $\rho$ might be a finite group. In this case the developing map induces a branched cover $S\to \CP$ defined on the cover $S$ of $\Sigma$ associated with the subgroup $\ker \rho$ of $\Gamma$. The Riemann-Hurwitz formula then gives another obstruction.

\begin{obstruction}\label{obstrRH}
If the image of $\rho$ is a finite group of order $n$, then $$n(\chi(\Sigma) + \sum_{i=1}^k n_i) \geqslant 2 (\max_{1\leqslant i\leqslant k} n_i + 1).$$
\end{obstruction}
Let us define an \emph{affine structure} on $\Sigma$ as a branched projective structure on $\Sigma$ with affine holonomy, such that the image of its developing map does not contain $\infty\in \CP$.
Adapting an argument of Kapovich in \cite{Kapovich}, we will see in \cref{liredegre} that if $\rho$ is affine, then $\sum_i n_i = 2g-2$ if and only if $X$ is an affine structure.
If moreover $\rho$ is Euclidean, then we may define the volume $\mathrm{Vol}(\rho)$ of $\rho$, see \cref{AffineHol}. The volume of $\rho$ must be positive, as we shall see in \cref{Euclide}.
\begin{obstruction}\label{obstrVol}
If $\sum_i n_i = 2g-2$ and $\rho$ is Euclidean, then $\mathrm{Vol}(\rho) > 0$.
\end{obstruction}
Haupt \cite{Haupt} observed that there is another obstruction to being the holonomy of a \emph{translation surface}, that is an affine structure with holonomy in the group of translations $\C$. Indeed if $g\geqslant 2$ and $\chi\in \Hom(\Gamma, \C)$ is the holonomy of a translation surface such that $\Lambda = \chi(\Gamma)$ is a lattice, then $d=\mathrm{Vol}(\chi) /\mathrm{Area}(\Lambda)$ must be at least $2$ because the developing map induces a branched cover $\Sigma\to \C/\Lambda$ of degree $d$. This obstruction was generalized by \cite{Deroin} for translation surfaces with prescribed singularities, where the authors observed that $d$ must satisfy $d\geqslant \max_i n_i +1$ and asked if these were the only obstructions to being the holonomy of a translation surface with prescribed singularities. This latter question was answered in the positive in \cite{Moi, Judge}. 
The \emph{linear part} of an Euclidean representation $\rho$ is the representation $\mathrm{Li}\circ \rho$, where $\mathrm{Li} : \mathrm{Aff}(\C)\to \C^*$ is defined by $az + b\mapsto a$. If the linear part of $\rho$ takes its values in a finite group, then the cover $S$ of $\Sigma$ associated with the subgroup $\ker \mathrm{Li}\circ \rho$ of $\Gamma$ inherits a translation surface structure. Therefore the obstruction of Haupt gives the following.
\begin{obstruction}\label{obstrHaupt}
If $\sum_i n_i = 2g-2$ and $\rho$ is Euclidean such that the image of its linear part is a finite group of order $n$ and the group $\Lambda = \{z_0\in \C \mid 
z + z_0\in \rho(\Gamma)\}$ is a lattice in $\C$, then we must have $$n\mathrm{Vol}(\rho) \geqslant (\max_i n_i +1)\mathrm{Area}(\C / \Lambda).$$
\end{obstruction}
As we will see, $\Lambda$ can be a lattice only if $n\in \{1,2,3,4,6\}$.

If $\rho$ is dihedral there is an obstruction that only occurs in genus $2$, when $\sum_i n_i$ is minimal: that is when $\sum_i n_i = 2$.
This new obstruction comes from the observation that every such structure is the exponential of a half-translation structure. We observe thanks to \cref{obstrHaupt} that a half-translation structure in genus $2$ with a single branch point is in fact a translation surface structure, see \cref{exemple}. Therefore we show  that there is no branched projective structure with holonomy $\rho$ and a single branch point in genus $2$.
\begin{obstruction}\label{obstrGenre2}
Suppose $g=2$.
If $\rho$ is a dihedral representation that is not affine  and that lifts to $\SL \C$, then $X$ cannot have a single branch point of order $2$.
\end{obstruction}

Our main result is that these are the only obstructions to being in $\Hol(\mathcal P(n_1, \ldots, n_k))$.
\begin{theo}\label{mainth}
Let $\rho\in \Hom(\Gamma, \PSL \C)$. We have $\rho\in \Hol(\mathcal P(n_1, \ldots, n_k))$ if and only if $\rho$ satisfies the conditions of the 6 obstructions above. 
\end{theo}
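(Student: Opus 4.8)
The plan is to observe first that the ``only if'' direction requires nothing new: each of the six obstructions was derived above as a necessary condition on the branch divisor of a structure with holonomy $\rho$, so what remains is to prove sufficiency. I would split the argument according to whether $\rho$ is nonelementary or elementary. For $\rho$ nonelementary the only constraint among the six is the parity condition of \cref{obstrSW}, and I would dispose of this case exactly as announced in \cref{SNE}: the theorem of Gallo, Kapovich and Marden produces a projective structure with holonomy $\rho$ having either no branch point or a single branch point of order $1$, according to whether $\rho$ lifts to $\SL\C$, and then repeated bubblings (\cref{bubbling}) together with splitting and merging of branch points let me redistribute and augment the branching to realize any divisor $(n_1,\dots,n_k)$ of the correct parity.

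The core of the proof is therefore the elementary case, which I would organize by the type of $\rho$ --- spherical, affine (including Euclidean), or dihedral. The common strategy in each subcase is to first realize $\rho$ by an explicit structure of \emph{minimal} degree $\sum_i n_i$ allowed by \cref{obstrMin}, and then to reach every admissible divisor by surgeries that fix the holonomy: bubbling to raise the total branching by an even amount, and splitting a branch point of order $n$ into branch points whose orders sum to $n$, or its reverse, to adjust the partition. Checking that these two moves, starting from the minimal configuration, generate exactly the set of divisors permitted by the obstructions is elementary but must be done with care, particularly in the finite-image cases governed by \cref{obstrRH}, where Riemann--Hurwitz pins down the base structure as a branched cover $S\to\CP$. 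For the spherical base case I would build a spherical cone metric, equivalently a structure with holonomy in $\SO$, of the minimal degree $2g-1$; for the affine and dihedral base cases I would build the corresponding affine or half-translation structure of degree $2g-2$.

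I expect the Euclidean subcase to be the main obstacle, precisely because it is where \cref{obstrVol} and \cref{obstrHaupt} interact. When $\sum_i n_i=2g-2$ the structure is forced by \cref{liredegre} to be an affine structure, and when the linear part has finite image of order $n$ with $\Lambda$ a lattice, realizability is governed by Haupt's inequality. Here I would pass to the finite cover $S\to\Sigma$ associated with $\ker(\mathrm{Li}\circ\rho)$, on which $\rho$ restricts to a translation structure, and invoke the positive solution to Haupt's question for prescribed singularities from \cite{Moi, Judge} to build a translation surface on $S$ with the correct singularity data; the work is to arrange this equivariantly so that it descends to a genuine structure on $\Sigma$ with holonomy $\rho$, and to verify that the volume and lattice hypotheses of \cref{obstrHaupt} are exactly what the descent requires. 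Finally the exceptional dihedral obstruction of \cref{obstrGenre2} must be shown to be sharp: I would realize every dihedral non-affine $\rho$ in genus $2$ with $\sum_i n_i=2$ by exponentiating a half-translation structure, check via \cref{obstrHaupt} that in the single-branch-point case such a structure would have to be a translation surface, whose holonomy could not then be dihedral, and thereby confirm that the only unrealizable divisor is the one excluded by the obstruction.
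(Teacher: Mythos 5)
Your case division and several of the individual tools (Gallo--Kapovich--Marden plus \cref{bubbling} and \cref{explosion} for the nonelementary case, the refined Haupt theorem, exponentiation of half-translation structures for the dihedral case, the sharpness analysis of \cref{obstrGenre2}) do match the paper. But the plan has a genuine gap: it omits the central mechanism that makes the elementary case tractable, namely the Ehresmann--Thurston principle (\cref{ET}, \cref{corET}) combined with the description of the closures of mapping class group orbits. Your strategy is to ``realize $\rho$ by an explicit structure of minimal degree,'' but for a \emph{given} $\rho$ with dense image in $\SO$ (or in $\mathrm{O}_2(\R)$, or a Euclidean $\rho$ with infinite linear part) no direct construction is available, and your proposal offers none. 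The paper handles these by showing that some geometrizable representation lies in $\overline{\Aut(\Gamma)\times\PSL\C\cdot\rho}$ (via \cref{previtxia}, \cref{infini}, \cref{orbiteEucl}) and invoking openness of $\Hol(\mathcal P(n_1,\ldots,n_k))$. The same issue arises even for finite image: constructing \emph{a} structure with holonomy onto $\A 4$, say, is not the same as constructing one with holonomy the given $\rho$; the paper bridges this with the classification of $\Aut(\Gamma)$-orbits of surjections onto $\Z_n$, $\D_n$, $\A 4$, $\Sy 4$, $\A 5$ (\cref{GpeFin}), which your plan does not mention.

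A second, more local problem is your treatment of the Euclidean case with finite linear part and $\sum_i n_i=2g-2$. You propose to build a translation surface on the cover $S\to\Sigma$ associated with $\ker(\mathrm{Li}\circ\rho)$ using \cite{Moi, Judge} and then ``arrange this equivariantly so that it descends.'' The cited results produce \emph{some} translation surface on $S$ with the prescribed holonomy and singularities, with no control over the deck group action; making it equivariant under the order-$n$ rotation is essentially equivalent to the original problem on $\Sigma$, so as stated the step is circular. The paper instead constructs the quotient structure directly (parallelograms with slits, handle-adding surgeries as in \cref{mineucl}), using \cref{imagelattice} and the orbit lemmas of \cref{finite_lin} to normalize $\rho$ first. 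To repair your plan you would need either an equivariant version of the Haupt realization theorem or the direct constructions, together with the openness-plus-orbit-closure machinery for all infinite-image subcases.
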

In particular for a given $\rho\in \Hom(\Gamma, \PSL \C)$ we can compute the number $d(\rho)$ defined as the minimal value of $\sum_i n_i$ such that there exists a branched projective structure in $\mathcal P(n_1, \ldots, n_k)$ with holonomy $\rho$.
\begin{cor}\label{dintro}
The image of the function $d : \Hom(\Gamma, \PSL \C) \to \mathbb Z$ is the set $\{0, 1, 2g-2, 2g-1, 2g, 2g+2\}$ and we explicit $d$, see \cref{fonctiond} at the end of this introduction.
\end{cor}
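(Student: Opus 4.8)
The plan is to use \cref{mainth} to reduce the corollary to an arithmetic optimization: for each $\rho$, the quantity $d(\rho)$ is the least $D=\sum_i n_i$ admitting a branch datum $(n_1,\dots,n_k)$ with $\sum_i n_i = D$ that satisfies all six obstructions. I would stratify $\Hom(\Gamma,\PSL\C)$ by the dynamical type of $\rho$ — nonelementary; elementary with infinite image; elementary with finite image — refining further by whether $\rho$ is spherical or Euclidean and by whether $\rho$ lifts to $\SL\C$, the last of which fixes the parity of $D$ via \cref{obstrSW}. For nonelementary $\rho$ the image is infinite, so \cref{obstrRH} is vacuous and the discussion of \cref{SNE} realizes every parity-admissible $D\geqslant 0$; thus $d(\rho)=0$ when $\rho$ lifts and $d(\rho)=1$ otherwise.

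For elementary $\rho$ with infinite image, \cref{obstrRH} again does not apply, so the active lower bound is \cref{obstrMin}: $D\geqslant 2g-2$, sharpened to $D\geqslant 2g-1$ when $\rho$ is spherical. With the parity constraint this predicts $d(\rho)\in\{2g-2,2g-1\}$ in the non-spherical case and $d(\rho)\in\{2g-1,2g\}$ in the spherical case, provided the minimal stratum $\mathcal P(1^{D})$ is realizable. The one place realizability can fail is a lifting Euclidean $\rho$ at $D=2g-2$, where the structure would be affine and \cref{obstrVol} or \cref{obstrHaupt} may obstruct it; I would check that in precisely these cases the next even value $D=2g$ falls in the non-affine regime $\sum_i n_i>2g-2$, where neither obstruction is active, so that $d(\rho)$ rises from $2g-2$ to $2g$ and no further.

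The decisive stratum is elementary $\rho$ with finite image of order $N$. Writing $m=\max_i n_i$ and using $\chi(\Sigma)=2-2g$, \cref{obstrRH} becomes $N\,(D-(2g-2))\geqslant 2m+2\geqslant 4$, whence $D\geqslant 2g-2+\lceil 4/N\rceil$. This single inequality carries the computation: for the trivial representation $N=1$ it forces $D\geqslant 2g+2$, and $\mathcal P(1^{2g+2})$ meets all six obstructions, so $d(\rho)=2g+2$; for $N\in\{2,3\}$ it gives $D\geqslant 2g$; and for $N\geqslant 4$ it gives only $D\geqslant 2g-1$. Combined with parity, every nontrivial finite-image representation that lifts satisfies $d(\rho)=2g$, and I would exhibit representations attaining each of $2g-1$, $2g$, $2g+2$, invoking \cref{mainth} for their realizability.

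It remains to explain the gap at $2g+1$. Being odd, a value $2g+1$ could only be attained by a representation that does not lift to $\SL\C$. A representation with cyclic image always lifts — lift the generators into the abelian preimage of the cyclic image, so that every commutator is trivial — and every finite subgroup of $\PSL\C$ of order at most $3$ is cyclic; hence a non-lifting finite-image representation has $N\geqslant 4$, for which the inequality above yields only $D\geqslant 2g-1$, realized by $\mathcal P(1^{2g-1})$. Non-lifting representations with infinite image realize $D=2g-1$ as well (or $D=1$ when nonelementary), so every non-lifting $\rho$ has $d(\rho)\in\{1,2g-1\}$ and $2g+1$ never occurs, while $2g+2$ is forced exactly by the trivial representation. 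Assembling the strata gives the image $\{0,1,2g-2,2g-1,2g,2g+2\}$ and the explicit description recorded in \cref{fonctiond}. I expect the main obstacle to be the Euclidean stratum — pinning down exactly when \cref{obstrVol} and \cref{obstrHaupt} obstruct $D=2g-2$ while never obstructing $D=2g$ — together with the verification that no non-lifting representation has finite image of order $\leqslant 3$, which is what ultimately opens the gap at $2g+1$.
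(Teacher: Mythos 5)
Your proposal is correct and follows essentially the same route as the paper: reduce to the all-ones stratum via the point-splitting surgery of \cref{explosion}, apply \cref{mainth}, and read off the minimal admissible degree in each dynamical case, with the Riemann--Hurwitz count $N\bigl(D-(2g-2)\bigr)\geqslant 4$ isolating the trivial representation and the lifting criterion for cyclic images closing the gap at $2g+1$. The only imprecision is the claim that realizability can fail only for Euclidean $\rho$ at $D=2g-2$ --- it can also fail because of \cref{obstrGenre2} --- but since that obstruction excludes only the single partition $(2)$ in genus $2$ while $(1,1)$ remains realizable, it never affects the value of $d$.
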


Our proof of \cref{mainth} consists in the case-by-case study of the possible $n_1, \ldots, n_k$ such that $\rho\in \Hol(\mathcal P(n_1, \ldots, n_k))$, for each form that $\rho(\Gamma)$ can take. In some of these cases we get back to notions that were already studied in the literature. 
For example if $\rho(\Gamma)\subset \mathrm{PSU}_2$, then we are characterizing the holonomies of conical spherical metrics on $\Sigma$.
These results in some special cases that we now explicit have their own interest and form the pieces of our proof of \cref{mainth}.

\subsubsection{Spherical structures.}
Let us first focus on spherical representations. \cref{mainth} consists in this case in computing the holonomies of spherical metrics with conical singularities of integral angles. For more information on spherical metrics with conical singularities we refer to \cite{Eremenko, MondelloPanov}.
\begin{prop}
Let $\rho\in \Hom(\Gamma, \SO)$. The representation $\rho$ is the holonomy of a spherical metric with conical singularities of angles $2\pi(n_1+1), \ldots, 2\pi(n_k+1)$ if and only if the following conditions are met.
\begin{enumerate}
\item $\sum_i n_i$ is even if $\rho$ lifts to $\mathrm{SU}_2$ and odd otherwise.
\item $\sum_i n_i \geqslant 2g-1$.
\item If the image of $\rho$ is finite of order $n\geqslant 1$, then $$n(\chi(\Sigma) + \sum_{i=1}^k n_i) \geqslant 2 (\max_{1\leqslant i\leqslant k} n_i + 1).$$
\end{enumerate}
\end{prop}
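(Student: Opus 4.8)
I would first record that a spherical metric with conical singularities of angles $2\pi(n_1+1),\dots,2\pi(n_k+1)$ is exactly a branched projective structure whose holonomy lies in $\mathrm{PSU}_2\simeq\SO$ and whose branch points have orders $n_1,\dots,n_k$, the cone angle at a point of order $n$ being $2\pi(n+1)$. With this dictionary, conditions (1), (2) and (3) are the specializations to spherical $\rho$ of \cref{obstrSW}, of the spherical half of \cref{obstrMin}, and of \cref{obstrRH}, so they are necessary. It remains to observe that the three remaining obstructions are vacuous here: \cref{obstrVol} and \cref{obstrHaupt} constrain only the case $\sum_i n_i=2g-2$, and \cref{obstrGenre2} only the case $g=2$, $\sum_i n_i=2$, which is again $2g-2$; since (2) forces $\sum_i n_i\geqslant 2g-1>2g-2$, none of these can apply, even when $\rho(\Gamma)$ is reducible and $\rho$ is therefore simultaneously affine, Euclidean or dihedral. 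This proves the necessity of (1)--(3).

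For sufficiency I would fix $\rho$ satisfying (1)--(3) and build a spherical cone metric with the prescribed angles, separating the cases according to whether $\rho(\Gamma)$ is infinite or finite. Two surgeries that preserve the holonomy and the sphericity of a structure are available and will reduce the infinite-image case to a single base construction: bubbling (see \cref{bubbling}), which raises $\sum_i n_i$ by $2$, and the splitting or merging of cone points, which redistributes the orders while keeping $\sum_i n_i$ fixed. Both preserve the parity of $\sum_i n_i$, which by (1) is already pinned to the (non)liftability of $\rho$.

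When $\rho(\Gamma)$ is infinite, condition (3) is vacuous and only (1) and (2) survive, so the minimal admissible value of $\sum_i n_i$ is $2g-1$ if $\rho$ does not lift and $2g$ if it does. I would realize this minimum with a single cone point, assembling $\Sigma$ handle by handle: writing $\Gamma=\langle a_i,b_i\mid \prod_i[a_i,b_i]=1\rangle$ and using that the commutator map $\SO\times\SO\to\SO$ is surjective, each handle carries a branched spherical structure whose boundary holonomy equals $[\rho(a_i),\rho(b_i)]$, and gluing the $g$ handles to a central spherical piece closes up to a metric whose single excess cone point carries exactly the Euler-characteristic deficit with the parity dictated by (1). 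Bubbling and splitting this base metric then produce every admissible partition $(n_1,\dots,n_k)$.

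The heart of the matter, and the step I expect to be the main obstacle, is the finite-image case. If $\rho(\Gamma)=G$ has order $n$, the developing map descends, on the cover $S\to\Sigma$ associated with $\ker\rho$, to a $G$-equivariant branched cover $f\colon S\to\CP$, with $G$ acting freely on $S$ and through $\SO$ on $\CP$; constructing the metric is equivalent to producing such an $f$ whose branch points project to cone points of orders $n_1,\dots,n_k$ on $\Sigma$. A Riemann--Hurwitz computation pins the degree of $f$ to $d=\tfrac12 n\bigl(\chi(\Sigma)+\sum_i n_i\bigr)$, which (1) guarantees to be an integer, and condition (3) is exactly the inequality $d\geqslant \max_i n_i+1$, namely that the degree is at least every local ramification index. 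The difficulty is that this numerical condition does not by itself guarantee the existence of $f$: one must exhibit a compatible system of local monodromies realizing the prescribed ramification while respecting $G$-equivariance. I would treat this by descending to a branched cover of the spherical orbifold $\CP/G$ and invoking a Hurwitz realizability statement (the Hurwitz conditions), using the freedom in positioning branch points on the genus-$0$ target to exclude the sporadic non-realizable profiles and thereby confirm that (3) suffices. Combined with the infinite-image construction and the surgeries above, this yields every admissible branch divisor and completes the proof.
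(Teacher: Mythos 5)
Your necessity argument is correct and matches the paper: conditions (1)--(3) are the specializations of \cref{obstrSW}, \cref{obstrMin} and \cref{obstrRH}, and the remaining obstructions are indeed vacuous once $\sum_i n_i\geqslant 2g-1$. The sufficiency half, however, has two genuine gaps. For finite image, reducing to a $G$-equivariant branched cover $S\to\CP$ (equivalently a branched cover of the orbifold $\CP/G$) is the right dictionary, but the Hurwitz conditions you invoke govern the classical, non-equivariant problem of covers of $\mathbb S^2$; the equivariant version requires realizing monodromy with prescribed image $G$ and compatible ramification over the orbifold points, and no off-the-shelf realizability statement does this. More importantly, even if such a cover exists, you obtain \emph{some} surjection $\Gamma\to G$ as holonomy, not the given $\rho$. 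Matching the two requires knowing that any two surjections onto $G$ with the same lifting class are related by $\Aut(\Gamma)$ --- this is exactly the content of the paper's Section 3 (\cref{ModCycl}, \cref{diedr}, \cref{AutA4}, \cref{modA5}, etc.), which your proposal never uses or replaces. The paper then sidesteps the equivariant Hurwitz problem entirely by explicit cut-and-paste constructions on copies of $\CP$, pyramids, bipyramids, tetrahedra, cubes and icosahedra, one for each $\Aut(\Gamma)$-orbit.

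For infinite image, your handle-by-handle construction via surjectivity of the commutator map is the Gallo--Kapovich--Marden strategy, but it is designed for nonelementary representations admitting Schottky decompositions; a spherical $\rho$ is elementary, and for, say, $\rho$ with image in $\mathrm{SO}_2(\R)$ every commutator $[\rho(a_i),\rho(b_i)]$ is trivial, so the "boundary holonomy of each handle" carries no information and the existence of a one-holed-torus spherical piece with the required cone-angle deficit is precisely what needs to be proved. The paper instead proves density statements for the $\Aut(\Gamma)$-orbit (\cref{ptitlemme}, \cref{denseDiedr}, \cref{previtxia}) and combines them with the openness of $\Hol(\mathcal P(n_1,\ldots,n_k))$ (\cref{corET}) to reduce the infinite-image case to the finite-image constructions --- so the finite case is logically prior, not parallel. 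Your use of bubbling (\cref{bubbling}) and of breaking up cone points (\cref{explosion}) to propagate from a minimal-degree structure to all admissible branch data is sound and is also how the paper proceeds.
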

Observe that a branched projective structure with trivial holonomy is just a branched cover of the sphere.
\begin{prop}\label{branchedcover}
There exists a branched cover $\Sigma\to \mathbb S^2$ with branch points of order $1\leqslant n_1\leqslant \ldots\leqslant n_k$ if and only if $\sum_i n_i = 2(g + d-1)$ for an integer $d \geqslant 1$ such that $1\leqslant n_i \leqslant d-1$ for every $1\leqslant i\leqslant k$.
\end{prop}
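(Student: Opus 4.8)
The plan is to separate the two directions and to reduce the existence question to a purely combinatorial statement about the symmetric group $\Sy{d}$.

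\emph{Necessity.} Suppose $f\colon \Sigma\to \mathbb S^2$ is a branched cover, and let $d\geqslant 1$ be its degree. Since a point of order $n_i$ has local model $z\mapsto z^{n_i+1}$, its ramification index is $n_i+1$; as the ramification indices over a single value sum to $d$, we get $n_i+1\leqslant d$, that is $1\leqslant n_i\leqslant d-1$. The Riemann--Hurwitz formula reads $\chi(\Sigma)=d\,\chi(\mathbb S^2)-\sum_i n_i$, i.e. $2-2g=2d-\sum_i n_i$, which rearranges to $\sum_i n_i=2(g+d-1)$.

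\emph{Sufficiency: reduction to $\Sy{d}$.} Conversely, fix $d\geqslant 1$ as in the statement and set $\ell_i=n_i+1\in\{2,\ldots,d\}$. By the Riemann existence theorem, a connected degree-$d$ branched cover of $\mathbb S^2$ with branch points of orders $n_1,\ldots,n_k$ is the same datum as a tuple $(\sigma_1,\ldots,\sigma_m)$ of permutations in $\Sy{d}$ with $\sigma_1\cdots\sigma_m=\id$, with $\langle\sigma_1,\ldots,\sigma_m\rangle$ transitive on $\{1,\ldots,d\}$, and whose multiset of nontrivial cycle lengths is exactly $\{\ell_1,\ldots,\ell_k\}$; here the number $m$ of branch values is ours to choose, and the genus of the resulting connected surface is then forced by Riemann--Hurwitz to equal $g$. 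I would use one branch value per $\ell_i$, carrying a single $\ell_i$-cycle, so the task becomes: write $\id\in\Sy{d}$ as a transitive product of cycles of lengths $\ell_1,\ldots,\ell_k$. Since the sign of an $\ell$-cycle is $(-1)^{\ell-1}$, the product-one relation already forces $\sum_i(\ell_i-1)=\sum_i n_i$ to be even, matching the hypothesis; it remains to build the factorization whenever $\sum_i(\ell_i-1)\geqslant 2(d-1)$, this last bound being exactly the condition $g\geqslant 0$.

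\emph{The construction.} I would induct on the number $k$ of cycles using the move that \emph{merges two cycles sharing one index}: if an $a$-cycle and a $b$-cycle overlap in a single point, their product is one $(a+b-1)$-cycle, and conversely such a cycle splits back into two overlapping cycles without changing the overall product or the transitivity of the tuple. Merging the two shortest cycles replaces $\{\ell_1,\ldots,\ell_k\}$ by $\{\ell_1,\ldots,\ell_{k-2},\,\ell_{k-1}+\ell_k-1\}$, which keeps $\sum(\ell_i-1)$ unchanged and has one fewer factor; provided $\ell_{k-1}+\ell_k-1\leqslant d$ this is a strictly smaller instance, and splitting the merged cycle recovers a realization of the original data. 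The terminal cases are $d\leqslant 2$ (powers of the unique transposition) and, more importantly, the case where some $\ell_i=d$: then this $d$-cycle makes the tuple automatically transitive, and one is reduced to the classical fact that a $d$-cycle factors into cycles of lengths $\ell_2,\ldots,\ell_k$ as soon as $\sum_{j\geqslant 2}(\ell_j-1)\geqslant d-1$ with the right parity, which holds here because $\sum_{j\geqslant 2}(\ell_j-1)=2g+d-1$.

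\emph{Main obstacle.} The delicate point is transitivity, i.e. connectedness of $\Sigma$, and the merging induction stalls exactly when no two cycles can be merged without exceeding $d$ (so that the two shortest already satisfy $\ell_{k-1}+\ell_k\geqslant d+2$) and no single cycle spans all $d$ sheets. This regime is the heart of the argument: there I would instead \emph{seed} transitivity with the two longest cycles, choosing their supports to overlap in $\ell_1+\ell_2-d\geqslant 2$ points so that $\langle\sigma_1,\sigma_2\rangle$ is already transitive on $\{1,\ldots,d\}$, and then factor the residual permutation $(\sigma_1\sigma_2)^{-1}$ using the remaining cycles $\ell_3,\ldots,\ell_k$ — a factorization that is no longer constrained by transitivity and can be arranged by tuning the relative position of $\sigma_1$ and $\sigma_2$ so that the cycle structure of $(\sigma_1\sigma_2)^{-1}$ matches those lengths. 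Making this matching work while respecting $\sum_i(\ell_i-1)\geqslant 2(d-1)$ is where the real bookkeeping lies; the rest is routine.
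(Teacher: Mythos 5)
Your necessity argument, the reduction to tuples of permutations via the Hurwitz/Riemann existence correspondence, and the merging induction (splitting an $(a+b-1)$-cycle into an $a$-cycle and a $b$-cycle overlapping in one point) are all correct and coincide with the paper's strategy for the case where two of the prescribed cycle lengths satisfy $\ell_i+\ell_j-1\leqslant d$. The genuine gap is in what you yourself call the ``main obstacle'', and it is not bookkeeping: it is the entire content of the hard case. You propose to take the two longest cycles $\sigma_1,\sigma_2$ with overlapping supports and then to factor $(\sigma_1\sigma_2)^{-1}$ into cycles of lengths $\ell_3,\ldots,\ell_k$, claiming this residual problem is ``no longer constrained by transitivity''. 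But $(\sigma_1\sigma_2)^{-1}$ is in general \emph{not} a single cycle --- its cycle type depends delicately on how the two supports interleave --- and writing an arbitrary permutation as a product of cycles of prescribed lengths is exactly the general Hurwitz factorization problem again, now with an uncontrolled target. You give no argument that the cycle type of $\sigma_1\sigma_2$ can be tuned to make this residual factorization solvable, and you explicitly defer that verification. As it stands, the sufficiency direction is proved only in the regime where repeated merging terminates at $k\leqslant 2$ or at a $d$-cycle.

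The paper closes this case differently and more economically: it first proves the $k=3$ instance of the realizability statement by induction on $d$ (its Lemma 6.3), and then, when every consecutive pair satisfies $n_i+n_{i+1}\geqslant d$, it uses that lemma to choose $\alpha\in A_1$, $\beta\in A_2$ whose product is a single $d$-cycle or $(d-1)$-cycle. This simultaneously guarantees transitivity and produces a new branch datum $\{A,A_3,\ldots,A_k\}$ of the \emph{same restricted shape} (one nontrivial cycle per branch value) with $v(\mathcal D')\geqslant 2d-2$, so the induction on $k$ --- transitivity included --- applies verbatim. If you want to salvage your outline, replace ``tune $\sigma_1\sigma_2$ so that its inverse factors into $\ell_3,\ldots,\ell_k$'' by ``arrange $\sigma_1\sigma_2$ to be a $d$- or $(d-1)$-cycle and recurse''; proving that this arrangement is possible (your ``classical fact'' about products of two long cycles, and likewise the factorization of a $d$-cycle into cycles of prescribed lengths that you invoke in your terminal case) is precisely the induction on $d$ that the paper carries out and that your proposal omits.
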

Note that our convention differs in this setting from the usual one: by a branch point of order $n$ we mean that the branched cover is locally $z\mapsto z^{n+1}$ while this is often called a branch point of order $n+1$.
While the present article was being written, an independent proof of
Proposition 1.4 appeared in the work of Chenakkod, Faraco and Gupta
\cite[Corollary D]{CFG}. Their proof, as well as ours, consists in showing directly
that some branched data are realizable. We do it by following the
techniques of \cite{EdmondsKulkarni}, while their proof is more geometric.
The case $g=0$ was also proven, with different methods, independently
by Kapovich and Tomasini in \cite{KapovichSphere, Tomasini}.

\subsubsection{Affine holonomy.}
If we consider affine representations and $\sum_i n_i = 2g-2$ then we compute the holonomies of {affine structures} with prescribed singularities. 
\begin{prop}\label{corIntro}
Let $1\leqslant n_1\leqslant \ldots \leqslant n_k$ be such that $\sum_i n_i = 2g-2$. Let $\rho\in \Hom(\Gamma,\mathrm{Aff}(\C))$.
\begin{enumerate}
\item If $\rho$ is not Euclidean then $\rho\in \Hol(\mathcal P(n_1, \ldots, n_k))$.
\item If $\rho$ is Euclidean, then $\rho\in \Hol(\mathcal P(n_1, \ldots, n_k))$ if and only if both $\mathrm{Vol}(\rho) >0$
 and $\rho$ satisfies the condition of \cref{obstrHaupt} if the image of its linear part is finite and has order in $\{1, 2, 3, 4, 6\}$.
\end{enumerate}
\end{prop}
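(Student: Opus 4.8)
The plan is to treat necessity and sufficiency separately, with necessity already in hand: since $\sum_i n_i = 2g-2$ and $\rho$ is affine, any structure in $\mathcal P(n_1,\ldots,n_k)$ with holonomy $\rho$ is an affine structure by \cref{liredegre}, the positivity $\mathrm{Vol}(\rho)>0$ is \cref{obstrVol}, and the displayed inequality is \cref{obstrHaupt}. The whole content is therefore sufficiency, and I would organize it through the standard dictionary for affine structures. An affine structure with holonomy $\rho$ is a $\rho$-equivariant developing map $D:\tilde\Sigma\to\C$ whose image avoids $\infty\in\CP$ and whose branch points of order $n_i$ are the ramification points of $D$; writing $\chi=\mathrm{Li}\circ\rho$, the derivative $dD$ satisfies $\gamma^*dD=\chi(\gamma)\,dD$, so it descends to a $\chi$-twisted holomorphic $1$-form on $\Sigma$ (for the complex structure pulled back by $D$) with zeros of orders exactly $n_1,\ldots,n_k$. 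Producing the structure thus amounts to producing such a twisted form with prescribed zeros and prescribed translation periods.

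For part (1), where $\chi$ takes a value $a$ with $|a|\neq 1$, there is no volume constraint because the area form $\frac i2\,dz\wedge d\bar z$ is not $\chi$-invariant, and this scaling is exactly what supplies the flexibility. I would first build a single affine structure with holonomy $\rho$ and total branching $2g-2$, for instance by realizing $\Sigma$ as a polygon in $\C$ whose sides are identified by the maps $\rho(\gamma)$, so that the cone points at the vertex cycles account for the branching. I would then move to the exact partition $(n_i)$ by local surgeries that split a cone point of order $n$ into two of orders $n'$ and $n''$ with $n'+n''=n$ inside a simply connected chart, which leaves $\rho$ unchanged. Because the holonomy contains a genuine dilation, these local models can be glued at any scale, so no global area bookkeeping is needed.

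For part (2), where $\chi$ takes values in the unit circle, I would split on whether $\chi(\Gamma)$ is finite. If $\chi(\Gamma)$ is infinite it is dense in the circle, so the translation part $\Lambda$ cannot be a lattice, the Haupt condition is vacuous, and the only constraint is $\mathrm{Vol}(\rho)>0$; the density of the rotations again makes the polygon-and-surgery scheme go through while tracking the invariant positive area. If $\chi(\Gamma)$ is finite of order $n$, I would pass to the (unramified) cover $S\to\Sigma$ associated with $\ker\chi$, whose deck group is $\Z/n$ and on which $\rho|_{\ker\chi}$ is the translation representation with image $\Lambda$. An affine structure on $\Sigma$ is then precisely a translation surface on $S$ invariant under the deck action, the deck transformations acting by rotation through the $n$-th roots of unity. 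Each branch point of order $n_i$ lifts to $n$ branch points of the same order, so $\mathrm{Area}(S)=n\,\mathrm{Vol}(\rho)$ and $\max$ branch order is unchanged; hence $\mathrm{Vol}(\rho)>0$ becomes positivity of $\mathrm{Area}(S)$, and when $\Lambda$ is a lattice — which by the crystallographic restriction forces $n\in\{1,2,3,4,6\}$ — the Haupt inequality becomes exactly the degree bound $\mathrm{Area}(S)\geq(\max_i n_i+1)\,\mathrm{Area}(\C/\Lambda)$ for the induced branched cover $S\to\C/\Lambda$.

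The main obstacle is the equivariant realization in this last case. The generalized Haupt theorem of \cite{Moi, Judge} produces a translation surface on $S$ with prescribed zeros and periods, but gives no control over the $\Z/n$-symmetry, whereas I need one invariant under the deck action with the singular data correctly distributed over the rotation orbits. I would therefore need an equivariant refinement: either run the Edmonds--Kulkarni-style combinatorial construction of \cite{EdmondsKulkarni} symmetrically on a $\Z/n$-invariant polygonal model, or realize the target periods inside the $\Z/n$-fixed part of the relative cohomology $H^1(S,Z;\C)$, with $Z$ the zero set, and check that the symmetric construction can still attain any partition of $2g-2$ compatible with the orbit structure. Confirming that the only surviving constraints are $\mathrm{Vol}(\rho)>0$ and the Haupt bound — in particular that the tight boundary case of the inequality is reached — is the delicate point.
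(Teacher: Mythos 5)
The necessity half of your argument is fine and coincides with the paper's: \cref{liredegre} forces the structure to be an affine one, \cref{obstrVol} and \cref{obstrHaupt} give the volume and Haupt conditions, and the crystallographic restriction $n\in\{1,2,3,4,6\}$ is as you say. The problem is that the entire content of the statement is sufficiency, and at the two hardest points your proposal either gives no construction or explicitly defers one. For part (1), ``realizing $\Sigma$ as a polygon in $\C$ whose sides are identified by the maps $\rho(\gamma)$'' is not something you can do for an arbitrary strictly affine $\rho$; exhibiting such a polygon is essentially equivalent to exhibiting the affine structure, so as written this step is circular. The paper's mechanism is genuinely different: by \cref{lineaire} the linear part $\mathrm{Li}\circ\rho$ lies in the closure of the conjugation orbit of $\rho$, so by the Ehresmann--Thurston principle (\cref{corET}) it suffices to geometrize $\mathrm{Li}\circ\rho$; one then writes $\mathrm{Li}\circ\rho=\exp(\chi)$ for a translation representation $\chi$ which, after shifting some periods by multiples of $2\pi i$, satisfies the refined Haupt conditions, and one takes the exponential of the resulting translation surface (\cref{strictaffine}). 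Some reduction of this kind is indispensable, and your sketch does not contain one.

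For part (2) with finite linear part you have correctly located the crux --- an equivariant refinement of the Haupt realization theorem controlling the $\Z/n$-symmetry and the distribution of zeros over deck orbits --- but you acknowledge that you do not have it, and that is precisely the gap: nothing in \cite{Moi,Judge} or \cite{EdmondsKulkarni} gives it off the shelf, and verifying that the boundary case of the Haupt inequality is attained equivariantly is exactly where the difficulty concentrates. The paper never proves such an equivariant statement; it works directly on $\Sigma$ with explicit slit-and-glue constructions in parallelograms and annuli (reusing the patterns of the cyclic-holonomy section), after putting $\rho$ in normal form via \cref{OOOD}, \cref{notAlattice} and \cref{imagelattice}, and it treats genus $2$, the non-lattice case and the lattice case separately. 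Likewise, for infinite linear part your ``density of rotations makes the scheme go through'' hides the real input, which is Ghazouani's description of the orbit closure (\cref{orbiteEucl}) combined with \cref{corET}. As it stands the proposal is a correct reduction of the problem together with an accurate map of where the work lies, but the work itself is missing.
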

This answers a question raised in \cite[Question 1.3]{CFG}. In fact \cref{corIntro} is due to a large extent to Ghazouani \cite{Ghazouani}, where he studies the action of the mapping class group on $\Hom(\Gamma, \mathrm{Aff}(\C))/\mathrm{Aff}(\C)$ and characterizes the representations of $\Hom(\Gamma, \mathrm{Aff}(\C))$ that are the holonomy of an affine structure.
Ghazouani does not address the problem of characterizing the representations that are holonomies of affine structures with prescribed branch points.
Let us observe that \cref{corIntro} gives a new proof of the fact that the strata of quadratic differentials $\mathcal Q(2n_1, \ldots, 2n_k)$ are nonempty except if $g = 1$ and $k = 0$, or $g = 2$ and $k = 1$, see \cref{Q(vide)}. This was proven by Masur and Smilie in \cite{MasurSmillie}, where they study the nonemptiness of all the strata of quadratic differentials. See also \cite{Lanneau} for another proof and the study of the connectedness of these strata.

In the case where $\sum_i n_i = 2d$ for $d\geqslant g$,  we will see that every $\rho\in \Hom(\Gamma, \mathrm{Aff}(\C))$ that is not spherical is in $\Hol(\mathcal P(n_1, \ldots, n_k))$.
\subsection{Action of the mapping class group on some representation spaces}
We will prove a Ehresmann-Thurston principle, following an argument of Kapovich \cite[Theorem 2.7]{Kapovich}.
\begin{prop}\label{ET} 
The sets $\Hol(\mathcal P(n_1, \ldots, n_k)) $ are open.
\end{prop}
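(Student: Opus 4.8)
The plan is to establish the classical Ehresmann--Thurston deformation principle in the branched setting, exactly as in Kapovich's Theorem~2.7: I would show that if $X \in \mathcal P(n_1, \ldots, n_k)$ has holonomy $\rho_0$, then every representation $\rho$ close enough to $\rho_0$ in $\Hom(\Gamma, \PSL\C)$ is again the holonomy of some $X' \in \mathcal P(n_1, \ldots, n_k)$, with $X'$ close to $X$. The set of realized representations in $\Hom(\Gamma, \PSL\C)$ is then open and invariant under conjugation, and since the quotient map $\Hom(\Gamma,\PSL\C)\to \Hom(\Gamma,\PSL\C)/\PSL\C$ is open, this immediately yields that $\Hol(\mathcal P(n_1, \ldots, n_k))$ is open.

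First I would encode $X$ by its developing map $D \colon \tilde\Sigma \to \CP$, a holomorphic map that is $\rho_0$-equivariant, of local form $z \mapsto z^{n_i+1}$ at the (finitely many $\Gamma$-orbits of) preimages of the branch points, and a local biholomorphism elsewhere. The branch points $p_1, \ldots, p_k$ are fixed marked points of $\Sigma$, and I would choose a locally finite atlas adapted to them, with pairwise disjoint \emph{branch charts} $V_i \ni p_i$ on which $D$ reads $z \mapsto z^{n_i+1}$ after post-composition with a Möbius map, and ordinary projective charts elsewhere. Cutting $\Sigma$ along a curve system realizing a standard generating set of $\Gamma$ produces a fundamental polygon $P$ whose interior carries the branch points; developing $P$ and recording the edge-identifications recovers the holonomy data $\rho_0$.

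The deformation step replaces the edge-gluing Möbius transformations by those dictated by a nearby $\rho$. Because $\PSL\C$ is a Lie group and the gluings vary continuously, for $\rho$ sufficiently close to $\rho_0$ the re-glued charts still fit together to define a map $D'$ on $\tilde\Sigma$ that is $\rho$-equivariant and close to $D$. Inside each branch chart $V_i$ I would keep the model $z \mapsto z^{n_i+1}$ untouched and modify $D$ only by post-composition with the projective transformations coming from the new transition data. This is the crucial point: post-composition by local biholomorphisms preserves the vanishing order of the derivative, hence leaves the ramification index $n_i + 1$ exactly unchanged. Thus $D'$ is the developing map of a genuine branched projective structure $X' \in \mathcal P(n_1, \ldots, n_k)$ with $\Hol(X') = [\rho]$.

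The main obstacle is precisely the rigidity of the branch data under deformation: a priori a branch point of order $n_i$ could split into several branch points of lower order, changing the stratum. The construction above avoids this by deforming only the projective gluing data and never the local branched model, so that no branching is created, destroyed, or redistributed; together with the continuity of the cocycle condition, this is what upgrades the unbranched Ehresmann--Thurston principle to the stratified statement. I expect the only point requiring genuine care to be making the continuity of the re-gluing precise, that is, controlling the dependence of $D'$ on $\rho$ in an appropriate topology on developing maps, and this is handled exactly as in the unbranched case treated by Kapovich.
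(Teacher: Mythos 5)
Your overall strategy is the same as the paper's (prove that a nearby representation is realized by deforming the developing map of a given structure, keeping the branch data fixed), but the step that carries all the weight is asserted rather than proved, and as literally stated it fails. An atlas requires the exact identities $\varphi_\alpha = h_{\alpha\beta}\circ\varphi_\beta$ on overlaps; if you replace the edge-gluing M\"obius maps (or the transition maps) by those dictated by a nearby $\rho$ \emph{without changing the charts}, these identities are destroyed and the ``re-glued charts'' do not fit together into anything. The content of the Ehresmann--Thurston principle is precisely the construction of a deformed developing map $D'$ that is $\rho$-equivariant, and your proposal defers this to ``the unbranched case treated by Kapovich'' without carrying it out. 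The same issue reappears at the branch points: freezing the local model $z\mapsto z^{n_i+1}$ near $p_i$ only helps if you can interpolate, in an annulus around $p_i$, between that frozen germ and the deformed structure outside, and this interpolation is exactly the missing construction.

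The paper resolves both points with one explicit construction. It chooses a triangulation whose vertices include the branch points, whose edges are broken circular arcs, and whose triangles develop injectively with small area; for $\rho_n\to\rho$ it moves the vertices equivariantly by $\rho_n$, reconnects them by circular arcs through perturbed midpoints, and refills each triangle by the smaller of the two spherical triangles its deformed boundary bounds. Local injectivity away from the vertices follows because the deformed triangles converge to the original ones, and --- this is the key point your proposal does not supply --- the branch order at each vertex is preserved because the total angle of the developed image around it is a continuous function of $n$ with values in $2\pi\mathbb Z$, hence eventually constant. Without an argument of this kind (or an explicit interpolation in annuli around the branch points), the possibility that a branch point of order $n_i$ splits or changes order under the deformation is not excluded, so the proof is incomplete.
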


The mapping class group $\Mod (\Sigma) = \pi_0(\homeo \Sigma)$ acts on the space of conjugacy classes of representations $\Hom(\Gamma, \PSL \C)/\PSL \C$ and this action preserves the sets $\Hol(\mathcal P(n_1, \ldots, n_k))$. This group identifies with the positive outer automorphisms of $\Gamma$,  $\mathrm{Out}^+(\Gamma) = \Aut(\Gamma)/ \mathrm{Inn}(\Gamma)$ and acts on $\Hom(\Gamma, \PSL \C) / \PSL \C$ by precomposition (see \cref{rappel}) where $\Aut(\Gamma)$ is the index $2$ subgroup of $\mathrm{Aut}(\Gamma)$ of automorphisms induced by orientation preserving homeomorphisms.
In order to geometrize a representation with prescribed branch data, it suffices by \cref{ET} to geometrize any element of the closure of its orbit under the action of $\Mod(\Sigma)$. Therefore our proof of \cref{mainth} is based on the description of the closure of the orbits of this action. Indeed this description will leave us with a small number of explicit representations to geometrize.

The study of the mapping class group action on character varieties $\Hom(\Gamma, G)/G$, where $G$ is a group, was pioneered by Goldman and is a wide and active research area. We refer to \cite{Goldman} for an introduction to this subject and \cite{Canary} and references therein for more information on this domain of research.
In particular Goldman proved in \cite{GoldmanErgodic} that the action of the mapping class group on $\Hom(\Gamma, \mathrm{SU}_2)/\mathrm{SU}_2$ is ergodic. While this result provides a description of the closure of almost every orbit, it does not allow us to understand the closure of a given orbit.
The description of the closure of the orbit of every representation in $\Hom(\Gamma, \mathrm{SU}_2)/\mathrm{SU}_2$ with dense image was achieved by Previte and Xia in \cite{PreviteXia1, PreviteXia2}.  A consequence of their work is that the orbit of a representation in $\Hom(\Gamma, \SO)$ with image a dense subgroup of $\SO$ is dense in its connected component of $\Hom(\Gamma, \SO)/\SO$, see \cref{previtxia}. We study the other orbits of representations in $\Hom(\Gamma, \SO)$ and give a complete description of their closures. This somehow complements the main result of \cite{PreviteXia2} in the case of closed surfaces.
Let $\epsilon : \mathrm{O}_2(\R)\to \mathrm{O_2}(\R)/\mathrm{SO}_2(\R) = \Z_2$ be the quotient homomorphism. Goldman showed in \cite{GoldmanCC} that $\Hom(\Gamma, \PSL \C)$ endowed with the pointwise convergence topology has two connected components. Representations in one of these connected components lift to $\SL \C$ while the others do not. Let $\sw : \Hom(\Gamma, \PSL \C)\to \{0,1\}$ be the map that assigns $0$ to the representations that lift to $\SL \C$ and $1$ to the others. The proper closed subgroups of $\SO$ are isomorphic to $\mathrm{SO}_2(\R)$, $\mathrm O(2)$, the cyclic groups $\Z_n = \Z/n\Z$, the dihedral groups $\D_n$ of order $2n$, the alternating groups $\A 4$ and $\A 5$ and the symmetric group $\Sy 4$. Let us start with $\rho\in \Hom(\Gamma, \SO)$ with infinite image. After conjugating $\rho$, we may assume that $\rho\in \Hom(\Gamma, \mathrm{O}_2(\R))$.

\begin{prop}\label{infini}
Let $\rho\in \Hom(\Gamma, \mathrm{O}_2(\R)))$ with infinite image. The closure of $\Aut(\Gamma)\cdot\rho$ is the set of representations $\rho_\infty\in \Hom(\Gamma, \mathrm{O}_2(\R))$ such that $\epsilon\circ \rho_\infty$ has the same image as $\epsilon\circ \rho$ and $\sw(\rho_\infty) = \sw(\rho)$.
\end{prop}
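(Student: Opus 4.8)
The plan is to prove the two asserted conditions are necessary, and then prove sufficiency by a case analysis on whether $\epsilon\circ\rho$ is trivial, using Dehn twists as the engine that varies the continuous parameters.

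\emph{Necessity.} First I would check that both invariants are $\Aut(\Gamma)$-invariant and survive in the closure. The component map $\rho\mapsto\epsilon\circ\rho\in\Hom(\Gamma,\Z_2)$ is continuous with discrete target, hence locally constant; therefore in the closure $\epsilon\circ\rho_\infty$ equals one of the homomorphisms $(\epsilon\circ\rho)\circ\phi$ with $\phi\in\Aut(\Gamma)$, and since every $\phi$ is surjective these all have the same image as $\epsilon\circ\rho$. Likewise $\sw$ is locally constant on $\Hom(\Gamma,\PSL\C)$ by Goldman's description of its connected components and is invariant under precomposition, so $\sw(\rho_\infty)=\sw(\rho)$. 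This gives the inclusion of the orbit closure in the announced set.

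\emph{Reduction to equal $w$.} For the reverse inclusion I take an arbitrary $\rho_\infty$ in the target set and aim to write it as a limit of $\rho\circ\phi_n$. The mapping class group acts on $H^1(\Sigma;\Z_2)$ through $\Sp(2g,\Z_2)$, which acts transitively on nonzero vectors; since $\epsilon\circ\rho$ and $\epsilon\circ\rho_\infty$ have the same image they lie in the same $\Sp(2g,\Z_2)$-orbit, so after replacing $\rho$ by $\rho\circ\phi$ (which changes neither the orbit closure nor the invariants) I may assume $\epsilon\circ\rho=\epsilon\circ\rho_\infty=:w$. Note also that infinite image forces the rotation part of $\rho(\Gamma)$ to be dense in $\mathrm{SO}_2(\R)$, so there is a simple closed curve $c$ with $w(c)=0$ and $\rho(c)=R_{\theta_0}$ a rotation by an irrational multiple of $2\pi$; the twist $T_c$ then fixes $w$, while its iterates multiply a transverse generator by $R_{n\theta_0}$, a dense subset of $\mathrm{SO}_2(\R)$.

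\emph{Case $w=0$.} Here $\rho$ lands in the abelian group $\mathrm{SO}_2(\R)=S^1$ and factors through $H_1(\Sigma;\Z)=\Z^{2g}$. Lifting the generators into the maximal torus of $\mathrm{SU}_2$ (where they commute, so the product of commutators of the lifts is $I$) shows every such representation lifts, hence $\sw\equiv0$ and the target set is all of $\Hom(\Gamma,\mathrm{SO}_2(\R))=(\R/\Z)^{2g}$. The $\Aut(\Gamma)$-action is the linear $\Sp(2g,\Z)$-action on this torus, and $\rho$ is a point whose coordinate subgroup is dense in $S^1$. It then remains to invoke density of the $\Sp(2g,\Z)$-orbit of such a point, which follows from irreducibility of the $\Sp(2g,\Z)$-action on $\mathbb Q^{2g}$ together with the shears $b_i\mapsto b_i a_i^{\,n}$ realized by powers of $T_{a_i}$.

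\emph{Case $w\neq0$, the hard part.} Reflections now appear. Using the twist $T_c$ from the reduction, together with the $\Sp(2g,\Z)$-action that redistributes its density among all generators, I can vary every rotation angle and (via $F_\phi R_{n\theta_0}=F_{\phi-n\theta_0}$) every reflection angle densely while keeping $w$ fixed. The delicate point is that not every angle-configuration is reachable: $\sw$ is an extra $\Z_2$-constraint computed from lifts $\tilde A_i,\tilde B_i\in\mathrm{SU}_2$, where a reflection lifts to an element squaring to $-I$ and a rotation $R_\phi$ to $\pm\,\mathrm{diag}(e^{i\phi/2},e^{-i\phi/2})$, with the product of commutators of the lifts equal to $\pm I$. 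I would prove that, for fixed $w$, the reachable configurations are dense in the whole $\sw$-level set, so that the pair (image of $w$, $\sw$) is a \emph{complete} invariant. The main obstacle is exactly this completeness. I expect to handle it by passing to the index-two cover $\Sigma'\to\Sigma$ attached to $\Gamma'=\ker w$, on which $\rho$ restricts to an abelian $S^1$-representation that is anti-invariant under the deck involution $\sigma$ (conjugation by a reflection inverts rotations), i.e. a point of the minus-eigenspace torus $\Hom(\Gamma',S^1)^{-}$. Mapping classes of $\Sigma$ fixing $w$ lift to $\sigma$-equivariant mapping classes of $\Sigma'$, and I would run the torus-density argument of the previous case equivariantly on this minus-torus, checking that irreducibility of the $\sigma$-equivariant symplectic action still forces density and that the extension data from $\Gamma'$ to $\Gamma$ contributes precisely the two values of $\sw$.
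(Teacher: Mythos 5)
Your necessity argument is fine, and your case $\epsilon\circ\rho=0$ follows the same route as the paper (\cref{ptitlemme}): there the density of the $\Sp_{2g}(\Z)$-orbit of an irrational point of $(\R/\Z)^{2g}$ is established by an explicit Euclidean-algorithm manipulation of a lift in $\R^{2g}$, and you should be aware that ``irreducibility of the $\Sp_{2g}(\Z)$-action on $\mathbb Q^{2g}$'' does not by itself give density of orbits on the torus, so the shear argument still has to be written out. The genuine gap is the case $w\neq 0$, which is the whole content of the proposition. There your text is a plan rather than a proof: the assertion that the reachable angle-configurations are dense in the entire $\sw$-level set is precisely the statement to be proved, and you defer it (``I would prove\ldots'', ``I expect to handle it by\ldots''). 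The double-cover scheme you sketch would require you to (i) determine the image of the stabilizer of $w$ in $\Mod(\Sigma)$ acting on the anti-invariant part of the homology of the double cover, and prove that its orbits on the corresponding subtorus of $\Hom(\ker w, S^1)$ are dense for the relevant points, and (ii) track both the reflection angles (the extension data from $\ker w$ back to $\Gamma$) and the class $\sw$ through this correspondence. Neither step is routine and neither is carried out, so the hard direction remains unproved.

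The paper avoids all of this by a reduction you have not considered: since $\sw$ and the image of $\epsilon\circ\rho_\infty$ are locally constant, the representations with \emph{finite} image and the prescribed invariants are dense in the target set, so it suffices to put each such $\rho'$ in the orbit closure. The classification of finite dihedral orbits (\cref{diedr}, Edmonds' theorem: the $\Aut(\Gamma)$-orbits of surjections onto $\D_n$ are separated exactly by $\sw$) lets one normalize $\rho$ and $\rho'$ to agree on the handle $(a_1,b_1)$, which after the normalization of \cref{ModCycl} carries the only reflection among the generators and hence all of the $\sw$-information; the remaining genus-$(g-1)$ subsurface then carries an $\mathrm{SO}_2(\R)$-valued representation with infinite image, and \cref{ptitlemme} applied through \cref{decoupe} concludes. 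I recommend replacing your case $w\neq 0$ by this reduction: it substitutes the already-available finite classification for the equivariant dynamics, and reuses exactly the abelian torus argument you have in the case $w=0$.
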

We also describe the orbits of representations with finite image in $\SO$.

\begin{prop}\label{GpeFin}
Suppose $g\geqslant 2$.
The action $\Aut(\Gamma)$ on the set of surjective homomorphisms of $\Hom(\Gamma, G)$ is transitive for $G = \Z_n$ or $G = \D_n$ with $n$ odd. There are two orbits for $G = \A 4, \A 5, \Sy 4, \D_n$ with $n$ even.
\end{prop}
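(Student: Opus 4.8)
The plan is to show that for each of these groups the orbits are classified by a single invariant, namely the class $\phi_*[\Sigma]\in H_2(G;\Z)$, whose concrete incarnation is the obstruction $\sw$ to lifting the associated $\SO$-representation to $\SU$; this invariant is constant for $G=\Z_n$ and for $G=\D_n$ with $n$ odd, and surjective onto $\Z_2$ in the remaining cases. First I would record that a surjection $\phi\in\Hom(\Gamma,G)$ is the datum of a generating tuple $(A_1,B_1,\dots,A_g,B_g)$ of $G$ with $\prod_i[A_i,B_i]=1$, that $\Aut(\Gamma)$ acts through $\Mod(\Sigma)$, and that since orientation-preserving mapping classes fix the fundamental class $[\Sigma]$, the push-forward $\phi_*[\Sigma]\in H_2(G;\Z)$ is an orbit invariant. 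Computing Schur multipliers gives $H_2(\Z_n;\Z)=0$, $H_2(\D_n;\Z)=0$ for $n$ odd and $\Z_2$ for $n$ even, and $H_2(\A 4;\Z)=H_2(\A 5;\Z)=H_2(\Sy 4;\Z)=\Z_2$, so the predicted orbit counts coincide with $|H_2(G;\Z)|$, and it suffices to prove that $\phi_*[\Sigma]$ is a complete invariant that attains every value.

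For the cyclic groups the argument is purely homological: a surjection $\Gamma\to\Z_n$ factors through $H_1(\Sigma;\Z)=\Z^{2g}$, the action of $\Mod(\Sigma)$ on $H_1$ realises the full symplectic group $\Sp(2g,\Z)$, and $\Sp(2g,\Z)$ surjects onto $\Sp(2g,\Z/n)$, which acts transitively on the primitive vectors of $(\Z/n)^{2g}$; hence there is a single orbit, matching $H_2(\Z_n;\Z)=0$. The same symplectic transitivity lets me normalise the abelianised homomorphism $\Gamma\to G\to G^{ab}$ to a fixed standard form in the remaining cases, reducing the problem to the orbits of epimorphisms with prescribed composite to $G^{ab}$.

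The core of the proof is to establish completeness of $\phi_*[\Sigma]$ for $g\ge 2$. I would realise both values of $\Z_2$ by exhibiting explicit generating tuples: using the binary extension $\tilde G\subset\SU$ with kernel $\Z_2$, a tuple lifts to $\tilde G$ exactly when $\sw(\phi)=0$, and one checks directly, already in genus $2$, that generating tuples with $\sw=0$ and with $\sw=1$ both occur (for $\D_n$ with $n$ even this can be read off the cup product in $H^2(\Sigma;\Z_2)$ of the two $\Z_2$-valued characters in $H^1(\Sigma;\Z_2)$). For completeness I would use the standard Dehn-twist moves on the tuple: twists supported in a single handle, replacing $(A_i,B_i)$ by $(A_i,B_iA_i)$, $(A_iB_i,B_i)$ and their inverses, together with handle-exchange and slide moves coupling consecutive handles. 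The handle interchanges identify the set of attainable modifications with the relations describing $H_2(G;\Z)$ via Hopf's formula, which is precisely what forces the invariant to be complete.

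The main obstacle is exactly this last step in low genus: the general stabilisation theorems identifying $\Mod$-orbits with $H_2(G;\Z)$ apply only once $g$ is large relative to $G$, so they cannot be invoked directly at $g=2$. To bridge this I would exploit that each of $\A 4,\A 5,\Sy 4,\D_n$ is two-generated with Schur multiplier of order at most $2$, so that the second handle provides exactly enough room to absorb any discrepancy between two epimorphisms sharing the same value of $\sw$ — either by a direct Nielsen-equivalence computation for the nonabelian part, or by reducing through the normalised abelianisation to transitivity of $\Mod$ on generating tuples of the derived subgroup. The genuinely hard case is $\A 5$, where $G^{ab}$ is trivial and the invariant is entirely non-homological at the abelian level; dispatching $\A 5$ and the polyhedral groups at $g=2$ by hand is where the real work lies.
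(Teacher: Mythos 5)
Your framework is the right one and matches the paper's: the orbit invariant is $\Phi(\rho)=\rho_\star[\Sigma]\in\mathrm H_2(G,\Z)$, concretely detected by the lifting obstruction $\sw$, and the cyclic case does follow from symplectic transitivity on primitive vectors of $(\Z_n)^{2g}$ exactly as in the paper. But the proposition's actual content is the \emph{completeness} of this invariant for the nonabelian groups at every $g\geqslant 2$, and on that point your proposal has a genuine gap rather than a proof. The assertion that Dehn twists, handle exchanges and slides ``identify the set of attainable modifications with the relations describing $\mathrm H_2(G,\Z)$ via Hopf's formula'' is precisely the Dunfield--Thurston/Livingston stabilisation statement, which is only known for $g$ large relative to $G$; it cannot be invoked at $g=2$, as you yourself note. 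The bridging heuristic you offer --- that each of $\A 4,\A 5,\Sy 4,\D_n$ is two-generated with Schur multiplier of order at most $2$, so ``the second handle provides exactly enough room to absorb any discrepancy'' --- is not an argument: two-generation of $G$ does not imply that $\Aut(\Gamma_2)$ acts transitively on generating $4$-tuples with trivial product of commutators and fixed $\sw$ (Nielsen classes of generating tuples can be strictly finer than any homological invariant), and nothing in the proposal rules this out for the groups at hand. You explicitly defer ``dispatching $\A 5$ and the polyhedral groups at $g=2$ by hand'' as ``where the real work lies''; that deferred work is the theorem.

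For comparison, the paper does not appeal to any general principle at this step: it proves completeness group by group through explicit normalisation. For $\D_n$ it first normalises $\epsilon\circ\rho$ via the cyclic case, then uses concrete Dehn twists to bring $\rho$ to one of two normal forms distinguished by \cref{sw1}; for $\A 4$ and $\Sy 4$ it quotients by the Klein subgroup $\K$ and reduces handle by handle; for $\A 5$ it requires a long chain of combinatorial lemmas (\cref{5cycle}, \cref{reduction}, \cref{gen2}, \cref{anseSp}) to force all but one handle to fix a point of $\{1,\dots,5\}$ and then settles genus $2$ and $3$ by hand. Your proposal would become a proof only once this case analysis (or an equivalent transitivity statement for Nielsen classes of generating tuples of these specific groups in genus $2$) is actually carried out; as written, the key step is asserted, not established.
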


Edmonds showed in \cite{Edmonds1, Edmonds2} that if $G$ is abelian or if $G$ is a semi-direct product of cyclic groups then two surjective representations $\rho_1$ and $\rho_2$ are in the same orbit under the action of $\Aut(\Gamma)$ if and only if $\rho_1([\Sigma]) = \rho_2([\Sigma])\in \mathrm{H}_2(G, \Z)$, see \cref{SGP} for more details on these notations. In particular this proves \cref{GpeFin} in the case $G=\Z_n$ and $G = \D_n$. Dunfield and Thurston also showed in \cite[Section 6]{DunfieldThurston} that given a finite group $G$, the action of $\Aut(\Gamma)$ on the set of surjective homomorphisms of $\Hom(\Gamma, G)$ has $|\mathrm H_2(G, \Z)|$ orbits for large enough $g$. A consequence of \cref{GpeFin} is that in these particular cases, it happens for every $g\geqslant 2$.

Let us now describe the organization of the paper.
In \cref{rappel} we recall some well-known facts about fundamental groups of surfaces, their automorphism groups and their representations.
In \cref{SGP} we study the action of the mapping class group on the space of conjugacy classes of some representations with finite image. More precisely we prove \cref{GpeFin}. In \cref{SINF} we provide a description of some orbits of elementary representations with infinite image. In particular we prove \cref{infini} and exhibit some particularly simple representations in the closure of the orbits of affine representations. In \cref{BPS} we recall the definition of branched projective structures and prove \cref{ET}. We also explain some surgeries on these structures that allow us to construct new projectives structures from old ones. In \cref{SGEO} we prove \cref{mainth}: we explain the origins of the obstructions and show that a representation satisfying the conditions of these obstructions is \textit{geometric}: it is in $\Hol(\mathcal P(n_1, \ldots, n_k))$. Finally in \cref{last_section} we explicit the function $d$ and prove \cref{dintro}.

\begin{figure}[h!]
\begin{tikzpicture}[>=stealth,yscale=1.9,xscale=1.4]  

\node(comp) at (4,5)[rectangle,draw,text width=3cm,text centered] {Is $\rho$ elementary?};
\node(desk) at (1,4)[rectangle,draw] {Does $\rho$ lift to $\SL \C$?};
\node(lap)  at (6,4)[rectangle,draw] {Does $\rho$ lift to $\SL \C$?}; 
\node(HPl)  at (0,3)[rectangle,draw] {$d(\rho) = 1$};     
\node(IBM)  at (2,3)[rectangle,draw] {$d(\rho) = 0$};    
\node(DELL) at (4,3)[rectangle,draw] {$d(\rho) = 2g-1$};   
\node(HPr)  at (7,3)[rectangle,draw] {Is $\rho$ spherical?};
\node(Eucl) at (3, 2)[rectangle,draw] {Is $\rho$ Euclidean?};  
\node(Tri) at (7, 2)[rectangle,draw] {Is $\rho$ trivial?};  
\node(fintri) at (8, 1)[rectangle,draw]{$d(\rho) = 2g+2$};
\node(finG) at (6, 1)[rectangle,draw]{$d(\rho) = 2g$};
\node(Aff) at (0, 1)[rectangle,draw]{$d(\rho) = 2g-2$};
\node(Vol) at (3, 0.85)[rectangle,draw, text width=3.4cm,text centered]{Is it true that $\mathrm{Vol}(\rho) > 0$ and that $n\mathrm{Vol}(\rho)  \geqslant 2 \mathrm{Area}(\Lambda)$ if $\mathrm{Li}\circ \rho$ has finite image of order $n$ and $\Lambda$ is a lattice?};
\node(finYes) at (4, -0.7)[rectangle, draw]{$d(\rho) = 2g-2$};
\node(finNo) at (2, -0.7)[rectangle, draw]{$d(\rho) = 2g$};

\draw[->] (comp) -- (desk); 
\draw[->] (comp) -- (lap);
\draw[->] (desk) -- (HPl); 
\draw[->] (desk) -- (IBM); 
\draw[->] (lap)  -- (DELL); 
\draw[->] (lap)  -- (HPr);
\draw[->] (HPr) -- (Eucl);
\draw[->] (Tri) -- (fintri);
\draw[->] (Tri) -- (finG);
\draw[->] (HPr) -- (Tri);
\draw[->] (Eucl) -- (Aff);
\draw[->] (Eucl) -- (Vol);
\draw[->] (Vol) -- (finYes);
\draw[->] (Vol) -- (finNo);
\draw (2.5, 4.5) node [above left] {no} ;
\draw (5, 4.5) node [above right] {yes} ;
\draw (0.5, 3.5) node [above left] {no} ;
\draw (1.5, 3.5) node [above right] {yes} ;
\draw (4.5, 3.5) node [above right] {no} ;
\draw (6.5, 3.5) node [above right] {yes} ;
\draw (4.8, 2.5) node [above right] {no} ;
\draw (7.5, 2.5) node [above left] {yes} ;
\draw (7.5, 1.5) node [above right] {yes} ;
\draw (6, 1.5) node [above right] {no} ;
\draw (3, 1.6) node [above right] {yes} ;
\draw (1.5, 1.6) node [above right] {no} ;
\draw (1.9, -0.2) node [above right] {no} ;
\draw (3.7, -0.2) node [above right] {yes} ;
\end{tikzpicture}
\caption{The function $d : \Hom(\Gamma, \PSL \C)\to \Z$.}
\label{fonctiond}
\end{figure}
\subsection*{Ackowledgements}
I wish to thank my advisor Maxime Wolff for his help and support. I also thank Gabriel Calsamiglia, Bertrand Deroin and Julien March\' e  for their kind interest.
\section{Reminder on the mapping class group}\label{rappel}
\subsection{Fundamental groups of surfaces}
For each $g,n\geqslant 0$, we fix $\Sigma_{g,n}$ a compact oriented surface of genus $g$ with $n$ boundary components. We also choose a basepoint in each of these surfaces and thus a fundamental group $\Gamma_{g,n}$. Let us fix standard generators $a_1, \ldots, b_g, c_1, \ldots, c_n$ for $\Gamma_{g,n}$, as in \cref{generator}.
\begin{figure}[h]
    \centering    
    \def\svgwidth{\columnwidth}
	\def\svgwidth{0.7\textwidth}
\begingroup%
  \makeatletter%
  \providecommand\color[2][]{%
    \errmessage{(Inkscape) Color is used for the text in Inkscape, but the package 'color.sty' is not loaded}%
    \renewcommand\color[2][]{}%
  }%
  \providecommand\transparent[1]{%
    \errmessage{(Inkscape) Transparency is used (non-zero) for the text in Inkscape, but the package 'transparent.sty' is not loaded}%
    \renewcommand\transparent[1]{}%
  }%
  \providecommand\rotatebox[2]{#2}%
  \newcommand*\fsize{\dimexpr\f@size pt\relax}%
  \newcommand*\lineheight[1]{\fontsize{\fsize}{#1\fsize}\selectfont}%
  \ifx\svgwidth\undefined%
    \setlength{\unitlength}{447.04482446bp}%
    \ifx\svgscale\undefined%
      \relax%
    \else%
      \setlength{\unitlength}{\unitlength * \real{\svgscale}}%
    \fi%
  \else%
    \setlength{\unitlength}{\svgwidth}%
  \fi%
  \global\let\svgwidth\undefined%
  \global\let\svgscale\undefined%
  \makeatother%
  \begin{picture}(1,0.38788195)%
    \lineheight{1}%
    \setlength\tabcolsep{0pt}%
    \put(0,0){\includegraphics[width=\unitlength,page=1]{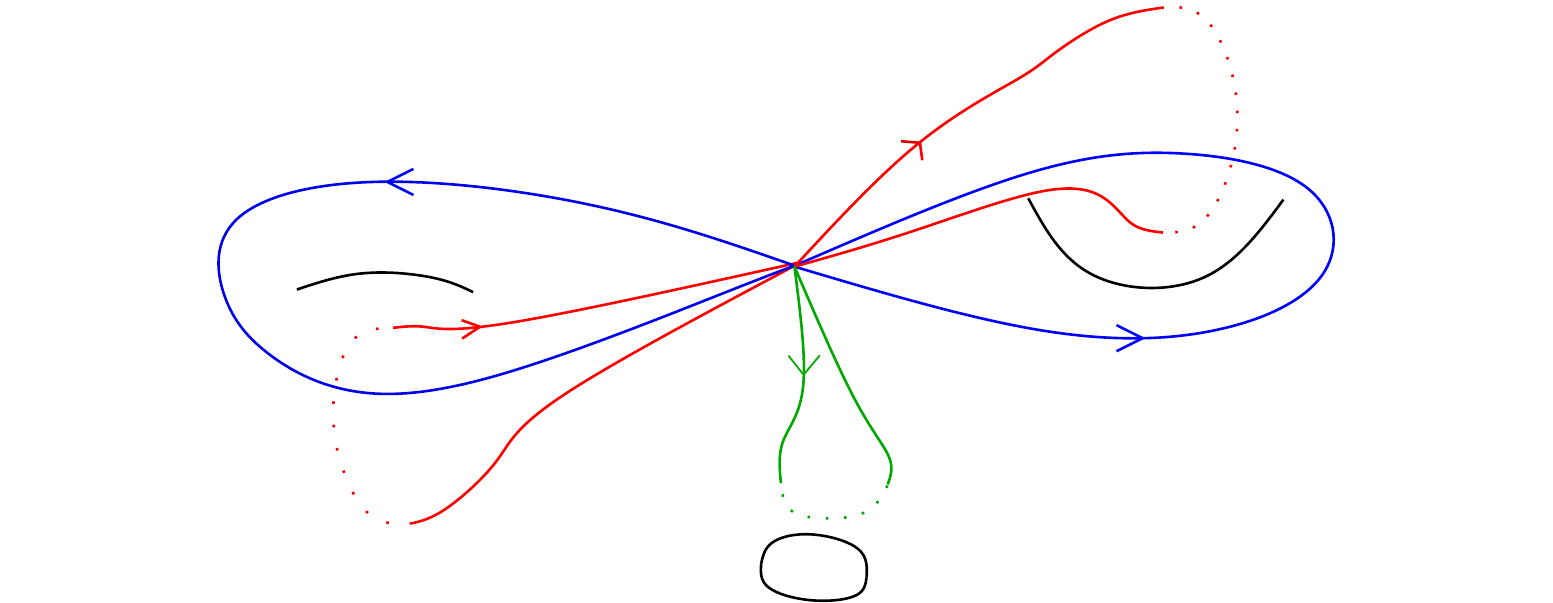}}%
    \put(0.27830602,0.2814433){\color[rgb]{0,0,0}\makebox(0,0)[lt]{\lineheight{1.25}\smash{\begin{tabular}[t]{l}$b_1$\\\end{tabular}}}}%
    \put(0.31776404,0.19764224){\color[rgb]{0,0,0}\makebox(0,0)[lt]{\lineheight{1.25}\smash{\begin{tabular}[t]{l}$a_1$\end{tabular}}}}%
    \put(0.47183891,0.13015215){\color[rgb]{0,0,0}\makebox(0,0)[lt]{\lineheight{1.25}\smash{\begin{tabular}[t]{l}$c_1$\end{tabular}}}}%
    \put(0.68163049,0.33972261){\color[rgb]{0,0,0}\makebox(0,0)[lt]{\lineheight{1.25}\smash{\begin{tabular}[t]{l}$a_2$\end{tabular}}}}%
    \put(1.99128631,0.46804483){\color[rgb]{0,0,0}\makebox(0,0)[lt]{\begin{minipage}{0.88552296\unitlength}\raggedright \end{minipage}}}%
    \put(0.73600682,0.13601806){\color[rgb]{0,0,0}\makebox(0,0)[lt]{\lineheight{1.25}\smash{\begin{tabular}[t]{l}$b_2$\end{tabular}}}}%
    \put(0,0){\includegraphics[width=\unitlength,page=2]{generateurs.pdf}}%
  \end{picture}%
\endgroup%
\
	\caption{Standard generators of  $\Gamma_{2,1}$.}
    \label{generator}
\end{figure}
These standard generators yield the following presentation of $\Gamma_{g,n}$
$$\Gamma_{g,n} = \langle a_1, b_1, \ldots, a_g, b_g, c_1, \ldots, c_n \mid \prod_{i=1}^g [a_i, b_i] = c_1 \ldots c_n\rangle.$$
In particular for $n=0$, we have $$\Gamma = \langle a_1, b_1, \ldots, a_g, b_g\mid \prod_{i=1}^g [a_i, b_i] = \id\rangle.$$
Recall that every $\sigma\in \mathrm{Aut}(\Gamma)$ is induced by a continuous map $f : \Sigma\to \Sigma$ preserving the basepoint of $\Gamma$. We denote by $\Aut(\Gamma)$ the index $2$ subgroup of $\mathrm{Aut}(\Gamma)$ induced by the $f$ preserving the orientation of $\Sigma$.
Let us denote by $\mathrm{Homeo}_0(\Sigma)$ 
the connected component of the identity of $\mathrm{Homeo}_+(\Sigma)$, the set of positive homeomorphisms of $\Sigma$, equipped with the compact-open topology. It is the set of homeomorphisms of $\Sigma$ isotopic to the identity, see \cite[Chapter 2]{FarbMargalit}.
 The mapping class group is the group of $\Mod(\Sigma) = \mathrm{Homeo}_+(\Sigma)/\mathrm{Homeo}_0(\Sigma)$ of isotopy class of positive homeomorphisms of $\Sigma$.
Every $f\in \Mod(\Sigma)$ induces an outer automorphism of $\Gamma$ by the action on $\Gamma$ of one of its representatives that fixes the basepoint of $\Gamma$. The theorem of Dehn, Nielsen and Baer, see \cite[Chapter 6]{FarbMargalit}, states that this homomorphism $\Mod(\Sigma)\to \mathrm{Out}^+(\Sigma)$ is an isomorphism, where $\mathrm{Out}^+(\Gamma) = \Aut(\Gamma)/\mathrm{Inn}(\Gamma)$ and $\mathrm{Inn}(\Gamma)$ denotes the set of inner automorphisms of $\Gamma$.

\subsection{Action on representation spaces}

Let $G$ be a group.
The group $\Aut(\Gamma)$ acts on the set of representations $\Hom(\Gamma, G)$ by precomposition. The group $G$ also acts by conjugation on $\Hom(\Gamma, G)$ and these two actions commute. Therefore $\Aut(\Gamma)\times G$ acts on $\Hom(\Gamma, G)$. The group $\mathrm{Inn}(\Gamma)$ of inner automorphisms of $\Gamma$ acts trivially on $\Hom(\Gamma, G)/G$. Hence we have an action of $\mathrm{Out}^+(\Gamma)$ on $\Hom(\Gamma, G)/G$.
Therefore the group $\Mod(\Sigma)$ acts on $\Hom(\Gamma, G) /G$. Note that we may restrict both the action of $\Aut(\Gamma)$ and the action of $\mathrm{Out}^+(\Gamma)$ on the set of (conjugacy classes) of surjective homomorphisms of $\Hom(\Gamma, G)$.
\begin{rmk}\label{conjugaison}
Two surjective homomorphisms of $\Hom(\Gamma, G)$ are in the same orbit under the action of $\Aut(\Gamma)\times G$ if and only if they are in the same orbit under the action of $\Aut(\Gamma)$. 
\end{rmk}

In our study of the $\Aut(\Gamma)$ action on $\Hom(\Gamma, G)$, we will often consider the restriction of a homomorphism $\rho\in \Hom(\Gamma, G)$ such that $\rho$ kills $c = \prod_{i=1}^k [a_i, b_i]$ to $\Hom(\Gamma_{g-k}, G)$. The following lemma ensures that we may then study the action of $\Aut(\Gamma_{g-k})$ on this restriction to understand the orbit of $\rho$ under the action of $\Aut(\Gamma)$.

\begin{lemma}\label{decoupe}
Let $\rho\in \Hom(\Gamma, G)$ and $c$ be a simple separating curve in $\Sigma$ such that $\rho(c) = \id$.
The curve $c$ induces a splitting $\Sigma = \Sigma_h \sharp \Sigma_l$, where $h + l = g$. For every $\sigma\in \Aut(\Gamma_h)$, the representation that we denote by $\sigma\cdot \rho$ satisfying:
$$\sigma\cdot\rho(\gamma) = \left\{
\begin{array}{ll}
	\rho(\sigma\cdot\gamma)& \mbox{if } \gamma\in \pi_1(\Sigma_h)\\
	\rho(\gamma) & \mbox{if } \gamma\in \pi_1(\Sigma_l)
\end{array}
\right.$$
lies in $\Aut(\Gamma)\cdot\rho$.
\end{lemma}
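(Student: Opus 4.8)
The plan is to realize the asserted automorphism of $\Gamma$ geometrically, as the action on $\pi_1$ of a homeomorphism of $\Sigma$ supported on the handle $\Sigma_h$, and then to check that this homeomorphism realizes precisely the prescribed gluing of $\sigma$ on one side with the identity on the other. First I would fix the separating curve $c$ and the induced connected sum decomposition $\Sigma = \Sigma_h \sharp \Sigma_l$; cutting along $c$ writes $\Sigma$ as the union of a genus-$h$ piece $\Sigma_{h,1}$ and a genus-$l$ piece $\Sigma_{l,1}$, each with one boundary component, glued along $c$. By van Kampen, since $\rho(c)=\id$, the representation $\rho$ is determined by its restrictions to $\pi_1(\Sigma_{h,1})$ and $\pi_1(\Sigma_{l,1})$, which are free groups, and the boundary curve $c$ maps to the product of commutators on the genus-$h$ side.

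The key step is to promote the given $\sigma\in \Aut(\Gamma_h)$ to an element of $\Aut(\Gamma)$ that acts as $\sigma$ on the genus-$h$ generators and trivially on the genus-$l$ generators. For this I would invoke the fact, recalled in \cref{rappel}, that every orientation-preserving automorphism of a surface group is induced by an orientation-preserving homeomorphism (Dehn--Nielsen--Baer). Choose a homeomorphism $f_h$ of the compact surface $\Sigma_{h,1}$, fixing the boundary $c$ pointwise and the basepoint, inducing $\sigma$ on $\pi_1(\Sigma_{h,1}) \cong \Gamma_h$; such an $f_h$ exists because $\sigma$ fixes the boundary word $\prod_{i=1}^h[a_i,b_i]$ up to the relation, so it is realized by a mapping class of the bordered surface fixing $\partial$. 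Extend $f_h$ by the identity across $c$ to a homeomorphism $f$ of all of $\Sigma$; this is continuous precisely because $f_h$ is the identity on $c$. Then $f$ is an orientation-preserving homeomorphism of $\Sigma$, hence induces $\sigma\cdot\rho := \rho\circ f_*^{-1}\in \Aut(\Gamma)\cdot \rho$, and by construction $f_*$ acts as $\sigma$ on $\pi_1(\Sigma_h)$ and trivially on $\pi_1(\Sigma_l)$, giving exactly the piecewise formula in the statement.

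The main obstacle is the compatibility at the interface: I must ensure that $\sigma$ can be chosen to fix the boundary curve $c$ so that the two homeomorphisms glue continuously, and that the resulting $f$ induces a well-defined automorphism of $\Gamma$ rather than merely a map on the free factors. Concretely, the subtlety is that an arbitrary $\sigma\in\Aut(\Gamma_h)$ need not fix the specific product-of-commutators word representing $c$ on the nose, only up to conjugacy; I would handle this by working with mapping classes of the bordered surface $\Sigma_{h,1}$ fixing $\partial\Sigma_{h,1}$ pointwise, whose induced automorphisms of the free group $\pi_1(\Sigma_{h,1})$ fix the boundary word exactly, and noting that every element of $\Aut(\Gamma_h)$ lifts to such a mapping class after adjusting by an inner automorphism. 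Since \cref{conjugaison} and the passage to $\Hom(\Gamma,G)/G$ allow us to absorb inner automorphisms, this adjustment does not affect the conclusion, and the gluing then goes through cleanly.
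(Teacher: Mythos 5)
Your overall strategy --- realize $\sigma$ by a homeomorphism of the bordered piece $\Sigma_{h,1}$ fixing the boundary pointwise and extend by the identity over $\Sigma_{l,1}$ --- is a sound geometric counterpart of the paper's argument, and you correctly isolate the delicate point: a given $\sigma\in\Aut(\Gamma_h)$ is a priori only realized by such a boundary-fixing homeomorphism up to an inner automorphism of $\Gamma_h$. However, your resolution of that point is where the proof breaks. Replacing $\sigma$ by $\mathrm{inn}_\delta\circ\sigma$ changes the representation $\sigma\cdot\rho$ by conjugating $\rho$ by $\rho(\delta)$ \emph{only on the generators of $\pi_1(\Sigma_h)$}, while leaving the $\pi_1(\Sigma_l)$ side untouched. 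Such a partial (one-sided) conjugation is not a global conjugation by an element of $G$, so it cannot be absorbed by the $G$-action on $\Hom(\Gamma,G)$; moreover the conclusion of \cref{decoupe} concerns the orbit $\Aut(\Gamma)\cdot\rho$ in $\Hom(\Gamma,G)$ itself, not its image in $\Hom(\Gamma,G)/G$, and \cref{conjugaison} is in any case a statement about surjective homomorphisms. Showing that these partial conjugations \emph{are} realized by honest elements of $\Aut(\Gamma)$ is precisely the heart of the lemma, and it is exactly what the paper's proof spends most of its effort on: the composition of a Dehn twist along $\delta$ with a Dehn twist along $c^{\pm1}\delta$ conjugates the $\Sigma_h$-side by $\rho(\delta)$ and fixes the $\Sigma_l$-side, crucially using $\rho(c)=\id$.

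Your argument can be repaired, but it needs a genuine additional input rather than the appeal to \cref{conjugaison}. The clean fix is the Birman exact sequence: the capping homomorphism $\Mod(\Sigma_{h,1},\partial)\to\Mod(\Sigma_{h,*})\simeq\Aut(\Gamma_h)$ is surjective (with kernel generated by the boundary twist), so every element of $\Aut(\Gamma_h)$ --- on the nose, not merely its outer class --- is induced by a homeomorphism of $\Sigma_{h,1}$ fixing $\partial\Sigma_{h,1}$ pointwise; the point-pushing elements are exactly what realize the inner automorphisms, and unwinding a push map as a difference of Dehn twists along the two boundary curves of an annular neighborhood of $\delta$ recovers the paper's explicit $T_\delta$, $T_{c^{\pm1}\delta}$ construction. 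A smaller slip worth correcting: $\pi_1(\Sigma_{h,1})$ is a free group of rank $2h$, not $\Gamma_h$; what you need is a lift of $\sigma$ to an automorphism of this free group fixing the boundary word $\prod_{i=1}^h[a_i,b_i]$ exactly, and the phrase ``$\sigma$ fixes the boundary word up to the relation'' is vacuous since that word is trivial in $\Gamma_h$.
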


\begin{proof}
Let us fix such a curve $c$.
The set $\mathcal A$ of $\sigma\in \Aut(\Gamma_h)$ satisfying this property for every $\rho$ such that $\rho(c) = \id$ is stable by multiplication. It contains the automorphisms induced by Dehn twists along nonseparating simple closed curves contained in $\Sigma_h$. The set $\mathcal A$ also contains the inner automorphisms of $\Gamma_h$: if $\delta$ is a nonseparating simple closed curve in $\Sigma_h$, then the composition of a Dehn twist along a curve homotopic to $\delta$ and of a Dehn twist along a curve homotopic to $c^{\pm 1}\delta$ conjugates $\rho(\gamma)$ with $\rho(\delta)$ for $\gamma\in \pi_1(\Sigma_h)$ and leaves $\rho(\gamma)$ unchanged for $\gamma\in \pi_1(\Sigma_l)$.
Since the Dehn twists along nonseparating simple closed curves generate $\mathrm{Out}^+(\Gamma_h)$, see \cite[Chapter 4]{FarbMargalit}, we have $\mathcal A = \Aut(\Gamma_h)$. 
\end{proof}

Observe that if $G$ is an abelian group, then any homomorphism $\rho\in \Hom(\Gamma, G)$ factors through a homomorphim in $\Hom(\mathrm H_1(\Sigma, \Z), G)$ since the abelianization of $\Gamma$ is $\mathrm H_1(\Sigma, \Z)$. The group $\mathrm H_1(\Sigma, \Z)$ is the free abelian group on the generators $[a_i], [b_i]$. Therefore, we may identify $\Hom(\Gamma, G)$ with $G^{2g}$, by the map $\rho\mapsto (\rho(a_1), \ldots, \rho(b_g))$.
The group $\Mod(\Sigma)$ acts on $\mathrm H_1(\Sigma, \Z)\simeq \Z^{2g}$ and it is well known that this action is made by the full symplectic group $\mathrm{Sp}_{2g}(\Z)$, see \cite[Chapter 6]{FarbMargalit}. The action of $\Mod(\Sigma)\simeq \mathrm{Out^+(\Gamma)}$ on $\Hom(\Gamma, G)\simeq G^{2g}$ is thus by $\mathrm{Sp}_{2g}(\Z)$ that acts on $G^{2g}$ seen as a $\Z$-module.

\section{Mapping class group orbits on representation spaces into some finite groups}\label{SGP}

In this section we study the orbits of the $\Aut(\Gamma)$ action on the set of surjective homomorphisms of $\Hom(\Gamma, G)$ for some finite groups $G$. 
Observe that the projection $\Hom(\Gamma, G)\to \Hom(\Gamma, G)/G$ induces a bijection between the $\Aut(\Gamma)$ orbits of surjective homomorphisms and the $\Mod(\Sigma)$ orbits of conjugacy class of surjective homomorphisms by \cref{conjugaison}.
Let us recall the classification of these orbits when the genus is large enough, from \cite[Theorem 6.23]{DunfieldThurston}.
There is a natural map $\Phi : \Hom(\Gamma, G)\to \mathrm H_2(G, \Z)$ defined as follows. Every $\rho\in \Hom(\Gamma, G)$ induces a map in homology $\rho_\star : \mathrm H_2(\Gamma, \Z)\to \mathrm H_2(G, \Z)$ and we define $\Phi(\rho)$ to be $\rho_\star([\Sigma])\in \mathrm H_2(G, \Z)$, where $[\Sigma]\in \mathrm{H}_2(\Sigma, \Z) = \mathrm{H_2}(\Gamma, \Z)$ is the fundamental class of $\Sigma$. This map is invariant under the action of $\Aut(\Gamma)$, since every positive automorphism of $\Gamma$ maps $[\Sigma]$ to itself.
\begin{theo}[Dunfield-Thurston]
Let $G$ be a finite group.
There exists $g_0\geqslant 0$ such that the natural map from the orbits of surjective homomorphisms of $\Hom(\Gamma_g, G)$ under the action of $\Aut(\Gamma)$ to $\mathrm{H}_2(G, \Z)$ is a bijection for every $g \geqslant g_0$.
\end{theo}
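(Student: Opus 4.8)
The plan is to separate the two assertions hidden in the word ``bijection''—that the orbit map is onto $\mathrm H_2(G,\Z)$ and that it is injective—each only in a stable range $g\geqslant g_0$. Throughout I view a representation $\rho\in\Hom(\Gamma_g,G)$ as a homotopy class of maps $\Sigma_g\to BG$, so that $\Phi(\rho)=\rho_\star([\Sigma_g])$ becomes the class of the $G$-surface $(\Sigma_g,\rho)$ in the oriented bordism group $\Omega_2(BG)\cong \mathrm H_2(G,\Z)$ (for a discrete group the Atiyah--Hirzebruch spectral sequence collapses in this range, since $\Omega_1=\Omega_2=0$, giving the isomorphism with $\mathrm H_2(BG,\Z)=\mathrm H_2(G,\Z)$). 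Surjective $\rho$ correspond to tuples $(x_1,y_1,\ldots,x_g,y_g)\in G^{2g}$ with $\prod_i[x_i,y_i]=\id$ that generate $G$, and by \cref{conjugaison} the $\Aut(\Gamma)$-action already contains global conjugation by $G$. The basic tool is the stabilization map $s\colon\Hom(\Gamma_g,G)\to\Hom(\Gamma_{g+1},G)$ appending a trivial handle $x_{g+1}=y_{g+1}=\id$; it commutes with $\Phi$ and descends to the orbit sets.

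For surjectivity of the orbit map, fix $c\in \mathrm H_2(G,\Z)$ and realize it by a $G$-surface, i.e.\ a representation $\rho_c$ of some $\Gamma_h$ with $\Phi(\rho_c)=c$. Choosing a fixed surjective representation $\sigma_0$ with $\Phi(\sigma_0)=0$ (which exists, e.g.\ as the connected sum of any surjection with its orientation-reversal, whose fundamental classes cancel) and forming $\rho_c\sharp\sigma_0$, I obtain a surjective representation with $\Phi=c$ by additivity of $\Phi$ under connected sum. Appending trivial handles then realizes $c$ in every sufficiently large genus.

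The heart of the matter is injectivity, which I would deduce from two lemmas. The first is a \emph{stable classification}: two surjective representations with equal $\Phi$ become $\Aut$-equivalent after appending enough trivial handles to each. This follows from the bordism picture, since equal classes in $\Omega_2(BG)$ are joined by a $G$-cobordism; a Morse function on the cobordism cuts it into elementary handle attachments, each of which is implemented, after stabilization, by a generator of the mapping class group acting on the tuple (Dehn twists, handle slides, braidings) together with $G$-conjugation. The second, and genuinely hard, lemma is \emph{destabilization}: for $g$ larger than a bound governed by the minimal number of generators of $G$, every surjective $\rho\in\Hom(\Gamma_g,G)$ is $\Aut(\Gamma_g)$-equivalent to $s(\rho')$ for some \emph{surjective} $\rho'\in\Hom(\Gamma_{g-1},G)$. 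The idea is to use surjectivity and the abundance of handles to first move a bounded number of handles into a position where they already generate $G$; a surplus handle $(x_g,y_g)$ then has both entries in the subgroup generated by the others, and handle slides let one subtract off those contributions to reduce $(x_g,y_g)$ to $(\id,\id)$ while keeping the rest surjective, the constraint $\prod_i[x_i,y_i]=\id$ being respected precisely because the bordism class has not changed. Checking that these Nielsen-type moves genuinely suffice, and extracting a bound on the genus at which they become available, is where the real work lies.

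Granting the two lemmas, the conclusion is a counting argument. Since $G$ is finite, the orbit sets $S_g$ of surjective representations are finite and $\mathrm H_2(G,\Z)$ is finite; destabilization makes $s\colon S_{g}\to S_{g+1}$ surjective once $g$ is large, so $|S_g|$ is eventually nonincreasing, hence eventually constant, and past that point a surjection between equal finite sets is a bijection. The common stable value is the colimit $S_\infty$, on which $\Phi$ is onto by the surjectivity step and injective by the stable classification lemma, whence $\Phi\colon S_\infty\xrightarrow{\sim}\mathrm H_2(G,\Z)$. Taking $g_0$ to be the genus past which $|S_g|$ is constant gives the asserted bijection $S_g\cong\mathrm H_2(G,\Z)$ for all $g\geqslant g_0$. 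I expect the destabilization lemma, and in particular the uniformity of its genus threshold, to be the principal obstacle; the bordism bookkeeping and the final counting step are comparatively routine.
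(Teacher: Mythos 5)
The paper does not prove this theorem: it is quoted from Dunfield--Thurston \cite[Theorem 6.23]{DunfieldThurston}, and the surrounding text only uses it as context before establishing the sharp bounds $g_0=1,2$ for the specific groups $\Z_n$, $\D_n$, $\A 4$, $\Sy 4$, $\A 5$ by explicit manipulation of generators. So there is no internal proof to compare against; what you have written is, in outline, the argument of Dunfield--Thurston themselves (going back to Livingston and Zimmermann): identify $\Phi$ with the class of the $G$-surface in $\Omega_2(BG)\cong \mathrm H_2(G,\Z)$, show that two epimorphisms with equal invariant become equivalent after stabilization by trivial handles, show that for large genus every epimorphism destabilizes, and conclude by counting. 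Your final step (eventual constancy of $|S_g|$, so that the stabilization map is eventually a bijection on orbit sets and injectivity descends from the stable range) is correctly and cleanly stated, as is the realization of every class by a surjective representation via connected sum with a $\Phi$-trivial surjection obtained from a surjection and its orientation-reversal.

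The proposal is nonetheless an outline rather than a proof: the two lemmas carrying all the weight are asserted with one-sentence sketches, and you acknowledge as much. For the stable classification lemma one must actually verify that each elementary handle attachment occurring in a $G$-cobordism is implemented, after stabilization, by mapping class group moves and conjugation; for the destabilization lemma one must verify that the handle slides available inside $\Aut^+(\Gamma_g)$ (as opposed to arbitrary Nielsen transformations on a free group, which are not all realized by homeomorphisms) suffice to trivialize a redundant handle, and one must extract a genus bound uniform over all epimorphisms. These are exactly the computations that occupy Section 3 of the paper in the special cases it needs --- \cref{decoupe} and the explicit Dehn twists in the proofs of \cref{diedr}, \cref{AutA4} and \cref{modA5} are instances of such handle slides --- and they are where the content of the theorem lies. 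As submitted, the argument follows the right route but is incomplete at precisely those two points.
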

The proof of Dunfield and Thurston is not constructive and does not give any bound on the minimal $g_0$ required. We will see that $g_0 = 1$ in the case $G = \Z_n$ and that $g_0=2$ in the case $G \in \{\D_n, \A 4, \Sy 4, \A 5\}$.
There does not exist any surjective homomorphism in $\Hom(\Gamma_1, G)$ for $G\in \{\A 4, \Sy 4, \A 5\}$ and for $G = \D_n$, $n\geqslant 3$, since these groups are not abelian. Moreover, the image of the set of surjective homomorphisms of $\Hom(\Gamma_1, \D_2)$ by $\Phi$ is not the whole group $\mathrm{H}_2(\D_2, \Z)$. Hence $g_0\geqslant 2$ for these groups. However for $g\geqslant 2$ and these groups $G$, the image of the set of surjective homomorphisms of $\Hom(\Gamma, G)$ by $\Phi$ is surjective. We may check that $|\mathrm{H}_2(G, \Z)|\leqslant 2$. We are going to show that the action of $\Aut(\Gamma)$ on the set of surjective homomorphisms with fixed image by $\Phi$ is transitive.
\subsection{Cyclic groups}
Let us start with the case $G = \Z_n$, the cyclic group of order~$n\geqslant 2$.
\begin{prop}\label{ModCycl}
The action of $\Aut(\Gamma)$ on the set of surjective homomorphisms of $\Hom(\Gamma, \Z_n)$ is transitive.
\end{prop}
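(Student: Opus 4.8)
The plan is to use the identification, established just above in \cref{rappel}, of $\Hom(\Gamma, \Z_n)$ with $(\Z_n)^{2g}$ via $\rho \mapsto (\rho(a_1), \rho(b_1), \ldots, \rho(a_g), \rho(b_g))$, under which the $\Aut(\Gamma)$ action becomes the action of $\Sp_{2g}(\Z)$ on $(\Z_n)^{2g}$ viewed as a $\Z$-module. A homomorphism $\rho$ is surjective exactly when its entries generate $\Z_n$, i.e. when $\gcd(\rho(a_1), \ldots, \rho(b_g), n) = 1$; I will call such a vector \emph{unimodular}. The standard surjection $\rho_0$ sending $a_1 \mapsto 1$ and all other generators to $0$ corresponds to $e_1 = (1, 0, \ldots, 0)$. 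So it suffices to show that $\Sp_{2g}(\Z)$ carries every unimodular vector of $(\Z_n)^{2g}$ to $e_1$.

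First I would reduce to an integral statement by lifting. The key elementary claim is that every unimodular vector $v \in (\Z_n)^{2g}$ admits a lift $\tilde v \in \Z^{2g}$ that is primitive, meaning its coordinates have gcd $1$. Since $2g \geq 2$, this follows from a prime-avoidance argument: starting from any lift $(\tilde v_1, \ldots, \tilde v_{2g})$, one adjusts $\tilde v_1$ by a multiple of $n$ so that $p \nmid \tilde v_1$ for each of the finitely many primes $p$ dividing $\gcd(\tilde v_2, \ldots, \tilde v_{2g})$. Unimodularity makes this possible prime-by-prime (if $p \mid n$ then $p \nmid \tilde v_1$ already, and if $p \nmid n$ one can fix the correction modulo $p$), and the Chinese remainder theorem combines the choices.

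Next I would invoke transitivity of $\Sp_{2g}(\Z)$ on primitive vectors of $\Z^{2g}$. A primitive $\tilde v$ satisfies $\langle \tilde v, w \rangle = 1$ for some $w \in \Z^{2g}$, because the functional $u \mapsto \langle \tilde v, u\rangle$ has image $\gcd(\text{coordinates of } \tilde v)\,\Z = \Z$, the standard symplectic form being unimodular. Then $\tilde v$ and $w$ span a hyperbolic plane $H$, the splitting $\Z^{2g} = H \oplus H^\perp$ is symplectic with $H^\perp$ of rank $2g-2$, and by induction one completes $\tilde v$ to an integral symplectic basis. As $\Sp_{2g}(\Z)$ acts transitively on ordered symplectic bases, it sends $\tilde v$ to $e_1$; reducing the resulting matrix modulo $n$ sends the class of $v$ to $e_1$ in $(\Z_n)^{2g}$, which finishes the proof.

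The main obstacle is the passage between $\Z_n$ and $\Z$: the transitivity statement is clean over $\Z$ via completion to a symplectic basis, but a unimodular vector over $\Z_n$ need not \emph{a priori} be the reduction of a primitive integer vector, so the crux is the lifting lemma of the second paragraph. An alternative that avoids lifting would be to establish surjectivity of $\Sp_{2g}(\Z) \to \Sp_{2g}(\Z_n)$ and run the completion argument directly over $\Z_n$ after a Chinese remainder reduction to prime powers $\Z_{p^k}$, where the local ring structure makes completing a unimodular vector to a symplectic basis routine; but the lifting route keeps everything over $\Z$ and appears shortest here.
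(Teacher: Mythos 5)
Your proposal follows essentially the same route as the paper: identify $\Hom(\Gamma,\Z_n)$ with $(\Z_n)^{2g}$ on which $\Aut(\Gamma)$ acts through $\Sp_{2g}(\Z)$, lift to $\Z^{2g}$, and invoke the classification of the $\Sp_{2g}(\Z)$-orbits on $\Z^{2g}$ by the gcd of the coordinates (the paper cites Farb--Margalit for this; you re-prove it by completing a primitive vector to a symplectic basis, which is fine). The only substantive difference is how the primitive lift is produced, and there your argument as written has a small hole: you adjust $\tilde v_1$ by a multiple of $n$ so as to avoid the primes dividing $d=\gcd(\tilde v_2,\ldots,\tilde v_{2g})$, but when all of $\tilde v_2,\ldots,\tilde v_{2g}$ vanish in $\Z$ (for instance for the obvious lift of $(k,0,\ldots,0)$ with $\gcd(k,n)=1$ and $k\not\equiv\pm1 \bmod n$) one has $d=0$, every prime divides $d$, and no translate $\tilde v_1+kn$ is primitive together with a string of zeros. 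The fix is exactly the move the paper makes: modify a \emph{different} coordinate of the lift by a multiple of $n$, e.g.\ replace $\tilde v_2=0$ by $n$, after which the gcd of the lift is $\gcd(\tilde v_1,n)=1$. With that one-line patch (or, equivalently, allowing your prime-avoidance correction on any coordinate rather than only the first) your argument is complete and coincides with the paper's.
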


This follows from the work of Edmonds in \cite{Edmonds1}, since $\mathrm{H}_2(\Z_n, \Z)$ is trivial. Indeed the main result of \cite{Edmonds1} is the following.
\begin{theo}[Edmonds]
If $G$ is a finite abelian group, then the natural map from the orbits of the set of surjective homomorphisms of  $\Hom(\Gamma, G)$ under the action of $\Aut(\Gamma)$ to $\mathrm{H}_2(G, \Z)$ is injective.
\end{theo}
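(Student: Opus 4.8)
The plan is to translate the statement into linear symplectic algebra and prove it by reduction on the structure of $G$. Since $G$ is abelian, every $\rho\in\Hom(\Gamma,G)$ factors through the abelianization $\mathrm H_1(\Sigma,\Z)\cong\Z^{2g}$, so I identify $\rho$ with the tuple $(x_1,y_1,\dots,x_g,y_g)=(\rho(a_1),\rho(b_1),\dots,\rho(a_g),\rho(b_g))\in G^{2g}$, and, as recalled in \cref{rappel}, the $\Aut(\Gamma)$-action becomes the action of $\Sp_{2g}(\Z)$ on $G^{2g}$ viewed as a $\Z$-module. For finite abelian $G$ one has the classical Schur isomorphism $\mathrm H_2(G,\Z)\cong\wedge^2 G$, under which $\Phi(\rho)=(\wedge^2\rho)\bigl(\sum_i a_i\wedge b_i\bigr)=\sum_i x_i\wedge y_i$, the image of the canonical symplectic bivector. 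As $\Sp_{2g}(\Z)$ fixes that bivector, $\Phi$ is manifestly invariant; injectivity is therefore equivalent to the assertion that $\Sp_{2g}(\Z)$ acts transitively on the set of \emph{surjective} tuples having any fixed value of $\sum_i x_i\wedge y_i$.

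Next I would record the symplectic operations used to move tuples around. The elementary transvections generating $\Sp_{2g}(\Z)$ include, on the one hand, the $\SL\Z$-substitutions acting within a single handle $(x_i,y_i)$, and on the other hand the cross-handle transvections $\tau_{e_i-e_j}$, $\tau_{f_i-f_j}$, $\tau_{e_i-f_j}$, which alter only two handles; a one-line computation in $\wedge^2 G$ checks that each of them preserves $\sum_k x_k\wedge y_k$ (as it must, being symplectic). Every such operation is an integral linear substitution of the tuple, hence preserves the subgroup of $G$ generated by its entries; in particular surjectivity is automatically preserved all along an orbit, which is what lets me manipulate freely. It is also convenient to pass to a finite quotient: since the reduction $\Sp_{2g}(\Z)\to\Sp_{2g}(\Z/N\Z)$ is surjective for $N=\exp(G)$ and $\rho$ factors through $(\Z/N\Z)^{2g}$, the orbits under $\Sp_{2g}(\Z)$ and under $\Sp_{2g}(\Z/N\Z)$ coincide.

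The core of the argument is a normal-form reduction, which I would run by induction on the number $m$ of invariant factors of $G=\bigoplus_{j=1}^m\Z/d_j$ with $d_1\mid\cdots\mid d_m$. The inductive step peels off one cyclic direct factor: using surjectivity together with the moves above, I would bring the tuple to a form in which $x_1$ is a fixed generator $\epsilon$ of a chosen summand $\Z/d\subset G$, while $y_1$ is adjusted inside the complementary summand $G'$ so that $\epsilon\wedge y_1$ equals exactly the $\epsilon$-component of the prescribed class $\Phi$, and the remaining entries $x_i,y_i$ for $i\ge 2$ lie in $G'$. The handles $(x_i,y_i)_{i\ge2}$ then define a surjection onto $G'$ whose invariant is the $\wedge^2 G'$-part of $\Phi$, and the inductive hypothesis finishes the job; the base case $m\le 1$ (so $\wedge^2 G=0$) is the transitivity of $\Sp$ on primitive vectors, exactly as in \cref{ModCycl}.

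The main obstacle is precisely this peeling step, where one must exploit that the symplectic group — smaller than the full linear group — still has enough transitivity to carry a generator of a direct summand into the first coordinate subject to the symplectic constraint, and that the residual freedom leaves no invariant beyond $\Phi$. The conceptual heart is a Witt-type extension theorem for the symplectic module $(\Z/N\Z)^{2g}$: a surjection $(\Z/N\Z)^{2g}\to G$ corresponds dually to an embedding of the module $G$ equipped with the alternating form it inherits from the symplectic form $\omega$, that inherited form is exactly $\Phi$, and two embeddings inducing the same form are conjugate under $\Sp_{2g}(\Z/N\Z)$. Establishing this Witt theorem over $\Z/N\Z$ — decomposing by the Chinese remainder theorem into the prime-power pieces and running the extension argument for a finite $p$-group over $\Z/p^k\Z$ — is where the genuine work lies; the low-genus bookkeeping needed when $m>g$, which forces two generators to share a handle, is routine by comparison.
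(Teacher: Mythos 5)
Your setup is fine as far as it goes: factoring through $\mathrm H_1(\Sigma,\Z)$, identifying the $\Aut(\Gamma)$-action with $\Sp_{2g}(\Z)$ acting on $G^{2g}$, the isomorphism $\mathrm H_2(G,\Z)\cong \wedge^2 G$ with $\Phi(\rho)=\sum_i x_i\wedge y_i$, and the passage to $\Sp_{2g}(\Z/N\Z)$ via surjectivity of the reduction are all correct, and injectivity is indeed equivalent to transitivity on surjective tuples with fixed $\Phi$. Note that the paper itself offers no proof of this statement to compare with --- it quotes it from \cite{Edmonds1} and only proves the cyclic case (\cref{ModCycl}), where $\mathrm H_2(\Z_n,\Z)=0$ and the claim is plain transitivity, by lifting to $\Z^{2g}$ and using the gcd classification of $\Sp_{2g}(\Z)$-orbits. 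So your argument must stand on its own, and it does not: all of the content is delegated to the ``Witt-type extension theorem over $\Z/N\Z$,'' which you explicitly leave unproved, and which is \emph{false} in the form you state it, namely with $\hat G$ carrying ``the alternating form it inherits from $\omega$'' by restriction.

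Concretely, take $M=(\Z/p^2\Z)^{4}$ with symplectic basis $e_1,f_1,e_2,f_2$, and the subgroups $H_1=\langle pe_1,\,pe_2\rangle$ and $H_2=\langle pe_1,\,pf_1\rangle$. Both are isomorphic to $(\Z/p\Z)^2$ and the restriction of $\omega$ vanishes identically on both, since e.g.\ $\omega(pe_1,pf_1)=p^2=0$ in $\Z/p^2\Z$; yet they are not conjugate under $\Sp_4(\Z/p^2\Z)$, because the refined pairing $\omega'(pu,pv):=\omega(u,v)\bmod p$ is well defined on the $p$-torsion subgroup $pM$, is $\Sp$-invariant, and vanishes on $H_1$ but not on $H_2$. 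Dually, these two embeddings correspond to \emph{surjections} $M\to(\Z/p\Z)^2$ whose invariants are $\Phi=0$ and $\Phi=u\wedge v\neq 0$ respectively, so Edmonds' theorem survives --- but only because $\Phi$ matches the finer $\mathbb Q/\Z$-valued (linking-type) form, not the naive restriction of $\omega$. Your extension theorem must therefore be formulated for that refined form and then proved, typically prime power by prime power with careful bookkeeping of the divided invariants; since that statement is essentially equivalent to the theorem, your proposal reduces the theorem to an unproved (and, as written, incorrect) lemma. The peeling induction has the same status: the step where you arrange $x_1=\epsilon$ and push all remaining entries into the complement $G'$ while controlling $\Phi$ is asserted rather than performed, and the symplectic coupling between handles is precisely where it can fail.
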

This result for the special case $G = \Z_n$ even goes back to Nielsen \cite{Nielsen}.
 Let us nevertheless give a short proof of \cref{ModCycl}.

\begin{proof}
Recall from \cref{rappel} that $\Aut(\Gamma)$ acts as $\Sp_{2g}(\Z)$ on $\Hom(\Gamma, \Z_n) \simeq (\Z_n)^{2g}$. Let us lift $\rho\in \Hom(\Gamma, \Z_n)$ to $\tilde{\rho}\in \Hom(\Gamma, \Z^{2g})\simeq \Z^{2g}$. It is well known that the orbits of the $\Sp_{2g}(\Z)$ action on $\Z^{2g}$ are classified by the greatest common divisor of each of its elements, see for example \cite[Proposition 6.2]{FarbMargalit}. Hence we may assume that $\tilde{\rho} = (k, 0, \ldots, 0)$ with $k\in \Z$. Note that $\gcd(k, n) = 1$, since $\rho$ is surjective. The homomorphism $\rho'=(k, n, 0,\ldots, 0)\in \Z^{2g}$ is another lift of $\rho$ and with the action of $\mathrm{Sp}_{2g}(\Z)$, one can change it to $\rho' = (1, 0,\ldots, 0)$. We now have $\rho(a_1) = 1$, $\rho(b_1) = 0$ and $\rho(a_i) = \rho(b_i) = 0$ for all $i\geqslant 2$.
\end{proof}

\subsection{Dihedral groups}\label{diedrMCG}
In this section we consider the case where $G = \D_n$ is the dihedral group of order $2n$, where $n\geqslant 2$, generated by a rotation of the plane of order $n$ and a symmetry $s$ along an axis of the Euclidean plane. Recall that $\epsilon \in \Hom(\mathrm{O}_2(\R), \Z_2)$ denotes the quotient homomorphism $\mathrm{O}_2(\R)\to \mathrm{O}_2(\R)/\mathrm{SO}_2(\R)$. Observe that there exists a natural isomorphism $\D_n \simeq \Z_2\rtimes \Z_n$ and that $\epsilon$ restricted to $\D_n$ is the composition of this isomorphism with the projection $\Z_2\rtimes \Z_n\to \Z_2$.
Note that for every $x\in \ker \epsilon$ and every $y\in \D_n\setminus \ker \epsilon$, we have $y^{-1}xy = x^{-1}$ and $y = y^{-1}$.

Let us first deal with the case $g = 1$.
\begin{prop}\label{DG1}
If $g = 1$, then there is no surjective homomorphism $\Hom(\Gamma, \D_n)$ unless $n =2$, in which case the action of $\Aut(\Gamma)$ on the set of surjective homomorphisms of $\Hom(\Gamma, \D_n)$ is transitive.
\end{prop}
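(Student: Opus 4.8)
The plan is to exploit the fact that for $g=1$ the group $\Gamma = \langle a_1, b_1 \mid [a_1,b_1] = \id\rangle$ is abelian, being isomorphic to $\Z^2$, so that every $\rho \in \Hom(\Gamma, \D_n)$ has abelian image. First I would recall that $\D_n$ is abelian precisely when $n \leqslant 2$: for $n \geqslant 3$ a rotation $r$ of order $n$ and a reflection $s$ satisfy $s r s^{-1} = r^{-1} \neq r$, as noted before the statement. Consequently, for $n \geqslant 3$ there is no surjective homomorphism $\Z^2 \to \D_n$, since the image of an abelian group is abelian and hence cannot be the non-abelian group $\D_n$. This settles every case except $n = 2$.

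For $n = 2$ I would use the isomorphism $\D_2 \simeq \Z_2 \times \Z_2$, which I regard as the two-dimensional vector space over the field $\mathbb{F}_2 = \Z_2$. Because $\Gamma$ is abelian there is no commutator relation to impose, so a homomorphism $\rho$ is determined freely by the pair $(\rho(a_1), \rho(b_1)) \in (\Z_2 \times \Z_2)^2$, which I encode as the matrix $M_\rho \in M_2(\mathbb{F}_2)$ whose two columns are $\rho(a_1)$ and $\rho(b_1)$. Surjectivity of $\rho$ amounts to $\rho(a_1)$ and $\rho(b_1)$ spanning $\Z_2 \times \Z_2$ as an $\mathbb{F}_2$-vector space, that is, to $M_\rho \in \mathrm{GL}_2(\mathbb{F}_2)$. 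Thus the surjective homomorphisms are in natural bijection with $\mathrm{GL}_2(\mathbb{F}_2)$, a set of six elements.

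It then remains to transport the $\Aut(\Gamma)$-action through this bijection. By the discussion in \cref{rappel}, for $g = 1$ the action on $\Hom(\Gamma, G) \simeq G^2$ is the action of $\Sp_2(\Z) = \SL{\Z}$ on the $\Z$-module $G^2$. Reading this off on the columns of $M_\rho$, an element $A \in \SL{\Z}$ sends $M_\rho$ to $M_\rho \bar A$ (up to transposition), where $\bar A \in \SL{\Z_2}$ is the reduction of $A$ modulo $2$. Since $\mathbb{F}_2^\times = \{1\}$ we have $\SL{\Z_2} = \mathrm{GL}_2(\mathbb{F}_2)$, and the reduction map $\SL{\Z} \to \SL{\Z_2}$ is surjective; hence the induced action on $\mathrm{GL}_2(\mathbb{F}_2)$ is multiplication by the whole group $\mathrm{GL}_2(\mathbb{F}_2)$. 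This is the regular action of a group on itself, which is transitive, yielding the claim.

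The argument is elementary, and I expect no genuine obstacle: the only points requiring care are the bookkeeping that identifies the $\Sp_2(\Z)$-action on $G^2$ with (right) multiplication on $\mathrm{GL}_2(\mathbb{F}_2)$, and the standard surjectivity of the reduction $\SL{\Z} \to \SL{\Z_2}$.
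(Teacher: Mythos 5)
Your proof is correct and follows essentially the same route as the paper: non-abelianness of $\D_n$ for $n\geqslant 3$ rules out surjections from $\Gamma_1\simeq\Z^2$, and for $n=2$ the surjective homomorphisms are identified with bases of $\mathbb F_2^2$. The paper leaves transitivity to the reader; your identification of the action with right multiplication by the image of the surjection $\SL{\Z}\to\mathrm{GL}_2(\mathbb F_2)$, hence with the regular action of $\mathrm{GL}_2(\mathbb F_2)$ on itself, is a clean way to supply that omitted step.
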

\begin{proof}
If there exists a surjective homomorphism in $\Hom(\Gamma, \D_n)$ then $\D_n$ must be abelian and $n = 2$. 
The set $\Hom(\Gamma, \D_2)$ has cardinality $16$. Among those $16$ representations, $6$ are surjective. Indeed this set is in bijection with the bases of the $\mathbb F_2$ vector space $\mathbb F_2\oplus \mathbb F_2$ since $\D_2 \simeq \mathbb \Z_2^2$. We leave it to the reader to check that the action of $\Aut(\Gamma)$ on this set is transitive.
\end{proof}

We now suppose until the end of this section that $g\geqslant 2$.

\begin{prop}\label{diedr}
The action of the group $\Aut(\Gamma)$ on the set of surjective homomorphisms of $\Hom(\Gamma, \D_n)$ has two orbits if $n$ is even and one if $n$ is odd.
\end{prop}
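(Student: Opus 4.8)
The plan is to reduce the statement to a cohomological computation by exploiting the fact, recalled just above, that $\D_n$ is a semidirect product of cyclic groups, $\D_n \simeq \Z_2 \ltimes \Z_n$. By the theorem of Edmonds \cite{Edmonds2}, two surjective homomorphisms $\rho_1, \rho_2 \in \Hom(\Gamma, \D_n)$ lie in the same $\Aut(\Gamma)$-orbit if and only if $\Phi(\rho_1) = \Phi(\rho_2) \in \mathrm{H}_2(\D_n, \Z)$. Hence the number of orbits of surjective homomorphisms equals the cardinality of the image of $\Phi$ restricted to the surjective homomorphisms, and the proposition amounts to (i) computing the Schur multiplier $\mathrm{H}_2(\D_n, \Z)$ and (ii) determining which of its elements are hit by a surjective $\rho$.

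For (i), I would run the Lyndon--Hochschild--Serre spectral sequence of the extension $1 \to \Z_n \to \D_n \to \Z_2 \to 1$. Since $\mathrm{H}_2(\Z_n, \Z) = 0 = \mathrm{H}_2(\Z_2, \Z)$, the only surviving contribution to $\mathrm{H}_2(\D_n, \Z)$ is $\mathrm{H}_1(\Z_2, \mathrm{H}_1(\Z_n, \Z)) = \mathrm{H}_1(\Z_2, \Z_n)$, where $\Z_2$ acts on $\Z_n$ by inversion; a direct computation gives $\Z_n/2\Z_n$, that is $\Z_2$ when $n$ is even and $0$ when $n$ is odd. (Alternatively this is the classical value of the Schur multiplier of a dihedral group.) When $n$ is odd, $\mathrm{H}_2(\D_n, \Z) = 0$, so $\Phi$ is constant; since $g \geq 2$ there is at least one surjective homomorphism, so Edmonds' theorem yields exactly one orbit.

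The substantive case, and the main obstacle, is $n$ even, where $\mathrm{H}_2(\D_n, \Z) = \Z_2$ and I must show that both classes are realized by surjective homomorphisms. I would exhibit two explicit surjective representations and distinguish them by computing $\Phi$ as a lifting obstruction: fixing a nontrivial central extension $1 \to \Z_2 \to \hat{\D}_n \to \D_n \to 1$ whose class generates $\mathrm{H}_2(\D_n, \Z)$, one has $\Phi(\rho) = \prod_{i=1}^g [\hat a_i, \hat b_i] \in \Z_2$ for any set-theoretic lifts $\hat a_i, \hat b_i$ of $\rho(a_i), \rho(b_i)$, the product being independent of the choice of lifts. A representation placing all generating data on handles with trivial $b_i$-image (so that every lifted commutator has a central entry, hence is trivial), for instance $\rho(a_1) = r$, $\rho(b_1) = \id$, $\rho(a_2) = s$, $\rho(b_2) = \id$ and the rest trivial, is surjective and realizes $\Phi = 0$. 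A representation built from the commutator identity $[r, s] = r^2$ on two handles, say $\rho(a_1) = r, \rho(b_1) = s, \rho(a_2) = r^{-1}, \rho(b_2) = s$ and the rest trivial, is again surjective, and a direct commutator bookkeeping in $\hat{\D}_n$ should show it realizes the nontrivial class; here $g \geq 2$ is used to have two handles available.

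The delicate point throughout is choosing the central extension $\hat{\D}_n$ concretely (e.g.\ a dicyclic / binary-dihedral type group) and carrying out the commutator computation carefully enough to certify that the two representations land in different classes. Once surjectivity of $\Phi$ onto $\mathrm{H}_2(\D_n, \Z) = \Z_2$ is confirmed, Edmonds' theorem gives exactly two orbits. A self-contained alternative, avoiding Edmonds, would be to prove transitivity on each fibre of $\Phi$ directly, using \cref{decoupe} to isolate handles and a normal-form argument parallel to the proof of \cref{ModCycl}; but leaning on the quoted theorem of Edmonds is the most economical route.
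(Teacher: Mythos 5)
Your overall strategy (Edmonds' theorem plus the computation $\mathrm H_2(\D_n,\Z)\simeq \Z_2$ for $n$ even and $0$ for $n$ odd) is sound, and it is in fact the route the paper mentions before opting for a self-contained normal-form argument. But the realization step for $n$ even --- the part you flag as the main obstacle and leave to ``a direct commutator bookkeeping'' --- fails for the specific pair of representatives you propose. Your second representation $\rho(a_1)=r$, $\rho(b_1)=s$, $\rho(a_2)=r^{-1}$, $\rho(b_2)=s$ lies in the \emph{same} class as the first one. Take the binary dihedral double cover $1\to\Z_2\to\hat{\D}_n\to\D_n\to 1$ (the preimage of $\D_n\subset\SO$ in $\SU$), with lifts $\hat r$, $\hat s$ satisfying $\hat s\hat r\hat s^{-1}=\hat r^{-1}$. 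Then $[\hat r,\hat s]=\hat r\cdot(\hat s\hat r^{-1}\hat s^{-1})=\hat r^{2}$ and $[\hat r^{-1},\hat s]=\hat r^{-2}$, so $\prod_i[\hat a_i,\hat b_i]=\hat r^{2}\hat r^{-2}=\Id$: the lifting obstruction vanishes and $\Phi(\rho)=0$. (For $n=2$ this is transparent: $[\hat r,\hat s]$ is central, hence its square is trivial.) Since this extension class does pair nontrivially with the generator of $\mathrm H_2(\D_n,\Z)$, both of your representatives realize the trivial class, and surjectivity of $\Phi$ onto $\Z_2$ is not established.

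The fix is the paper's choice of second representative: keep the order-$n$ rotation on one handle with $\rho(b_1)=\id$, and put the two \emph{distinct commuting involutions} $r^{n/2}$ and $s$ on another handle, i.e.\ $\rho(a_g)=r^{n/2}$, $\rho(b_g)=s$. By \cref{sw1} the commutator of their lifts is $-\Id$, so $\prod_i[\hat a_i,\hat b_i]=-\Id$ and this representation realizes the nontrivial class; it is still surjective since $r$ and $s$ lie in its image. Two smaller remarks: the Lyndon--Hochschild--Serre computation also requires the vanishing of the differential $d_2\colon \mathrm H_3(\Z_2,\Z)\to \mathrm H_1(\Z_2,\Z_n)$, which your fallback to the classical value of the Schur multiplier covers; and note that the paper, after quoting Edmonds, actually proves the proposition by a direct Dehn-twist normal-form argument, so your route, once repaired, is shorter but less self-contained.
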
 
This follows from this theorem of Edmonds which is the main result of \cite{Edmonds2}.
\begin{theo}[Edmonds]
Let $G$ be a split metacyclic group.
The natural map from the $\Aut(\Gamma)$ orbits of surjective homomorphisms of $\Hom(\Gamma, G)$ to $\mathrm H_2(G, \Z)$ is injective.
\end{theo}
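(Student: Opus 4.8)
The plan is to establish injectivity by proving that any two surjections $\rho_1,\rho_2\colon\Gamma\to G$ with $\Phi(\rho_1)=\Phi(\rho_2)$ can both be reduced, through the $\Aut(\Gamma)$-action, to one and the same normal form, so that they lie in a single orbit. I would exploit the split metacyclic structure $G=N\rtimes Q$ with $N=\langle x\rangle\cong\Z_m$ the normal cyclic subgroup and $Q\cong\Z_n$ the cyclic quotient, the generator of $Q$ acting on $N$ by $x\mapsto x^{r}$. The first step treats the quotient: composing $\rho$ with $G\to Q$ yields a surjection $\bar\rho\colon\Gamma\to\Z_n$, and by \cref{ModCycl} there is $\sigma\in\Aut(\Gamma)$ carrying $\bar\rho$ to the standard surjection sending $a_1$ to a generator and all other standard generators to $0$. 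After replacing $\rho$ by $\rho\circ\sigma$ I may assume $\bar\rho$ is standard, so $\rho(a_1)$ projects to a generator of $Q$ while $\rho(b_1),\rho(a_2),\dots,\rho(b_g)$ all lie in the abelian subgroup $N$.

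The second step normalizes this $N$-valued data using the stabilizer of the standard $\bar\rho$ inside $\Aut(\Gamma)$. This stabilizer contains the Dehn twists supported in the subsurface carrying handles $2,\dots,g$ and the handle-slide maps that modify $\rho(b_1),\rho(a_i),\rho(b_i)$ for $i\geq 2$ by products and $Q$-conjugates of elements of $N$. Because $N$ is abelian, the induced action on the tuple in $N^{2g-1}$ is by a group of \emph{twisted} symplectic transformations: the usual $\Sp$-action on handles, but with coefficients twisted by the action $\theta$ of $Q$ on $N$. Adapting the abelian theorem of Edmonds stated above to this twisted setting, I would use these moves to trivialize the images of handles $3,\dots,g$, then cut them off by \cref{decoupe}, and finally bring the surviving genus-two generating vector to a standard form carrying a single residual parameter $s\in N$. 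The constraint $\prod_i[\rho(a_i),\rho(b_i)]=\id$ together with the surjectivity of $\rho$ cuts $s$ down to a finite cyclic set of possibilities; in particular the non-abelian case genuinely requires two handles, which is why $g\geq 2$ appears.

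The final step identifies the residual parameter with the homological invariant. The Lyndon--Hochschild--Serre spectral sequence of $1\to N\to G\to Q\to 1$ has $E^2_{2,0}=\mathrm H_2(\Z_n)=0$ and $E^2_{0,2}=\mathrm H_0(Q,\mathrm H_2(N))=0$, so $\mathrm H_2(G,\Z)=E^\infty_{1,1}$, a subquotient of $E^2_{1,1}=\mathrm H_1(Q;N)$, a finite cyclic group. A direct evaluation of $\rho_\star[\Sigma]$ on the genus-two normal form shows that it is exactly the class of $s$ in this subquotient. Hence the normal form is determined by $\Phi(\rho)$, and two surjections with equal invariant coincide after the reductions, proving that $\Phi$ is injective on $\Aut(\Gamma)$-orbits.

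The step I expect to be the main obstacle is the second one: verifying that the stabilizer of the standard cyclic quotient acts transitively enough on the $N$-valued data to trivialize the excess handles and reach a unique genus-two form with fixed $\mathrm H_2$-class. This is the heart of Edmonds' argument and demands explicit Nielsen and Dehn-twist computations on generating vectors; the essential subtlety is that the symplectic action on handles is entangled with the nontrivial twist $\theta$ on the coefficient group $N$, so the clean abelian reduction must be carried out $Q$-equivariantly rather than over $\Z$, and the interplay of these two actions is precisely what will produce the two-orbit versus one-orbit dichotomy once $\mathrm H_2(G,\Z)$ is computed.
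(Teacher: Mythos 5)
The first thing to say is that the paper contains no proof of this statement: it is quoted as the main result of \cite{Edmonds2}, and the author only gives an independent, hands-on proof of the special case $G=\D_n$ (the proof of \cref{diedr}). So your proposal can only be measured against Edmonds' actual argument and against the paper's dihedral computation, and on that measure it is a roadmap rather than a proof. The genuine gap is your second step, and you identify it yourself: the claim that the stabilizer of the standardized quotient map acts transitively enough on the $N$-valued data $(\rho(b_1),\rho(a_2),\dots,\rho(b_g))$ to kill handles $3,\dots,g$ and reach a genus-two normal form with one residual parameter $s\in N$ is exactly the content of Edmonds' theorem, and nothing in your sketch establishes it. The ``twisted symplectic'' picture is also not quite right as stated: because the twist $\theta$ acts nontrivially, the available moves do not form a clean $\Sp_{2g}(\Z)$-action on $N^{2g}$, and the relation $\prod_i[\rho(a_i),\rho(b_i)]=\id$ imposes a constraint that couples the handles (in the paper's dihedral proof this is visible in the delicate parity manipulation of $k$ in $\rho(b_g)=\rho(a_1)^k s$, and in the fact that $\rho([a_g,b_g])=\id$ forces $\rho(a_g)^2=\id$, whence the dichotomy $\rho(a_g)\in\{\id,\rho(a_1)^{n/2}\}$). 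Those concrete computations are what make the genus-two reduction work for $\D_n$; for general split metacyclic $G$ the analogous computations are harder and are precisely what must be supplied. Your assertion that surjectivity ``cuts $s$ down to a finite cyclic set'' is likewise unsubstantiated.

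The third step has a secondary gap of the same kind. Your spectral-sequence computation correctly places $\mathrm H_2(G,\Z)$ as a subquotient of $E^2_{1,1}=\mathrm H_1(Q;N)$, but the differential $d^2\colon E^2_{3,0}=\mathrm H_3(Q,\Z)\to E^2_{1,1}$ is nonzero in general, so identifying which subquotient arises, and then showing by ``direct evaluation'' that $\rho_\star[\Sigma]$ equals the class of $s$ there, requires an explicit computation (say via Hopf's formula applied to the genus-two normal form) that you do not perform. Since injectivity is exactly the statement that the normal form is determined by $\Phi(\rho)$, both the uniqueness of the normal form (step two) and the faithfulness of the invariant on normal forms (step three) are the theorem, and both are deferred in your write-up. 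The outline is consistent with how Edmonds argues and with how the paper handles $\D_n$, but as it stands it would not be accepted as a proof.
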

The group $\D_n$ is indeed a semi-direct product of cyclic groups: it inserts in the split exact sequence
$$1\to \Z_n \to \D_n \xrightarrow{\epsilon} \Z_2\to 1.$$ However for the sake of completeness we will give a direct proof of \cref{diedr}.
Let us describe these orbits. We define $\rho\in \Hom(\Gamma, \D_n)$ as follows. 
Let $\rho(a_1)\in \ker \epsilon$ be an order $n$ rotation and $\rho(b_1) = \id$. 
We let $\rho(a_i) = \rho(b_i) = \id$ for $1 < i < g$ and $\rho(a_g) = \id$ and  $\rho(b_g) = s$. If $n$ is even, we define $\rho'\in \Hom(\Gamma, \D_n)$ by  $\rho'(\gamma) = \rho(\gamma)$ for $\gamma\in \{a_1, \ldots, b_g\}\setminus \{a_g\}$ and by $\rho'(a_g) = \rho(a_1)^{n/2}$. Observe that $s$ and $\rho(a_1)^{n/2}$ commute since $s \rho(a_1)^{n/2} = \rho(a_1)^{-n/2} s = \rho(a_1)^{n/2} s$. Therefore $\rho'$ is well-defined.
The representations $\rho$ and $\rho'$ are not in the same orbit under the action of $\Aut(\Gamma)$ since $\rho$ lifts to $\SL \C$ while $\rho'$ does not because of the following lemma.

\begin{lemma}\label{sw1}
Let $\alpha, \beta\in \PSL \C$ be order $2$ elements that commute. Either $\alpha = \beta$ or $[\tilde \alpha, \tilde \beta] = - \Id$, where $\tilde \alpha$ and $\tilde \beta$ are lifts of $\alpha$ and $\beta$ in $\SL \C$.
\end{lemma}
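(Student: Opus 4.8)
The plan is to work directly with the lifts of involutions and reduce everything to a simultaneous-diagonalization argument. First I would record the standard fact that an order $2$ element of $\PSL \C$ lifts to a trace-zero matrix of $\SL \C$ whose square is $-\Id$: if $\tilde\alpha$ is any lift of an involution $\alpha$, then $\tilde\alpha^2 = \pm\Id$, and the case $\tilde\alpha^2 = \Id$ forces the eigenvalues of $\tilde\alpha$ to be square roots of $1$ with product $1$, hence $\tilde\alpha = \pm\Id$, making $\alpha$ trivial. Therefore $\tilde\alpha^2 = -\Id$ and $\Tr\tilde\alpha = 0$, the two lifts of $\alpha$ being $\pm\tilde\alpha$, both trace-zero; the same applies to $\beta$.

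Next I would observe that the commutator $[\tilde\alpha, \tilde\beta]$ is independent of the choice of lifts, since replacing $\tilde\alpha$ by $-\tilde\alpha$ or $\tilde\beta$ by $-\tilde\beta$ leaves it unchanged. Because $\alpha$ and $\beta$ commute in $\PSL \C$, their lifts satisfy $\tilde\alpha\tilde\beta = \pm\tilde\beta\tilde\alpha$, that is $[\tilde\alpha, \tilde\beta] = \pm\Id$. The statement thus reduces to proving that the value $+\Id$ can occur only when $\alpha = \beta$.

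So suppose $[\tilde\alpha, \tilde\beta] = \Id$, i.e.\ $\tilde\alpha$ and $\tilde\beta$ commute in $\SL \C$. Each has distinct eigenvalues $\pm i$, hence is diagonalizable, and commuting diagonalizable matrices are simultaneously diagonalizable. In a common eigenbasis $\tilde\alpha = \mathrm{diag}(i,-i)$ up to sign, while any trace-zero diagonal matrix squaring to $-\Id$ is $\pm\mathrm{diag}(i,-i)$; hence $\tilde\beta = \pm\tilde\alpha$, which means $\alpha = \beta$ in $\PSL \C$. Contrapositively, if $\alpha\neq\beta$ the commutator cannot be $\Id$, so it equals $-\Id$, which is exactly the claim.

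The computations involved are elementary, and the only point demanding a little care is the verification that $[\tilde\alpha,\tilde\beta]$ does not depend on the chosen lifts, so that the dichotomy $[\tilde\alpha,\tilde\beta]=\pm\Id$ is genuinely well-posed. The geometric content behind the lemma — that two distinct commuting half-turns of $\mathbb H^3$ have orthogonal axes and therefore \emph{anticommuting} lifts — is what the simultaneous-diagonalization step encodes, and I expect no serious obstacle beyond organizing these observations cleanly.
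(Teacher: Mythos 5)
Your proof is correct. It reaches the same conclusion as the paper but organizes the argument differently: the paper conjugates so that $\alpha = \pm\mathrm{diag}(i,-i)$, notes that a commuting involution distinct from $\alpha$ must interchange the two fixed points of $\alpha$ on $\CP$, normalizes $\beta$ to $\pm\begin{pmatrix}0&1\\-1&0\end{pmatrix}$, and computes the commutator of the lifts explicitly to be $-\Id$. You instead never compute the commutator: you observe that $[\tilde\alpha,\tilde\beta]$ is well defined and lies in $\{\pm\Id\}$ because it projects to $[\alpha,\beta]=\id$, and then rule out the value $+\Id$ by simultaneous diagonalization of two commuting trace-zero involutive lifts, which forces $\tilde\beta=\pm\tilde\alpha$ and hence $\alpha=\beta$. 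The paper's route is a two-line matrix computation once the normal forms are in place; yours trades that computation for the standard fact about commuting diagonalizable matrices and makes the well-posedness of the dichotomy explicit, which the paper leaves implicit. Both are complete; the underlying geometry (a half-turn commuting with $\alpha$ either equals $\alpha$ or swaps its fixed points) is the same in each.
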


\begin{proof}
Suppose $\alpha\neq \beta$. We may conjugate $\alpha$ and $\beta$ so that $\alpha = \pm\begin{pmatrix}
i & 0\\
0 & -i\\
\end{pmatrix}$. The map $\beta$ must interchange the fixed points of $\alpha$ in $\CP$, thus $\beta = \pm\begin{pmatrix}
0 & 1\\
-1 & 0 
\end{pmatrix}$. We have $[\tilde \alpha, \tilde \beta] = -\Id$.
\end{proof}
We turn to the proof of \cref{diedr}.
\begin{proof}
Let $\rho\in \Hom(\Gamma, \D_n)$ with image $\D_n$.
The homomorphism $\epsilon\circ \rho\in \Hom(\Gamma, \Z_2)$ is onto, hence by \cref{ModCycl}, we may assume that $\epsilon\circ\rho(b_g) = 1$, $\epsilon\circ\rho(a_g) = 0$ and  $\epsilon\circ\rho(a_i) = \epsilon\circ\rho(b_i) = 0$ for $i< g$. For $i < g$, both $\rho(a_i)$ and $\rho(b_i)$ are in $\mathrm{SO}_2(\R)$, hence $\rho([a_i, b_i]) = \id$. Therefore $\rho$ restricts to a homomorphism $\Hom(\Gamma_{g-1}, \mathrm{SO}_2(\R))$ with finite image. By \cref{ModCycl} again, we may assume that $\rho(a_1)$ generates the group $\rho(\Gamma_{g-1})$, that $\rho(b_1) = \id$ and that $\rho(a_i) = \rho(b_i) = \id$ for $1 < i < g$. We claim that $\rho(a_1)$ and $\rho(a_g)$ generate $\ker \epsilon$. Indeed every $x\in \ker \epsilon$ is a word in $\rho(a_1)$, $\rho(a_g)$ and $\rho(b_g)$ since $\rho$ is onto. Such a word contains an even number of $\rho(b_g)$ because $x\in \ker\epsilon$. Since $\rho(b_g)^2=\id$ and $\rho(b_g)\rho(a_i) = \rho(a_i)^{-1} \rho(b_g)$ for all $i$, we can write $x$ without any $\rho(b_g)$. We may apply a Dehn twist along a curve homotopic to $a_1a_g$ to replace $\rho(b_1) = \id$ with $\rho(a_1)\rho(a_g)$. Applying again an automorphism of $\Gamma_{g-1}$, we may assume that $\rho(a_1)$ is a chosen generator of $\ker \epsilon$, while $\rho(b_1) = \id$ and  $\rho(a_i) = \rho(b_i)=\id$ for $1 < i < g$.
Let us write $\rho(b_g) = \rho(a_1)^k s$. Let us show that we may assume that $k$ is even. If $k$ is odd and $\rho(a_g) = \rho(a_1)^l$ with $\ell$ odd, then we apply a Dehn twist along a curve homotopic to $a_g$ so that $\rho(b_g)$ is replaced with $\rho(a_g)\rho(b_g) = \rho(a_1)^{k+l}s $. If $\rho(a_g) = \rho(a_1)^l$ with $\ell$ even, then apply a Dehn twist along a curve homotopic to $a_ga_1$. It replaces $\rho(b_g)$ with $\rho(b_g)\rho(a_g)\rho(a_1) = \rho(a_1)^{k+l+1}s$. We may apply an automorphism of $\Gamma_{g-1}$ so that $\rho(b_1) = \id$ again.
We now assume that $\rho(b_g) = \rho(a_1)^{k} s$, with $k = 2k'$.
We conjugate $\rho$ with $\rho(a_1)^{k'}$ so that $\rho(b_g) = s$.
Since $\rho([a_g, b_g]) = \id$, we have $\rho(b_g)^{-1}\rho(a_g)^{-1}\rho(b_g)\rho(a_g) = \rho(a_g)^2 = \id$. Since $\rho(a_g)$ is in the cyclic group $\ker\epsilon$, either $\rho(a_g) = \id$, or $n$ is even and $\rho(a_g) = \rho(a_1)^{n/2}$.
\end{proof}
%
%
\subsection{The group $\A 4$}
Let us now classify the orbits of the action of $\Aut(\Gamma)$ on the set of surjective homomorphisms of $\Hom(\Gamma, G)$ where $G = \A 4$.
\begin{prop}\label{AutA4}
The action of $\Aut(\Gamma)$ on the set of surjective $\rho\in \Hom(\Gamma, \mathfrak A_4)$ has two orbits.
\end{prop}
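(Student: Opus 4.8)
The plan is to invoke the same strategy that proved \cref{diedr}: show that the $\Aut(\Gamma)$ orbit of any surjective $\rho\in\Hom(\Gamma,\A 4)$ can be driven to one of two explicit normal forms, distinguished by the obstruction class $\sw(\rho)\in\{0,1\}$, equivalently by $\Phi(\rho)\in \mathrm H_2(\A 4,\Z)\simeq \Z_2$. Recall that $\A 4$ has the semidirect structure $V\rtimes \Z_3$, where $V=\{\id,(12)(34),(13)(24),(14)(23)\}\cong \Z_2^2$ is the Klein four-group (the unique Sylow $2$-subgroup) and $\Z_3$ is generated by a $3$-cycle. Since $|\mathrm H_2(\A 4,\Z)|=2$ and both values are attained by surjective homomorphisms when $g\geqslant 2$ (as noted in the excerpt), the invariant $\Phi$ already guarantees \emph{at least} two orbits; the work is to prove there are \emph{at most} two, i.e.\ that $\Phi$ is a complete invariant here.

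The key steps are as follows. First I would use \cref{decoupe} and \cref{ModCycl} to simplify the restriction of $\rho$ to a subsurface. Since $\A 4/V\cong \Z_3$ is cyclic, the composite $\pi\circ\rho:\Gamma\to \Z_3$ (with $\pi$ the quotient map) is either trivial or surjective. If it is surjective, \cref{ModCycl} lets me normalize so that $\pi\circ\rho(a_1)$ generates $\Z_3$ and $\pi\circ\rho$ kills all other generators; the commutator relation then forces $\rho(a_i),\rho(b_i)\in V$ for $i\geqslant 2$ and $\rho(b_1)\in V$ as well, after which $\rho$ restricted to the genus-$(g-1)$ piece lands in the abelian group $V$. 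Second, on that abelian piece I would apply the $\Sp_{2g}(\Z)$-action (the reduction used in the proof of \cref{ModCycl}) together with \cref{decoupe} to put the $V$-valued data into a standard form: arrange that the images of two handle generators already generate $V$ and that the remaining handles are killed. The point is that Dehn twists realize the elementary symplectic moves, and conjugation by elements of $\Z_3$ permutes the three nontrivial elements of $V$ transitively, giving ample room to absorb choices. Third, I would reincorporate the handle carrying the $\Z_3$-generator and clean up the cross-terms, exactly as in the dihedral argument, conjugating to kill any residual $V$-part of $\rho(a_1)$.

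The upshot of this normalization is that every surjective $\rho$ is carried into one of two explicit representations $\rho_0,\rho_1$ agreeing on all generators except one, where they differ by an element of $V$ so as to toggle the lift obstruction. That $\rho_0$ and $\rho_1$ lie in distinct orbits follows because $\Phi$ (equivalently $\sw$) is $\Aut(\Gamma)$-invariant and separates them; here I expect to reuse a computation in the spirit of \cref{sw1} to certify that one normal form lifts to $\SL\C$ and the other does not.

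The step I expect to be the main obstacle is the reduction of the $V$-valued part after the $\Z_3$-generator has been isolated. Unlike the cyclic or dihedral cases, the conjugation action of $\Z_3$ on $V$ is nontrivial and cyclically permutes the three involutions, so the bookkeeping of which involution sits on which generator is more delicate; the symplectic moves must be combined with these $\Z_3$-conjugations to reach a canonical form, and I must verify that exactly the lift obstruction $\Phi(\rho)$ survives as the only residual invariant. Making sure no third orbit hides in the interplay between the $\Sp_{2g}(\Z)$-action and the $\Z_3$-twisting is the crux; I would handle it by tracking $\Phi$ explicitly throughout and checking that once $\Phi$ is fixed, all remaining configurations are connected by the available moves.
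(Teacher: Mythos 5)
Your proposal follows essentially the same route as the paper: normalize the composite $\Gamma\to\A 4\to\Z_3$ via \cref{ModCycl} so the $3$-cycle sits on $a_1$, observe that the commutator relation and a centralizer argument force the remaining data into the Klein four-group $\K$, reduce that abelian part using the genus-$(g-1)$ subsurface (the paper does this by invoking the $\D_2\cong\K$ case of \cref{diedr}), and separate the two resulting normal forms by the lifting obstruction via \cref{sw1}. The crux you flag — the case analysis on the $\K$-valued restriction (image cyclic versus all of $\K$, and within the latter the two $\D_2$-orbits) — is exactly where the paper does its remaining work, so your plan is sound and matches the paper's proof in structure.
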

%
%
Recall that the group $\K = \{\sigma\in \mathfrak{A_4} \mid \sigma^2 = \id\}$ is an order $4$ normal subgroup of $\A 4$ isomorphic to $\Z_2\times \Z_2$. The quotient $\A 4/\K$ has order $3$; hence there exists a surjective homomorphism $\varphi : \A 4\to \Z_3$.

Let us describe a representative of one of these two orbits. Let $\rho\in \Hom(\Gamma, \A 4)$ be such that $\rho(a_1)$ is $3$-cycle, $\rho(b_1) = \rho(a_2) = \id$ and $\rho(b_2)\in \K\setminus \{\id\}$ and $\rho(a_i) = \rho(b_i) = \id$ for $3\leqslant i \leqslant g$.
As a first step to prove \cref{AutA4}, we show that all the representations of this form lie in the same orbit under the action of $\Aut(\Gamma)$.
\begin{lemma}\label{lemA4}
For every $\rho_0$ and $\rho_1$ of this form, there exists $\sigma\in \Aut(\Gamma)$ such that $\sigma\cdot \rho_0 = \rho_1$.
\end{lemma}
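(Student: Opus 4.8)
The plan is to combine free conjugation with a single application of \cref{decoupe} on the first handle. First I would record that every $\rho$ of this form is surjective onto $\A 4$: its image contains the $3$-cycle $\rho(a_1)$ and the nontrivial element $\rho(b_2)\in\K$, and conjugating $\rho(b_2)$ by the powers of $\rho(a_1)$ yields all three nontrivial elements of $\K$ (the $3$-cycle permutes them cyclically), so the image contains $\K$ together with a $3$-cycle and is therefore all of $\A 4$. By \cref{conjugaison} the $\Aut(\Gamma)$-orbit of such a $\rho$ coincides with its $\Aut(\Gamma)\times\A 4$-orbit, so throughout I may conjugate by elements of $\A 4$ at will.

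Next I would analyze the conjugation action. Since $\rho(b_1)=\rho(a_2)=\id$ and $\rho(a_i)=\rho(b_i)=\id$ for $i\geqslant 3$, conjugating by $g\in\A 4$ fixes all these trivial values and acts on the pair $(\rho(a_1),\rho(b_2))$ by $(c,k)\mapsto (gcg^{-1},gkg^{-1})$. The stabilizer of a pair $(c,k)$ is trivial: the centralizer of the $3$-cycle $c$ is $\langle c\rangle\cong\Z_3$, and no nonidentity power of $c$ fixes $k$ because $c$ permutes the three nontrivial elements of $\K$ cyclically. Hence each conjugation orbit has $12$ elements; as there are $8\times 3 = 24$ representations of this form, there are exactly two such orbits. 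Moreover the four $3$-cycles $\A 4$-conjugate to a given $c$ form one of the two conjugacy classes of $3$-cycles, each paired with every $k\in\K\setminus\{\id\}$, so the two conjugation orbits are distinguished precisely by the $\A 4$-conjugacy class of $\rho(a_1)$.

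It remains to connect the two conjugacy classes of $3$-cycles, which are mutually inverse. For this I would invoke \cref{decoupe} with the simple separating curve $[a_1,b_1]$ bounding the first handle: since $\rho(b_1)=\id$ we have $\rho([a_1,b_1])=\id$, so the lemma allows me to precompose the restriction to the first handle with any $\sigma\in\Aut(\Gamma_1)=\SL\Z$ while leaving all other handles untouched. Taking $\sigma=-\Id\in\SL\Z$, which sends $a_1\mapsto a_1^{-1}$ and $b_1\mapsto b_1^{-1}$ and is orientation preserving, produces a representation in the $\Aut(\Gamma)$-orbit of $\rho$ in which $\rho(a_1)$ is replaced by $\rho(a_1)^{-1}$ while $\rho(b_1)$ stays trivial and $\rho(a_2),\rho(b_2)$ are unchanged. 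This is again of the prescribed form, but its first handle now carries a $3$-cycle in the opposite conjugacy class. Combining this one move with conjugation lets me normalize any $\rho_0$ and $\rho_1$ of this form to a common representative, proving they lie in a single $\Aut(\Gamma)$-orbit.

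The main obstacle is exactly this crossing between the two $\A 4$-conjugacy classes of $3$-cycles, which conjugation inside $\A 4$ cannot achieve. The observation that resolves it is that $\rho(b_1)=\id$ lets the order-two mapping class of the first handle (acting as $-\Id$ on its homology) invert $\rho(a_1)$ without disturbing any other value, so the result stays within the prescribed form; everything else reduces to the bookkeeping of the simultaneous-conjugation action described above.
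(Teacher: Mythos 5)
Your proof is correct and follows essentially the same route as the paper's: reduce to the $\Aut(\Gamma)\times\A 4$-action via \cref{conjugaison}, observe that conjugation handles everything except the two $\A 4$-conjugacy classes of $3$-cycles (the paper matches $\rho_0(b_2)$ by conjugating with powers of $\rho_0(a_1)$, exactly your orbit analysis), and resolve the remaining ambiguity $\rho_0(a_1)\in\{\rho_1(a_1),\rho_1(a_1)^{-1}\}$ by an automorphism of the first handle supplied by \cref{decoupe}. The only cosmetic difference is that you exhibit the explicit element $-\Id\in\mathrm{SL}_2(\Z)$ where the paper invokes the transitivity statement of \cref{ModCycl}.
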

\begin{proof}
By \cref{conjugaison} it suffices to show that they are in the same orbit under the action of $\Aut(\Gamma)\times \mathfrak A_4$.
We may conjugate $\rho_0$ so that $\rho_0(a_1)\in \{\rho_1(a_1), \rho_1(a_1)^{-1}\}$. 
We can even suppose $\rho_0(a_1) = \rho_1(a_1)$, replacing $\rho_0$ with $\sigma\cdot\rho_0$ by \cref{ModCycl}, where $\sigma\in \Gamma_1$ following the notation of \cref{decoupe}. Conjugating $\rho_0$ with a power of $\rho_0(a_1)$ replaces $\rho_0$ with $\rho_2$. Indeed we have $$\{\rho_0(a_1)^k \rho_0(b_2) \rho_0(a_1)^{-k}\mid 0\leqslant k \leqslant 2\} = \K\setminus \{\id\}.\qedhere$$
\end{proof}

Let us now describe the other orbit.
Let $\rho\in \Hom(\Gamma, \A 4)$ be such that $\rho(a_1)$ is a $3$-cycle, $\rho(b_1) = \id$ and $\rho(a_2)$ and $\rho(b_2)$ are distinct double transpositions and $\rho(a_i) = \rho(b_i) = \id$ for $3\leqslant i\leqslant g$. Observe that such a representation cannot be in the same orbit as the previous one, since it does not lift to $\SL \C$ by \cref{sw1}.
\begin{lemma}\label{lemA4bis}
If $\rho_0$ and $\rho_1$ have this form, then there exists $\sigma\in \Aut(\Gamma)$ such that $\sigma\cdot \rho_0 = \rho_1$.
\end{lemma}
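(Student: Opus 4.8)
The plan is to imitate the proof of \cref{lemA4}: by \cref{conjugaison} it suffices to place $\rho_0$ and $\rho_1$ in the same orbit under $\Aut(\Gamma)\times\A 4$, so I allow myself both precompositions by automorphisms and global conjugations. Throughout I would exploit that $\K\setminus\{\id\}$ is exactly the set of the three double transpositions, that $\K\simeq\mathbb F_2^2$ is abelian, and that conjugation by a $3$-cycle cyclically permutes these three elements while conjugation by an element of $\K$ fixes each of them.

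First I would align the $3$-cycles. After conjugating $\rho_0$ I may assume $\rho_0(a_1)\in\{\rho_1(a_1),\rho_1(a_1)^{-1}\}$, since any two $3$-cycles of $\A 4$ are conjugate up to taking inverses. As $\rho_0(b_1)=\id$, the separating curve $[a_1,b_1]$ has trivial holonomy, so \cref{decoupe} lets me apply to the first handle the automorphism $(a_1,b_1)\mapsto(a_1^{-1},b_1^{-1})$ — induced by $-\Id\in\SL\Z$, hence orientation preserving — to flip $\rho_0(a_1)$ if needed, without disturbing the other handles. The global conjugation above may alter $\rho_0(a_2),\rho_0(b_2)$, but these remain distinct double transpositions and every other generator is still sent to $\id$. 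Thus I may assume $\rho_0$ and $\rho_1$ agree on $a_1,b_1$ and on $a_i,b_i$ for $i\geqslant 3$.

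It then remains to match the ordered pair $(\rho_0(a_2),\rho_0(b_2))$ with $(\rho_1(a_2),\rho_1(b_2))$ while fixing the other handles. Because both images lie in the abelian group $\K$, the separating curve $[a_2,b_2]$ cutting off the genus-$1$ piece carrying the second handle has trivial holonomy, so \cref{decoupe} allows me to act on this handle alone by $\Aut(\Gamma_1)$. I would identify this action with that of $\SL\Z$ on $\mathrm H_1$, then reduce modulo $2$: using $\K\simeq\mathbb F_2^2$, the action factors through the surjection $\SL\Z\to\SL{\mathbb F_2}=\mathrm{GL}_2(\mathbb F_2)$, a group of order $6$ acting simply transitively on the six ordered bases of $\mathbb F_2^2$, that is, on the ordered pairs of distinct double transpositions. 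Since $(\rho_0(a_2),\rho_0(b_2))$ and $(\rho_1(a_2),\rho_1(b_2))$ are two such pairs, a suitable $\sigma\in\Aut(\Gamma_1)$ sends the first to the second, yielding $\sigma\cdot\rho_0=\rho_1$.

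The step I expect to be the real obstacle — and the only genuinely new point compared with \cref{lemA4} — is this last matching. Conjugation inside $\A 4$ can only \emph{cyclically rotate} the ordered pair of double transpositions (reversing two of them would require conjugating by an odd permutation, which is absent from $\A 4$), so the cyclic class of the pair masquerades as an invariant and half of the target pairs look unreachable. The point I would stress is that this apparent invariant is destroyed by an honest mapping-class automorphism of the second handle: over $\mathbb F_2$ one has $\det=-1=1$, so the order-reversing matrix $\left(\begin{smallmatrix}0&-1\\1&0\end{smallmatrix}\right)\in\SL\Z$ reduces to the transposition in $\mathrm{GL}_2(\mathbb F_2)$, and this is exactly what reverses the pair. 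Checking that $\SL\Z\twoheadrightarrow\mathrm{GL}_2(\mathbb F_2)$ acts simply transitively on ordered bases is the crux; everything else is bookkeeping to ensure, via \cref{decoupe}, that each move touches only the intended handle.
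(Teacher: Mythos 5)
Your argument is correct and follows the same route as the paper: conjugate and act on the first handle to arrange $\rho_0(a_1)=\rho_1(a_1)$, then invoke \cref{decoupe} to act by $\Aut(\Gamma_1)$ on the second handle alone and use the transitivity of that action on surjective homomorphisms $\Gamma_1\to\K\simeq\D_2$ (which is \cref{DG1}). Your explicit identification of this last action with the simply transitive action of $\mathrm{GL}_2(\mathbb F_2)$ on ordered bases of $\mathbb F_2^2$ merely fills in the verification the paper leaves to the reader.
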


\begin{proof}
As before we can assume that $\rho_0(a_1) = \rho_1(a_1)$.
The action of $\Gamma_1$ on the set of surjective homomorphisms of $\Hom(\Gamma_1, \K)$ is transitive, since $\K\simeq \D_2$. Therefore, we may replace $\rho_0$ with $\sigma\cdot \rho_0$ so that $\rho_0 = \rho_1$, where $\sigma\in \Gamma_1$, following the notations of \cref{decoupe}.
\end{proof}

We now show that any representation may be taken to the form of \cref{lemA4} or \cref{lemA4bis}. This completes the proof of \cref{AutA4}.
\begin{proof}
Let $\rho \in \Hom(\Gamma, \A 4)$ with image $\A 4$. The homomorphism $\varphi \circ \rho$ in $\Hom(\Gamma, \Z_3)$ is surjective. We have seen in \cref{ModCycl} that the action of $\Aut(\Gamma)$ on the set of surjective homomorphisms of $\Hom(\Gamma, \Z_3)$ is transitive. Hence we may assume that $\varphi\circ \rho(a_i) = \varphi\circ \rho(b_i) = 0$ for all $2\leqslant i\leqslant g$ and that $\varphi\circ \rho(a_1) = 1$ and $\varphi\circ \rho(b_1) = 0$. For all $2\leqslant i\leqslant g$, $\rho([a_i, b_i])=\id$ since $\rho(a_i)$ and $\rho(b_i)$ are in the abelian group $\K$. Since $\varphi\circ\rho(a_1)$ has order $3$, $\rho(a_1)$ itself has order $3$ and $\rho(a_1)$ is a $3$-cycle. Moreover $\rho(b_1)\in \K$ and $\rho(b_1)$ commutes with $\rho(a_1)$. The centralizer of $\rho(b_1)$ is $\A 4$ thus $\rho(b_1) = \id$ since $\A 4$ is centerless. We can restrict $\rho$ to the subsurface obtained by collapsing the first handle and define $\rho'\in \Hom(\Gamma_{g-1}, \K)$. The image $\rho'$ cannot be the trivial group, since otherwise $\rho(a_1)$ would generate $\A 4$. Let us first assume that the image of $\rho'$ is cyclic of order $2$. The action of $\Aut(\Gamma_{g-1})$ on the set of surjective $\rho'\in \Hom(\Gamma_{g-1}, \Z_2)$ has only one orbit, hence we may assume that $\rho(a_2)$ has order $2$, while $\rho(a_i) = \rho(b_i) = \id$ otherwise for $i\geqslant 2$. We now turn to the case where the image of $\rho'$ is $\K$. The action of $\Aut(\Gamma_{g-1})$ on the set of surjective homomorphisms of $\Hom(\Gamma_{g-1}, \K)$ has at most two orbits by \cref{diedr} since $\K$ is isomorphic to $\D_2$. If $\rho'$ does not lift to $\SL \C$, then we may assume that $\rho'(a_2)$ and $\rho'(b_2)$ are different order $2$ elements and that $\rho'(a_i) = \rho'(b_i) = \id$ for $3 \leqslant i\leqslant g$. If $\rho'$ lifts to $\SL \C$, then $g\geqslant 3$ and we may assume that $\rho'(a_2)$ and $\rho'(a_3)$ are different elements of $\K$ and that $\rho(a_i) = \rho(b_i) = \id$ otherwise for $3\leqslant i \leqslant g$. After applying \cref{lemA4} in the first two handles, we may assume that $\rho(a_2) = \rho(a_3)$ and get back to the case where the image of $\rho'$ is cyclic. In each case, we can thus assume that $\rho(\gamma) = \id$ for $\gamma\in \{a_3, \ldots, b_g\}$ and that the restriction of $\rho$ on the first two handles has the form of \cref{lemA4} or \cref{lemA4bis}.
\end{proof}

\subsection{The group $\Sy 4$}
We turn to the case $G = \Sy 4$. As before there are two orbits of class of surjective homomorphisms in $\Hom(\Gamma, G)/G$ under the action of $\Mod(\Sigma)$.
\begin{prop}
There are two orbits of surjective homomorphisms of $\Hom(\Gamma, \Sy 4)$ under the action of $\Aut(\Gamma)$.
\end{prop}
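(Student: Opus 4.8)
The plan is to identify the single invariant that separates the two orbits and to show it is complete, following the pattern of the proof of \cref{AutA4} for $\A 4$. The relevant group theory is that $\K \cong \Z_2 \times \Z_2$ is normal in $\Sy 4$ with quotient $\Sy 4/\K \cong \Sy 3 \cong \D_3$, and that $\mathrm H_2(\Sy 4, \Z) \cong \Z_2$, so that by Dunfield--Thurston \cite{DunfieldThurston} there are exactly two orbits once $g$ is large; our task is to push this down to $g \geqslant 2$. The distinguishing invariant will be $\sw$, that is, whether $\rho$ lifts to $\SL \C$: since $\Sy 4 \subset \SO \subset \PSL \C$ and the preimage of $\Sy 4$ in $\SU$ is the central extension $1 \to \Z_2 \to 2\mathrm O \to \Sy 4 \to 1$ by the binary octahedral group, the class of this extension is the nonzero element of $\mathrm H_2(\Sy 4, \Z)$, and $\sw(\rho)$ records whether $\rho$ pulls it back trivially. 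As $\sw$ is invariant under $\Aut(\Gamma)$, exhibiting one surjection of each type gives at least two orbits. For the lifting type take $\rho_0(a_1)$ a $3$-cycle, $\rho_0(b_g)$ a transposition chosen so that $\langle \rho_0(a_1), \rho_0(b_g)\rangle = \Sy 4$, and all other generators trivial; then every commutator of lifts is $\Id$ and $\sw(\rho_0) = 0$. For the non-lifting type, keep this data but send $a_2$ and $b_2$ to two distinct commuting double transpositions in $\K$; by \cref{sw1} the corresponding commutator of lifts equals $-\Id$, so $\sw(\rho_1) = 1$.

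It remains to prove that there are at most two orbits, by reducing an arbitrary surjective $\rho$ to $\rho_0$ or $\rho_1$. First I would compose with the projection $\psi : \Sy 4 \to \Sy 3 \cong \D_3$; since $\rho$ is onto, $\psi \circ \rho$ is a surjection to $\D_3$, and as $3$ is odd \cref{diedr} lets me normalise it to the standard dihedral form, with $\psi\rho(a_1)$ an order-$3$ rotation, $\psi\rho(b_g)$ a reflection and the $\psi$-image of every other generator trivial. Lifting this back, $\rho(a_1)$ is a $3$-cycle, $\rho(b_g)$ is an odd permutation, and every remaining generator---in particular $\rho(b_1)$ and $\rho(a_g)$---lies in $\K$. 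Surjectivity of $\rho$ forces the $\K$-valued residual data, acted on by conjugation by the $3$-cycle $\rho(a_1)$ and the odd element $\rho(b_g)$, to generate $\K$. I would then classify this residual data exactly as in the $\A 4$ argument: the $\K$-values on the middle handles $2, \ldots, g-1$ form a homomorphism to $\K \cong \D_2$, which by \cref{diedr} and the surgery of \cref{decoupe} can be brought either to the trivial homomorphism or to a single handle carrying two distinct commuting involutions, according to its value of $\sw$.

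The main obstacle is the bookkeeping of the two mixed slots $\rho(b_1)$ and $\rho(a_g)$, whose $\K$-components are coupled to the non-$\K$ generators $\rho(a_1)$ and $\rho(b_g)$ both through the surface relation $\prod_i [a_i,b_i] = \id$ and through the conjugation action of the $3$-cycle and the odd permutation on $\K$. The point to establish is that Dehn twists along $a_1 a_g$, $a_g a_1$ and the analogous curves---mirroring the manipulation $\rho(b_g) = \rho(a_1)^k s \mapsto s$ in the proof of \cref{diedr}---let me absorb these $\K$-components into the middle handles without disturbing the already normalised $\psi$-image. Once both mixed slots are cleared, the residual data is concentrated in the middle handles, its class in $\mathrm H_2(\K, \Z)$ is the only remaining freedom, and $\sw(\rho)$ is exactly this class; hence $\rho$ is equivalent to $\rho_0$ when $\sw(\rho) = 0$ and to $\rho_1$ when $\sw(\rho) = 1$, which completes the proof.
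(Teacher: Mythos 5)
Your identification of the invariant ($\sw$, equivalently the image of the fundamental class in $\mathrm H_2(\Sy 4,\Z)\cong\Z_2$) and your first reduction (push forward to $\Sy 4/\K\cong\Sy 3$ and normalise via \cref{diedr}) match the paper's strategy for $g\geqslant 3$. But the heart of the argument is missing. You write that ``the point to establish'' is that Dehn twists let you absorb the $\K$-components of the two mixed slots $\rho(b_1)$ and $\rho(a_g)$ into the middle handles; this is exactly the nontrivial combinatorial content of the proposition, and you do not carry it out. Worse, the reduction as you state it cannot work in genus $2$: there the handles $(a_1,b_1)$ and $(a_2,b_2)=(a_g,b_g)$ are both ``mixed'' and there are no middle handles at all, so if both mixed slots could always be cleared every surjection would be equivalent to your $\rho_0$ and there would be only one orbit --- contradicting the statement you are proving. (Your representative $\rho_1$ is also ill-defined for $g=2$, since $b_2=b_g$ is supposed to be simultaneously a transposition and a double transposition.) Note too that the handle $(a_g,b_g)$ can itself contribute to $\sw$ when $\rho(a_g)$ is a nontrivial element of $\K$ commuting with the odd permutation $\rho(b_g)$ (by \cref{sw1}), so the claim that after clearing, $\sw(\rho)$ equals the $\mathrm H_2(\K,\Z)$-class of the middle-handle data also needs the clearing to be done in a specific way.

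The paper avoids this trap by treating genus $2$ separately and directly: it first produces a nonseparating simple closed curve killed by $\rho$ (\cref{nonsep}), arranges $\rho(b_1)=\id$ and $\rho(a_1)\in\A 4$, and then runs an explicit case analysis on the second handle ($4$-cycle, transposition, or pair of distinct commuting transpositions), showing the first two cases coincide and are separated from the third by $\sw$. Only for $g\geqslant 3$ does it use the $\Sy 3$-quotient normalisation, and there the induction reduces the number of nontrivial handles one at a time (using \cref{rpz} to realise $\rho(a_3)\rho(b_3)$ as the image of a nonseparating curve) until the genus-$2$ case applies. To repair your proof you would need either to supply the explicit twist computations clearing the mixed slots for $g\geqslant 3$ \emph{and} a separate argument for $g=2$, at which point you have essentially reconstructed the paper's proof.
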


The group $\K = \{\sigma\in \A 4\mid \sigma^2 = 1\}$ is also normal in $\Sy 4$ and the quotient group $\Sy 4/\K$ is isomorphic to $\Sy 3\simeq \D_3$. Let us denote by $\varphi$ a surjective homomorphism of $\Hom(\Sy 4, \Sy 3)$.
%
Let us first suppose that $g=2$.
\begin{lemma}\label{nonsep}
There exists a nonseparating simple closed curve $\gamma\in \Gamma$ such that $\rho(\gamma) = \id$.
\end{lemma}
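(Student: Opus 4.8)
The plan is to use the mapping class group to put $\rho$ in a very explicit form and then to exhibit $\gamma$ by hand as a curve crossing the separating curve between the two handles; the point is that within a single handle the image of $\rho$ will be noncyclic, so no curve contained in one handle can lie in $\ker\rho$, forcing us to build a connecting curve.

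First I would pass to the quotient $\varphi\colon\Sy 4\to\Sy 4/\K\cong\Sy 3\cong\D_3$. Since $\rho$ is onto, $\varphi\circ\rho$ is a surjection onto $\D_3$, and as $3$ is odd, \cref{diedr} shows that the $\Aut(\Gamma)$–action on such surjections is transitive. A nonseparating simple closed curve in $\ker(\rho\circ\sigma)$ is carried by $\sigma\in\Aut^+(\Gamma)$ to a nonseparating simple closed curve in $\ker\rho$, so we may replace $\rho$ by $\rho\circ\sigma$ and assume $\varphi\circ\rho$ is in the standard form produced in the proof of \cref{diedr}: for $g=2$ this reads $\varphi\circ\rho(a_1)$ the order $3$ rotation and $\varphi\circ\rho(b_1)=\varphi\circ\rho(a_2)=\id$. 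Consequently $\rho(a_1)$ is a $3$–cycle (the only elements of $\Sy 4$ of order divisible by $3$) and $\rho(b_1),\rho(a_2)\in\K$.

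Now I would split into cases according to $\rho(b_1)$ and $\rho(a_2)$. If $\rho(a_2)=\id$ then $a_2$ is the required curve, and if $\rho(b_1)=\id$ then $b_1$ is. So assume both are nontrivial elements of $\K$, i.e. double transpositions. No proper subgroup of $\A 4$ contains both a $3$–cycle and a double transposition, so $\langle\rho(a_1),\rho(b_1)\rangle=\A 4$; moreover conjugation by the $3$–cycle $\rho(a_1)$ permutes the three double transpositions cyclically. Hence there is $j\in\{0,1,2\}$ with $\rho(a_1)^{j}\rho(b_1)\rho(a_1)^{-j}=\rho(a_2)$. I would then take $\gamma$ to be the band sum of the two disjoint nonseparating curves $b_1$ and $a_2$, along a band winding $j$ times around the first handle, so that the class of $\gamma$ in $\Gamma$ is the conjugate $a_1^{j}b_1a_1^{-j}\,a_2^{-1}$. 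Its homology class $[b_1]-[a_2]$ is primitive, so $\gamma$ is nonseparating, and by the choice of $j$ we get $\rho(\gamma)=\bigl(\rho(a_1)^{j}\rho(b_1)\rho(a_1)^{-j}\bigr)\rho(a_2)^{-1}=\rho(a_2)\rho(a_2)^{-1}=\id$.

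The main obstacle is the last, geometric step: one must verify that a conjugate of $b_1$ times $a_2^{-1}$ — rather than $b_1a_2^{-1}$ itself, which is killed only in the easy subcase $\rho(b_1)=\rho(a_2)$ — is genuinely represented by an \emph{embedded} nonseparating curve, that is, that the $j$ windings of the band can be absorbed without creating self-intersections. This is exactly where crossing the separating curve becomes essential: the restriction of $\rho$ to the first handle has image $\A 4$, which is noncyclic, so no primitive element of the handle subgroup $\langle a_1,b_1\rangle$ can lie in $\ker\rho$ (a primitive element extends to a basis whose images would otherwise generate a cyclic group), and hence no curve contained in a single handle can serve as $\gamma$. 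Note that the relation $\prod_i[a_i,b_i]=\id$ is not needed explicitly beyond guaranteeing that $\rho$ is a homomorphism, so the argument is uniform over all surjections once the normal form of the second paragraph is reached.
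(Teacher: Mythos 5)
Your proof is correct and follows essentially the same route as the paper: after normalizing so that $\rho(a_1)$ is a $3$-cycle and the other relevant generators land in $\K$, you use the fact that conjugation by the $3$-cycle permutes the three double transpositions cyclically to match $\rho(b_1)$ with $\rho(a_2)$ (the paper matches $\rho(b_1)$ with $\rho(b_2)$), and you realize the word $a_1^{j}b_1a_1^{-j}a_2^{-1}$ as a band sum of disjoint nonseparating simple closed curves, exactly as the paper does with $b_2a_1^{\pm 1}b_1^{-1}a_1^{\mp 1}$. The only real difference is that you make explicit, via $\varphi\circ\rho$ and \cref{diedr}, the normalization that the paper's proof uses implicitly.
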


\begin{proof}
We suppose that $\rho(b_1) \neq \id$ and $\rho(b_2) \neq \id$ since otherwise we are done. Since $\rho(a_1)$ is a $3$-cycle, it does not commute with $\rho(b_1)=\rho(b_1)^{-1}$, thus $\rho(b_1)$, $\rho(a_1^{-1}b_1^{-1}a_1)$ and $\rho(a_1 b_1^{-1} a_1^{-1})$ are $3$ distinct elements of $\K\setminus \{\id\}$. One of them is therefore equal to $\rho(b_2) = \rho(b_2)^{-1}$. 
The loop $b_2b_1$ in the first case, $b_2a_1^{-1}b_1^{-1}a_1$ in the second case and $b_2a_1 b_1^{-1} a_1^{-1}$ in the last one is sent to $\id$ and is represented by a simple nonseparating curve.
\end{proof}

We suppose thanks to \cref{nonsep} that $\rho(b_1) = \id$. We also assume that $\rho(a_1)\in \A 4$. Indeed, if $\rho(a_1)\notin \A 4$, then suppose $\rho(a_2)$ or $\rho(b_2)$ is not in $\A 4$. We may assume that $\rho(b_2)\notin \A 4$ and replace $\rho(a_1)$ with $\rho(b_2)\rho(a_1)$ with a Dehn twist along a curve homotopic to $b_1b_2$. Suppose now that both $\rho(a_2)$ and $\rho(b_2)$ are in $\A 4$. Since $[\rho(a_2), \rho(b_2)] = \id$, the group generated by $\rho(a_2)$ and $\rho(b_2)$ is either cyclic or $\K$. It cannot be $\K$ since one can check that $\Sy 4$ is not generated by $\K$ together with a single $\sigma\in \Sy 4$. 
Since the group generated by $\rho(a_2)$ and $\rho(b_2)$ is cyclic, we may assume that $\rho(b_2) = \id$ and, interchanging the handle, that $\rho(a_1)\in \A 4$.

Since $[\rho(a_2), \rho(b_2)] = \id$, and  $\rho(a_2)$ and $\rho(b_2)$ are not both in $\A 4$, we may assume that one of the following holds:
\begin{enumerate}
\item $\rho(a_2)$ is a $4$-cycle and $\rho(b_2) = \id$
\item $\rho(a_2)$ is a transposition and $\rho(b_2) = \id$
\item $\rho(a_2)$ and $\rho(b_2)$ are two distinct transpositions that commute.
\end{enumerate} 

We leave it to the reader to check that such $\rho(a_2)$ and $\rho(b_2)$ together with a double transposition do not generate $\Sy 4$. Therefore $\rho(a_1)$ is a $3$-cycle. If we are in the second case above, then a Dehn twist replaces $\rho(a_2)$ with $\rho(a_2a_1)$, which is a $4$-cycle. Indeed, it cannot be a transposition: since $\rho(a_1)$ and $\rho(a_2)$ generate $\Sy 4$, $\rho(a_2)\rho(a_1)$ does not fix the fixed point of $\rho(a_1)$ nor the fixed points of $\rho(a_2)$. 
We may then apply a Dehn twist in the first handle so that $\rho(b_1) = \id$ again.

Let us show that two representations $\rho_0$ and $\rho_1$ that are in the first case above are in the same orbit under the action of $\Aut(\Gamma)$. 
By \cref{conjugaison} we may conjugate $\rho_0$ so that $\rho_0(a_1) = \rho_1(a_1)$. Conjugating $\rho_0$ by some $\rho(a_1)^{k}$, $0\leqslant k \leqslant 2$, we may also assume that $\rho_0(a_2) = \rho_1(a_2)^{\pm 1}$, thanks to the following observation whose proof is left to the reader. 
\begin{lemma}
If $\sigma$ is a $3$-cycle and $\tau$ is a $4$-cycle, then $\{\sigma^{-k} \tau \sigma^k\}$ intersect every cyclic subgroup of order $4$ of $\mathfrak S_4$.
\end{lemma}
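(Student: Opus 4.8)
The plan is to reduce the statement to understanding how conjugation by a $3$-cycle permutes the cyclic subgroups of order $4$ in $\Sy 4$. First I would recall that $\Sy 4$ has exactly three cyclic subgroups of order $4$: there are six $4$-cycles, and each cyclic subgroup of order $4$ contains exactly two of them, namely a $4$-cycle and its inverse. Moreover each such subgroup is determined by its unique element of order $2$, which is the common square of its two $4$-cycles and hence a double transposition. Since the three double transpositions are precisely the nonidentity elements of the Klein four-group $\K$, the cyclic subgroups of order $4$ are in bijection with $\K\setminus\{\id\}$ via the map sending a subgroup $C$ to the unique double transposition it contains.

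Next I would observe that for any $k$ the conjugate $\sigma^{-k}\tau\sigma^k$ is again a $4$-cycle, and that it lies in the cyclic subgroup corresponding to its square $(\sigma^{-k}\tau\sigma^k)^2 = \sigma^{-k}\tau^2\sigma^k \in \K\setminus\{\id\}$. Thus, via the bijection above, it suffices to show that the three elements $\sigma^{-k}\tau^2\sigma^k$ for $k = 0,1,2$ exhaust $\K\setminus\{\id\}$; equivalently, that conjugation by $\sigma$ acts as a $3$-cycle on the three nonidentity elements of $\K$.

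This is the heart of the argument, and I expect it to be essentially the only real step. Since $\K$ is normal in $\Sy 4$ and $\K \simeq \Z/2\Z \times \Z/2\Z$ has automorphism group $\Sy 3$ acting faithfully on its three nonidentity elements, conjugation gives a homomorphism $\Sy 4 \to \Sy 3$ whose kernel is the centralizer of $\K$, which is $\K$ itself; comparing orders, this descends to an isomorphism $\Sy 4/\K \simeq \Sy 3$. As $\sigma$ is a $3$-cycle it is not in $\K$, so its image in $\Sy 3$ has order $3$, hence is a $3$-cycle, and therefore cyclically permutes the three nonidentity elements of $\K$. Consequently the three double transpositions $\sigma^{-k}\tau^2\sigma^k$ are pairwise distinct and fill up $\K\setminus\{\id\}$, so the set $\{\sigma^{-k}\tau\sigma^k\}$ meets all three cyclic subgroups of order $4$, which is the claim.

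As a sanity check one could instead fix explicit representatives, say $\sigma = (123)$ and a single $4$-cycle $\tau$, compute the three conjugates directly, and read off that their squares run through all of $\K\setminus\{\id\}$; but the conceptual route through $\Sy 4/\K \simeq \Sy 3$ avoids any case-checking and makes transparent why the hypothesis that $\sigma$ be a $3$-cycle (rather than, say, a transposition, whose image in $\Sy 3$ fixes a point) is exactly what is needed.
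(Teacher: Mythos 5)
Your proof is correct and complete. The paper explicitly leaves this lemma to the reader, so there is no argument in the text to compare against; your write-up is a valid way to fill the gap. The two key observations — that the three cyclic subgroups of order $4$ are in bijection with the three double transpositions via $C \mapsto C \cap (\K\setminus\{\id\})$, and that a $3$-cycle, lying outside $\K = \ker(\Sy 4 \to \mathrm{Aut}(\K) \simeq \Sy 3)$, must act as a $3$-cycle on $\K\setminus\{\id\}$ — are both accurate, and together they do yield that the squares $\sigma^{-k}\tau^2\sigma^k$ exhaust $\K\setminus\{\id\}$ and hence that each conjugate $\sigma^{-k}\tau\sigma^k$ lands in the corresponding cyclic subgroup.
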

Applying an automorphism of the second handle, thanks to \cref{ModCycl}, we may replace $\rho_0(a_2)$ with $\rho_0(a_2)^{-1}$ if necessary.

Let us now suppose that $\rho_0$ and $\rho_1$ both have the form of the third case above: $\rho_0(a_2)$ and $\rho_0(b_2)$ are different transpositions that commute. We may conjugate $\rho_0$ so that $\rho_0(a_1) = \rho_1(a_1)$. There are three possibilities for $\{\rho_0(a_2), \rho_0(b_2)\}$ and we can go from one to the other by conjugating by $\rho_0(a_1)^k$, $0\leqslant k \leqslant 2$. Therefore we may assume that $\{\rho_0(a_2), \rho_0(b_2)\} = \{\rho_1(a_2), \rho_1(b_2)\}$. Since the action of $\Aut(\Gamma_1)$ on the set of surjective homomorphisms of $\Hom(\Gamma_1, \K)$ is surjective, we can assume that $\rho_0 = \rho_1$. Representations in the first case above cannot be in the same orbit as a representation in the third case since the former lifts to $\SL \C$ while the latter does not by \cref{sw1}.
As a corollary of this classification in genus $2$, we show that if $\rho : \Gamma_2\to \Sy 4$ is surjective, then every $g\in \Sy 4$ is the image of a nonseparating simple closed curve by $\rho$.
\begin{lemma}\label{rpz}
For every surjective $\rho\in \Hom(\Gamma_2, \Sy 4)$ and $g\in \Sy 4$, there exists $\sigma\in \Aut(\Gamma_2)$ such that $\sigma\cdot \rho (b_1) = g$.
\end{lemma}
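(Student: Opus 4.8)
The plan is to reinterpret the statement geometrically. Since $\Aut(\Gamma_2)$ acts on representations by precomposition, $\sigma\cdot\rho(b_1) = \rho(\sigma(b_1))$, so I must show that as $\sigma$ ranges over $\Aut(\Gamma_2)$ the element $\rho(\sigma(b_1))$ ranges over all of $\Sy 4$. The element $\sigma(b_1)$ is a based nonseparating simple closed curve, and by the change of coordinates principle (see \cite[Chapter 1]{FarbMargalit}) the group $\Mod(\Sigma_2) = \mathrm{Out}^+(\Gamma_2)$ acts transitively on free homotopy classes of nonseparating simple closed curves; moreover the inner automorphisms, which lie in $\Aut(\Gamma_2)$, act on $\rho(\sigma(b_1))$ by conjugation by arbitrary elements of $\rho(\Gamma_2) = \Sy 4$. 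Hence $\{\rho(\sigma(b_1)) : \sigma\in\Aut(\Gamma_2)\}$ is a union of conjugacy classes, namely the conjugacy classes of $\rho(\gamma)$ as $\gamma$ runs over nonseparating simple closed curves. The statement is therefore equivalent to: every conjugacy class of $\Sy 4$ is the image under $\rho$ of some nonseparating simple closed curve.

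First I would record that this set of realized conjugacy classes is invariant under the $\Aut(\Gamma_2)$-action on $\rho$, since any $\tau\in\Aut(\Gamma_2)$ permutes the nonseparating simple closed curves. By the classification of surjective homomorphisms $\Gamma_2\to \Sy 4$ established above there are exactly two orbits, distinguished by $\sw$, so it suffices to prove the claim for one $\rho$ in each orbit. Equivalently, and more conveniently, it suffices to produce for each conjugacy class $C\subset\Sy 4$ and each value of $\sw$ a surjective $\rho'\in\Hom(\Gamma_2,\Sy 4)$ with $\rho'(b_1)\in C$: by the classification such a $\rho'$ lies in the orbit of $\rho$, so $\rho' = \tau\cdot\rho$ and $\rho'(b_1) = \rho(\tau(b_1))\in C$, after which an inner automorphism conjugates $\rho(\tau(b_1))$ to the prescribed element $g$ inside its class. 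This reformulation is the reason I would avoid arguing directly with words representing simple closed curves: $b_1$ is visibly such a curve, and the construction only involves choosing the four generator images of $\rho'$ subject to the relator $[\rho'(a_1),\rho'(b_1)][\rho'(a_2),\rho'(b_2)] = \id$.

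Next I would carry out the finite verification. The classes $\{\id\}$, the transpositions, the $3$-cycles and the $4$-cycles are each realized as $\rho'(b_1)$ in a manifestly surjective $\rho'$ by placing the remaining generators in the second handle; for instance taking $\rho'(a_1) = \id$, $\rho'(b_1) = x$ with $x\in C$, $\rho'(b_2) = \id$ and $\rho'(a_2)$ chosen with $\langle x, \rho'(a_2)\rangle = \Sy 4$ produces a lifting ($\sw = 0$) representative. For the double transpositions and for the non-lifting ($\sw = 1$) representatives I would route a nontrivial commutator through the first handle and compensate in the second, using that two distinct commuting involutions have lift-commutator $-\Id$ while a trivial generator gives $+\Id$ by \cref{sw1}, thereby flipping the value of $\sw$ while keeping $\rho'(b_1)\in C$ and the image equal to $\Sy 4$.

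The hard part will be this last bookkeeping: I must check that \emph{both} orbits realize \emph{all} five conjugacy classes, i.e. that each class $C$ — especially the transpositions and double transpositions, where the obvious representatives tend to generate only a small subgroup — admits surjective representatives of each $\sw$. This is precisely where the full genus-$2$ classification, and not merely the count of two orbits, is needed, which is why the lemma is a corollary of it; the verification reduces to a finite computation of the lifting obstruction in the binary octahedral group, with \cref{sw1} controlling the commutators of the order-$2$ elements involved.
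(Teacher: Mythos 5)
Your argument is correct and is essentially the paper's own proof: the paper likewise reduces the lemma, via the two-orbit classification and conjugation by inner automorphisms, to exhibiting for each of the five conjugacy classes a surjective representation in each orbit with $\rho(b_1)$ in that class, and leaves the resulting finite check to the reader. Your geometric framing through nonseparating simple closed curves and your partial sketch of the verification (which does go through, e.g.\ $(\rho(a_1),\rho(b_1))=((1234),(13)(24))$ handles the double-transposition class with $\sw=0$, and pairing a class representative with a distinct commuting involution flips $\sw$ via \cref{sw1}) add detail but not a different route.
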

\begin{proof}
For every conjugacy class of $g\in \Sy 4$, it suffices to exhibit a representation $\rho$ in each of the two $\Aut(\Gamma)$-orbits such that $\rho(b_1) = g$. We leave it to the reader to check this for each of the $5$ conjugacy classes of $\Sy 4$.
\end{proof}

Let us now turn to the case $g\geqslant 3$. The homomorphism $\varphi\circ\rho$ in $\Hom(\Gamma, \Sy 3)$ is surjective. Hence we may assume that $\varphi\circ\rho(a_1)$ is a $3$-cycle, $\varphi\circ\rho(a_2)$ is a transposition and $\varphi\circ\rho(\gamma) = \id$ for the others $\gamma\in \{a_1, \ldots, b_g\}$. Let us show that the number of $\gamma\in \{a_3, \ldots, b_g\}$ such that  $\rho(\gamma) = \id$ can be lowered. Iterating this process, we get back to the genus $2$ case. 

The group generated by $\rho(\gamma)$ for $\gamma\in \{a_1, b_1, a_2, b_2\}$ has order divisible by $6$ and is not included in $\A 4$. Therefore it is $\Sy 4$ or it is isomorphic to $\Sy 3$. In the case where it is isomorphic to $\Sy 3$, $\rho(a_2)$ must be a transposition and after conjugating we may assume that $\rho(a_2) = (12)$. Let $\gamma\in \{a_3, \ldots, b_g\}$ be such that $\rho(\gamma)\neq \id$. We may assume that $\gamma \in \{ a_3, b_3\}$, applying an automorphism of $\Gamma_{g-2}$ that interchanges the handles. We can even suppose that $\gamma = a_3$: we may apply the transformation $(x,y)\mapsto (y, -x)$ of $\mathrm{Sp}_2(\Z)\subset \mathrm{Sp}_{2g-2}(\Z)$.
We can apply a Dehn twist along a curve homotopic to $[a_g, b_g]\ldots[a_3, b_3]a_1$ if necessary so that $\rho(\gamma)$ does not preserve the set $\{1, 2\}$. It does not change $\rho(\gamma)$ for $\gamma\in \{a_1, a_2, b_2\}$ and we can turn back $\rho(b_1)$ to its previous value with a Dehn twist in the first handle. A Dehn twist along a curve homotopic to $a_3^{-1} b_2$ replaces $\rho(a_2)$ with a $4$-cycle. Hence we may assume that the group generated by $\rho(\gamma)$ for $\gamma\in \{a_1, b_1, a_2, b_2\}$ is $\Sy 4$.

Therefore, applying an automorphsim of $\Gamma_2$, we may assume that $\rho(b_1)=\rho(a_3)\rho(b_3)$ by \cref{rpz}.  We then apply a Dehn twist along a curve homotopic to $b_3^{-1}b_1$, that replaces $\rho(a_3)$ with $\rho(a_3)\rho(b_1)^{-1}\rho(b_3)^{-1}=\id$. This does not change $\rho(b_3)$ and conjugate the $\rho(a_i)$ and $\rho(b_i)$ for $i\geqslant 4$. Since $\varphi\circ\rho(\gamma)= \id$ for $\gamma\in \{a_3, \ldots, b_g\}$, we may apply an automorphism of $\Gamma_2$ to turn back to the case where $\varphi\circ\rho(a_1)$ is a $3$-cycle, $\varphi\circ \rho(a_2)$ is a transposition, and $\varphi\circ\rho(b_1) = \varphi\circ\rho(b_2) = \id$.

\subsection{The group $\A 5$}
Let us classify the orbits of the $\Aut(\Gamma)$ action on the set of surjective homomorphisms of $\Hom(\Gamma, \A 5)$.
\begin{prop}\label{modA5}
There are two orbits of surjective homomorphisms of $\Hom(\Gamma, \A 5)$ under the action of $\Aut(\Gamma)$.
\end{prop}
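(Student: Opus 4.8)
The plan is to show that the two orbits are exactly the fibers of the Stiefel--Whitney invariant $\sw$, following the template already used for $\A 4$ and $\Sy 4$. Since the Schur multiplier of $\A 5$ is $\Z_2$, the binary icosahedral group $\SL_2(\mathbb F_5)\subset \SU$ is the nontrivial central extension of $\A 5\subset \SO$, and a surjective $\rho\in\Hom(\Gamma,\A 5)$ lifts to $\SL\C$ precisely when $\sw(\rho)=0$. As $\sw$ labels the two connected components of $\Hom(\Gamma,\PSL\C)$, it is invariant under precomposition by $\Aut(\Gamma)$, so it suffices to (i) exhibit surjective representations with $\sw=0$ and with $\sw=1$, and (ii) show that any two surjective representations with the same value of $\sw$ are $\Aut(\Gamma)$-equivalent. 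For (i) I would take $\rho(a_1)$ a $5$-cycle together with either $\rho(b_1)=\id$ and $\rho(a_2)$ a $3$-cycle, which lifts, or $\rho(a_2),\rho(b_2)$ two distinct commuting double transpositions, which does not lift by \cref{sw1}; in both cases the image is $\A 5$, since a $5$-cycle generates $\A 5$ together with a $3$-cycle, respectively together with a Klein four-subgroup. This yields at least two orbits.

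For (ii) I would mirror the $\Sy 4$ argument: first classify in genus $2$, then reduce $g\geq 3$ to $g=2$. The genus-$2$ base case would begin with an analog of \cref{nonsep}, namely that for surjective $\rho\in\Hom(\Gamma_2,\A 5)$ there is a nonseparating simple closed curve with trivial holonomy, so that after an element of $\Aut(\Gamma_2)$ one may assume $\rho(b_1)=\id$. Then $[\rho(a_2),\rho(b_2)]=\id$, so $\langle\rho(a_2),\rho(b_2)\rangle$ is abelian, hence cyclic or a Klein four-group $\K$, and together with $\rho(a_1)$ it generates $\A 5$. A case analysis over the conjugacy classes of $\A 5$ (orders $1,2,3,5$, the two classes of $5$-cycles treated separately) then leaves, after conjugation and Dehn twists normalizing the second handle via \cref{ModCycl} and via the transitivity of $\Aut(\Gamma_1)$ on surjections $\Gamma_1\to\K$ used in \cref{lemA4bis}, only two genus-$2$ normal forms — a cyclic-$A$ form with $\sw=0$ and a $\K$-form with $\sw=1$ — separated by $\sw$ through \cref{sw1}.

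The genus reduction, together with the base-case normalization, is where I expect the main obstacle to lie. Unlike $\D_n$, $\A 4$ and $\Sy 4$, the group $\A 5$ is simple, so there is no auxiliary quotient homomorphism $\varphi$ to organize the reduction or to produce the trivially-holonomous nonseparating curve cheaply; all the bookkeeping must be carried out directly inside $\A 5$ via its subgroup and conjugacy-class structure. After the genus-$2$ classification I would upgrade it to a realization statement in the spirit of \cref{rpz} — for surjective $\rho\in\Hom(\Gamma_2,\A 5)$ every element of $\A 5$ equals $\sigma\cdot\rho(b_1)$ for some $\sigma\in\Aut(\Gamma_2)$ — by checking one representative of each orbit against each conjugacy class. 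Given $g\geq 3$, I would then use Dehn twists to force the first two handles to generate $\A 5$, which is possible because $\A 5$ is $2$-generated and every proper subgroup is solvable; applying \cref{decoupe} and the realization statement, I can match $\rho$ on the remaining handles against prescribed elements and cancel $\rho(a_g),\rho(b_g)$ by twists, lowering the genus while preserving surjectivity, and automatically preserving $\sw$ by its invariance. Iterating reaches genus $2$ and completes the proof that there are exactly two orbits for every $g\geq 2$.
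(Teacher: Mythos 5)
You have the right answer and the right invariants: the two orbits are the fibers of $\sw$, your two representatives are surjective (a $5$-cycle together with a $3$-cycle, or with a Klein four-group, forces the subgroup to have order divisible by $15$, resp.\ $20$, hence to be all of $\A 5$), and they are separated by \cref{sw1}. Your architecture — genus-$2$ classification, a realization lemma in the spirit of \cref{rpz}, then a genus reduction — is also the paper's. But the step you defer as ``the main obstacle'' is not bookkeeping to be filled in later; it is the core of the argument, and your proposed substitute does not close it. The paper's replacement for the missing quotient homomorphism is \cref{reduction}: via \cref{5cycle}, \cref{boom} and a case analysis over the subgroup generated by the first handles ($\A 5$, $\A 4$, $\Sy 3$, $\D_2$, cyclic), one pushes \emph{every handle but the first} into the stabilizer of the point $5$, i.e.\ into a copy of $\A 4$. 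That single reduction is what makes the rest work: combined with the lemma that $[\alpha,\beta]$ being a double transposition fixing $p$ forces $\alpha$ and $\beta$ to fix $p$, it yields $\rho([a_i,b_i])=\id$ for every $i$, so the separating curves needed for \cref{decoupe} have trivial holonomy; and it turns handles $2,\dots,g$ into a representation into $\A 4$, which is already classified, killing handles $4,\dots,g$ at once and leaving only a genus-$3$ step that uses \cref{anseSp}.

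Your plan — force the first two handles to generate $\A 5$, then ``cancel $\rho(a_g),\rho(b_g)$ by twists'' using \cref{decoupe} and the realization statement — presupposes exactly what that reduction provides: to restrict $\rho$ to the first two handles, or to apply a genus-$2$ realization lemma there, you need $\rho([a_1,b_1][a_2,b_2])=\id$, and to cancel the last handle you need $\rho([a_g,b_g])=\id$; nothing in your argument produces either. (This is also why the generic Dunfield--Thurston stabilization only works for large genus.) The same issue touches your base case: the existence of a nonseparating simple closed curve with trivial holonomy in genus $2$ is true but not cheap — the paper first needs $\rho([a_1,b_1])=\rho([a_2,b_2])=\id$ from \cref{reduction}, and in the subcase where both handles are sent to pairs of distinct commuting double transpositions it must exhibit the explicit word $b_2^{-1}a_2b_1^{-1}a_2b_1^{-1}a_1^{-1}a_2b_1^{-1}$. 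So the proposal correctly predicts the shape of the proof but is missing its one essential idea: reduce all but one handle into a point stabilizer $\A 4\subset\A 5$ so that the already-solved cases and \cref{decoupe} become available.
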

We first find a simple closed curve that is sent to a $5$-cycle.
\begin{lemma}\label{5cycle}
Let $g'\geqslant 2$ and $\rho \in \Hom(\Gamma_{g', 1}\, \A 5)$ be such that $\rho(\Gamma_{g', 1})$ acts transitively on $\{1, \ldots, 5\}$. There exists $\sigma\in \Aut(\Gamma_{g', 1})$ such that $\sigma\cdot\rho(a_1)$ is a $5$-cycle.
\end{lemma}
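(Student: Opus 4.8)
The plan is to reduce the statement to a pure existence question and then analyze the few possibilities for the image. Recall that $\Gamma_{g',1}$ is free on $a_1,b_1,\dots,a_{g'},b_{g'}$ and that $\Aut(\Gamma_{g',1})$ is identified with the mapping class group of $\Sigma_{g',1}$, which acts transitively on isotopy classes of non-separating simple closed curves by the change of coordinates principle (see \cite[Chapter 1]{FarbMargalit}). Since $a_1$ is represented by such a curve, for every non-separating simple closed curve $\gamma$ there is $\sigma\in\Aut(\Gamma_{g',1})$ with $\sigma\cdot\rho(a_1)$ conjugate to $\rho(\gamma)$; as being a $5$-cycle is conjugation invariant, it suffices to exhibit a single non-separating simple closed curve $\gamma$ with $\rho(\gamma)$ a $5$-cycle. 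I would use repeatedly the following realization tool: for two standard generators meeting once, such as $a_i$ and $b_i$, the products $a_ib_i^{\pm1}$ are again represented by non-separating simple closed curves, and the band sum of two disjoint such curves (for instance $a_i$ and $a_j$) represents, up to conjugacy, their product. Thus elements such as $\rho(a_i)\rho(b_i)$ and $\rho(a_i)\rho(a_j)$ are realized by $\rho$ on non-separating simple closed curves.

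The image $H=\rho(\Gamma_{g',1})$ is a transitive subgroup of $\A 5$, so $5\mid |H|$ and by Cauchy's theorem $H$ contains an element of order $5$, necessarily a $5$-cycle. The transitive subgroups of $\A 5$ are $\Z_5$, $\D_5$ and $\A 5$, and I would treat them in turn. If $H=\Z_5$, some generator $a_i$ or $b_i$ has nontrivial image, which is automatically a $5$-cycle; composing with a handle interchange and, if needed, the swap $a_i\leftrightarrow b_i$ brings it to $a_1$. If $H=\D_5$ and some generator maps to a rotation, the same argument applies. Otherwise every generator maps to a reflection, i.e.\ an order-$2$ double transposition; since the images generate $\D_5$ there are two generators $x,y$ with $\rho(x)\neq\rho(y)$, and the product of two distinct reflections of $\D_5$ is a nontrivial rotation, hence a $5$-cycle. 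Whether $x,y$ lie in a common handle or in different handles, the realization tool above produces a non-separating simple closed curve mapping to $\rho(x)\rho(y)$, and we are done.

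The remaining case $H=\A 5$ is where the difficulty concentrates. If some generator already maps to a $5$-cycle we finish as before, so assume every generator has image of order at most $3$ (a double transposition or a $3$-cycle). The plan is to find two generators $x,y$, possibly after first replacing the generating tuple by another one in the same $\Aut(\Gamma_{g',1})$-orbit (which leaves $H$ unchanged), such that $\rho(x)\rho(y)^{\pm1}$ has order $5$; the realization tool then concludes. The main obstacle is precisely to guarantee that such a pair can be reached: this amounts to the finite group-theoretic fact that one cannot generate $\A 5$ by elements of order at most $3$ while keeping all relevant pairwise products of order in $\{1,2,3\}$ — if that were so, every two generators would generate a subgroup isomorphic to a subgroup of $V_4$, $\Z_3$, $\Sy 3$ or $\A 4$, none of which contains an element of order $5$, and one checks by a short case analysis that these pieces cannot assemble into all of $\A 5$. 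I expect this verification, together with the bookkeeping needed to ensure the productive pair can be realized on an \emph{embedded} non-separating curve, to be the technical heart of the argument; the hypothesis $g'\geqslant 2$ enters exactly here, since a second handle provides the room to form the required band sum when the two generators lie on disjoint curves.
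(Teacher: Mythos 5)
Your opening reduction --- find one nonseparating simple closed curve whose image is a $5$-cycle, then move it to $a_1$ by the change of coordinates principle --- is exactly the paper's first step, and your treatment of the images $\Z_5$ and $\D_5$ is complete. The gap is the case $\rho(\Gamma_{g',1})=\A 5$, which you yourself defer as ``the technical heart''; worse, the finite group-theoretic claim you propose to rest it on is false. The set $\{(1\,2\,3),\,(1\,2\,4),\,(1\,2\,5)\}$ generates $\A 5$ (the first two generate the stabilizer of $5$, which is maximal, and the third moves $5$), consists of $3$-cycles, and every product $xy^{\pm1}$ of two distinct members has order at most $3$: for instance $(1\,2\,5)(1\,2\,3)=(1\,5)(2\,3)$ and $(1\,2\,5)(1\,3\,2)=(1\,3\,5)$, the remaining pairs behaving identically by symmetry. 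So the representation sending $a_1,b_1,a_2$ to these three $3$-cycles and all other generators to the identity satisfies the hypotheses of the lemma, yet no pair of generator images has a product of order $5$. Your plan of locating a ``productive pair'' among the generators therefore cannot succeed as stated, and the asserted ``short case analysis'' showing that subgroups without order-$5$ elements ``cannot assemble into all of $\A 5$'' does not exist.

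What is actually needed, and what the paper does, is a word of length three. After arranging that some generator, say $\rho(a_1)$, is a $3$-cycle with fixed points $p,q$ (a step you also leave implicit: it requires realizing a product of two double transpositions on a nonseparating curve and re-running the change of coordinates), transitivity produces generators among $b_1,a_2,\dots,b_{g'}$ that move $p$ and $q$ in a controlled way; the elementary computations of \cref{tech} and \cref{produit} then show that $\rho(b_2^{-1}a_2)$ neither fixes nor swaps $p$ and $q$, so that $\rho(b_2^{-1}a_2a_1)$ is a $5$-cycle, and $b_2^{-1}a_2a_1$ is still represented by a nonseparating simple closed curve. Nothing in your proposal supplies this three-letter move or an equivalent preliminary Nielsen transformation, so the $\A 5$ case --- the only genuinely hard one --- remains open.
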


\begin{proof}
It suffices to find a nonseparating simple closed curve $\gamma\in \Gamma_{g',1}$ such that $\rho(\gamma)$ is a $5$-cycle, since for every nonseparating simple closed curve $\gamma\in \Gamma_{g',1}$ there exists $\sigma\in \Aut(\Gamma_{g',1})$ such that $\sigma\cdot \gamma\in \{a_1, a_1^{-1}\}$.
The product of two double transpositions $\tau$ and $\tau'$ that do not share the same fixed point is not a double transposition. Indeed if $\tau\tau'$ has order $2$, then $\tau\tau' = (\tau\tau')^{-1} = \tau'\tau$ thus $\tau \tau' \tau^{-1} = \tau'$ and $\tau$ and $\tau'$ have the same fixed point. Let us first suppose that every $\rho(\gamma)$ for $\gamma\in \{a_1, \ldots, b_{g'}\}$ has order $2$ or $1$. Two of them do not share the same fixed point, hence their product has order $3$ or $5$. Therefore we may assume that $\rho(a_1)$ has order $3$. Let us denote by $p$ and $q$ the fixed points of $\rho(a_1)$.

\begin{lemma}\label{tech}
Let $p\neq q\in \{1, \ldots, 5\}$ and $\alpha$, $\beta\in \A 5$. If $\alpha(p) = p$ and $\alpha(q) \neq q$, and $\beta(q) = q$ and $\beta(p)\neq p$, then $\beta\alpha$ does not interchange $p$ and $q$, nor does it fix $p$ or $q$. If $\tau$ interchanges $p$ and $q$, then $\tau \alpha$ does not fix nor interchanges $p$ and $q$. These two statements also hold for $\alpha\beta$ and $\alpha\tau$.
\end{lemma}

\begin{proof}
For the first part, $\beta(\alpha(p)) = \beta(p) \neq p$ and $\beta(\alpha(q)) \neq q$ because $\alpha(q)\neq q$. Moreover if $\beta\alpha(p) = \beta(p) = q$, then $\beta(q) = q = \beta(p)$ and $q=p$.
Let us prove the second part. We have $\tau(\alpha(p)) = \tau(p) = q$ and $\tau(\alpha(q)) \neq p$ since otherwise $\alpha(q) = q$. Moreover $\tau(\alpha(q)) = q$ if and only if $\alpha(q) = p$. But $\alpha(q) = p = \alpha(p)$ implies that $p = q$.
\end{proof}
Let us make some observations on the order of the product of elements of $\A 5$ whose proof are left to the reader.
\begin{lemma}\label{produit}
If $\sigma$ is a $3$-cycle, then the product of $\sigma$ with a double transposition that does not fix its fixed points, nor interchanges them is a $5$-cycle. The product of $\sigma$ with a $3$-cycle that does not share a fixed point with $\sigma$ is also a $5$-cycle. The product of a double transposition with another double transposition that does not share the same fixed point, nor a transposition, is a $5$-cycle.\qed
\end{lemma}

If one of $\rho(\gamma)$, $\gamma\in \{b_1, a_2, \ldots, b_g\}$ does not fix $p$ nor $q$ nor interchanges them, then the product of $\rho(a_1)$ with this permutation has order $5$. If it is not the case, then we can find one of them that does not fix $p$ but fixes $q$ , interchanging $p$ and $q$ if necessary.
Indeed since $\rho(\Gamma_{g',1})$ acts transitively on $\{1, \ldots, 5\}$, one of them does not preserve the set $\{p, q\}$.
 Another one of these permutations does not fix $q$ and either interchanges $p$ and $q$, or fixes $p$.
Let us say for clarity that these permutations are $\rho(a_2)$ and $\rho(b_2)$. Then $\rho(b_2^{-1}a_2)$ does not fix $p$ nor $q$ nor interchanges them by \cref{tech}, hence $\rho(b_2^{-1}a_2a_1)$ is a $5$-cycle.
\end{proof}
We now show that we may assume that both $\rho(a_i)$ and $\rho(b_i)$ fix $5$ for every $i\geqslant 2$.
\begin{prop}\label{reduction}
There exists $\sigma\in \Aut(\Gamma)$ such that for all $2\leqslant i \leqslant g$, both $\sigma\cdot\rho(a_i)$ and $\sigma\cdot\rho(b_i)$ fix $5$.
\end{prop}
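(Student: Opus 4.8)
The plan is to normalise the first handle and then sweep the remaining handles into the stabiliser of $5$, one handle at a time, using the first handle as an engine.

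First I would invoke \cref{5cycle} to assume, after replacing $\rho$ by $\sigma\cdot\rho$ for a suitable $\sigma\in\Aut(\Gamma)$, that $\rho(a_1)$ is a $5$-cycle. Write $H=\{\tau\in\A 5\mid \tau(5)=5\}$ for the stabiliser of $5$, a copy of $\A 4$ of index $5$. The point of having chosen a $5$-cycle is the following elementary observation, which is the mechanism by which a single generator can be corrected to fix $5$: since $\rho(a_1)$ acts on $\{1,\dots,5\}$ as a single $5$-cycle, $\rho(a_1)^k(5)$ runs over all of $\{1,\dots,5\}$ as $k$ ranges over $0,\dots,4$, so $\{\rho(a_1)^k\}_{k=0}^{4}$ is a complete set of representatives of the left cosets of $H$. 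In particular, for every $\tau\in\A 5$ there is a unique $0\leqslant k\leqslant 4$ with $\rho(a_1)^{-k}\tau\in H$, namely the one for which $\rho(a_1)^{k}(5)=\tau(5)$.

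Next I would induct on the complexity $N(\rho)=\#\{\gamma\in\{a_2,b_2,\dots,a_g,b_g\}\mid \rho(\gamma)(5)\neq 5\}$, the goal being $N=0$. Suppose $N>0$ and pick $j\geqslant 2$ with, say, $\rho(a_j)(5)\neq 5$ (the case of $\rho(b_j)$ being identical). Choosing $k$ so that $\rho(a_1)^{k}(5)=\rho(a_j)(5)$, the corrected element $\rho(a_1)^{-k}\rho(a_j)$ lies in $H$, so I would left-multiply $a_j$ by $\rho(a_1)^{-k}$. This is realised by a sequence of Dehn twists along curves homotopic to products of standard generators, exactly as in the previous proofs; at the level of $\mathrm H_1(\Sigma,\Z)$ it is the transformation of $\Sp_{2g}(\Z)$ fixing every standard generator except $a_j\mapsto a_j-k\,a_1$ and $b_1\mapsto b_1+k\,b_j$, whose symplecticity is immediate. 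The crucial feature is that it leaves $\rho(a_1)$ (still a $5$-cycle), $\rho(b_j)$, and every $\rho(a_i),\rho(b_i)$ with $i\notin\{1,j\}$ unchanged, disturbing only $\rho(b_1)$, which the statement does not constrain; hence it strictly decreases $N$. When instead it is $\rho(b_j)$ that must be corrected, I would use the move whose homological effect is $b_j\mapsto b_j-k'\,a_1$ and $b_1\mapsto b_1-k'\,a_j$, so that an already-corrected $a_j$ is untouched. Iterating until $N=0$ gives the proposition.

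The hard part will not be the homological bookkeeping, which is clean, but the passage from this symplectic picture to honest elements of $\Aut(\Gamma)$ acting on the \emph{nonabelian} $\rho$: a Dehn twist realises the desired multiplication only up to an inner automorphism, and a conjugate of $\rho(a_1)^{-k}\rho(a_j)$ need not fix $5$. I would handle this exactly as in the earlier arguments, performing the multiplication one twist at a time along explicit simple closed curves homotopic to $a_1^{\pm1}a_j$ and keeping track of the resulting word on the nose, and by ordering the corrections so that $a_j$ is cleaned before $b_j$. One must also check that the collateral changes to $b_1$ never propagate back to $a_1$; this is automatic since $a_1$ is fixed by every twist used, which is precisely why keeping $\rho(a_1)$ a $5$-cycle throughout is essential.
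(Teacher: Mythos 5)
There is a genuine gap, and it sits exactly where you flag the ``hard part'': the passage from the symplectic picture to an actual element of $\Aut(\Gamma)$ with the stated effect on the nonabelian representation. Your induction needs, at each step, an automorphism that multiplies $\rho(a_j)$ by (a word containing) $\rho(a_1)^{k}$ while leaving $\rho(a_1)$ \emph{and} every already-cleaned generator untouched. Checking that the corresponding transvection lies in $\Sp_{2g}(\Z)$ proves nothing here, since any effect on $\rho$ beyond its abelianization is invisible in $\mathrm H_1$. The Dehn twists that can actually realize such a multiplication are the ones the paper uses, e.g.\ along a curve homotopic to $b_j^{-1}a_1^{k}b_1^{-1}$, which sends $\rho(a_j)$ to $\rho(a_j)\rho(b_1)^{-1}\rho(a_1)^{k}\rho(b_j)^{-1}$. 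The homology class of that curve is $ka_1-b_1-b_j$, which pairs nontrivially with $a_1$: the twist changes $\rho(a_1)$ and in general destroys the $5$-cycle that powers your coset argument. This is not an artifact of a bad choice of curve: a simple closed curve whose based representative contains $a_1^{k}$ with $k\geqslant 2$ must be a $(k,\pm1)$-curve on the first handle, so its class necessarily contains $b_1$ and hence crosses $a_1$. Your proposed substitute, twisting along curves homotopic to $a_1^{\pm1}a_j$, does not work either: such a curve has class $\pm a_1+a_j$, which pairs trivially with $a_j$, so the twist modifies $b_1$ and $b_j$ but cannot correct $\rho(a_j)$ at all. Iterating the $k=1$ move along $a_1b_j^{-1}$ preserves $\rho(a_1)$ but multiplies $\rho(a_j)$ by powers of $\rho(a_1)\rho(b_j)^{-1}$, which need not be a $5$-cycle and hence need not sweep the five cosets of the stabilizer of $5$. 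So the sentence ``this is automatic since $a_1$ is fixed by every twist used'' is precisely the false step.

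The paper avoids this trap structurally: it takes $k_\rho$ minimal such that all handles beyond $k_\rho$ fix $5$, assumes $k_\rho\geqslant 2$, and runs a case analysis on the isomorphism type of the subgroup generated by the first $k_\rho$ handles (cyclic, $\D_2$, $\Sy 3$, $\A 4$, or $\A 5$), invoking \cref{ModCycl}, \cref{diedr} and \cref{AutA4} to normalize the bad block and reach a contradiction. The twist along $b_2^{-1}a_1^{k}b_1^{-1}$ does appear there (in the $\A 5$ case, after \cref{5cycle} and \cref{boom}), but only \emph{once}, as the terminal move of a minimality argument, so the collateral damage to $\rho(a_1)$ is harmless. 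If you want to keep your greedy sweep, you must either show how to restore $\rho(a_1)$ to a $5$-cycle after each correction using only automorphisms supported away from the cleaned handles (which fails in general, since the new $\langle\rho(a_1),\rho(b_1)\rangle$ may contain no $5$-cycle), or restructure the argument as the paper does.
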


We may ensure that $\rho(a_i)$ fixes $5$ by applying an automorphism of $\Aut(\Gamma_{1,1})$, unless both $\rho(a_i)$ and $\rho(b_i)$ are double transpositions that commute.
\begin{lemma}\label{boom}
There exists $\sigma\in \Aut(\Gamma_{1,1})$ such that $\sigma\cdot \rho(a_1)$ fixes $5$, unless both $\rho(a_1)$ and $\rho(b_1)$ are double transpositions that fix some $p\in \{1, \ldots, 4\}$, and $\alpha\neq \beta$.
\end{lemma}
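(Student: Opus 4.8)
The plan is to treat $\Gamma_{1,1}$ as the free group $\langle a_1,b_1\rangle$, to set $\alpha=\rho(a_1)$, $\beta=\rho(b_1)$ and $H=\langle\alpha,\beta\rangle=\rho(\Gamma_{1,1})$, and to exploit that $\Aut(\Gamma_{1,1})\cong\SL{\Z}$ acts transitively on primitive elements. Concretely I only need the following members of the $\Aut(\Gamma_{1,1})$-orbit of $a_1$: the generators $a_1,b_1$ (exchanged by an orientation-preserving mapping class), the simple closed curves $a_1b_1^{n}$, and the inner automorphisms, which realize conjugation of $\alpha$ by any element of $H$. The key reformulation is that \emph{it suffices to find a primitive word $w$ such that $\rho(w)$ fixes some point of the $H$-orbit $O$ of $5$}: if $\rho(w)(q)=q$ with $q\in O$, choosing $h\in H$ with $h(q)=5$ (possible since $O$ is an $H$-orbit) makes $h\rho(w)h^{-1}=\rho(\delta w\delta^{-1})$ fix $5$, and $\delta w\delta^{-1}$ is again in the orbit of $a_1$. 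If $\alpha$ or $\beta$ already fixes $5$ we are done, so from now on both move $5$; in particular $|O|\geq 2$.

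\textbf{Transitive case.} If $H$ acts transitively then $O=\{1,\dots,5\}$, so I only need one primitive word whose image has \emph{any} fixed point. If one of $\alpha,\beta$ is not a $5$-cycle it has a fixed point and I take $w=a_1$ or $w=b_1$. If both are $5$-cycles I will use the averaging identity
$$\sum_{n=0}^{4}\#\mathrm{Fix}(\alpha\beta^{n})=\sum_{x\in\{1,\dots,5\}}\#\{\,n:\beta^{n}(x)=\alpha^{-1}(x)\,\}=5,$$
valid because $\beta$ is a $5$-cycle, so $n\mapsto\beta^{n}(x)$ hits each point exactly once. Since $5>0$, some $\alpha\beta^{n}=\rho(a_1b_1^{n})$ has a fixed point, and $a_1b_1^{n}$ is primitive.

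\textbf{Intransitive case.} Here I split according to $|O|\in\{2,3,4\}$. For $|O|=2$ both generators swap $5$ with the other point of $O$, so $\rho(a_1b_1)=\alpha\beta$ fixes $5$. For $|O|=3$ I pass to the induced transitive action $\bar H\leq\mathrm{Sym}(O)=\Sy 3$, which is $\Z_3$ or $\Sy 3$; in both cases I produce a primitive word whose restriction to $O$ is not a $3$-cycle (a coprime solution $a_1^{p}b_1^{q}$ of $p\bar\alpha+q\bar\beta\equiv 0$ in $\Z_3$ when $\bar H=\Z_3$, and a generator of odd sign when $\bar H=\Sy 3$), which therefore fixes a point of $O$. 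For $|O|=4$ the complementary point $p\in\{1,\dots,4\}$ is $H$-fixed, so $H\leq\mathrm{Stab}(p)\cong\A 4$ acts transitively on $O$ and hence equals $\K$ or $\A 4$; if $H=\A 4$ some generator is a $3$-cycle and fixes a point of $O$, while if $H=\K$ then $\alpha\neq\beta$ are double transpositions fixing $p$ and every primitive word maps into $\{\alpha,\beta,\alpha\beta\}$, none of which fixes a point of $O$.

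This last situation is exactly the excluded configuration, so the case analysis simultaneously produces $\sigma$ and identifies the Klein four-group case as the \emph{only} obstruction. I expect the two delicate points to be the averaging identity for a pair of $5$-cycles — the one place where transitivity is genuinely used — and the verification that the case $|O|=4$, $H=\K$ is truly unavoidable, i.e.\ that no primitive word escapes $\{\alpha,\beta,\alpha\beta\}$ (because $H=\K$ is abelian and a primitive homology class is nonzero modulo $2$). Everything else reduces to transitivity of $\Aut(\Gamma_{1,1})$ on primitive elements together with elementary facts about cycle types in $\A 5$.
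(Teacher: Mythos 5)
Your proof is correct, but it is organized quite differently from the paper's. The paper proceeds by explicit elementary moves: it first uses Dehn twists and basis changes to rule out, one by one, the situations where $\beta$ fixes $5$, where $\alpha(5)$ is fixed by $\beta$ (and symmetrically), and where $\alpha(5)$ lies in the $\langle\beta\rangle$-orbit of $5$ (and symmetrically); it then observes that these reductions already dispose of the case where $\alpha$ or $\beta$ is a cycle, so that only the case of two double transpositions remains, which \cref{produit} resolves unless they share a fixed point. Your argument instead fixes the single reduction ``find a primitive word whose image fixes a point of the $H$-orbit $O$ of $5$, then conjugate by $H$,'' and runs a case analysis on $|O|$. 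This buys two things: a cleaner conceptual frame (transitivity of $\Aut(\Gamma_{1,1})$ on primitive elements plus inner automorphisms does all the work), and, as a bonus, a proof that the excluded Klein-four configuration is genuinely an obstruction --- something the paper's lemma neither asserts nor needs, since \cref{reduction} only uses the stated disjunction. The averaging identity for two $5$-cycles is correct, though the paper's direct choice of $k$ with $\beta^k(\alpha(5))=5$ is shorter.

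One caveat: in the case $|O|=3$, $\bar H=\Z_3$, you invoke ``a coprime solution $a_1^pb_1^q$'' as if every $a_1^pb_1^q$ with $\gcd(p,q)=1$ were primitive in $\Gamma_{1,1}$. That is false in general (for instance $a_1^2b_1^3$ has primitive homology class but is not a primitive element of the free group). It does not damage your proof, because here $\bar\alpha$ and $\bar\beta$ are both nontrivial in $\Z_3$ (both generators move $5$), so one may take $(p,q)\in\{(1,1),(1,-1)\}$ and the word $a_1b_1^{\pm1}$ is honestly primitive; equivalently, $H$ is generated by a single $3$-cycle supported on $O$ and one of $\alpha\beta^{\pm1}$ is trivial. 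You should state the choice of $(p,q)$ explicitly rather than appeal to coprimality alone. The rest --- the identification of the transitive subgroups in the $|O|=4$ case, the parity argument showing a primitive homology class has nonzero image in $\K$, and the use of orientation-preserving automorphisms realizing transitivity on primitive elements --- is sound and consistent with the conventions of \cref{rappel} and \cref{decoupe}.
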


\begin{proof}
Let  $\alpha = \rho(a_1)$ and $\beta = \rho(b_2)$.
If $\beta$ fixes $5$, then one can apply the automorphisms that acts on $\Hom(\Gamma_{1,1}, \A 5)$ as $(\alpha, \beta)\to (\alpha, \alpha\beta)\to (\beta^{-1}, \alpha\beta)$. If $\alpha(5)$ is fixed by $\beta$, then one can apply the automorphisms $(\alpha, \beta)\to (\alpha, \alpha^{-1}\beta)\to (\alpha^{-1}\beta\alpha, \alpha^{-1}, \beta)$ to replace $\alpha$ with $\alpha^{-1}\beta \alpha$. This new $\alpha$ fixes $5$. Similarly, if $\beta(5)$ is fixed by $\alpha$ then one can replace $\beta$ with $\beta^{-1}\alpha \beta$. If $\alpha(5)$ is in the orbit of $5$ by the action of the group generated by $\beta$, then one can apply the Dehn twist that replaces $\alpha$ with $\beta^k\alpha$ so that $\beta^k \alpha$ fixes $5$. Similarly we can suppose that $\beta(5)$ is not in the orbit of 5 under the action of the group generated by $\alpha$. Therefore, if $\alpha$ or $\beta$ is a cycle, then there exists such a $\sigma$. 
Indeed if $\alpha$ is a cycle that does not fix $5$, either $\beta(5)$ is fixed by $\alpha$ or $\beta(5)$ is in the orbit of $5$ by the action of $\langle\alpha\rangle$.
Let us now suppose that both $\alpha$ and $\beta$ are double transpositions. We may replace $\alpha$ with $\beta\alpha$. Either $\beta\alpha$ is not a double transposition, or $\alpha$ and $\beta$ are different double transpositions that share a common fixed point by \cref{produit}. 
\end{proof}
%
%

We now prove \cref{reduction}.
\begin{proof}
Let us assume that for all $k_\rho+1\leqslant i \leqslant g$, both $\rho(a_i)$ and $\rho(b_i)$ fix $5$, and that $k_\rho$ is minimal for that property. We may also assume that for all $\sigma\in \Aut(\Gamma)$ we have $k_{\rho} \leqslant k_{\sigma\cdot \rho}$. Assume by contradiction that $k_\rho \geqslant 2$. Let us denote by $G$ the group generated by $\rho(\gamma)$ for $\gamma\in \{a_1, \ldots, b_{k_\rho}\}$.
The group $G$ is included in $\A 5$, hence it is either $\A 5$, cyclic or isomorphic to $\A 4$, $\Sy 3$, or $\D_2$. 

\begin{itemize}
\item If $G$ is cyclic, then by \cref{ModCycl}, one can assume applying an automorphism of $\Gamma_{k_\rho}$ that $\rho(a_{k_\rho}) = \rho(b_{k_\rho}) = \id$, contradicting the minimality of $k_\rho$.

\item If $G$ is isomorphic to $\A 4$, then it is embedded in $\A 5$ as the subgroup that fixes a point $p\in \{1, 2, 3, 4\}$. Since  
\[\rho([a_1, b_1] \ldots [a_{k_\rho}, b_{k_\rho}]) = \rho([a_{k_\rho+1}, b_{k_\rho+1}]\ldots [a_g, b_g])^{-1}\]
the permutation $\rho([a_1, b_1] \ldots [a_{k_\rho}, b_{k_\rho}])$ has two fixed points and is in a group isomorphic to $\D_2$, thus it is $\id$. Therefore, we may restrict the representation $\rho$ to  $\Hom(\Gamma_{k_\rho}, G)$. By \cref{AutA4}, there are two orbits of surjective $\rho' : \Gamma_{k_\rho}\to G$, under the action of $\Aut(\Gamma_{k_\rho})$. In each of these orbits, there exists a representation $\rho'$ such that $\rho'(a_{k_\rho})$ is any desired $3$-cycle  and $\rho'(b_{k_\rho}) = \id$. Hence we may assume that both $\rho(a_{k_\rho})$ and $\rho(b_{k_\rho})$ fix $5$, a contradiction.

\item Let us suppose that $G$ is isomorphic to $\Sy 3$. Since $\rho([a_1, b_1] \ldots [a_{k_\rho}, b_{k_\rho}])$ is in the commutator subgroup of $G$, it has order $3$ or $1$. It is also in the commutator subgroup of $\A 4$ and thus must be trivial. Hence we may restrict $\rho$ to $\Hom(\Gamma_{k_\rho}, \Sy 3)$. The action of $\Aut(\Gamma_{k_\rho})$ on the set of surjective homomorphisms of $\Hom(\Gamma_{k_\rho}, G)$ is transitive, hence we may assume that $\rho(a_{k_\rho})$ is any fixed element of $G\setminus \{\id\}$ and $\rho(b_{k_\rho})= \id$ since there exists a surjective representation $\Gamma_{k_\rho}\to G$ with this property. Therefore we can assume that both $\rho(a_{k_\rho})$ and $\rho(b_{k_\rho})$ fix $5$, since in each subgroup $G$ of $\A 5$ isomorphic to $\Sy 3$, there exists $g\in G\setminus \{\id\}$ that fixes $5$: up to conjugacy we have
$$G = \{\id, (1\ 2\ 3), (1\ 3\ 2), (1\ 2)(4\ 5), (2\ 3)(4\ 5), (1\ 3)(4\ 5)\}.$$

\item Let us now assume that $G$ is isomorphic to $\D_2$. We may restrict $\rho$ to $\Hom(\Gamma_{k_\rho}, \D_2)$. If this restriction does not lift to $\SL \C$, then one can assume that $\rho(a_1)$ and $\rho(b_1)$ are different double transpositions and that $\rho(a_{k_\rho}) = \rho(b_{k_\rho}) = \id$, a contradiction. Similarly, if $k_\rho > 2$, one can assume that $\rho$ kills both $a_{k_\rho}$ and $b_{k_\rho}$. Let us thus suppose that $k_\rho = 2$ and that the restriction of $\rho$ to the first two handles lifts to $\SL \C$. We may restrict $\rho$ to the last $g-2$ handles and get a representation $\rho'\in \Hom(\Gamma_{g-2}, \A 4)$. If $\rho'(\Gamma_{g-2})$ is isomorphic to $\Z_3$, $\Z_2$ or $\D_2$ and does not lift to $\SL \C$, then one can assume that $\rho(a_i) = \rho(b_i) = \id$ for all $i > 3$. We may then conjugate $\rho$ so that $\rho(\gamma)$ fix $5$ for $\gamma\in \{a_1, b_1, a_2, b_2\}$ and thus get $k_\rho = 1$. If $\rho'(\Gamma_{g-2})= \A 4$, we may assume that $\rho(a_3)$ is a $3$-cycle that fixes the fixed point of $\rho(\gamma)$ for $\gamma\in \{a_1, b_1, a_2, b_2\}$ and $\rho(b_3) = \id$. We also assume that $\rho(a_i) = \rho(b_i) = \id$ for $i > 4$. Again, conjugating $\rho$ we get $k_\rho = 1$. Let us turn to the case where $\rho'(\Gamma_{g-2})$ is isomorphic to $\D_2$ and lifts to $\SL \C$.
After conjugating $\rho$ and applying automorphisms of $\Gamma_{g-2}$ and of $\Gamma_2$, we may assume that 
$\rho(a_1) = (1\ 3)(2\ 5)$, $\rho(a_2) = (1\ 2)(3\ 5)$, $\rho(a_3) = (1\ 2)(3\ 4)$, $\rho(a_4) = (1\ 3)(2\ 4)$, and $\rho(b_i) = \id$ for all $i$ and $\rho(a_i) = \id$ for all $i > 4$. We may restrict $\rho$ to the second and third handles $\Hom(\Gamma_2, \A 5)$. The image of $\rho(\Gamma_2)$ is isomorphic to $\D_3 \simeq \Sy 3$. Since the action of $\Aut(\Gamma_2)$ on the set of surjective homomorphisms of $\Hom(\Gamma_2, \Sy 3)$ is transitive, we may assume that $\rho(a_2) = (3\ 4\ 5)$, $\rho(a_3) = (1\ 2)(3\ 4)$ and $\rho(b_2) = \rho(b_3) = \id$.
We now apply a Dehn twist along a curve homotopic to $[a_g, b_g]\ldots [a_3, b_3]a_2$. This conjugate the first handle so that $\rho(a_1)$ fixes $5$. Hence for all $i\neq 2$, both $\rho(a_i)$ and $\rho(b_i)$ fix $5$, thus $k_\rho = 1$.

\item Let us suppose that $G = \A 5$. By \cref{5cycle}, we may assume that $\rho(a_1)$ is a $5$-cycle. By \cref{boom} we may assume that $\rho(b_2)$ fixes $5$ or that both $\rho(a_2)$ and $\rho(b_2)$ fix some $p\in \{1, 2, 3, 4\}$. In this latter case we may apply a power of a Dehn twist along a curve homotopic to $[a_2, b_2]a_1$ so that both $\rho(a_i)$ and $\rho(b_i)$ fix $p$ for $k_\rho <i \leqslant g$. We then conjugate $\rho$ to interchange $5$ with $p$. We have added a hanlde that fixes $5$, which is a contradiction. We thus turn to the case where $\rho(b_2)$ fixes $5$. A Dehn twist as in \cref{Dehn twist} along a curve homotopic to $b_2^{-1} a_1^k b_1^{-1}$ replaces $\rho(a_2) $ with $\sigma_k = \rho(a_2)\rho(b_1^{-1})\rho(a_1)^k\rho(b_2^{-1})$. We may take $k$ so that it fixes $5$. \qedhere
\end{itemize}
\end{proof}
\begin{figure}
    \centering    
    \def\svgwidth{\columnwidth}
  	\def\svgwidth{0.7\textwidth}

	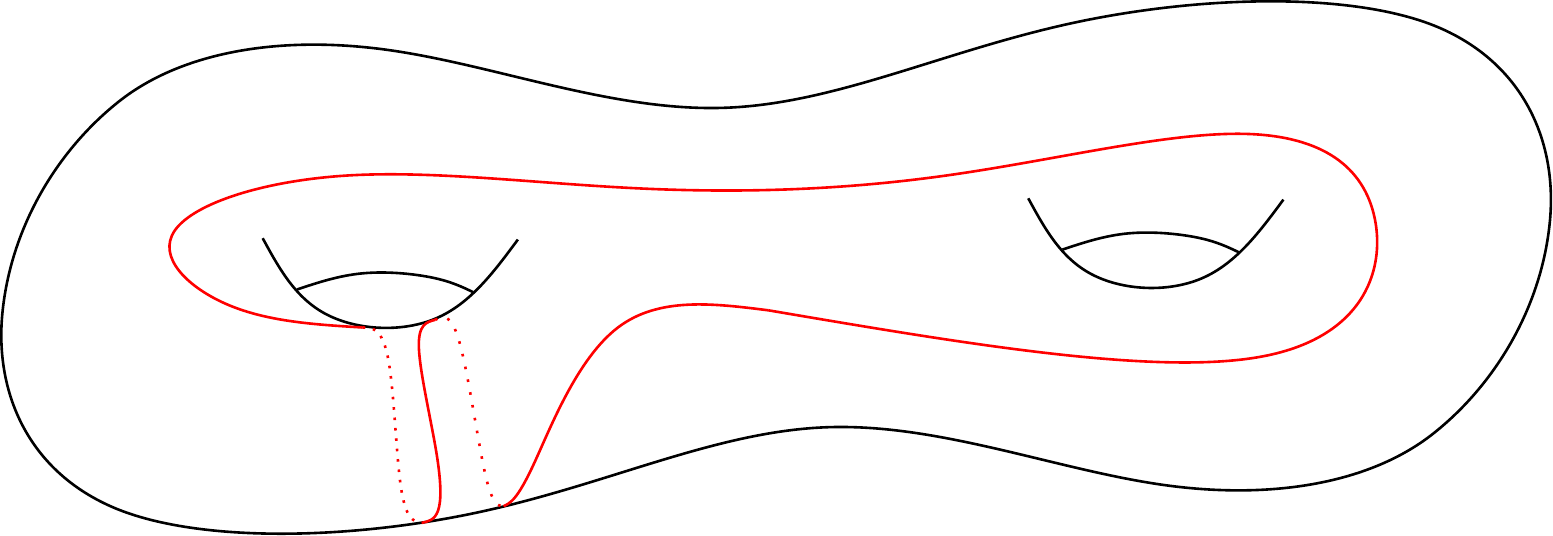
	\caption{Dehn twist along a simple closed curve}
    \label{Dehn twist}
\end{figure}

Let us now observe that if $\rho(\gamma)$ fixes $5$ for each $\gamma\in \{a_2, \ldots, b_g\}$, then $\rho(a_1)$ commutes with $\rho(b_1)$.
\begin{lemma}
Let $\alpha, \beta\in \A5$ be such that $[\alpha, \beta]$ has order $2$, and denote by $p\in \{1, \ldots, 5\}$ the fixed point of $[\alpha, \beta]$. Both $\alpha$ and $\beta$ fix $p$.
\end{lemma}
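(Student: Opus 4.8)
The plan is to reduce the statement to a counting identity and then apply the commutator formula of Frobenius. First I would record the shape of $c:=[\alpha,\beta]$: since it has order $2$ in $\A 5$ it is a double transposition, so it fixes exactly one point (this is what makes $p$ well defined). After relabelling we may assume $c=(1\,2)(3\,4)$ and $p=5$. The point stabilizer $\mathrm{Stab}(5)$ is a copy of $\A 4$ acting on $\{1,2,3,4\}$, and it contains $c$. With this normalization the lemma becomes the purely group-theoretic assertion that \emph{every} pair $(\alpha,\beta)\in\A 5\times\A 5$ with $[\alpha,\beta]=c$ satisfies $\alpha,\beta\in\mathrm{Stab}(5)$.

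The key tool is Frobenius's classical formula: for a finite group $G$ and $c\in G$, the number of pairs $(\alpha,\beta)\in G\times G$ with $[\alpha,\beta]=c$ equals $|G|\sum_\chi \chi(c)/\chi(1)$, the sum ranging over the irreducible characters of $G$. I would evaluate this count once for $G=\A 5$ and once for $G=\A 4$. Using the character table of $\A 5$ (degrees $1,3,3,4,5$, with values $1,-1,-1,0,1$ on the class of double transpositions) I get $60\bigl(1-\tfrac13-\tfrac13+0+\tfrac15\bigr)=32$, while the character table of $\A 4$ (degrees $1,1,1,3$, with values $1,1,1,-1$ on its double transpositions) yields $12\bigl(1+1+1-\tfrac13\bigr)=32$. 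The two counts coincide.

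Now I would conclude. Since $\mathrm{Stab}(5)\simeq\A 4$ is a subgroup of $\A 5$ and commutators are computed identically in the two groups, the $32$ pairs realizing $c$ inside $\A 4$ form a subset of the $32$ pairs realizing $c$ inside $\A 5$; equality of cardinalities forces these two sets to coincide. Hence every $(\alpha,\beta)$ in $\A 5\times\A 5$ with $[\alpha,\beta]=c$ already lies in $\mathrm{Stab}(5)$, that is, both $\alpha$ and $\beta$ fix $5=p$. Undoing the relabelling gives the statement for an arbitrary order-$2$ element $c$ and its fixed point $p$.

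The one conceptually delicate point, and the reason I prefer the counting to a hands-on argument, is the case $\langle\alpha,\beta\rangle=\A 5$. A quick subgroup analysis disposes of everything else: $c\neq\Id$ rules out abelian $\langle\alpha,\beta\rangle$; the membership $c\in[\langle\alpha,\beta\rangle,\langle\alpha,\beta\rangle]$ rules out $\Sy 3$ and $\D_5$, whose commutator subgroups are cyclic of order $3$ and $5$ and so contain no involution; and if $\langle\alpha,\beta\rangle\simeq\A 4$ then it is one of the point stabilizers $\mathrm{Stab}(q)$, whence $c$ fixes $q$ and $q=p$. What is left is exactly to show that an involution cannot be the commutator of a \emph{generating} pair of $\A 5$, and this is precisely the content of the numerical coincidence above, which confines all commutator pairs realizing $c$ to a single $\A 4$.
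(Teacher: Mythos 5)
Your proof is correct, and it takes a genuinely different route from the paper's. The paper argues by hand: it assumes $[\alpha,\beta]$ is a double transposition fixing $5$ while $\alpha$ does not fix $5$, and then runs a case analysis on the cycle type of $\alpha$ (double transposition, $3$-cycle, $5$-cycle), deriving an explicit contradiction in each case by tracking where $5$ is sent; this is elementary and self-contained, in keeping with the purely combinatorial flavour of the surrounding section. Your argument replaces all of this with the Frobenius commutator-counting formula: both counts are indeed $32$ (the values $1,-1,-1,0,1$ on the class of double transpositions against degrees $1,3,3,4,5$ for $\A 5$, and $1,1,1,-1$ against $1,1,1,3$ for $\A 4$, are correct), and since $\mathrm{Stab}(5)\simeq \A 4$ contains $c$ and commutators are computed identically in subgroup and ambient group, the inclusion of solution sets plus equality of cardinalities forces every realizing pair into the stabilizer. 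Note that the counting identity alone already finishes the proof, so your closing subgroup analysis is optional motivation rather than a needed step; also, since $c$ is an involution, the choice of commutator convention does not affect the count. What your approach buys is a uniform, case-free argument that explains \emph{why} the lemma holds (a numerical coincidence between $\A 4$ and $\A 5$) and that would adapt to similar questions; what it costs is the character tables and Frobenius's theorem, machinery the paper otherwise never invokes.
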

\begin{proof}
Let us suppose that $[\alpha, \beta]$ has order $2$ and fixes $5$. We assume for contradiction that $\alpha$ does not fix $5$.
If $\alpha$ is a double transposition, then we may assume that $\alpha = (5\ k)(i\ j)$ for $i,j,k\in \{2, 3, 4, 5\}$. The double transposition $\beta \alpha^{-1} \beta$ has the form $(5\ k)(x,y)$, since $[\alpha, \beta]$ must fix $k$. Therefore $[\alpha, \beta]$ fixes both $5$ and $k$, this is not possible since it is a double tranposition.
If $\alpha = (5\ i\ j)$ is a $3$-cycle, then $\beta^{-1}\alpha^{-1}\beta$ is a $3$-cycle of the form $(i\ 5\ k)$, since $[\alpha, \beta]$ fixes $5$. But $|\{i,j,k,5\}|\leqslant 4$, hence $[\alpha, \beta]$ fixes some $p\in \{1, 2, 3, 4\}$ and cannot have order $2$.
If $\alpha$ is a $5$-cycle, say $\alpha = (1\ 2\ 3\ 4\ 5)$. We must have $\beta^{-1} \alpha^{-1} \beta = (1 \ 5\ i\ j\ k)$ with $\{i,j,k\} = \{2, 3, 4\}$. We cannot have $k = 2$, otherwise $[\alpha, \beta]$ would fix $1$. We cannot have $i = 4$, otherwise $[\alpha, \beta]$ would fix $4$.
If $i = 3$, then $j = 2$ and $[\alpha, \beta]$ fixes $2$. If $i = 2$, then $\beta^{-1}\alpha^{-1}\beta = (1\ 5\ 2\ 3\ 4)$ or $\beta^{-1}\alpha^{-1}\beta = (1\ 5\ 2\ 4\ 3)$. In this latter case, $[\alpha, \beta]$ fixes $3$. We thus suppose that $\beta^{-1}\alpha^{-1}\beta = (1\ 5\ 2\ 3\ 4)$. Since $\beta' = (2\ 4)$ satisfies $\beta'^{-1} \alpha^{-1} \beta = \beta^{-1}\alpha^{-1} \beta$, the permutation $\beta\beta'^{-1}$ must be in the centralizer of $\alpha^{-1}$, and thus be a power of $\alpha$. Therefore $\beta\in \Sy 5\setminus \A 5$.
\end{proof}

We now suppose thanks to \cref{reduction} that $\rho(\gamma)$ fix $5$ for $\gamma\in \{a_2, \ldots, b_g\}$.
Note that since $\rho(a_1)$ and $\rho(b_1)$ do not both fix $5$, the commutator $[\rho(a_1), \rho(b_1)] = \id$ since it is a product of commutators of permutations fixing $5$. Therefore the representation restricted on the last $g-1$ handles gives $\rho : \Gamma_{g-1}\to \A 4$. The image of this representation is a subgroup of $\A 4$: it is isomorphic to $\A 4$, $\D_2\simeq \Z_2\times \Z_2$,  $\Z_3$ or $\Z_2$ or the trivial group. In each of these cases, we have seen that we may assume that $\rho(a_i)=\rho(b_i)=\id$ for all $i\geqslant 4$, and that $\rho([a_2,b_2]) = \id$.
We are thus reduced to the genus $g = 3$ case. In order to deal with this case, we first prove \cref{modA5} for $g=2$.
\subsubsection{The genus 2 case}
\begin{prop}\label{gen2}
The action of $\Aut(\Gamma_2)$ on the set of surjective homomorphisms of $\Hom(\Gamma_2, \A 5)$ has two orbits.
\end{prop}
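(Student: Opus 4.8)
The plan is to show that the two orbits are detected by the single invariant $\sw$, which records whether $\rho$ lifts to $\SL \C$. Since $\rho(\Gamma_2)=\A 5$ lies in $\SO\simeq \mathrm{PSU}_2$, lifting to $\SL \C$ amounts to lifting the image to the binary icosahedral group, the preimage of $\A 5$ in $\SU$; the obstruction $\sw(\rho)\in\{0,1\}$ is invariant under the $\Aut(\Gamma_2)$ action. Both values are attained by surjective homomorphisms: taking $\rho(a_1)$ a $5$-cycle, $\rho(a_2)$ chosen so that $\langle\rho(a_1),\rho(a_2)\rangle=\A 5$ and $\rho(b_1)=\rho(b_2)=\id$ gives a surjection with $\sw=0$, whereas sending $\rho(a_1),\rho(b_1)$ to two distinct commuting double transpositions (so that, by \cref{sw1}, the lifted commutator is $-\Id$), $\rho(a_2)$ to a $5$-cycle and $\rho(b_2)=\id$ gives a surjection with $\sw=1$. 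Hence there are at least two orbits, and it remains to bound the number from above by two.

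For the upper bound I would first normalise one handle. Viewing $\rho$ as a homomorphism of $\Gamma_{2,1}=\langle a_1,b_1,a_2,b_2\rangle$ and using that $\A 5$ acts transitively on $\{1,\dots,5\}$, \cref{5cycle} produces an automorphism, built from Dehn twists along nonseparating curves and therefore descending to $\Mod(\Sigma_2)$, after which $\rho(a_1)$ is a $5$-cycle, say $(1\,2\,3\,4\,5)$. The main step is then to kill the separating curve $c=[a_1,b_1]$: I claim that, up to $\Aut(\Gamma_2)$, one may arrange $\rho([a_1,b_1])=\id$. Granting this, the relation forces $\rho([a_2,b_2])=\id$ as well, so each handle has abelian image. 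Since $\langle\rho(a_1),\rho(b_1)\rangle$ is an abelian subgroup of $\A 5$ containing the $5$-cycle $\rho(a_1)$, it equals $\langle\rho(a_1)\rangle\simeq\Z_5$, so $\rho(b_1)\in\langle\rho(a_1)\rangle$; the second handle $\langle\rho(a_2),\rho(b_2)\rangle$ is then a cyclic or $\D_2$ subgroup that, together with the $5$-cycle, generates $\A 5$.

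Once the separating curve is killed I would finish by \cref{decoupe}, which lets me act on the two one-holed tori independently. On the first handle \cref{ModCycl} normalises $(\rho(a_1),\rho(b_1))$ to $((1\,2\,3\,4\,5),\id)$. On the second handle: if its image is cyclic, \cref{ModCycl} reduces it to $(\text{generator},\id)$ and we are in the case $\sw=0$; if its image is $\D_2\simeq\Z_2\times\Z_2$, \cref{DG1} normalises the surjection $\Gamma_1\to\D_2$ and we are in the case $\sw=1$. A short case analysis on the finitely many conjugacy types of a generating pair consisting of the fixed $5$-cycle and such a cyclic or $\D_2$ subgroup, together with \cref{conjugaison} and the handle-mixing Dehn twists provided by \cref{decoupe}, shows that all configurations with $\sw=0$ are mutually equivalent and likewise all those with $\sw=1$. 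Combined with the lower bound, this yields exactly two orbits.

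The hard part will be the claimed reduction to $\rho([a_1,b_1])=\id$, that is, exhibiting a genus-one subsurface whose boundary is killed; this is the genus-$2$ analogue of \cref{nonsep} and I expect it to require a careful case analysis on $\rho(b_1),\rho(a_2),\rho(b_2)$ relative to the fixed $5$-cycle $\rho(a_1)$, pushing $\rho(b_1)$ into the centralizer $\langle\rho(a_1)\rangle$ by suitable Dehn twists and invoking the product lemmas \cref{tech} and \cref{produit} to control the orders of the permutations that arise at each step.
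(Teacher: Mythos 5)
Your overall skeleton is the same as the paper's: the complete invariant is $\sw$, both values are realized by surjective homomorphisms (your two representatives are valid --- a $\mathrm D_2$ subgroup together with a $5$-cycle does generate $\A 5$ since $\A 5$ has no proper transitive subgroup of order divisible by $20$, and \cref{sw1} gives $\sw=1$ there), so the content of the proposition is the upper bound. But the two steps that carry essentially all of the difficulty are left as announced intentions rather than arguments, and for both of them the route you sketch is not the one that actually works in the paper. First, for killing the separating curve: you propose to fix $\rho(a_1)$ as a $5$-cycle and push $\rho(b_1)$ into its centralizer by Dehn twists using \cref{tech} and \cref{produit}. The paper instead gets $\rho([a_1,b_1])=\id$ as a consequence of \cref{reduction} (which arranges that $\rho(a_2)$ and $\rho(b_2)$ both fix $5$, so that $\rho([a_1,b_1])=\rho([a_2,b_2])^{-1}$ lies in the commutator subgroup $\K$ of the stabilizer of $5$) combined with the lemma stating that if $[\alpha,\beta]$ is a double transposition then $\alpha$ and $\beta$ both fix its fixed point --- which would contradict transitivity. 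Without something playing the role of that lemma, ``pushing $\rho(b_1)$ into $\langle\rho(a_1)\rangle$'' is not an argument; note also that the centralizer of a $5$-cycle is only reached once you already know the two elements commute, which is precisely what is to be proved.

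Second, the final claim that ``a short case analysis \dots shows that all configurations with $\sw=0$ are mutually equivalent'' is exactly where the paper does its remaining real work, and your normalization makes it harder rather than easier. With $\rho(a_1)=(1\,2\,3\,4\,5)$ and $\rho(b_1)=\rho(b_2)=\id$ fixed, the admissible $\rho(a_2)$ range over involutions, $3$-cycles and $5$-cycles (each generating $\A 5$ with the fixed $5$-cycle), and the only moves preserving your normal form are conjugations by powers of $\rho(a_1)$, automorphisms of the second handle, and handle-mixing twists that destroy the first handle and force renormalization. The paper avoids this by going in the opposite direction: \cref{lm} trades a $5$-cycle for a $3$-cycle or double transposition, a further twist makes both $\rho(a_1)$ and $\rho(a_2)$ $3$-cycles, and then the generating condition forces $\rho(a_2)$ to have support $\{\alpha,4,5\}$ with $\alpha\in\{1,2,3\}$, at which point conjugation by powers of $(1\,2\,3)$ and one inversion finish the argument. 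In the $\sw=1$ case the paper likewise needs an explicit nonseparating curve killed by $\rho$ when both handles map to pairs of distinct commuting double transpositions, which your dichotomy (first handle cyclic of order $5$, second handle $\D_2$) silently excludes without justification. So the proposal is a correct outline with the right invariant and representatives, but the two substantive steps are missing, and the sketched routes for both would need to be replaced by arguments of the kind the paper actually gives.
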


Let us describe a representation in each of these orbits. Let $\rho_0$ be the representation defined by $\rho_0(a_1) = (1\ 2\ 3\ 4\ 5)$, $\rho_0(b_1) = \rho_0(b_2) = \id$ and $\rho_0(a_2) = (1\ 5\ 4\ 3\ 2)$. Let $\rho_1$ be the representation defined by $\rho_1(a_1) = (1\ 2\ 3\ 4\ 5)$ and $\rho_1(b_1) = \id$, $\rho_1(a_2) = (1\ 2) (3\ 4)$ and $\rho_1(b_2) = (1\ 3)(2\ 4)$. While $\rho_0$ lifts to $\SL \C$, the representation $\rho_1$ does not by \cref{sw1}.

Let us fix a surjective homomorphism $\rho\in \Hom(\Gamma, \A 5)$.
By \cref{reduction}, we can assume that $\rho(a_2)$ and  $\rho(b_2)$ both fix $5$, and that $\rho([a_1,b_1]) = \rho([a_2, b_2]) = \id$. Since the centralizer of a cycle in $\A 5$ is the group it generates, if $\rho(a_1)$ or $\rho(b_1)$ has order $3$, we may assume that $\rho(b_1) = \id$. The same holds in the second handle.

Let us first show that if $\rho$ does not lift to $\SL \C$, then there exists $\sigma\in \Aut(\Gamma)$ such that $\sigma\cdot\rho = \rho_1$.
\begin{lemma}
The action of $\Aut(\Gamma)$ on the set of surjective homomorphisms of $\Hom(\Gamma, \A 5)$ that do not lift to $\SL \C$ is transitive.
\end{lemma}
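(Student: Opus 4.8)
The plan is to bring an arbitrary non-lifting surjective $\rho\in \Hom(\Gamma_2, \A 5)$ to the normal form $\rho_1$, exploiting the reduction already in place. By \cref{reduction} I may assume that $\rho(a_2),\rho(b_2)$ fix $5$, so the second handle maps into the stabilizer $\A 4 = \mathrm{Stab}(5)$, and that $\rho([a_1,b_1]) = \rho([a_2,b_2]) = \id$; in particular each pair $\rho(a_i),\rho(b_i)$ commutes and hence generates an abelian subgroup of $\A 5$, which is either cyclic or the Klein four group. Lifting each generator to $\SL\C$ and using that the commutator of lifts is well defined, I would write $[\widetilde{\rho(a_i)},\widetilde{\rho(b_i)}] = (-1)^{\epsilon_i}\Id$, so that $\sw(\rho) = \epsilon_1 + \epsilon_2 \pmod 2$. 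By \cref{sw1}, $\epsilon_i = 1$ precisely when $\rho(a_i)$ and $\rho(b_i)$ are two distinct commuting involutions (equivalently, when the $i$-th handle generates a Klein four), while a cyclic handle gives $\epsilon_i = 0$. Thus non-lifting, i.e. $\sw(\rho) = 1$, is equivalent to saying exactly one handle is a Klein four and the other is cyclic.

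Next I would arrange that the Klein four sits in the second handle and equals $\K = \{\id,(1\,2)(3\,4),(1\,3)(2\,4),(1\,4)(2\,3)\}$. Since every Klein four subgroup of $\A 5$ fixes a point, if the Klein four is the first handle it fixes some $p\neq 5$ (otherwise the whole image would lie in $\A 4$, contradicting surjectivity); conjugating by a double transposition exchanging $p$ and $5$ and then swapping the two handles (an orientation-preserving automorphism of $\Gamma_2$, hence in $\Aut(\Gamma)$) returns us to the case where the second handle is the Klein four $\K\subseteq \A 4$ and the first handle is cyclic and, by surjectivity, moves $5$. The first handle is then generated by a single $5$-cycle, $3$-cycle or double transposition moving $5$, the remaining generator being a power of it, which I can kill with a handle-$1$ automorphism via \cref{ModCycl} and the centralizer remark. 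In the latter two cases I would multiply the generator by a suitable element of $\K$ through a Dehn twist mixing the two handles, turning it into a $5$-cycle by \cref{produit}, and then restore $\rho(b_1) = \id$. This reaches the form $\rho(a_1) = c$ a $5$-cycle, $\rho(b_1) = \id$, with the second handle surjecting onto $\K$.

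For the endgame I would first conjugate $\rho$, which is legitimate by \cref{conjugaison}, so that $c = (1\,2\,3\,4\,5)$; the second handle now generates some Klein four $\K''$ fixing a point $q$. Conjugating further by the appropriate power of $(1\,2\,3\,4\,5)$ fixes $\rho(a_1)$ and $\rho(b_1)$ while cycling $q$ to $5$, because $\langle(1\,2\,3\,4\,5)\rangle$ permutes the five point-stabilizing Klein fours transitively; this makes the second handle generate exactly $\K$. Finally, since $\rho([a_2,b_2]) = \id$ the separating curve between the handles is killed, so \cref{decoupe} lets me act by $\Aut(\Gamma_1)$ on the second handle alone while fixing the first; as this action is transitive on the surjective homomorphisms $\Gamma_1\to \K\simeq \D_2$ (as in \cref{lemA4bis}), I can set $(\rho(a_2),\rho(b_2)) = ((1\,2)(3\,4),(1\,3)(2\,4))$, giving $\rho = \rho_1$.

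The main obstacle I anticipate is the bookkeeping in the middle stage: coordinating the Dehn twists that convert the cyclic handle into a $5$-cycle (checking with \cref{produit} that the relevant products of a $3$-cycle or double transposition with an element of $\K$ are genuinely $5$-cycles, and that no forbidden coincidence of fixed points occurs), while tracking the effect of each twist on $\rho(b_1)$ and restoring it to $\id$. The two final conjugations are clean once one observes that $(1\,2\,3\,4\,5)$ acts transitively on the five Klein fours, which is the structural fact making the normalization of the second handle independent of the already fixed first handle.
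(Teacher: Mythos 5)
Your proposal is correct and follows essentially the same route as the paper: use \cref{reduction} to normalize, identify non-lifting (via \cref{sw1}) with exactly one handle carrying two distinct commuting double transpositions, make the other handle cyclic with a $5$-cycle generator using \cref{produit} and a Dehn twist mixing the handles, conjugate by powers of $(1\,2\,3\,4\,5)$ to place the Klein four in $\K$, and finish with transitivity of $\Aut(\Gamma_1)$ on surjections onto $\K\simeq \D_2$. The only difference is that you spell out why the Klein-four handle may be assumed to be the second one (swap plus conjugation), a point the paper asserts without comment; this is a welcome elaboration, not a different argument.
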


\begin{proof}
We suppose that $\rho$ does not lift to $\SL \C$.
By \cref{sw1} and the remark above, $(a_2, b_2)$ must be sent to two different double transpositions. 
We may assume that $\rho(b_1) = \id$: if $\rho(a_1)$ is a cycle, since $[\rho(a_1), \rho(b_1)] = \id$, $\rho(b_1)$ is a power of $\rho(a_1)$. If $\rho(a_1)$ is a double transposition, then since $\rho$ lifts to $\SL \C$ and $\rho(b_1)$ commutes with $\rho(a_1)$, $\rho(b_1)$ must again be a power of $\rho(a_1)$. The group generated by $\rho(a_1)$ and $\rho(b_1)$ is thus cyclic and we may assume with an automorphism of $\Gamma_1$ that $\rho(b_1) = \id$. Since $\A 5$ is not abelian, $\rho(a_1)\neq \id$. We may assume that that $\rho(a_1)$ is a $5$-cycle. If it is a double transposition or a $3$ cycle, then we may apply an automorphism of $\Gamma_1$ in the handle $(a_2, b_2)$ so that $\rho(a_2)\rho(a_1)$ is a $5$-cycle, by \cref{produit}. A Dehn twist along a curve homotopic to $a_2^{-1}b_1$ replaces $\rho(a_1)$ with $\rho(a_1)\rho(a_2)^{-1}$ which is a $5$-cycle. Note that $\rho(a_1)$ and $\rho(b_1)$ are still distinct double transpositions that commute.
We can conjugate $\rho$ so that $\rho(a_1) = (1\ 2\ 3\ 4\ 5)$. We can then conjugate $\rho$ by $\rho(a_1)^k$ so that $\rho(a_1)$ and $\rho(b_1)$ fix $5$. Since the action of $\Aut(\Gamma_1)$ on the set of surjective homomorphisms $\Hom(\Gamma_1, \D_2)$ is transitive, we may assume that $\rho(a_1) = (1\ 2)(3\ 4)$ and $\rho(b_1) = (1\ 3)(2\ 4)$, and $\rho = \rho_1$.
\end{proof}

We now suppose that $\rho$ lifts to $\SL \C$. We still assume that $\rho([a_1,b_1]) = \rho([a_2, b_2]) =\id$.
\begin{lemma}
There exists a nonseparating simple closed curve $\gamma$ such that~$\rho(\gamma)=\id$.
\end{lemma}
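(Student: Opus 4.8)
The plan is to analyse the two handle subgroups $H_1:=\langle\rho(a_1),\rho(b_1)\rangle$ and $H_2:=\langle\rho(a_2),\rho(b_2)\rangle$. Since $\rho([a_1,b_1])=\rho([a_2,b_2])=\id$, both are abelian subgroups of $\A 5$, hence cyclic of order $2$, $3$ or $5$, or isomorphic to $\D_2$. The separating curve $[a_1,b_1]=[a_2,b_2]^{-1}$ is killed by $\rho$, so by \cref{decoupe} I may apply automorphisms of $\Gamma_1$ independently in each handle. If $H_1$ is cyclic, then the restriction of $\rho$ to the first handle factors through $\mathrm H_1(\Gamma_1,\Z)=\Z^2$, and the gcd/$\Sp_2(\Z)$ argument of \cref{ModCycl} lets me arrange $\rho(b_1)=\id$; as $b_1$ is nonseparating this finishes the lemma. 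This already covers the case where one of $\rho(a_1),\rho(b_1)$ is a $3$- or $5$-cycle, since the centralizer of a cycle is the cyclic group it generates, forcing $H_1$ cyclic; the same discussion applies to $H_2$. Thus the only remaining case is the one in which both $H_1$ and $H_2$ are Klein four-groups.

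In that case I would use the structural fact that every Klein four-subgroup of $\A 5$ consists of the three double transpositions fixing a common point. Hence $H_1$ fixes some point $p_1$ and $H_2$ fixes $5$; surjectivity forces $p_1\neq 5$, and since distinct double transpositions fix distinct points we get $H_1\cap H_2=\{\id\}$. Using the transitivity of $\Aut(\Gamma_1)$ on surjections onto $\D_2$ in each handle, together with a global conjugation, I would normalize to the explicit model $\rho(a_1)=(2\,3)(4\,5)$, $\rho(b_1)=(2\,4)(3\,5)$, $\rho(a_2)=(1\,2)(3\,4)$, $\rho(b_2)=(1\,3)(2\,4)$, which does lift to $\SL\C$ by \cref{sw1} and so genuinely occurs in the case at hand.

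The heart of the proof is then to exhibit a nonseparating simple closed curve in $\ker\rho$ for this model, and this is the step I expect to be the main obstacle. The relation $H_1\cap H_2=\{\id\}$ rules out every easy candidate: a curve carried by a single handle maps into $H_i\setminus\{\id\}$, and a curve of the form (loop in the second handle)$\cdot$(loop in the first handle) maps to a product $h_2h_1$ with $h_i\in H_i$, which is never trivial. A kernel element in a nonseparating homology class certainly exists, for instance $\gamma=a_1\,c\,a_2\,c^{-1}$ where $c$ is any element with $\rho(c)\rho(a_2)\rho(c)^{-1}=\rho(a_1)$ (such $\rho(c)$ exists because $\rho(a_1)$ and $\rho(a_2)$ are conjugate double transpositions, and $c$ exists by surjectivity): then $\rho(\gamma)=\id$ while $[\gamma]=[a_1]+[a_2]$ is primitive. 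The genuinely delicate point is to realise a kernel element by an \emph{embedded} curve. Rather than verify simplicity of such a $\gamma$ directly, I would first apply a cross-handle mapping class, e.g. the Dehn twist along the nonseparating curve in the class $[a_1]+[a_2]$, which by \cref{produit} moves $\rho$ to a representation in its orbit in which one handle maps onto a cyclic group generated by a $5$-cycle. Since a nonseparating simple closed curve in the kernel pulls back under any mapping class, this reduces the Klein/Klein configuration to the cyclic case already treated, where $\rho(b_1)=\id$ exhibits the desired curve.
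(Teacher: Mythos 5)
Your reduction to the case where both handle groups $H_1=\langle\rho(a_1),\rho(b_1)\rangle$ and $H_2=\langle\rho(a_2),\rho(b_2)\rangle$ are Klein four-groups, the normalization to an explicit model via \cref{decoupe} and the transitivity on surjections onto $\D_2$, and the observation that $H_1\cap H_2=\{\id\}$ (which rules out every curve supported in one handle and every product of one loop from each handle) all match the paper and are correct. The gap is exactly at the step you flag as the main obstacle. Consider the twist you propose, along a curve $\delta$ in the class $[a_1]+[a_2]$, realized disjointly from $a_1$ and $a_2$ with $\rho(\delta)=\rho(a_1)\rho(a_2)$, a $5$-cycle by \cref{produit}. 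The twist leaves $\rho(a_1)$ unchanged and replaces $\rho(b_1)$ by (a conjugate of) $\rho(b_1)\rho(\delta)^{\pm1}$; in your model this is $(2\,4)(3\,5)$ times a $5$-cycle, i.e.\ a $3$-cycle that does not commute with the double transposition $\rho(a_1)=(2\,3)(4\,5)$. So the first handle does not map onto a cyclic group --- its image is not even abelian --- and, more importantly, the relation $\rho([a_1,b_1])=\id$ is destroyed. The ``cyclic case already treated'' depends on precisely that relation: it is what lets \cref{decoupe} and \cref{ModCycl} act by $\mathrm{Sp}_2(\Z)$ inside the handle and kill $b_1$. \cref{produit} only controls the cycle type of one product; it says nothing about the second generator of the handle, and a double transposition can never lie in the cyclic group generated by a $5$-cycle. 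If instead you re-run the preliminary reductions on the twisted representation, you land back in either the cyclic case or the Klein/Klein case with no measure of progress, so the argument becomes circular (indeed, that the Klein/Klein configuration lies in the orbit of a configuration with a trivial $\rho(b_1)$ is essentially the content of the statement being proved).

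What is needed at this point --- and what the paper does --- is an explicit word in the normalized Klein/Klein model that is simultaneously killed by $\rho$ and represented by an embedded nonseparating curve; the paper exhibits $\gamma=b_2^{-1}a_2b_1^{-1}a_2b_1^{-1}a_1^{-1}a_2b_1^{-1}$ for its normal form and checks both properties directly. Your own computation of $H_1\cap H_2=\{\id\}$ already shows that no short word can work, so some such explicit verification (or a genuinely different mechanism) cannot be avoided here; the single cross-handle twist does not supply it.
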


\begin{proof}
We have already explained why this holds when one of the handles $(a_1, b_1)$ and $(a_2, b_2)$ does not have its generators mapped by $\rho$ to distinct double transpositions. We now turn to this case.
Conjugating $\rho$ if necessary, and then applying a homeomorphism in each handle, we may assume that $\rho(a_1) = (1\ 2)(3\ 4)$, $\rho(b_1) = (1\ 3)(2\ 4)$, $\rho(a_2) = (1\ 3)(2\ 5)$ and $\rho(b_2) = (1\ 5)(2\ 3)$.
The loop $\gamma = b_2^{-1}a_2b_1^{-1} a_2b_1^{-1}a_1^{-1}a_2b_1^{-1}$ is simple, nonseparating, and $\rho(\gamma) = \id$.
\end{proof}
 
We can thus assume that $\rho(b_1) = \id$. Note that we may also assume that $\rho(b_2) = \id$. Indeed if $\rho(a_2)$ and $\rho(b_2)$ are distinct double transpositions, then $\rho$ does not lift do $\SL \C$. Since $[\rho(a_2), \rho(b_2)] = \id$, and that the commutator of a cycle is the group it generates, we may assume that $\rho(b_2) = \id$. One of $\rho(a_1)$ and $\rho(a_2)$ has order at least $3$ because  the group $\A 5$ is not generated by two double transpositions.
Therefore we suppose $\rho(a_1)$ has order $3$ or $5$. The following lemma enables us to assume that it has order $3$.

\begin{lemma}\label{lm}
If $\rho(a_1)$ is a $5$-cycle, then we may replace $\rho(a_1)$ with a double transposition or a $3$-cycle without changing $\rho(b_1)$, $\rho(a_2)$ and $\rho(b_2)$.
\end{lemma}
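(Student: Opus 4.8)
The plan is to produce the required automorphism as a composition of the Dehn twists already used in this section together with a slide of the first handle over the second, realising the move that replaces $\rho(a_1)$ by $\rho(a_1)\cdot g\,\rho(a_2)^{\pm 1}g^{-1}$ for an arbitrary $g$ in the image, while fixing $\rho(a_2)$ and $\rho(b_2)=\id$. Indeed, exactly as in the twist computations of \cref{reduction}, a twist along a simple closed curve meeting $a_1$ once replaces $\rho(a_1)$ by its product with the image of that curve; sliding the first handle along a loop representing a word $w$ drags $a_1$ across a parallel copy of $a_2^{\pm 1}$ conjugated by $w$, and since $\rho(b_1)=\rho(b_2)=\id$ the appended curve has image $g\,\rho(a_2)^{\pm1}g^{-1}$ with $g=\rho(w)$ ranging over all of $\langle\rho(a_1),\rho(a_2)\rangle=\A 5$, the handle-$2$ curves $a_2,b_2$ being left untouched. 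After the move I would restore $\rho(b_1)=\id$ by an automorphism of the first handle: the relation forces $\rho(b_1)$ to commute with the new $\rho(a_1)$, hence to lie in the cyclic group it generates, and \cref{ModCycl} applied in the first handle sends it back to $\id$ without disturbing $\rho(a_2)$ or $\rho(b_2)$.

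With this move in hand, write $\alpha=\rho(a_1)$ (a $5$-cycle) and $\beta=\rho(a_2)$. Since $\langle\alpha,\beta\rangle=\A 5$ we have $\beta\neq\id$, so $\beta$ has order $2$, $3$ or $5$, and one checks that $\beta$ and $\beta^{-1}$ are $\A 5$-conjugate in every case; thus the reachable new values of $\rho(a_1)$ are exactly the products $\alpha\tau$ with $\tau$ in the $\A 5$-conjugacy class of $\beta$. The lemma therefore reduces to the purely group-theoretic claim that some such $\tau$ makes $\alpha\tau$ a $3$-cycle. I would verify this by a short case analysis, normalising $\alpha=(1\,2\,3\,4\,5)$. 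If $\beta$ is a double transposition, all double transpositions are conjugate in $\A 5$, and $\tau=(1\,2)(3\,4)$ gives $\alpha\tau=(1\,3\,5)$. If $\beta$ is a $3$-cycle, all $3$-cycles are conjugate, and $\tau=(1\,3\,2)$ gives $\alpha\tau=(1\,4\,5)$.

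The one delicate case is $\beta$ a $5$-cycle, because the $5$-cycles split into two $\A 5$-conjugacy classes and $\tau$ must be chosen in the class of $\beta$. Here I would use that $\alpha\tau$ fixes a point $i$ precisely when $\tau(i)=\alpha^{-1}(i)$, so to obtain a $3$-cycle I seek a $5$-cycle $\tau$ agreeing with $\alpha^{-1}$ in exactly two places. Carrying this out in each class: $\tau=(1\,5\,3\,2\,4)$ lies in the class of $\alpha$ and yields $\alpha\tau=(2\,5\,4)$, while $\tau=(1\,5\,3\,4\,2)$ lies in the other class and yields $\alpha\tau=(3\,5\,4)$. Hence a suitable conjugate exists whichever class $\beta$ belongs to, and in all cases the new $\rho(a_1)$ is a $3$-cycle, which is what the subsequent argument needs.

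The main obstacle I anticipate is geometric rather than combinatorial: one must exhibit an honest orientation-preserving mapping class, presented as a product of the Dehn twists used above, that effects right multiplication of $\rho(a_1)$ by a \emph{prescribed} conjugate $g\,\rho(a_2)^{\pm1}g^{-1}$ and fixes $\rho(a_2)$ and $\rho(b_2)$ \emph{exactly}, not merely up to conjugacy, since the naive single twist along a curve disjoint from the second handle only multiplies $\rho(a_1)$ by powers of $\rho(a_2)$ and those can leave it a $5$-cycle (for instance $\alpha=(1\,2\,3\,4\,5)$, $\beta=(1\,3)(2\,4)$ has $\alpha\beta$ a $5$-cycle). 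Once the conjugate-multiplication move is established, the only remaining subtlety is the splitting of $5$-cycles into two classes, which is dispatched by the explicit choices above.
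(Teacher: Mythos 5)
There is a genuine gap here, and you have located it yourself: everything rests on a ``handle slide'' that replaces $\rho(a_1)$ by $\rho(a_1)\cdot g\rho(a_2)^{\pm1}g^{-1}$ for an \emph{arbitrary} $g$ in the image while fixing $\rho(a_2)$ and $\rho(b_2)$ exactly, and this move is never constructed. It is not available from the toolkit used in this section: a Dehn twist multiplies $\rho(a_1)$ by the image of a \emph{simple} closed curve meeting $a_1$ once, and a curve in the class of $wa_2^{\pm1}w^{-1}b_1^{-1}$ for a general word $w$ need not be simple; moreover, as soon as $w$ involves $a_2$ or $b_2$ it will intersect the second handle, so the twist will not leave $\rho(a_2)$ and $\rho(b_2)$ untouched. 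Viewed algebraically, the Nielsen move $a_1\mapsto a_1\cdot wa_2w^{-1}$ fixing $a_2,b_2$ must also preserve the surface relator $[a_1,b_1][a_2,b_2]$, a constraint you do not address, and subsurface-pushing maps only \emph{conjugate} the first handle rather than right-multiplying $a_1$ by a prescribed conjugate. So the reduction to ``$\alpha\tau$ with $\tau$ ranging over the whole conjugacy class of $\beta$'' is unjustified, and your own example $\alpha=(1\,2\,3\,4\,5)$, $\beta=(1\,3)(2\,4)$ shows that the moves you actually have (right multiplication by $\rho(a_2)^{\pm1}$ via the twist along $a_2b_1^{-1}$) do not suffice on their own.

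The missing idea is much cheaper than the move you are trying to build. The paper first replaces $\rho(a_1)$ by a power $\rho(a_1)^{k}$ --- an automorphism of the first handle, available by \cref{ModCycl} because $\rho(b_1)=\id$ --- where $k$ is chosen so that $\rho(a_1)^{k}\rho(a_2)$ fixes a point $p$ with $\rho(a_2)(p)\neq p$; such a $k$ with $1\leqslant k\leqslant 4$ exists precisely because $\rho(a_1)$ is a $5$-cycle and hence acts transitively. The single available Dehn twist along $a_2b_1^{-1}$ then turns $\rho(a_1)$ into $\rho(a_1)^{k}\rho(a_2)$, which is a nontrivial element of $\A 5$ with a fixed point (nontrivial by surjectivity of $\rho$), hence automatically a $3$-cycle or a double transposition --- exactly what the lemma asks for; the damage to $\rho(b_2)$ is then repaired by an automorphism of the second handle. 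This bypasses your conjugacy-class analysis, and in particular the delicate splitting of the $5$-cycles into two $\A 5$-classes, entirely.
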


\begin{proof}
Let $p\in \{1, \ldots, 5\}$ be such that $\rho(a_2)$ does not fix $p$. There exists $1\leqslant k\leqslant 4$ such that $\rho(a_1)^k\rho(a_2)$ fixes $p$. We apply an automorphism of $\Gamma_1$ in the handle $(a_1, b_1)$, that replaces $\rho(a_1)$ with $\rho(a_1)^k$ and leaves $\rho(b_1) = \id$ unchanged; it exists by \cref{ModCycl}. We then apply a Dehn twist along a curve homotopic to $a_2b_1^{-1}$ to replace it with $\rho(a_1)^k \rho(a_2)$ that fixes $p$. We still have $\rho(b_1) = \id$, and we may apply an automorphism of $\Gamma_1$ in the second handle $(a_2, b_2)$ that does not change $\rho(a_2)$ and turns back $\rho(b_2)$ to $\id$.
%
\end{proof}
We can now assume that $\rho(a_1)$ is a $3$-cycle: we may assume that both $\rho(a_1)$ and $\rho(b_1)$ have order 2 or 3 by \cref{lm}, and they cannot both have order $2$. We may interchange the handles if $\rho(a_1)$ is not a $3$-cycle.
If $\rho(a_2)$ is a double transposition, then we apply a Dehn twist along a curve homotopic to $a_1b_2^{-1}$ that replaces it with $\rho(a_2)\rho(a_1)$ which is a $5$-cycle by \cref{produit}. 
By \cref{lm} we may assume that $\rho(a_2)$ is a $3$-cycle without modifying $\rho(\gamma)$ for $\gamma\in \{a_1, b_1, b_2\}$.
%

We now show that all the homomorphisms of this form are in the same orbit under the action of $\Aut(\Gamma_2)$, thus proving \cref{gen2}.
\begin{proof}
We may conjugate $\rho$ so that $\rho(a_1) = (1\ 2\ 3)$. Let us write $\rho(b_2) = (\alpha\ \beta\ \gamma)$, with $\alpha\in \{1, 2, 3\}$ and $\{\beta, \gamma\} = \{4, 5\}$. Conjugating $\rho$ with a power of $\rho(a_1)$, we can suppose that $\alpha  = 1$, that is $\rho(a_2) = (1\ 4\ 5)$ or $\rho(a_2) = (1\ 5\ 4)$. By \cref{ModCycl}, there exists an automorphism of $\Gamma_1$ that replaces $\rho(a_2)$ with $\rho(a_2)^{-1}$, and we can thus assume that $\rho(a_2) = (1\ 4\ 5)$. 
\end{proof}

Let us state a corollary of this classification that will be useful to finish our classification of the orbits in genus at least $3$.
\begin{cor}\label{anseSp}
Let $g\in \A 5\setminus\{\id\}$.
For every surjective $\rho\in \Hom(\Gamma_2, \A 5)$, there exists $\sigma\in \Aut(\Gamma_2)$ such that $\sigma \cdot \rho(a_1) = g$ and $\sigma \cdot \rho(b_1) = \id$.
\end{cor}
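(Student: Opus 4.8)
The plan is to reduce to the two explicit representatives furnished by \cref{gen2}. By that proposition the surjective homomorphisms $\Gamma_2\to \A 5$ form exactly two orbits under $\Aut(\Gamma_2)$, and since $\sw$ is an $\Aut(\Gamma_2)$-invariant taking the two distinct values $\sw(\rho_0)=0$ and $\sw(\rho_1)=1$, it is a complete invariant: the orbit of $\rho_0$ consists of the representations that lift to $\SL\C$ and the orbit of $\rho_1$ of those that do not. Hence it suffices to exhibit, inside each of the two orbits, one representation $\rho'$ with $\rho'(a_1) = g$ and $\rho'(b_1) = \id$. Indeed a given surjective $\rho$ lies in one of the two orbits, and choosing $\rho'$ in that same orbit we obtain $\sigma\in \Aut(\Gamma_2)$ with $\sigma\cdot\rho = \rho'$, whence $\sigma\cdot\rho(a_1) = g$ and $\sigma\cdot\rho(b_1) = \id$. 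In both constructions below the surface relation $[a_1,b_1][a_2,b_2] = \id$ holds automatically, because $\rho'(b_1)=\id$ forces $\rho'([a_1,b_1])=\id$ and the two images chosen in the second handle will commute.

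For the orbit that lifts to $\SL\C$, I would set $\rho'(a_1) = g$, $\rho'(a_2) = h$ and $\rho'(b_1) = \rho'(b_2) = \id$, where $h\in \A 5$ is chosen so that $\langle g, h\rangle = \A 5$. Since every $\rho'(b_i)$ is trivial, lifting each generator arbitrarily to $\SL\C$ gives a product of handle commutators equal to $\Id$, so $\rho'$ lifts and lies in the orbit of $\rho_0$. Such an $h$ always exists: recall that the proper subgroups of $\A 5$ have orders in $\{1,2,3,4,5,6,10,12\}$. If $g$ is a $5$-cycle (resp.\ a $3$-cycle) take $h$ to be any $3$-cycle (resp.\ any $5$-cycle), as then $|\langle g,h\rangle|$ is divisible by $15$, which no proper subgroup order is; if $g$ is a double transposition, take for $h$ a $5$-cycle that $g$ does not invert (for instance $h=(1\,2\,3\,4\,5)$ when $g=(1\,2)(3\,4)$), so that $\langle g,h\rangle$ has order divisible by $10$ and is not the dihedral group $\D_5$ normalising $\langle h\rangle$, hence equals $\A 5$.

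For the orbit that does not lift, I would set $\rho'(a_1) = g$, $\rho'(b_1) = \id$, and let $\rho'(a_2), \rho'(b_2)$ be two distinct commuting double transpositions generating a Klein four subgroup $V\simeq \D_2$ chosen so that $\langle g, V\rangle = \A 5$. By \cref{sw1} the commutator of the lifts in the second handle equals $-\Id$, while the first handle contributes $\Id$; hence $\rho'$ does not lift and lies in the orbit of $\rho_1$. It remains to produce such a $V$ for every nontrivial $g$. The relevant finite-group fact is that a Klein four subgroup of $\A 5$ fixes a unique point $p$, and the only subgroups of $\A 5$ containing it are itself, the point stabiliser $\A 4$ of $p$, and $\A 5$: the intermediate orders must be multiples of $4$ dividing $60$, and $\A 5$ has no subgroup of order $20$. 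Consequently $\langle g, V\rangle = \A 5$ as soon as $g$ moves $p$. Since $g\neq \id$ moves some point $p$, I would take for $V$ the Klein four subgroup fixing that $p$, which finishes the construction.

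The main obstacle is purely the finite-group bookkeeping of the two previous paragraphs: verifying, conjugacy class by conjugacy class, that the generating partner $h$ and the Klein four subgroup $V$ exist. The subgroup structure of $\A 5$ recalled above—proper subgroup orders in $\{1,2,3,4,5,6,10,12\}$, a unique $\A 4$ above each Klein four group, and no subgroup of order $20$—makes each of these verifications immediate.
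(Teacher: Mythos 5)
Your proposal is correct and follows essentially the same route as the paper: exhibit, in each of the two $\Aut(\Gamma_2)$-orbits given by \cref{gen2} (distinguished by $\sw$ via \cref{sw1}), an explicit representative with $a_1\mapsto g$ and $b_1\mapsto \id$, using a generating partner $h$ in the lifting case and a Klein four subgroup fixing a point moved by $g$ in the non-lifting case. You merely spell out the finite-group verifications that the paper leaves to the reader.
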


\begin{proof}
There exists $h\in \A 5$ such that $g$ and $h$ generate $\A 5$. The representation $\rho'$ defined by $\rho'(a_1) = g$, $\rho'(b_1) = \rho'(b_2) = \id$ and $\rho'(a_2) = h$ lifts to $\SL \C$. Therefore, if $\rho$ lifts to $\SL \C$, then there exists $\sigma\in \Aut(\Gamma_2)$ such that $\sigma\cdot \rho = \rho'$.
There exist two distinct commuting double transpositions $h_1, h_2\in \A 5$ such that $h_1$, $h_2$ and $g$ generate $\A 5$: it suffices to take two distinct transpositions with a fixed point $p\in \{1, \ldots, 5\}$ that is not fixed by $g$, since the proper subgroups of $\A 5$ of order divisible by $4$ have a fixed point in $\{1, \ldots, 5\}$. Let $\rho''$ be the representation defined by $\rho''(a_1) = g$, $\rho''(b_1) = \id$ and $\rho''(a_2) = h_1$ and $\rho''(b_2) = h_2$. The representation $\rho''$ does not lift to $\SL \C$ by \cref{sw1}. Therefore, if $\rho$ does not lift to $\SL \C$, then there exists $\sigma\in \Aut(\Gamma_2)$ such that $\sigma\cdot \rho = \rho''$.
\end{proof}

\subsubsection{Genus $g\geqslant 3$}

The proof of \cref{modA5} is now reduced to the case $g = 3$. Therefore, the following lemma completes its proof.
\begin{lemma}
Let us assume that $\rho \in \Hom(\Gamma_3, \A 5)$ is surjective. There exists $\sigma\in \Aut(\Gamma_3)$ such that $\sigma\cdot\rho(a_3) =\sigma\cdot\rho(b_3) = \id$.
\end{lemma}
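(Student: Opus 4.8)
The plan is to reuse the structure set up immediately before the statement. By \cref{reduction} I may assume that $\rho(a_i),\rho(b_i)$ fix $5$ for $i=2,3$, so that handles $2$ and $3$ take values in $\A 4=\mathrm{Stab}(5)$; moreover it has already been arranged that $\rho([a_1,b_1])=\id$ and $\rho([a_2,b_2])=\id$, so the surface relation forces $\rho([a_3,b_3])=\id$ and $\rho([a_1,b_1][a_2,b_2])=\id$ as well. Consequently every handle is cut off by a separating curve on which $\rho$ vanishes, and by \cref{decoupe} I may act by $\Aut(\Gamma_1)$ on handle $3$ alone, by $\Aut(\Gamma_2)$ on handles $2,3$, or by $\Aut(\Gamma_2)$ on handles $1,2$, each time without disturbing the complementary handles.

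First I would normalize handles $2,3$, whose image $H=\rho(\langle a_2,b_2,a_3,b_3\rangle)$ is a subgroup of $\A 4$. If $H$ is cyclic (in particular if it is trivial), then \cref{ModCycl} applied to the handle-$2,3$ subsurface puts $\rho(a_3)=\rho(b_3)=\id$ and there is nothing more to do. Otherwise $H\simeq\D_2$ or $H=\A 4$, and using the forms \cref{lemA4} and \cref{lemA4bis} from \cref{AutA4}, or the $\D_2$ case of \cref{diedr}, I can assume that $\rho(a_3),\rho(b_3)\in\K$; in particular they commute and have order at most $2$, in accordance with $\rho([a_3,b_3])=\id$.

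The decisive point is to arrange that handles $1$ and $2$ already generate $\A 5$. Granting this, I finish as follows. Since $\rho([a_1,b_1][a_2,b_2])=\id$, \cref{decoupe} makes the restriction to handles $1,2$ a surjective homomorphism $\Gamma_2\to\A 5$, so \cref{anseSp} (post-composed with the orientation-preserving automorphism $(a_1,b_1)\mapsto(b_1^{-1},a_1)$ so as to control $b_1$ rather than $a_1$) lets me assume $\rho(b_1)=\rho(a_3)\rho(b_3)$ while leaving handle $3$ fixed. A Dehn twist along a curve homotopic to $b_3^{-1}b_1$ (as in \cref{Dehn twist}) then replaces $\rho(a_3)$ by $\rho(a_3)\rho(b_1)^{-1}\rho(b_3)^{-1}=\rho(b_3)^{-2}=\id$, using $\rho(a_3),\rho(b_3)\in\K$, and leaves $\rho(b_3)$ unchanged, exactly as in the $\Sy 4$ computation. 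After this $\rho(a_3)=\id$, and the same move applied once more, now with $\rho(b_1)=\rho(b_3)$, kills $\rho(b_3)$. This reduces $\rho$ to the genus-$2$ case and, combined with \cref{gen2} and \cref{sw1}, completes the proof of \cref{modA5}.

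The main obstacle is the remaining possibility that $\langle\rho(a_1),\rho(b_1),\rho(a_2),\rho(b_2)\rangle$ is a proper subgroup of $\A 5$ while handle $3$ is genuinely needed to generate $\A 5$. Here I would proceed as in the $\Sy 4$ treatment of genus $\geqslant 3$, transferring a generator of handle $3$ into handle $2$ by a Dehn twist along a curve homotopic to a word such as $a_2a_3$, which strictly enlarges $\langle\text{handles }1,2\rangle$; since $\A 5$ is finite this terminates after finitely many steps, after which I renormalize handle $3$ back into $\K$ by acting on handle $3$ alone (\cref{decoupe}), an operation that cannot destroy the surjectivity of handles $1,2$. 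The one genuinely delicate point is to check that these slides can be carried out while keeping the separating commutators trivial, so that every handle stays separable throughout the process.
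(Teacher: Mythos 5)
Your setup and your endgame are sound and essentially match the paper: after \cref{reduction} the handles $2,3$ land in $\A 4=\mathrm{Stab}(5)$ with all handle commutators trivial, and once the restriction of $\rho$ to handles $1,2$ is known to be onto $\A 5$, the combination of \cref{anseSp} with the Dehn twist along $b_3^{-1}b_1$ (the same move as in the $\Sy 4$ section) does kill handle $3$ — up to two small repairs you should make explicit: \cref{anseSp} requires $\rho(a_3)\rho(b_3)\neq\id$, so the case $\rho(a_3)=\rho(b_3)$ needs a preliminary twist inside handle $3$; and your ``same move applied once more'' needs an intervening swap of $a_3$ and $b_3$, together with a check that handles $1,2$ still surject onto $\A 5$ after the first twist has altered $\rho(a_1)$.

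The genuine gap is exactly at what you call the decisive point: arranging that handles $1$ and $2$ generate $\A 5$. Your proposed mechanism — iterated Dehn twists along curves such as $a_2a_3$ that ``strictly enlarge'' $\langle\rho(a_1),\rho(b_1),\rho(a_2),\rho(b_2)\rangle$, terminating because $\A 5$ is finite — does not work as stated. Such a twist replaces $\rho(b_2)$ by something like $\rho(b_2)\rho(a_2)\rho(a_3)$, and the new subgroup generated by handles $1,2$ neither contains the old one nor is visibly larger; even granting weak monotonicity, termination only says the subgroup stabilizes, not that it stabilizes at $\A 5$. Worse, these twists generically destroy the relation $\rho([a_2,b_2])=\id$, which you need both to apply \cref{decoupe} and \cref{anseSp} to handles $1,2$ and to keep $\rho(a_3),\rho(b_3)$ in $\K$; you flag this as ``genuinely delicate'' but it is precisely the point that must be proved. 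The paper avoids the iteration entirely: after normalizing handles $2,3$ by \cref{ModCycl}, \cref{diedr} and \cref{AutA4} (subsurface automorphisms, so all commutator conditions survive), it argues case by case on $H=\rho(\langle a_2,b_2,a_3,b_3\rangle)\leqslant\A 4$. If $H\simeq\D_2$ one puts two distinct double transpositions on handle $2$; since $\rho(a_1),\rho(b_1)$ cannot both fix $5$, the group $G$ generated by handles $1,2$ has order divisible by $4$ and no global fixed point, hence $G=\A 5$. If $H\simeq\A 4$ one puts on $a_2$ a $3$-cycle making $\langle\rho(a_1),\rho(b_1),\rho(a_2)\rangle$ transitive, and the only transitive subgroup of $\A 5$ of order divisible by $3$ is $\A 5$ itself. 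These two elementary facts about $\A 5$ are the real content of the step, and your proposal does not supply a substitute for them.
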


\begin{proof}
Recall that we may assume that $\rho([a_1, b_1])= \rho([a_2, b_2]) = \rho([a_3, b_3]) = \id$, and that the group $H$ generated $\rho(\gamma)$ for $\gamma\in \{a_2, b_2, a_3, b_3\}$ is a subgroup of $\A 4$, that is embedded as the subgroup whose elements fix $5$. Let us act by $\Aut(\Gamma_2)$ on the last two handles. The subgroups of $\A 4$ are cyclic, isomorphic to $\D_2\simeq \Z_2\times \Z_2$ and $\A 4$. We have classified the orbits of the $\Aut(\Gamma_2)$ action on $\Hom(\Gamma_2, H)$ in the previous sections. We saw that we may assume that $\rho(a_3) = \rho(b_3) =\id$ unless $H$ is isomorphic to $\A 4$, or $\D_2$, in which case we can assume that the group $G$ generated by the $\rho(\gamma)$, $\gamma\in \{a_1, b_1, a_2, b_2\}$, is isomorphic to $\A 5$ and that $\rho(a_3)\neq \id$. Indeed if $H$ is isomorphic to $\D_2$, we may assume that $\rho(a_2)$ and $\rho(b_2)$ are two distinct double transpositions. Since $\rho(a_1)$ and $\rho(b_1)$ do not both fix $5$, the group $G$ is $\A 5$ because one can check that the proper subgroups of $\A 5$ of order divisible by $4$ have a fixed point $p\in \{1, \ldots, 5\}$. If $H$ is isomorphic to $\A 4$, then we may suppose that $\rho(a_2)$ is a $3$-cycle such that $\rho(a_1), \rho(b_1)$ and $\rho(a_2)$ acts transitively on $\{1, \ldots 5\}$. We then have $G = \A 5$ because the only subgroup of order divisible by $3$ that acts transitively on $\{1, \ldots, 5\}$ is $\A 5$ itself.
We saw that the action of $\Aut(\Gamma_2)$ on the set of surjective homomorphism of $\Hom(\Gamma_2, \A 5)$ has two orbits. Therefore, by \cref{anseSp}, we may apply an automorphism of the first two handles so that $\rho(a_2) = \rho(a_3)^{-1}$ and that $\rho(b_2) = \id$.  
A Dehn twist along a curve homotopic to $a_3b_2^{-1}$ replaces $\rho(a_2)$ with $\id$, and we still have $\rho(b_2) = \id$. We can permute the second and the third handles so that $\rho(a_3) = \rho(b_3) = \id$.
\end{proof}

\section{Closure of infinite orbits of the mapping class group action on representation spaces into elementary subgroups of $\PSL \C$}\label{SINF}
We now describe the closure of the orbits of some representations $\rho\in \Hom(\Gamma, G)$ with infinite image under the action of $\Aut(\Gamma)$, where $G$ is an elementary subgroup of $\PSL \C$. We will also show some other results describing these orbits.
\subsection{Holonomy in $\SO$}
\subsubsection{The group $\mathrm{SO}_2(\R)$}
Let $\rho\in \Hom(\Gamma, \mathrm{SO}_2(\R))$ with infinite image. The orbit of $\rho$ under the action of $\Aut(\Gamma)$  is dense in $\Hom(\Gamma, \mathrm{SO}_2(\R))$.
\begin{prop}\label{ptitlemme}
The orbit $\Aut(\Gamma)\cdot \rho$ is dense in $\Hom(\Gamma, \mathrm{SO}_2(\R))$.
\end{prop}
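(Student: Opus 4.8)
The plan is to exploit that $\mathrm{SO}_2(\R)$ is abelian. Since $\mathrm{SO}_2(\R)\simeq \R/\Z$, every $\rho$ factors through the abelianization $\mathrm H_1(\Sigma,\Z)\simeq \Z^{2g}$, so, as recalled in \cref{rappel}, the evaluation $\rho\mapsto x:=(\rho(a_1),\ldots,\rho(b_g))$ identifies $\Hom(\Gamma,\mathrm{SO}_2(\R))$ with the torus $(\R/\Z)^{2g}$, and the $\Aut(\Gamma)$-action becomes the linear action of $\Sp_{2g}(\Z)$. The image of $\rho$ is the subgroup of $\R/\Z$ generated by the coordinates of $x$, and this subgroup is infinite precisely when at least one coordinate is irrational. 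Writing $\omega$ for the standard symplectic form on $\Z^{2g}$ and pairing it against $x$, the set $\{\omega(x,w): w\in \Z^{2g}\}$ is exactly this subgroup, so the hypothesis furnishes a primitive $w$ with $\omega(x,w)$ irrational.

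The engine of the proof is the collection of symplectic transvections $T_w(v)=v+\omega(v,w)w$, $w\in \Z^{2g}$, which generate $\Sp_{2g}(\Z)$. Because $\omega(w,w)=0$ one computes $T_w^n(v)=v+n\,\omega(v,w)\,w$, so whenever $v$ lies in the closed, $\Sp_{2g}(\Z)$-invariant orbit closure $C:=\overline{\Aut(\Gamma)\cdot x}$ and $\omega(v,w)$ is irrational, the whole circle $v+(\R/\Z)w$ lies in $C$. Applying this to the $w$ found above shows that $C$ contains the $1$-dimensional subtorus coset $x+T_1$, where $T_1=(\R/\Z)w$.

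I would then grow the subtorus inductively. Suppose $C$ contains a coset $x+T_k$, where $T_k$ is the image in $(\R/\Z)^{2g}$ of a $k$-dimensional subspace $U_k=\R w_1+\cdots+\R w_k$ with $1\le k<2g$. Both $U_k$ and its symplectic orthogonal $U_k^{\perp_\omega}$ are proper subspaces of $\R^{2g}$ (here the nondegeneracy of $\omega$ enters), a real vector space is not a union of two proper subspaces, and rational points are dense, so there is $w_{k+1}\in \Z^{2g}$ lying in neither $U_k$ nor $U_k^{\perp_\omega}$. Writing $v=x+\sum_i t_i w_i$ with $t_i\in\R/\Z$, the pairing $\omega(v,w_{k+1})=\omega(x,w_{k+1})+\sum_i t_i\,\omega(w_i,w_{k+1})$ sweeps all of $\R/\Z$, since some $\omega(w_i,w_{k+1})$ is a nonzero integer; hence it is irrational for a dense set of $v\in x+T_k$. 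Adjoining the corresponding circles in the direction $w_{k+1}$ and passing to the closure of $C$ yields $x+T_{k+1}$ with $\dim T_{k+1}=k+1$. Iterating until $k=2g$ gives $C=(\R/\Z)^{2g}$, which is the assertion.

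The main obstacle is the induction step, and precisely the choice of $w_{k+1}$: it must be transversal to $U_k$ so that the subtorus genuinely grows, yet pair irrationally with some point of the current coset so that a fresh circle is actually adjoined. The nondegeneracy of $\omega$ is exactly what guarantees that $U_k$ and $U_k^{\perp_\omega}$ are simultaneously proper, and hence that both conditions can be met at every stage.
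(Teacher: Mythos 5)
Your proof is correct, but it follows a genuinely different route from the paper's. The paper lifts $\rho$ to $\R^{2g}$, runs an explicit Euclidean-algorithm-style sequence of symplectic moves in each handle to show that every $\rho_n=(\tfrac1n,0,\ldots,0)$ lies in $\overline{\Aut(\Gamma)\cdot\rho}$, and then invokes \cref{ModCycl} (transitivity on surjective homomorphisms onto cyclic groups) to conclude that the closure contains all of the dense set $(\mathbb Q/\Z)^{2g}$ of finite-image representations. You instead run the standard dynamical argument for $\Sp_{2g}(\Z)$ acting on the torus: iterate a symplectic transvection $T_w$ at a point pairing irrationally with $w$ to sweep out a whole circle inside the closed invariant set $C$, then grow the resulting subtorus coset one dimension at a time, using nondegeneracy of $\omega$ to guarantee at each stage an integer vector $w_{k+1}$ outside both $U_k$ and $U_k^{\perp_\omega}$. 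Your argument is self-contained (it does not rely on the classification of finite orbits) and more conceptual, at the price of needing the facts that transvections along integer vectors lie in $\Sp_{2g}(\Z)$ and that rational subspaces project to closed subtori; the paper's version is more hands-on and recycles machinery it has already established. Both are valid; the only points worth making explicit in a write-up of yours are that the $w_i$ remain integral throughout (so each $T_k$ really is a closed subtorus) and that the orbit closure is invariant under the full $\Sp_{2g}(\Z)$, which the paper's \cref{rappel} supplies.
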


We identify $\mathrm{SO}_2(\R)$ with $\R/\Z$ and $\Hom(\Gamma, \mathrm{SO}_2(\R))$ with $(\R/\Z)^{2g}$ on which $\Aut(\Gamma)$ acts as $\Sp_{2g}(\Z)$, see \cref{rappel}.

\begin{proof}
Let $\rho_n = (\frac{1}{n}, 0, \ldots, 0)\in \Hom(\Gamma, \mathrm {SO}_2(\R))$. Let us consider a lift $\tilde \rho$ of $\rho$ to $\Hom(\Gamma, \R) \simeq \R^{2g}$. Let $x_i = \tilde \rho(a_i)$ and $y_i = \tilde \rho(b_i)$ for $1\leqslant i \leqslant g$. One of $x_1, \ldots, y_g$ is not rational and we can assume without loss of generality that $x_1$ is as such: we can replace $\rho$ with some $\sigma\cdot \rho$, exchanging the handles so that $x_1$ or $y_1$ is not rational and we may then apply the symplectic transformation $(x_1, y_1)\mapsto (y_1, -x_1)$ if necessary. We can apply the transformation that replaces $(x_1, y_1)$ with $(x_1, y_1 - y_i)$ and $(x_i, y_i)$ with $(x_1 + x_i, y_i)$ for each $i$ if necessary so that $x_i\notin \mathbb Q$. For every $i$ such that $x_i\Z + y_i \Z$ is not dense in $\R$, replace $y_i$ with $y_i + 1$. The new $\tilde \rho$ is still a lift of $\rho$ and now $ x_i\Z + y_i \Z$ is dense in $\R$ because $\frac{y_i}{x_i}$ and $\frac{y_i +1}{x_i}$ cannot both be rationals. We may assume that each $x_i$ and $y_i$ are nonnegative, applying $(x_i, y_i)\mapsto (x_i + k y_i, y_i)$ and $(x_i, y_i)\mapsto (x_i, y_i + lx_i)$, for some $k,l\in \Z$. We now apply the Euclidean algorithm in each handle: if $x_i > y_i$, replace $x_i$ with $x_i -y_i$ and otherwise replace $y_i$ with $y_i - x_i$. Iterating this process, we can assume that $0 < x_i, y_i < \epsilon$ for every $i$, where $\epsilon > 0$ is fixed. Finally replacing $x_1$ with $x_1 + m y_1$, $m\in \Z$, we can assume that $|x_1 - \frac{1}{n}| < \epsilon$ and that the other $x_i$ and $y_i$ are in $[0, \epsilon]$.

We have shown that each $\rho_n$ is in the closure of $\Aut(\Gamma)\cdot \rho$. Since this closure is invariant under $\Aut(\Gamma)$, it contains every representation in $\Hom(\Gamma, \mathrm{SO}_2(\R))$ with finite image by \cref{ModCycl}. Therefore it contains the closure of $(\mathbb Q/\Z)^{2g}$, hence is equal to $\Hom(\Gamma, \mathrm{SO}_2(\R))$.
\end{proof}

\subsubsection{The group $\mathrm O_2(\R)$}
Let $\rho\in \Hom(\Gamma, \mathrm O_2(\R))$ be such that $\overline {\rho(\Gamma)} = \mathrm O_2(\R)$. Note that this amounts to requiring that $\rho$ has infinite image in $\mathrm{O}_2$ and is not contained in $\mathrm{SO}_2(\R)$. The orbit of such a $\rho$ under the action of $\Aut(\Gamma)$ is included in the closed set $\Delta$ of representations $\rho'\in \Hom(\Gamma, \mathrm O_2(\R))$ such that $\epsilon\circ \rho'$ is surjective and such that $\sw(\rho') = \sw(\rho)$.
\begin{prop}\label{denseDiedr}
The orbit $\Aut(\Gamma)\cdot \rho$ is dense in $\Delta$.
\end{prop}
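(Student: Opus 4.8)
The plan is to prove that the closed, $\Aut(\Gamma)$-invariant set $\Delta$ equals $\overline{\Aut(\Gamma)\cdot\rho}$. First note that both conditions defining $\Delta$ are locally constant on $\Hom(\Gamma,\mathrm{O}_2(\R))$: the class $\sw$ is locally constant, and $\rho'\mapsto\epsilon\circ\rho'$ takes values in the discrete group $\Z_2^{2g}$, so "$\epsilon\circ\rho'$ surjective" is clopen. Hence $\Delta$ is closed, and since it is cut out by $\Aut(\Gamma)$-invariant conditions and contains $\rho$, we get $\overline{\Aut(\Gamma)\cdot\rho}\subseteq\Delta$. For the reverse inclusion I would first normalize: since $\epsilon\circ\rho$ is onto $\Z_2$, \cref{ModCycl} lets me assume, after replacing $\rho$ by a point of its orbit, that $\epsilon\circ\rho(a_1)=1$ and $\epsilon\circ\rho(\gamma)=0$ for every other standard generator, i.e. $\rho(a_1)$ is a reflection while all other $\rho(a_i),\rho(b_i)$ are rotations. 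Writing $\Delta_0\subseteq\Delta$ for the clopen piece where $\epsilon\circ\rho'$ is this standard homomorphism, transitivity of $\Aut(\Gamma)$ on surjections to $\Z_2$ gives $\Delta=\Aut(\Gamma)\cdot\Delta_0$, so it suffices to prove $\overline{\Aut(\Gamma)\cdot\rho}\supseteq\Delta_0$. (Note $g\geq 2$, since otherwise $\rho(\Gamma)$ would be abelian and could not be dense in $\mathrm{O}_2(\R)$.)

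Next I would identify $\Delta_0$ explicitly. For $\rho'\in\Delta_0$ the relation $\prod_i[a_i,b_i]=\id$ reduces to $\rho'(b_1)^2=\id$, because all higher commutators vanish (rotations commute) and $\rho'(a_1)$ conjugates the rotation $\rho'(b_1)$ to its inverse. Thus $\rho'(b_1)$ has order at most $2$, and by \cref{sw1} the value $\sw(\rho')$ is $0$ or $1$ according as $\rho'(b_1)$ equals $\id$ or the order-$2$ rotation. The constraint $\sw(\rho')=\sw(\rho)$ therefore pins $\rho'(b_1)$ to a fixed value $c$, and $\Delta_0$ is exactly the set of representations sending $a_1$ to an arbitrary reflection, $b_1$ to $c$, and $a_i,b_i$ ($i\geq 2$) to arbitrary rotations; it is homeomorphic to $(\text{reflections})\times(\R/\Z)^{2(g-1)}$.

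I would then fill in the rotation coordinates. The separating curve $[a_1,b_1]$ has $\rho$-image $\id$, so \cref{decoupe} lets me act by $\Aut(\Gamma_{g-1})$ on the handles $2,\dots,g$ while fixing the first handle. The restriction of $\rho$ there is a representation $\Gamma_{g-1}\to\mathrm{SO}_2(\R)$ whose image $\langle\rho(a_i),\rho(b_i):i\geq 2\rangle$ is dense: indeed $\rho(\Gamma)\cap\mathrm{SO}_2(\R)$ is dense of index $2$ in the dense group $\rho(\Gamma)$ and, by the normal form, is generated by this subgroup together with the finite-order element $\rho(b_1)=c$, which cannot contribute to density. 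Applying \cref{ptitlemme} to this restriction, I conclude that $\overline{\Aut(\Gamma)\cdot\rho}$ contains every representation with first handle $(\rho(a_1),c)$ and arbitrary rotations in the remaining handles.

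The crux, and the step I expect to be the main obstacle, is moving the reflection $\rho(a_1)$ to an arbitrary axis, since $\Delta_0$ is a union of conjugacy classes while $\Aut(\Gamma)$ acts only by precomposition. The clean resolution is that inner automorphisms lie in $\Aut(\Gamma)$ and act on a representation $\rho''$ by conjugation by $\rho''(\gamma)$; conjugation by a rotation $s$ fixes every rotation and sends a reflection $r$ to $s^2 r$, rotating its axis by $2\arg s$. Given a target $\rho'\in\Delta_0$ with reflection $r'$ and rotation tuple $\vec t$, I would pick $\vec t\,'$ arbitrarily close to $\vec t$ with an irrational entry in the $a_2$-coordinate; by the previous step $\overline{\Aut(\Gamma)\cdot\rho}$ contains the representation $\rho''$ with first handle $(\rho(a_1),c)$ and higher rotations $\vec t\,'$. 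Choosing $\gamma=a_2^{\,k}$ so that $\rho''(a_2)^{2k}\rho(a_1)$ approximates $r'$ (possible since the powers of an irrational rotation are dense), the inner automorphism $\iota_\gamma$ sends $\rho''$ to a representation whose reflection is $\approx r'$, whose value at $b_1$ is still $c$, and whose remaining rotations are unchanged at $\vec t\,'\approx\vec t$. This exhibits points of $\overline{\Aut(\Gamma)\cdot\rho}$ arbitrarily close to $\rho'$, so $\overline{\Aut(\Gamma)\cdot\rho}$ is dense in, hence equals, $\Delta_0$; spreading by $\Aut(\Gamma)$ then yields all of $\Delta$.
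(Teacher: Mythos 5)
Your proof is correct, and it departs from the paper's at the one genuinely delicate step. Both arguments share the same skeleton: normalize $\epsilon\circ\rho$ by \cref{ModCycl} so that only $a_1$ is sent to a reflection, observe that the surface relation forces $\rho(b_1)$ to be $\id$ or the order-$2$ rotation (with the value determined by $\sw$, as you correctly extract from \cref{sw1}), and fill in the rotation coordinates on handles $2,\dots,g$ by combining \cref{decoupe} with \cref{ptitlemme}. The difference is how the reflection coordinate is matched. The paper reduces to targets $\rho'$ with finite dihedral image (implicitly using that these are dense in $\Delta$) and invokes the orbit classification of \cref{diedr} to put $\rho'$ in a normal form agreeing with $\rho$ on the first handle; strictly speaking this requires choosing the approximating dihedral group so that it contains the particular reflection $\rho(a_1)$, a point left tacit. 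You instead treat an arbitrary target directly and rotate the reflection with inner automorphisms: conjugating by $\rho''(a_2)^{k}$ sends the reflection $r$ to $\rho''(a_2)^{2k}r$ while fixing every rotation, and density of the powers of an irrational rotation finishes the job. Your route is more self-contained — it needs neither \cref{diedr} nor the density of finite-image representations in $\Delta$ — at the cost of a small perturbation argument to guarantee an irrational entry; the paper's version instead reuses classification machinery it has already built for other purposes.
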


\begin{proof}
It suffices to show that every representation $\rho'\in \Hom(\Gamma, \D_n)$ with finite image such that $\epsilon\circ \rho'$ is surjective and $\sw(\rho) = \sw(\rho')$ is in $\Aut(\Gamma)\cdot \rho$.
The representation $\epsilon\circ \rho\in \Hom(\Gamma, \Z_2)$ is surjective, hence we may assume by \cref{ModCycl} that $\epsilon\circ\rho(\gamma) = 0$ for $\gamma\in \{b_1, a_2\ldots, b_g\}$ and that $\epsilon(\rho(a_1)) = 1$. We thus have $[\rho(a_i), \rho(b_i)] = \id$ for every $1\leqslant i \leqslant g$. Therefore $\rho(b_1)$ is either $\id$ or the rotation of order $2$ in $\mathrm{SO}_2(\R)$. 
Let us consider a representation $\rho'$ with finite image in $\mathrm{O_2}$, such that $\epsilon\circ \rho$ is surjective and $\sw(\rho) = \sw(\rho')$.
By \cref{diedr}, we may assume that $\rho'(\gamma) = \rho(\gamma)$ for $\gamma\in \{a_1, b_1\}$. Since the image of $\rho$ is dense in $\mathrm O_2(\R)$, one of the $\rho(\gamma)$ has infinite order, where $\gamma\in \{a_2, \ldots, b_g\}$. 
By \cref{ptitlemme}, there exists a sequence $\sigma_n$ in $\Aut(\Gamma_{g-1})$ such that $\sigma_n\cdot \rho \to \rho'$, following the notations of \cref{decoupe}. Therefore $\rho'\in \overline{\Aut(\Gamma)\cdot \rho}$.
\end{proof}
\subsubsection{The group $\SO$}

In this subsection, we consider representations $\rho\in \Hom(\Gamma, \SO)$ such that the image of $\rho$ is dense in $\SO$.
Let us recall the main result of \cite{PreviteXia2}.
\begin{theo}\label{PrePreXia}[Previte-Xia]
Let $g' \geqslant 1$ and $b \geqslant 0$.
Let $\rho\in \Hom(\Gamma_{g',b}, \mathrm{SU}_2)$, and let us denote by $[\rho]$ its conjugacy class in $\Hom(\Gamma_{g',b}, \mathrm{SU}_2)/\mathrm{SU}_2$. If $\rho(\Gamma_{g',b})$ is dense in $\mathrm{SU}_2$ then the closure of the orbit $\Mod(\Sigma)\cdot [\rho]$ is the set $$\{\rho' : \Gamma_{g', b}\to \mathrm{SU}_2\mid \forall i\leqslant b\ \exists h\in \mathrm{SU}_2,  \rho'(c_i) = h\rho(c_i) h^{-1}\}/\mathrm{SU}_2.$$
\end{theo}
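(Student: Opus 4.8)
Write $\mathcal{F}$ for the target set on the right-hand side, the fibre over $[\rho]$ of the peripheral map $[\rho']\mapsto(\text{conjugacy class of }\rho'(c_1),\ldots,\rho'(c_b))$. The inclusion $\overline{\Mod(\Sigma)\cdot[\rho]}\subseteq\mathcal F$ is the easy half: every mapping class is represented by a homeomorphism fixing each boundary component, so it carries each $c_i$ to a freely homotopic (hence conjugate) loop; thus $\rho'(c_i)$ is conjugate to $\rho(c_i)$ for every $\rho'$ in the orbit. Since in $\mathrm{SU}_2$ conjugacy classes are exactly the level sets of the (continuous) trace, this constraint passes to the closure. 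All the content is in the reverse inclusion $\mathcal F\subseteq\overline{\Mod(\Sigma)\cdot[\rho]}$.

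The engine I would use is Goldman's twist (bending) flow. For a simple closed curve $\gamma$ with $\rho(\gamma)\neq\pm\Id$, the centraliser of $\rho(\gamma)$ is the maximal torus $T_\gamma\cong\mathrm{SO}_2(\R)$ containing it; for $\zeta\in T_\gamma$ let $\rho_\zeta$ be the representation obtained by inserting $\zeta$ across $\gamma$ (conjugating one side if $\gamma$ separates, twisting the transverse generators otherwise). The point is the identity $T_\gamma^{\,n}\cdot\rho=\rho_{\rho(\gamma)^n}$: the Dehn twist realises the bending at the discrete times $\zeta=\rho(\gamma)^n$. Hence, whenever $\rho(\gamma)$ has infinite order, $\{\rho(\gamma)^n:n\in\Z\}$ is dense in $T_\gamma$, and therefore $\overline{\{T_\gamma^{\,n}\cdot\rho\}}$ contains the entire bending circle $\{\rho_\zeta:\zeta\in T_\gamma\}$. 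This is the mechanism that upgrades the discrete $\Mod(\Sigma)$-action to access to the continuous Hamiltonian twist flows.

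With this in hand I would run the reverse inclusion along one of two routes. The conceptual route uses Goldman's computation that the Hamiltonian vector fields of the trace functions $\Tr\rho'(\gamma)$, ranging over simple closed curves $\gamma$, span the tangent space of the relative character variety at any representation with dense image — the same fact that underlies his ergodicity theorem. Combined with the previous paragraph (each such flow lies in the orbit closure once $\rho(\gamma)$ has infinite order), this shows $\overline{\Mod(\Sigma)\cdot[\rho]}$ contains a neighbourhood of $[\rho]$ in the dense-image locus of $\mathcal F$; by $\Mod(\Sigma)$-invariance the closure is open there, being closed it exhausts that (connected, smooth) locus, and the lower-dimensional complement is then swept up by continuity. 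The more hands-on route is an induction on $(g',b)$: the base case is the one-holed torus, where $c_1=[a_1,b_1]$ and one checks by hand, using the bending circles along $a_1$, $b_1$ and $a_1b_1$, that every representation with the prescribed commutator trace is attained; the inductive step cuts along a nonseparating curve to lower the genus or peels off a pair of pants, arranging via an auxiliary curve with infinite-order holonomy that the new boundary holonomies move independently while the complementary subsurface is handled by the inductive hypothesis, in the spirit of \cref{decoupe}.

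The hard part is the transitivity/openness step, and two issues must be confronted. First, density of the image does not guarantee that a prescribed simple closed curve has infinite-order holonomy, so before any bending flow can be deployed one must move $\rho$ within its orbit to arrange enough curves $\gamma$ with $\rho(\gamma)$ of infinite order — exactly the kind of curve-hunting carried out in the finite-image lemmas of the previous section. Second, one must verify that finitely many such bending flows genuinely span all directions of $\mathcal F$ transverse to the peripheral constraint and that the dense-image locus of $\mathcal F$ is connected, so that openness propagates to the whole fibre; the small-complexity base cases, where few simple closed curves are available, are where this requires real care.
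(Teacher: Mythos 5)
The paper does not prove this statement: it is quoted verbatim as the main theorem of \cite{PreviteXia2} and used as a black box (only its consequence \cref{previtxia} is derived from it in the text). So there is no proof in the paper to compare yours against; what you have written is an attempt to reprove Previte--Xia's theorem itself.

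Your outline does identify the right circle of ideas --- the easy inclusion via invariance of boundary traces, Dehn twists as discrete-time Goldman twist flows, and density of $\{\rho(\gamma)^n\}_{n\in\Z}$ in the maximal torus $T_\gamma$ whenever $\rho(\gamma)$ has infinite order --- and this is indeed the engine of the actual argument. But the points you defer are exactly where all the content lies, and as stated they are gaps rather than details. First, at an arbitrary point of the orbit closure with dense image you must produce enough \emph{simple} closed curves with infinite-order holonomy; density of the image only yields infinite-order elements of $\Gamma_{g',b}$, and a priori every curve of a chosen pants decomposition could be sent to a finite-order element, so this curve-hunting has to be carried out before any bending circle is available. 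Second, the assertion that the Hamiltonian fields of trace functions of simple closed curves span the tangent space of the relative character variety at \emph{every} dense-image point is not Goldman's ergodicity statement (which concerns almost every point for a fixed pants decomposition); making it pointwise, and verifying that the dense-image locus of the fibre $\mathcal F$ is connected and dense in $\mathcal F$ so that your open--closed argument and the final ``swept up by continuity'' step apply, is the hard part --- above all in the low-complexity base cases (one-holed torus, four-holed sphere) that you flag but do not treat. The proposal is therefore a plausible strategy sketch, not a proof; for the purposes of this paper the statement should simply be cited from \cite{PreviteXia1, PreviteXia2}, as the author does.
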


Let us recall that the representation space $\Hom(\Gamma, \SO)/\SO$ has two connected components: one of which is the set of conjugacy classes of representations that lift to $\mathrm{SU}_2$, and the other the set of conjugacy classes of representations that do not lift to $\mathrm{SU}_2$.
It follows from \cref{PrePreXia} that the orbit of a representation in $\Hom(\Gamma, \SO)$ with dense image is dense in its connected component of $\Hom(\Gamma, \SO)/\SO$.
\begin{lemma}\label{previtxia}
The $\Mod(\Sigma)$ orbit of $[\rho]\in \Hom(\Gamma, \SO)/\SO$ with dense image is dense in its connected component.
\end{lemma}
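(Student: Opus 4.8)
The plan is to deduce this from the Previte--Xia theorem (\cref{PrePreXia}) by passing to the double cover $q\colon \mathrm{SU}_2\to \SO$, where $\SO\simeq \mathrm{SU}_2/\{\pm\Id\}$, and by opening up a boundary component. Fix $\rho\in \Hom(\Gamma,\SO)$ with dense image. Let $p\colon \Gamma_{g,1}\to \Gamma$ be the homomorphism induced by capping off the boundary, so that $p$ kills $c=\prod_{i=1}^g[a_i,b_i]$, and recall that $\Gamma_{g,1}$ is free. Hence the representation $p^*\rho\in \Hom(\Gamma_{g,1},\SO)$ lifts to some $\hat\rho\in \Hom(\Gamma_{g,1},\mathrm{SU}_2)$, and $q\circ\hat\rho=p^*\rho$ forces $\hat\rho(c)\in q^{-1}(\id)=\{\pm\Id\}$. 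In fact $\hat\rho(c)=\Id$ precisely when $\rho$ lifts to $\mathrm{SU}_2$, i.e.\ according to the value of $\sw(\rho)$; in either case $\hat\rho(c)$ is central in $\mathrm{SU}_2$. This is the key point that will let me apply \cref{PrePreXia}.

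Before doing so I must check that $\hat\rho$ itself has dense image. Let $H=\overline{\hat\rho(\Gamma_{g,1})}$, a closed subgroup of $\mathrm{SU}_2$. Since $q(H)$ is compact, hence closed, and contains the dense subgroup $\rho(\Gamma)$, we get $q(H)=\SO$, so that $H\cdot\{\pm\Id\}=\mathrm{SU}_2$ and thus $[\mathrm{SU}_2:H]\leqslant 2$. A closed subgroup of index $2$ would be open, hence would contain the identity component of the connected group $\mathrm{SU}_2$, a contradiction; therefore $H=\mathrm{SU}_2$ and $\hat\rho$ is dense. Now \cref{PrePreXia} applied with $g'=g$ and $b=1$ shows that the closure of $\Mod(\Sigma_{g,1})\cdot[\hat\rho]$ is $\{[\rho']\mid \rho'(c)=h\,\hat\rho(c)\,h^{-1}\text{ for some }h\in \mathrm{SU}_2\}$, and since $\hat\rho(c)=\pm\Id$ is central this is exactly
\[
S=\{[\rho']\in \Hom(\Gamma_{g,1},\mathrm{SU}_2)/\mathrm{SU}_2\mid \rho'(c)=\hat\rho(c)\}.
\]

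It remains to push this density down to $\Gamma$. Any $\rho'\in \Hom(\Gamma_{g,1},\mathrm{SU}_2)$ with $\rho'(c)=\hat\rho(c)$ satisfies $(q\circ\rho')(c)=\id$, so $q\circ\rho'$ factors as $\bar{\rho'}\circ p$ for a unique $\bar{\rho'}\in \Hom(\Gamma,\SO)$, and $\bar{\rho'}$ lies in the connected component determined by $\sw(\rho)$. This yields a continuous surjection $\Psi\colon S\to C$ onto that component $C$, with $\Psi([\hat\rho])=[\rho]$. The map $\Psi$ is equivariant for the capping homomorphism $\mathrm{Cap}\colon \Mod(\Sigma_{g,1})\to\Mod(\Sigma)$: if $\phi$ acts on $\Gamma_{g,1}$ by $\phi_*$ and $\bar\phi=\mathrm{Cap}(\phi)$ acts on $\Gamma$ with $p\circ\phi_*=\bar\phi_*\circ p$, then $\Psi(\phi\cdot[\rho'])=\bar\phi\cdot\Psi([\rho'])$ by a direct computation. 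Since $\mathrm{Cap}$ is surjective (see \cite[Chapter~3]{FarbMargalit}), we obtain $\Mod(\Sigma)\cdot[\rho]=\Psi\big(\Mod(\Sigma_{g,1})\cdot[\hat\rho]\big)$, whence
\[
\overline{\Mod(\Sigma)\cdot[\rho]}\supseteq \Psi\big(\overline{\Mod(\Sigma_{g,1})\cdot[\hat\rho]}\big)=\Psi(S)=C,
\]
using continuity of $\Psi$. As the components of $\Hom(\Gamma,\SO)/\SO$ are closed, the reverse inclusion is automatic, and the orbit is dense in $C$. The step I expect to be the main obstacle is this last descent: one must set up $\Psi$ and the compatibility between the $\Mod(\Sigma_{g,1})$-action on boundary-constrained $\mathrm{SU}_2$-representations and the $\Mod(\Sigma)$-action on $\SO$-representations carefully, so that surjectivity of the capping homomorphism and closedness of the components combine to transport Previte--Xia's density across the covering.
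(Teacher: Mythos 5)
Your proof is correct and follows essentially the same route as the paper: lift $\rho$ to an $\mathrm{SU}_2$-representation of $\Gamma_{g,1}$ sending the boundary to $\pm\Id$ according to $\sw(\rho)$, apply the Previte--Xia theorem to the relative character variety, and project back down to $\Hom(\Gamma,\SO)/\SO$. The paper states this in three lines; you have simply supplied the details (density of the lift, equivariance under the capping homomorphism) that the paper leaves implicit.
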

\begin{proof}
Let us see $\rho$ in $\Hom(\Gamma_{g,1}, \mathrm P\SU)$ where $\rho(c_1) = \id$. There exists a lift $\tilde \rho\in \Hom(\Gamma_{g,1}, \SU)$ such that $\tilde \rho(\partial \Sigma_{g,1}) = \epsilon\Id$ where $\epsilon = 1$ if $\rho$ lifts to $\SL \C$ and $\epsilon = -1$ otherwise. \cref{PrePreXia} ensures us that the orbit of  $[\tilde \rho]$ is dense in its relative character variety, hence its projection in $\Hom(\Gamma, \SO)/\SO$ is dense in its connected component.
\end{proof}

\subsection{Affine holonomy}\label{AffineHol}

In this subsection we consider representations $\rho\in \Hom(\Gamma, \mathrm{Aff}(\C))$, where the group $\mathrm{Aff}(\C)$ is the subgroup of $\PSL \C$ that fix $\infty\in \CP$. In other words, $\mathrm{Aff}(\C) = \{\pm \begin{pmatrix}
\lambda & t\\
0 & \lambda^{-1}
\end{pmatrix}\mid \lambda\in \C^*, t\in \C\}$. The map $\mathrm{Aff}(\C)\to \C^*\rtimes \C$ defined by $\pm \begin{pmatrix}
\lambda & t\\
0 & \lambda^{-1}
\end{pmatrix}\mapsto (\lambda^2, \lambda^{-1}t)$ is an isomorphism. We denote by $az + b$ the element of $\mathrm{Aff}(\C)$ corresponding to $(a, b)\in \C^*\rtimes \C$.
We identify a representation $\rho$ with the $2g$-tuple $(\rho(\gamma))_{\gamma\in \{a_1, \ldots, b_g\}}$.
Let us first observe that a representation in $\Hom(\Gamma, \mathrm{Aff}(\C))$ lifts to $\SL \C$.
\begin{lemma}
Every representation $\rho\in \Hom(\Gamma, \mathrm{Aff}(\C))$ lifts to $\SL \C$.
\end{lemma}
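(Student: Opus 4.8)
The plan is to reduce the statement to the standard computation of the lifting obstruction and then to exploit the solvability of the affine group. First I would observe that the preimage of $\mathrm{Aff}(\C)$ under the projection $\SL \C \to \PSL \C$ is exactly the Borel subgroup $B = \left\{ \begin{pmatrix} \lambda & t \\ 0 & \lambda^{-1} \end{pmatrix} \mid \lambda\in \C^*,\ t\in\C \right\}$ of upper triangular matrices, and that $B\to \mathrm{Aff}(\C)$ is a double cover with kernel $\{\pm\Id\}$. Since $-\Id\in B$, the group $B$ surjects onto $\mathrm{Aff}(\C)$, so it suffices to produce a lift of $\rho$ with values in $B$.

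Next I would recall the usual description of the obstruction. Choose arbitrary lifts $\tilde\rho(a_i),\tilde\rho(b_i)\in B$ of $\rho(a_i),\rho(b_i)$ and form $w = \prod_{i=1}^g [\tilde\rho(a_i),\tilde\rho(b_i)]\in \SL \C$. Because the central kernel $\{\pm\Id\}$ disappears from any commutator, $w$ does not depend on the chosen lifts, and it projects to $\rho\!\left(\prod_i [a_i,b_i]\right) = \rho(\id) = \id$ in $\PSL \C$; hence $w\in\{\pm\Id\}$. The representation $\rho$ lifts precisely when $w = \Id$, that is when $\sw(\rho)=0$.

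The key step is then to detect the sign of $w$ using the diagonal. Consider the map $\ell : B\to \C^*$ sending $\begin{pmatrix} \lambda & t \\ 0 & \lambda^{-1} \end{pmatrix}$ to $\lambda$; this is a group homomorphism onto the abelian group $\C^*$. Consequently $\ell$ kills every commutator, so $\ell(w) = \prod_i \ell\!\left([\tilde\rho(a_i),\tilde\rho(b_i)]\right) = 1$. Since $\ell(-\Id) = -1\neq 1$, we must have $w\neq -\Id$, and therefore $w = \Id$. This shows that $\rho$ lifts to $\SL \C$.

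I do not expect a genuine obstacle here: the whole content is that the $\{\pm\Id\}$-valued obstruction $w$ is detected by the abelianization $\ell : B\to\C^*$, which trivializes it because $B$ is solvable. The only minor points requiring care are that the lifts may be taken inside $B$ (guaranteed by $-\Id\in B$) and that $w$ is independent of those choices; both are immediate.
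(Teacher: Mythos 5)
Your proof is correct and follows essentially the same route as the paper: choose lifts inside the upper-triangular subgroup and observe that the product of commutators there is forced to be $\Id$. The paper simply asserts this last point, whereas you spell out the justification via the diagonal homomorphism $\ell : B\to\C^*$; this is exactly the intended mechanism.
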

\begin{proof}
Let us pick lifts $\tilde \rho(\gamma) = \begin{pmatrix}
\tilde \lambda_\gamma & \tilde t_\gamma\\
0 & \tilde \lambda_\gamma^{-1}
\end{pmatrix}$ in $\SL \C$ of $\rho(\gamma)$ for each $\gamma\in\{a_1, \ldots, b_g\}$. The product of the commutators $[\tilde \rho(a_i), \tilde \rho(b_i)]$ is $\Id$.
\end{proof}
We denote by $\mathrm{Li} : \C^*\rtimes \C\to \C^*$ the projection on the first coordinate and by $\mathrm{Tr} : \C^*\rtimes \C\to \C$ the projection on the second coordinate. Note that $\mathrm{Li}$ is a homomorphism while $\mathrm{Tr}$ satisfies $\mathrm{Tr}(h_1h_2) = \mathrm{Tr}(h_1) + \mathrm{Li}(h_1)\mathrm{Tr}(h_2)$ for every $h_1,h_2\in \mathrm{Aff}(\C)$.
Recall that an affine representation $\rho$ induces a representation in $\C^*$, \textit{its linear part} $\mathrm{Li}\circ \rho$. We also define the \textit{translation part} of $\rho$ to be the map $\mathrm{Tr}\circ \rho$.

We identify $\Hom(\Gamma, \C^*)$ with the set of affine representations $\rho$ such that $\mathrm{Tr}(\rho) = 0$.
The linear part of a representation is in the closure of its orbit under the action of $\mathrm{Aff}(\C)$ by conjugation.
\begin{lemma}\label{lineaire}
Let $\rho\in \Hom(\Gamma, \mathrm{Aff}(\C))$. The representation $\mathrm{Li}\circ\rho$ is in the closure of $\mathrm{Aff}(\C)\cdot \rho$.
\end{lemma}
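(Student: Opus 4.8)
The plan is to exhibit an explicit one-parameter family inside the conjugation orbit $\mathrm{Aff}(\C)\cdot\rho$ that converges to $\mathrm{Li}\circ\rho$. Since $\Hom(\Gamma, \mathrm{Aff}(\C))$ carries the topology of pointwise convergence and a representation is determined by its values on the generators $a_1, \ldots, b_g$, it suffices to conjugate $\rho$ so that the linear part of each generator is preserved while its translation part is driven to $0$.

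First I would consider, for each $s \in \C^*$, the dilation $h_s \in \mathrm{Aff}(\C)$ given by $h_s : z \mapsto sz$, that is $h_s = (s, 0) \in \C^* \rtimes \C$. A direct computation with affine maps shows that for $g = az + b$ one has $h_s\, g\, h_s^{-1} : z \mapsto az + sb$; in the coordinates $(a, b) \in \C^* \rtimes \C$ this is the assignment $(a, b) \mapsto (a, sb)$. In particular the linear part is unchanged—as it must be, since $\mathrm{Li}$ is a homomorphism and conjugation is inner—while the translation part is multiplied by $s$.

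Writing $\rho(\gamma) = (a_\gamma, b_\gamma)$ for each $\gamma \in \{a_1, \ldots, b_g\}$, the conjugate $h_s\,\rho\,h_s^{-1}$ is therefore the representation $\gamma \mapsto (a_\gamma, s b_\gamma)$. Letting $s \to 0$ along $\C^*$ gives $s b_\gamma \to 0$ for every generator $\gamma$, so these conjugates converge pointwise to the representation $\gamma \mapsto (a_\gamma, 0)$. Under the identification of $\Hom(\Gamma, \C^*)$ with the affine representations of vanishing translation part, this limit is exactly $\mathrm{Li}\circ\rho$, whence $\mathrm{Li}\circ\rho \in \overline{\mathrm{Aff}(\C)\cdot\rho}$.

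There is no genuine obstacle in this argument; the only points requiring care are the verification of the conjugation formula and the observation that, although the candidate limit corresponds to the excluded value $s = 0$, it is nonetheless a bona fide limit of honest conjugates taken over $s \in \C^*$. That each intermediate map $\gamma \mapsto (a_\gamma, s b_\gamma)$ is again a homomorphism is automatic, being a conjugate of $\rho$; alternatively it can be checked from the cocycle relation $\mathrm{Tr}(h_1 h_2) = \mathrm{Tr}(h_1) + \mathrm{Li}(h_1)\mathrm{Tr}(h_2)$.
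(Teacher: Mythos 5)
Your proof is correct and is essentially the paper's argument: the paper conjugates $\rho$ by the dilations $g_n = \begin{pmatrix} n & 0\\ 0 & 1/n\end{pmatrix}$ (i.e.\ $z\mapsto n^2 z$), which scales each translation part by $1/n^2$ and leaves the linear part fixed, exactly as your family $h_s$ with $s\to 0$ does. No further comment is needed.
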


\begin{proof}
The sequence $(g_n^{-1}\rho g_n)_n$ tends to $\mathrm{Li}\circ \rho$, where for every $n\geqslant 1$, $$g_n = \begin{pmatrix}
n & 0\\
0 & 1/n
\end{pmatrix}.$$
\end{proof}

Let us denote by $\mathbb S^1$ the unit circle $\{z\in \C \mid z\overline z = 1\}$. The group $\mathrm{Isom}^+(\mathbb E^2)$ is the subgroup $\mathbb S^1\rtimes \C$ of $\C^*\rtimes \C = \mathrm{Aff}(\C)$. It is the group of positive isometries of the Euclidean plane.

Let us suppose for now that $\rho$ is Euclidean: $\rho\in \Hom(\Gamma, \mathrm{Isom}^+(\mathbb E^2))$.
The representation $\rho$ acquires a notion of \textit{volume}, see also \cite[Section 6]{Selim1}.

\begin{lemma}\label{forme_volume}
There exists a smooth $\rho$-equivariant function $f : \tilde \Sigma\to \C$. Moreover, the pullback of $dx\wedge dy$ by $f$ induces a form $\omega$ on $\Sigma$ whose volume $\int_\Sigma \omega$ does not depend on $f$.
\end{lemma}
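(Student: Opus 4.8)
The plan is to split the statement into three parts: constructing an equivariant $f$, checking that $f^*(dx\wedge dy)$ descends to $\Sigma$, and proving that $\int_\Sigma\omega$ does not depend on the chosen $f$.

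For existence, the key point is that $\C$ is an affine space and that $\rho$ acts by affine (indeed Euclidean) transformations, so that affine combinations are preserved. Concretely, I would fix a smooth $\Gamma$-equivariant partition of unity $(\psi_\gamma)_{\gamma\in\Gamma}$ on $\tilde\Sigma$, that is a locally finite family with $\psi_\gamma=\psi_{\id}\circ\gamma^{-1}$, $\psi_\gamma\geqslant 0$ and $\sum_\gamma\psi_\gamma=1$; such a family exists by normalizing the $\Gamma$-translates of a bump function supported near a fundamental domain, using that the $\Gamma$-action on $\tilde\Sigma$ is free, properly discontinuous and cocompact. Fixing any $z_0\in\C$, I would set $f(x)=\sum_{\gamma\in\Gamma}\psi_\gamma(x)\,\rho(\gamma)z_0$. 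This sum is locally finite, hence defines a smooth map, and since each $\rho(\delta)$ is affine while $\sum_\gamma\psi_\gamma=1$, the computation $f(\delta x)=\sum_\gamma\psi_{\delta^{-1}\gamma}(x)\,\rho(\gamma)z_0=\rho(\delta)f(x)$ gives the required equivariance.

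For the descent, I would observe that every $\rho(\gamma)\in\mathrm{Isom}^+(\mathbb E^2)$ acts on $\C=\R^2$ as a rotation followed by a translation, hence preserves the area form: $\rho(\gamma)^*(dx\wedge dy)=dx\wedge dy$. Consequently, for $\gamma\in\Gamma$ one has $\gamma^*\big(f^*(dx\wedge dy)\big)=(\rho(\gamma)\circ f)^*(dx\wedge dy)=f^*(dx\wedge dy)$, so the $2$-form $f^*(dx\wedge dy)$ is $\Gamma$-invariant and descends to a $2$-form $\omega$ on the closed oriented surface $\Sigma$, which we may integrate. This is precisely where being Euclidean rather than merely affine is used: a general linear part would rescale $dx\wedge dy$ by $|\mathrm{Li}\circ\rho|^2$ and $\omega$ would not descend.

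The hard part will be the independence of $\int_\Sigma\omega$ on $f$; note that one cannot simply write $dx\wedge dy=d(x\,dy)$ and conclude the integral vanishes, because the primitive $f^*(x\,dy)$ is not $\Gamma$-invariant and does not descend to $\Sigma$. Given two equivariant maps $f_0,f_1$, I would use the affine homotopy $f_t=(1-t)f_0+t f_1$, which is equivariant for every $t$ by the same affine-combination argument, and assemble it into $F\colon\tilde\Sigma\times[0,1]\to\C$, $F(x,t)=f_t(x)$, equivariant for the $\Gamma$-action taken trivial on the $[0,1]$ factor. Then $F^*(dx\wedge dy)$ is closed, as the pullback of a closed form, and $\Gamma$-invariant, so it descends to a closed $2$-form $\Omega$ on $\Sigma\times[0,1]$ whose restriction to each slice $\Sigma\times\{t\}$ is $f_t^*(dx\wedge dy)$. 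Applying Stokes' theorem on $\Sigma\times[0,1]$, whose boundary is $\Sigma\times\{1\}-\Sigma\times\{0\}$, then yields $\int_\Sigma f_1^*(dx\wedge dy)-\int_\Sigma f_0^*(dx\wedge dy)=\int_{\Sigma\times[0,1]}d\Omega=0$, which is the desired independence.
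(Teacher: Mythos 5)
Your proof is correct, and the second and third parts (descent of $f^*(dx\wedge dy)$ to $\Sigma$, and the Stokes argument on $\Sigma\times[0,1]$ applied to the affine homotopy $f_t=(1-t)f_0+tf_1$) coincide with the paper's argument. The only genuine difference is in the existence of a smooth $\rho$-equivariant map: the paper observes that $\Hom(\Gamma,\mathrm{Isom}^+(\mathbb E^2))$ is path-connected, so the flat bundle $\tilde\Sigma\times\C/\Gamma$ is topologically trivial and therefore admits a smooth section, which one lifts to the desired $f$; you instead build $f$ directly by averaging, $f(x)=\sum_\gamma\psi_\gamma(x)\,\rho(\gamma)z_0$ against a $\Gamma$-equivariant partition of unity, using that affine maps commute with affine combinations. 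Your construction is more explicit and self-contained (it avoids invoking the classification of oriented plane bundles over a surface and the connectedness of the representation space), while the paper's is shorter and generalizes immediately to any situation where the relevant representation space is connected. Your remark that the Euclidean hypothesis is exactly what makes $f^*(dx\wedge dy)$ descend --- a point the paper leaves implicit in this proof --- is accurate and worth keeping.
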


\begin{proof}
Since $\Hom(\Gamma, \mathrm{Isom}^+(\mathbb E^2))$ is path-connected, the bundle $\tilde \Sigma\times \C / \Gamma$ is topologically trivial, where $\Gamma$ acts by $\gamma\cdot (\tilde x, z) = (\gamma\cdot\tilde x, \rho(\gamma)\cdot z)$. Hence it admits smooth sections. We can lift such a section to a function $F : \tilde \Sigma\to \tilde \Sigma\times \C$ of the form $F(\tilde x) = (\tilde x, f(z))$. The function $f : \tilde \Sigma\to \C$ is smooth and $\rho$-equivariant.

Given two smooth $\rho$-equivariant functions $f_0$ and $f_1$, the smooth function $H : (x,t)\in \tilde \Sigma\times [0,1]\mapsto tf_1(x) + (1-t)f_0(x)$ is also $\rho$-equivariant. The pullback $H^*(dx\wedge dy)$ is $\Gamma$-invariant and thus induces a form $\omega$ on $\Sigma\times [0,1]$. Observe that this form is closed since $dx\wedge dy$ is closed. Therefore Stokes' theorem gives:
\[0 = \int_{\Sigma\times [0,1]} d\omega = \int_{\Sigma\times \{1\}} \omega - \int_{\Sigma\times \{0\}} \omega.\]
The restriction of $\omega$ to $\Sigma\times \{i\}$ is the pullback of $dx\wedge dy$ by $f_i$ for $i=0,1$.
\end{proof}

\begin{defn}
The volume of $\rho$ is the number $\mathrm{Vol}(\rho) = \int_\Sigma \omega$, where $\omega$ is the induced volume form on $\Sigma$ by the pullback of $dx\wedge dy$ by a smooth $\rho$-equivariant function $f : \tilde \Sigma\to \C$.
\end{defn}

\begin{rmk}
The volume is invariant under the action of $\mathrm{Aut}^+(\Gamma)$ and under the action of $\mathrm{Isom}^+(\mathbb E^2)$ by conjugation.
\end{rmk}

\subsubsection{Finite linear part}\label{finite_lin}
Let $\rho\in \Hom(\Gamma, \mathrm{Isom}^+(\mathbb E^2))$ be such that $\alpha = \mathrm{Li}\circ \rho$ has finite image of order $n\geqslant 1$. We also suppose that $\rho$ is not spherical.

We exhibit some representations in the closure of the orbit of $\rho$ under the action of $\Aut(\Gamma)\times \mathrm{Aff}(\C)$. Using our classification of the orbits of $\Hom(\Gamma, \Z_n)$ under the action of $\Aut(\Gamma)$, we provide a simple form of $\rho$ if $n\geqslant 2$.

\begin{lemma}\label{OOOD}
If $n\geqslant 2$, then there exists $\rho'\in \Aut(\Gamma)\times \C\cdot \rho$ such that $$\rho' = (\zeta z, z, z + z_3, z + z_4, \ldots, z + z_{2g}).$$
Where $\C$ is the subgroup of $\mathrm{Isom}^+(\mathbb E^2)$ of translations, and $\zeta = e^{2\pi i / n}$.
\end{lemma}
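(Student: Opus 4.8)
The plan is to reduce to the cyclic classification for the linear part and then clear the two coordinates attached to the first handle, one by an explicit translation conjugacy and the other for free from the surface relation. Throughout I use that the $\Aut(\Gamma)$-action by precomposition and the $\mathrm{Aff}(\C)$-action by conjugation commute (see \cref{rappel}), so each operation below keeps us inside the $\Aut(\Gamma)\times\C$-orbit of $\rho$.

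First I would normalize the linear part. The homomorphism $\alpha=\mathrm{Li}\circ\rho$ is surjective onto its image, the cyclic group $\mu_n=\langle\zeta\rangle$ of order $n$; identifying $\mu_n$ with $\Z_n$, \cref{ModCycl} provides $\sigma\in\Aut(\Gamma)$ such that the linear part of $\sigma\cdot\rho$ sends $a_1$ to $\zeta$ and every other standard generator to $1$. Replacing $\rho$ by $\sigma\cdot\rho$, I may thus assume $\rho(a_1)=\zeta z+t$ for some $t\in\C$, while $\rho(b_1),\rho(a_2),\dots,\rho(b_g)$ are all pure translations.

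Second I would conjugate by a translation to remove $t$. Conjugation by an element of $\C$ preserves every linear part (as $\mathrm{Li}$ kills translations) and fixes each pure translation (the translation subgroup of $\mathrm{Isom}^+(\mathbb E^2)$ being abelian), so the normal form above is untouched except possibly at $a_1$. A direct computation shows that conjugating $\zeta z+t$ by $z+c$ produces $\zeta z+\bigl(t+(1-\zeta)c\bigr)$; since $n\geqslant 2$ we have $\zeta\neq 1$, so taking $c=-t/(1-\zeta)$ yields a representative, still in the $\Aut(\Gamma)\times\C$-orbit of $\rho$, with $\rho(a_1)=\zeta z$ and the remaining generators the same pure translations.

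The crux, and the only place where one checks rather than arranges, is that the translation part of $b_1$ then vanishes on its own. Writing $\rho(b_1)=z+u$, for each $i\geqslant 2$ the images $\rho(a_i),\rho(b_i)$ are commuting translations, so $\rho([a_i,b_i])=\id$; the relation $\prod_{i=1}^g[a_i,b_i]=\id$ therefore forces $\rho([a_1,b_1])=\id$. Since $[\zeta z,\,z+u]$ equals the translation by $(\zeta-1)u$, this gives $(\zeta-1)u=0$, whence $u=0$ and $\rho(b_1)=z$. Naming $z_j$ the translation part of the $j$-th generator for $j\geqslant 3$ then exhibits $\rho$ in the announced form $\rho'=(\zeta z,z,z+z_3,\dots,z+z_{2g})$. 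I expect the only real subtlety to be bookkeeping, namely ensuring that the translation conjugacy does not reintroduce linear parts and that every step remains inside the $\Aut(\Gamma)\times\C$-orbit, both of which are guaranteed by the two observations above.
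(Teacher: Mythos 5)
Your proof is correct and follows essentially the same route as the paper: normalize the linear part via \cref{ModCycl}, kill the translation part of $\rho(a_1)$ by conjugating with a translation (possible since $\zeta\neq 1$), and deduce $\rho(b_1)=z$ from the surface relation because the commutators in the remaining handles are trivial. The only difference is that you spell out the two small computations the paper leaves implicit.
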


\begin{proof}
There exists $\sigma\in \Aut(\Gamma)$ such that $\sigma\cdot \alpha = (\zeta, 1, \ldots, 1)$ by \cref{ModCycl}.
Therefore, replacing $\rho$ with $\sigma\cdot\rho$ we have $$\rho = (\zeta z + z_1, z + z_2, z + z_3, \ldots, z + z_{2g}).$$
Now we can conjugate $\rho$ by a translation so that $z_1 = 0$. The product of the commutators $[\rho(a_i), \rho(b_i)]$ is $[\zeta z, z + z_2]$ and must be trivial. Thus $\zeta z$ commutes with $z + z_2$ hence $z_2 = 0$.
\end{proof}

There are even simpler representations in the closure of the orbit of $\rho$ under the action of $\Aut(\Gamma)\times \mathrm{Aff}(\C)$.
\begin{lemma}\label{ecrasement}
Let $\rho_\infty\in \Hom(\Gamma, \mathrm{Isom}^+(\mathbb E^2))$ be the representation defined by 
$$\rho_\infty = \left\{
\begin{array}{ll}
	(z + 1, z, z,\ldots z) & \mbox{if } n= 1\\
	(\zeta z, z, z + 1, z, \ldots, z, z)  & \mbox{if } n \geqslant 2.
\end{array}
\right.$$
We have $\rho_\infty \in \overline{\Aut(\Gamma)\times \mathrm{Aff}(\C)\cdot \rho}$.
\end{lemma}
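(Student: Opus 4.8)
The plan is to reduce both cases to a single statement about the vector of translation parts, and then to push that vector — using symplectic moves together with affine rescaling — to the standard coordinate vector. Throughout, the action of $\Aut(\Gamma)$ on the translation parts of a tuple of translations is the standard action of $\Sp_{2g}(\Z)$ on $\C^{2g}$, since these parts are read off the abelianization (see \cref{rappel}), while conjugating $\rho$ by the affine map $z\mapsto \lambda z$, $\lambda\in\C^*$, scales every translation part by $\lambda$ and fixes every linear part. The point is that non-sphericity forces the relevant translation vector to be nonzero: for $n=1$ the image is infinite, and for $n\geqslant 2$ a vanishing translation vector would give the finite image $\langle \zeta z\rangle$.

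The heart of the argument is the following elementary observation, which I would isolate as a sublemma: if $w=(w_1,\ldots,w_{2h})\in\C^{2h}$ is nonzero, then $(1,0,\ldots,0)$ lies in the closure of $\C^*\cdot \Sp_{2h}(\Z)\cdot w$. To prove it I would first use coordinate permutations and the order-four element $\bigl(\begin{smallmatrix} 0 & 1 \\ -1 & 0\end{smallmatrix}\bigr)$ inside one handle (all in $\Sp_{2h}(\Z)$) to arrange $w_1\neq 0$, and then, if necessary, apply the transvection $b_1\mapsto b_1 a_1$ to replace $w_2$ by $w_2+w_1=w_1$, so that $w_2\neq 0$. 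The transvection $a_1\mapsto a_1 b_1^k$ then sends $w$ to $(w_1+kw_2, w_2, w_3,\ldots, w_{2h})$; since $w_2\neq 0$ the first entry is nonzero for large $k$, and rescaling by $(w_1+kw_2)^{-1}$ yields $\bigl(1, \tfrac{w_2}{w_1+kw_2}, \ldots, \tfrac{w_{2h}}{w_1+kw_2}\bigr)$, which converges to $(1,0,\ldots,0)$ as $k\to\infty$. So one coordinate is blown up and the rescaling crushes all the others.

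For $n=1$ the representation is valued in the translations and equals $(z+z_1,\ldots,z+z_{2g})$ with $(z_1,\ldots,z_{2g})\neq 0$, so applying the sublemma with $h=g$ produces a sequence in $\Aut(\Gamma)\times\mathrm{Aff}(\C)\cdot\rho$ converging to $(z+1,z,\ldots,z)=\rho_\infty$. For $n\geqslant 2$ I would first invoke \cref{OOOD} to assume $\rho=(\zeta z, z, z+z_3,\ldots, z+z_{2g})$ with $(z_3,\ldots,z_{2g})\neq 0$ (which also forces $g\geqslant 2$). The subtlety here — and the step I expect to require the most care — is to modify only the last $g-1$ handles so that the first handle stays exactly $(\zeta z, z)$ in the limit. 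This is precisely what \cref{decoupe} provides: the separating curve $c=[a_1,b_1]$ satisfies $\rho(c)=\id$ because $\rho(b_1)=\id$, so every $\sigma\in\Aut(\Gamma_{g-1})$ acting on the handles $2,\ldots,g$ keeps us in $\Aut(\Gamma)\cdot\rho$ and acts as $\Sp_{2g-2}(\Z)$ on $(z_3,\ldots,z_{2g})$; moreover the scaling $z\mapsto\lambda z$ scales $(z_3,\ldots,z_{2g})$ while fixing $\rho(a_1)=\zeta z$ and $\rho(b_1)=z$, whose translation parts vanish. Applying the sublemma with $h=g-1$ to $(z_3,\ldots,z_{2g})$ then yields a sequence converging to $(\zeta z, z, z+1, z,\ldots, z)=\rho_\infty$, completing the proof.
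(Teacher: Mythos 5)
Your proposal is correct and follows essentially the same route as the paper: in both arguments one first ensures (by handle permutations and a Dehn twist) that the second translation coordinate is nonzero, then blows up the first coordinate with a power of a transvection $a_1\mapsto a_1b_1^k$, and finally conjugates by a dilation $z\mapsto\lambda z$ so that the rescaled tuple converges to the standard vector; your appeal to \cref{decoupe} in the case $n\geqslant 2$ just makes explicit what the paper compresses into ``proceed as in the case $n=1$'' after \cref{OOOD}. The only difference is cosmetic packaging (your sublemma about $\overline{\C^*\cdot\Sp_{2h}(\Z)\cdot w}$ versus the paper's explicit $h_N$-conjugation).
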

\begin{proof}
We first suppose that $n = 1$.
Let us write  $\rho = (z + z_1, \ldots, z + z_{2g})$.
We have supposed that $\rho$ is not conjugated in $\SO$, hence there exists $1\leqslant i \leqslant 2g$ such that $z_i\neq 0$. Exchanging the handles if necessary we can assume that $1\leqslant i\leqslant 2$. Applying a Dehn twist in the first handle if needed, we may even assume that $i = 2$. Conjugating $\rho$ with  $z\mapsto z_2 z$, we may assume that $z_2 = 1$. A power of a Dehn twist along a curve homotopic to $b_1$ replaces $\rho$ with $$\rho_N = (z + N + z_1, z + 1, z + z_3, \ldots z + z_{2g}).$$
The representation $h_N^{-1}\rho_N h_N$ converges to $\rho_\infty$, where $h_N = \begin{pmatrix}
\sqrt{N} & 0\\
0 & 1/\sqrt N
\end{pmatrix}$.
We now suppose that $n\geqslant 2$.
We may start with $\rho$ as in \cref{OOOD}.
 Since $\rho$ is not conjugated in $\SO$, there exists $3\leqslant i \leqslant 2g$ such that $z_i\neq 0$. We then proceed as in the case $n=1$.
\end{proof}

If $n\in \{1, 2\}$ then the group $\mathrm{GL}_2^+(\R)$ acts on $\Hom(\Gamma, \Z_n\rtimes \C)$. This action is defined as follows: if $A\in \mathrm{GL}_2^+(\R)$ and $\rho\in \Hom(\Gamma, \Z_2\rtimes \C)$, then $A\cdot \rho$ is defined to be the representation $\rho' : \gamma\mapsto \alpha(\gamma)z + A\cdot t(\gamma)$, where $\alpha = \mathrm{Li}\circ \rho$ and $t = \mathrm{Tr}\circ\rho$.
Let us make an observation that will be useful later in the case $n = 2$.
\begin{lemma}\label{lemmeGenre2}
Let $x,y\in \C$.
The representation $(-z, z, z + x, z + y)$ in $\Hom(\Gamma_2, \Z_2\rtimes \C)$ is in the same orbit as the representation $(z+x, -z + y, z-x, -z)$ under the action of $\Aut(\Gamma_2)$.
\end{lemma}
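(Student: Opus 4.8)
The plan is to exhibit an explicit element of $\Aut(\Gamma_2)$ carrying the first representation to the second, built as a composition of a handle exchange, automorphisms supported in a single handle, and a Dehn twist along a curve meeting both handles, followed if necessary by a conjugation by a translation. Write elements of $\Z_2\rtimes \C$ as pairs $(\epsilon, t)$ with $\epsilon\in \{\pm 1\}$ and $t\in \C$, so that $(\epsilon_1,t_1)(\epsilon_2,t_2) = (\epsilon_1\epsilon_2, t_1 + \epsilon_1 t_2)$ and $z\mapsto \epsilon z + t$ corresponds to $(\epsilon, t)$; thus $-z = (-1,0)$, $z + x = (1,x)$ and $-z + y = (-1,y)$. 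First I would record that both tuples are genuine homomorphisms: writing $\rho = (-z, z, z+x, z+y)$, one has $[\rho(a_1),\rho(b_1)] = \id$ because $\rho(b_1) = \id$, and $[\rho(a_2),\rho(b_2)] = \id$ because $\rho(a_2)$ and $\rho(b_2)$ are translations, so the surface relation holds; for $\rho' = (z+x, -z+y, z-x, -z)$ a direct computation gives $[\rho'(a_1),\rho'(b_1)] = (1, 2x)$ and $[\rho'(a_2),\rho'(b_2)] = (1, -2x)$, whose product is $\id$. Both representations lift to $\SL \C$, as all affine representations do, so there is no parity obstruction to their lying in a common orbit.

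Next I would reduce by moves that are easy to track on the $4$-tuple $(A_1,B_1,A_2,B_2) = (\rho(a_1),\dots,\rho(b_2))$. Applying the (orientation-preserving) handle exchange $(A_1,B_1,A_2,B_2)\mapsto (A_2,B_2,A_1,B_1)$ turns $\rho$ into $((1,x),(1,y),(-1,0),(1,0))$, and then the automorphism of the second handle realizing $(x,y)\mapsto (y,-x)$ in $\Sp_2(\Z)$, which sends $(A_2,B_2)\mapsto (B_2, A_2^{-1})$, produces $((1,x),(1,y),(1,0),(-1,0))$. At this stage the values $\rho(a_1) = z+x$ and $\rho(b_2) = -z$ already agree with the target, so it remains only to turn $\rho(b_1) = z$ into $-z+y$ and $\rho(a_2) = z$ into $z - x$. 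I would also note that since the intermediate representation kills the separating curve $[a_1,b_1]$, \cref{decoupe} provides extra freedom to act on the first handle (by $\Sp_2(\Z)$, via \cref{ModCycl}) without affecting the second.

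The final step must transfer the linear part $-1$ onto $b_1$ and the translation $-x$ onto $a_2$; this forces a Dehn twist along a nonseparating curve meeting both handles, after which a conjugation by a suitable translation $(1,c)$ (which fixes translations and shifts every $(-1,t)$ to $(-1,t+2c)$) normalizes the remaining constants to $(-1,y)$ and $(1,-x)$. The main obstacle is precisely this handle-mixing step: because the $\Z_2$ factor acts on $\C$ by $z\mapsto -z$, a twist along a connecting curve alters the translation parts with sign corrections dictated by the current linear parts, so the curve and the power of the twist must be chosen to move the linear data $(0,0,0,1)\mapsto (0,1,0,1)$ in $\mathrm H^1(\Sigma; \Z_2)$ and the twisted translation data to their target values simultaneously. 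The delicate point, which I would verify by hand, is that the words realizing the desired values form an orientation-preserving automorphism of $\Gamma_2$ — equivalently a product of Dehn twists — rather than merely matching the representation on generators; carrying out this bookkeeping completes the proof.
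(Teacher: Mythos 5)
Your preliminary reductions (checking the surface relation, the handle exchange, the $\Sp_2(\Z)$ move in one handle) are correct, but the proof stops exactly where the content of the lemma begins. After your normalizations you still need to move the linear part $-1$ from one handle to the other and to produce the translation $-x$ on $a_2$, and you only assert that ``a Dehn twist along a nonseparating curve meeting both handles'' followed by ``bookkeeping'' will do this; you explicitly defer the verification that the resulting assignment on generators is induced by an element of $\Aut(\Gamma_2)$. That verification \emph{is} the lemma. The paper's proof consists precisely of naming the curve and computing the effect: starting from $(-z,z,z+x,z+y)$, it first uses \cref{ModCycl} in the first handle to reach $(z,-z,z+x,z+y)$, then applies a Dehn twist along a curve homotopic to $a_2^{-1}b_1$ (this is the handle-mixing step, and the twist curve and direction are chosen so that the sign corrections you worry about come out right), obtaining $(-z+x,-z,z+x,-z+x+y)$, and then finishes with twists supported in single handles and a handle exchange. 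Without an explicit curve and an explicit computation of the twisted tuple, your argument is a plan rather than a proof.

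There is also a smaller but genuine problem with your final normalization step: you invoke ``a conjugation by a suitable translation $(1,c)$,'' but the lemma asserts the two representations lie in the same orbit under $\Aut(\Gamma_2)$ alone, with no conjugation. Conjugation by $\rho(\gamma)$ for $\gamma\in\Gamma_2$ can be absorbed into an inner automorphism (and \cref{conjugaison} lets you absorb conjugation by elements of the image when the homomorphism is viewed as surjective onto its image), but conjugation by an arbitrary translation $(1,c)$ sends $(-1,t)$ to $(-1,t+2c)$ and the required $c$ need not lie in $\rho(\Gamma_2)$, so this step is not justified. The paper's argument avoids conjugation entirely; if you keep the conjugation in your scheme you must either check that the needed $c$ is realized by an element of the image or restructure the moves so that no conjugation is required.
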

\begin{proof}
Let us start with the representation $\rho = (-z, z, z + x, z + y)$. We may apply an automorphism of $\Aut(\Gamma_1)$ in the first handle so that $\rho = (z, -z, z + x, z + y)$ by \cref{ModCycl}. We then apply a Dehn twist along a curve homotopic to $a_2^{-1}b_1$, that replaces $\rho$ with $(-z + x, -z, z + x, -z + x + y)$. A Dehn twist in the second handle replaces $\rho(b_2)$ with $-z + y$. We may now apply a Dehn twist in the first handle and then interchange the two handles so that $\rho = (z+x, -z + y, z-x, -z)$.
\end{proof}

Let us consider the group $\Lambda = \Lambda(\rho)\subset \C$ of $\mathrm{Tr}(\rho(\gamma))$ for $\gamma\in \ker \alpha$. The group $\Lambda$ may be a lattice in $\C$ or not. This property is invariant under the action of $\Aut(\Gamma)$. Kapovich gave in \cite{Kapovich} a description of the closure of orbits of the $\Aut(\Gamma)\times \mathrm{GL}_2^+(\R)$ action on $\Hom(\Gamma, \C)$, see also \cite[Proposition 2.10]{CalsamigliaDeroin}. As a corollary of this description we show that if $g\geqslant 3$ and $n = 1$ and $\rho$ of positive volume then there exists $\rho_\infty \in \overline{\Aut(\Gamma)\cdot \rho}$ such that $\Lambda(\rho_\infty)$ is a lattice with area a given divisor of $\mathrm{Vol}(\rho)$. Note that if $n = 1$ then $\rho$ may be seen as a representation in $\Hom(\Gamma, \C)$ and we have $\Lambda(\rho) = \rho(\Gamma)$.

\begin{lemma}\label{LatticeClosure}
Let us suppose that $g \geqslant 3$. Let $\chi\in \Hom(\Gamma, \C)$ be such that $\mathrm{Vol}(\chi) > 0$ and $\chi(\Gamma)$ is not a lattice in $\C$. For every $m\geqslant 1$ there exists $\chi_\infty\in \overline{\Aut(\Gamma)\cdot \chi}$ such that $\Lambda = \chi_\infty(\Gamma)$ is a lattice, and $\mathrm{Vol}(\chi_\infty) / \mathrm{Area}(\Lambda) = m$.
\end{lemma}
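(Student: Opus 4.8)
The plan is to combine three ingredients: the $\Aut(\Gamma)$-invariance of the volume, an explicit choice of target representation, and Kapovich's description of the closure of the orbit of the enlarged group $\Aut(\Gamma)\times\mathrm{GL}_2^+(\R)$.

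First I would record the normal form. Under the identification $\Hom(\Gamma,\C)\simeq\C^{2g}$, $\chi\mapsto(\chi(a_1),\ldots,\chi(b_g))$, the group $\Aut(\Gamma)$ acts as $\Sp_{2g}(\Z)$ (Section~\ref{rappel}), and one computes $\mathrm{Vol}(\chi)=\sum_i\mathrm{Im}(\overline{\chi(a_i)}\,\chi(b_i))$. This expression is continuous and $\Sp_{2g}(\R)$-invariant, hence constant on $\overline{\Aut(\Gamma)\cdot\chi}$, so every $\chi'$ in this closure has $\mathrm{Vol}(\chi')=\mathrm{Vol}(\chi)=:V$. Moreover $V>0$ forces the real and imaginary parts of the period vector to be $\R$-linearly independent, so $\chi(\Gamma)$ spans $\C$; being assumed not a lattice, it is a non-discrete subgroup. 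Consequently it suffices to produce some $\chi_\infty\in\overline{\Aut(\Gamma)\cdot\chi}$ whose image is a lattice $\Lambda$ of area $V/m$: the ratio $\mathrm{Vol}(\chi_\infty)/\mathrm{Area}(\Lambda)=V/(V/m)=m$ is then automatic.

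Next I would exhibit an explicit candidate. Given $m\geqslant1$, choose any lattice $\Lambda=\Z\omega_1\oplus\Z\omega_2$ with $\mathrm{Im}(\overline{\omega_1}\omega_2)=V/m$, and set $\chi_\infty=(\omega_1,\omega_2,\omega_1,(m-1)\omega_2,0,\ldots,0)$, using the hypothesis $g\geqslant3$ (already $g\geqslant2$ suffices) to have two free handles. Its image is $\Z\omega_1+\Z\omega_2+\Z(m-1)\omega_2=\Lambda$, a lattice of area $V/m$, while $\mathrm{Vol}(\chi_\infty)=\mathrm{Im}(\overline{\omega_1}\omega_2)+\mathrm{Im}(\overline{\omega_1}(m-1)\omega_2)=(V/m)+(m-1)(V/m)=V$, as required. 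Note that any $A\in\mathrm{GL}_2^+(\R)$ with $\det A=1$ carries $\chi_\infty$ to another lattice representation of the same volume $V$ and the same degree $m$, so I am free to replace $\chi_\infty$ by an arbitrary $\mathrm{SL}_2(\R)$-translate of itself.

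Finally I would place $\chi_\infty$ in the closure via Kapovich's description: since $\chi$ has positive volume and non-lattice image, the closure of its $\Aut(\Gamma)\times\mathrm{GL}_2^+(\R)$-orbit contains every representation of positive volume, in particular $\chi_\infty$. This furnishes $M_k\in\Aut(\Gamma)$ and $A_k\in\mathrm{GL}_2^+(\R)$ with $A_kM_k\chi\to\chi_\infty$, and from $\mathrm{Vol}(A_kM_k\chi)=(\det A_k)V\to V$ we get $\det A_k\to1$. \textbf{The hard part is to remove the linear factors $A_k$}, i.e.\ to descend from the enlarged group to $\Aut(\Gamma)$ alone: a priori $A_k$ may diverge along the $\mathrm{SL}_2(\R)$-directions, so $M_k\chi=A_k^{-1}(A_kM_k\chi)$ need not converge. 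I would resolve this using the freedom noted above—every $\mathrm{SL}_2(\R)$-translate of $\chi_\infty$ is an equally valid target—so it is enough to force the $A_k$ to stay bounded. Applying reduction theory for $\Sp_{2g}(\Z)$ (or reading this directly off Kapovich's explicit description), I would adjust $M_k$ within its $\Sp_{2g}(\Z)$-coset so that $M_k\chi$ remains in a fixed region transverse to the $\mathrm{SL}_2(\R)$-action, confining $A_k$ to a compact set; a subsequence then gives $A_k\to A_\infty$ with $\det A_\infty=1$, whence $M_k\chi\to A_\infty^{-1}\chi_\infty$, a lattice representation of volume $V$ and area $V/m$ lying in $\overline{\Aut(\Gamma)\cdot\chi}$. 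This boundedness/compactness step is where I expect the real work to lie.
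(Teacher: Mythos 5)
Your first two steps are fine and agree with the paper: the volume is continuous and $\Aut(\Gamma)$-invariant, hence constant on $\overline{\Aut(\Gamma)\cdot\chi}$, and your explicit target $\chi_\infty=(\omega_1,\omega_2,\omega_1,(m-1)\omega_2,0,\ldots,0)$ has lattice image of area $V/m$ and volume $V$ (the paper's choice is $(\tfrac{V}{m},im,i,0,\ldots,0)$, same idea). The gap is the third step, precisely where you say you expect the real work to lie. The paper does not descend from the $\Aut(\Gamma)\times\mathrm{GL}_2^+(\R)$-orbit closure by any compactness argument: it invokes \cite[Proposition 2.10]{CalsamigliaDeroin}, which is already a description of the closure of the $\Aut(\Gamma)$-orbit \emph{alone} (after replacing $\chi$ by a single $A\cdot\chi$, $A\in\mathrm{GL}_2^+(\R)$, which is harmless since the conclusion is $\mathrm{GL}_2^+(\R)$-equivariant): for non-lattice $\chi$ of positive volume that closure contains every $\chi'$ with $\mathrm{Vol}(\chi')=\mathrm{Vol}(\chi)$, $\Im(\chi')\in\Z^{2g}$ and $\gcd(\Im(\chi'))=1$. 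The entire difficulty of the lemma is carried by that citation.

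Your proposed substitute --- obtain $A_kM_k\chi\to\chi_\infty$ from the enlarged-group statement and then ``adjust $M_k$ within its coset'' by reduction theory so that the $A_k$ stay bounded --- is not an argument as it stands, and I do not see how to make it one without using the non-lattice hypothesis in an essential way that your sketch never locates. If $\chi(\Gamma)$ were a lattice $L$, the enlarged orbit closure would still contain lattice representations of every ratio, so sequences $(M_k,A_k)$ with $\det A_k\to 1$ would still exist; yet $\overline{\Aut(\Gamma)\cdot\chi}$ would then be the orbit itself, which is closed and discrete inside $L^{2g}$, with image always equal to $L$ and ratio frozen by \cref{imagelattice} --- so no adjustment of the $M_k$ can confine the $A_k$. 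Since your bounding step, as described, makes no use of non-discreteness of $\chi(\Gamma)$, it would ``prove'' this false statement as well. The honest options are to cite the refined $\Aut(\Gamma)$-orbit closure description (Kapovich, Calsamiglia--Deroin) as the paper does, or to reprove it, which is a genuine equidistribution-type result and not a formal consequence of the enlarged-group statement.
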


\begin{proof}
It follows from \cite[Proposition 2.10]{CalsamigliaDeroin} that, replacing $\chi$ with $A\cdot \chi$, $A\in \mathrm{GL}_2^+(\R)$ if necessary, the closure of $\Aut(\Gamma)\cdot \chi$ contains the set of representations $\chi'\in \Hom(\Gamma, \C) \simeq \C^{2g}$ such that $\mathrm{Vol}(\chi') = \mathrm{Vol}(\chi)$ and $\Im(\chi')\in \Z^{2g}$ and $\gcd(\Im(\chi')) = 1$. In particular, $\chi' = (\frac{\mathrm{Vol}(\chi)}{m}, im, i, 0, \ldots, 0)$ is in $\overline{\Aut(\Gamma)\cdot \chi}$.
\end{proof}

Let us first classify the orbits of the $\Aut(\Gamma)$ action on the set $\Omega(L)$ of representations in $\Hom(\Gamma, \C)$ with image a given lattice $L$ in $\C$. If $\rho\in \Omega(L)$ then $\mathrm{Vol}( \rho)$ is a multiple of $\mathrm{Area}(L)$.
\begin{lemma}\label{imagelattice}
If $g\geqslant 2$, then we define a bijection $\Omega(L)\to \Z$ by $$[\rho']\mapsto \mathrm{Vol}(\rho')/\mathrm{Area}(L).$$
\end{lemma}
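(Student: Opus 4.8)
The plan is to identify the quantity $\mathrm{Vol}(\rho')/\mathrm{Area}(L)$ with the degree of the induced map to the torus $\C/L$, and then to classify the $\Aut(\Gamma)$-orbits in $\Omega(L)$ by a symplectic reduction. Since $\C$ is abelian, $\Hom(\Gamma,\C)\simeq\C^{2g}$ and $\Aut(\Gamma)$ acts as $\Sp_{2g}(\Z)$ (see \cref{rappel}). I would fix a positively oriented basis $(u,v)$ of $L$, so that $\mathrm{Area}(L)=\Im(\bar u v)$, and write $\rho'=\xi\,u+\eta\,v$ for two integer functionals $\xi,\eta\in\Hom(\Gamma,\Z)$. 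Unwinding the pullback of $dx\wedge dy$ in the definition of the volume gives $\mathrm{Vol}(\rho')=\big(\sum_i(\xi(a_i)\eta(b_i)-\xi(b_i)\eta(a_i))\big)\,\mathrm{Area}(L)$, so that $d(\rho'):=\mathrm{Vol}(\rho')/\mathrm{Area}(L)=\sum_i\big(\xi(a_i)\eta(b_i)-\xi(b_i)\eta(a_i)\big)$ is exactly the degree of the map $\Sigma\to\C/L$ induced by $\rho'$; in particular it is an integer. By the remark on the invariance of the volume, $d$ is constant on $\Aut(\Gamma)$-orbits, so $[\rho']\mapsto d(\rho')$ is a well-defined map from orbits to $\Z$.

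For surjectivity onto $\Z$ I would exhibit, for each $d\in\Z$, the representation $\rho_d\in\Omega(L)$ defined by $\rho_d(a_1)=v$, $\rho_d(b_1)=-d\,u$, $\rho_d(a_2)=u$ and $\rho_d(\gamma)=\id$ for the remaining generators; this uses $g\geqslant 2$. Its image is $\langle u,v\rangle=L$ and $d(\rho_d)=d$, so the map is onto.

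The heart of the argument is injectivity: every surjective $\rho'\in\Omega(L)$ is $\Sp_{2g}(\Z)$-equivalent to $\rho_{d(\rho')}$. First I would observe that $\eta$ must be a primitive functional, for if $\eta$ were divisible by some $e\geqslant 2$ then every $2\times 2$ minor of $\binom{\xi}{\eta}$ would be divisible by $e$, contradicting surjectivity onto $L$. Hence, by the transitivity of $\Sp_{2g}(\Z)$ on primitive homomorphisms $\Gamma\to\Z$ (the gcd classification used in the proof of \cref{ModCycl}), I may normalize $\eta=a_1^\ast$, i.e. $\eta(a_1)=1$ and $\eta=0$ on the other generators. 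With this normalization $d=-\xi(b_1)$, and surjectivity becomes the coprimality condition $\gcd\big(\xi(b_1),\xi(a_2),\xi(b_2),\ldots,\xi(b_g)\big)=1$. It then remains to normalize $\xi$ while fixing $\eta$, i.e. using only the stabilizer of $b_1$ in $\Sp_{2g}(\Z)$, which indeed fixes $\eta=a_1^\ast$. Restricting to the complementary symplectic summand $W=\langle a_2,\ldots,b_g\rangle$ and applying $\Sp_{2g-2}(\Z)$ (here $g\geqslant 2$ is used), I would first bring $\xi|_W$ to the form $e\,a_2^\ast$; the coprimality condition then reads $\gcd(d,e)=1$. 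A transvection along $b_1$ changes $\xi(a_1)$ by multiples of $d$, while the handle-mixing transvection $a_1\mapsto a_1+a_2,\ b_2\mapsto b_2-b_1$ (followed by a renormalization of $\xi|_W$ under $\Sp_{2g-2}(\Z)$) changes $\xi(a_1)$ by multiples of $e$; since $\gcd(d,e)=1$, these moves drive $e$ to $1$ and then $\xi(a_1)$ to $0$, leaving $\xi=a_2^\ast-d\,b_1^\ast$, which is exactly $\rho_d$.

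I expect the symplectic reduction of $\xi$ inside the stabilizer of $b_1$ to be the main obstacle. Unlike the classification of $\Sp_{2g}(\Z)$-orbits on $\Z^{2g}$, where one tracks a single primitive vector, here two functionals must be controlled simultaneously and one must verify that all the reduction moves stay inside $\mathrm{Stab}(b_1)$, equivalently that they preserve $\eta$. The point that makes the reduction terminate is precisely the coprimality $\gcd(d,e)=1$ coming from surjectivity, which turns the procedure into a Euclidean algorithm between $d$ and the complementary content; the availability of the extra handle $(a_2,b_2)$, that is the hypothesis $g\geqslant 2$, is exactly what provides the room to carry this out.
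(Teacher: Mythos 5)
Your argument is correct. Note, however, that the paper does not actually prove this lemma: it simply refers to \cite[Proposition 2.2]{Moi} for the proof, so there is nothing in the text to compare your reduction against step by step. What you supply is a self-contained proof of exactly the expected shape: writing $\rho'=\xi u+\eta v$, identifying $\mathrm{Vol}(\rho')/\mathrm{Area}(L)$ with the symplectic pairing $\sum_i(\xi(a_i)\eta(b_i)-\xi(b_i)\eta(a_i))$, i.e.\ the degree of the induced map $\Sigma\to\C/L$, and then putting the pair $(\xi,\eta)$ into a normal form under the (dual) $\Sp_{2g}(\Z)$-action. I checked the key steps: primitivity of $\eta$ does follow from surjectivity via the Smith normal form criterion (all $2\times2$ minors divisible by $e$ forces the second determinantal divisor to exceed $1$); the transvection along $b_1$ and the map $a_1\mapsto a_1+a_2$, $b_2\mapsto b_2-b_1$ are both symplectic and both fix $\eta=a_1^\ast$; and the coprimality $\gcd(d,e)=1$ makes the combined Euclidean procedure terminate at $\xi=a_2^\ast-d\,b_1^\ast$, which is your $\rho_d$. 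Two small points you should make explicit in a written version: the volume formula itself deserves a line of justification (it is the evaluation of the pullback of $dx\wedge dy$ on the fundamental class, which reduces to the standard cup-product computation), and the degenerate cases $e=0$ (forcing $d=\pm1$) and $d=0$ (forcing $e=1$) should be mentioned, though both are handled by the same moves. That every element of $\Sp_{2g}(\Z)$ you use is realized by an element of $\Aut(\Gamma)$ is already covered by the discussion at the end of \cref{rappel}.
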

\begin{rmk}
In particular, for each $\rho'\in \Hom(\Gamma, \C)$ with image $L$, there exists $\sigma\in \Aut(\Gamma)$ such that $\sigma\cdot\rho' = (mu, v, u, 0, \ldots, 0)$, where $m = \mathrm{Vol}(\rho') / \mathrm{Area}(L)$.
\end{rmk}
We refer to \cite[Proposition 2.2]{Moi} for a proof of \cref{imagelattice}.

We suppose that $\rho = (\zeta z, z, z + x_1, z + y_1, \ldots, z + x_{g-1}, z + y_{g-1})$. Observe that $\Lambda$ is the group generated by the $\xi x_i$ and $\xi y_i$ for $1\leqslant i < g$ and $\xi$ a $n$-th root of unity.

Let us first suppose that $\Lambda$ is not a lattice in $\C$. We show that we may assume that the group generated by the $x_i$ and $y_i$, $1 \leqslant i < g$, is not a lattice if $g\geqslant 3$.
\begin{lemma}\label{notAlattice}
Suppose $g\geqslant 3$.
Replacing $\rho$ with some $\sigma\cdot\rho$ if necessary, we can assume that the group generated by the $x_i$ and $y_i$, $1\leqslant i < g$, is not a lattice.
\end{lemma}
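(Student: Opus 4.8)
The statement is vacuous unless $M$ is a lattice, so I assume it is. The plan rests on one elementary criterion: for a lattice $L\subset\C$ and an element $\lambda\in\C$, the group $L+\Z\lambda$ is again a lattice if and only if $\lambda$ lies in the $\mathbb{Q}$-span $\mathbb{Q}L$ of $L$. Indeed, if $\lambda\in\mathbb{Q}L$ one clears denominators to see that $L+\Z\lambda$ is discrete, while if $\lambda\notin\mathbb{Q}L$ the image of $\lambda$ in the torus $\C/L$ generates an infinite subgroup, which cannot be discrete in a compact group, so $L+\Z\lambda$ is not discrete. Using this I would first locate the defect of $\Lambda=\Z[\zeta]M$: if every $\zeta^{j}x_i$ and $\zeta^{j}y_i$ lay in $\mathbb{Q}M$, then $\Lambda$ would be a finitely generated subgroup of the $\mathbb{Q}$-vector space $\mathbb{Q}M\cong\mathbb{Q}^2$, hence discrete after clearing denominators, contradicting the hypothesis that $\Lambda$ is not a lattice. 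Therefore there is a generator $w\in\{x_i,y_i\}$ and an exponent $1\le j\le n-1$ with $\zeta^{j}w\notin\mathbb{Q}M$.

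The mechanism to exploit this is the conjugation of a translation handle by the rotation in the first handle. Since $\rho([a_1,b_1])=\id$, \cref{decoupe} lets us act by $\Aut(\Gamma_{g-1})$ on the last $g-1$ handles; as these handles carry only translations, this action is the $\Sp_{2(g-1)}(\Z)$ action on $(x_1,y_1,\dots,x_{g-1},y_{g-1})\in\C^{2(g-1)}$, and it preserves $M$. The genuinely new move is the automorphism conjugating a single handle $(a_i,b_i)$, $i\ge 2$, by $a_1^{j}$: since conjugating $z\mapsto z+t$ by $\rho(a_1)=\zeta z$ yields $z\mapsto z+\zeta^{j}t$, this replaces $(x_{i-1},y_{i-1})$ by $(\zeta^{j}x_{i-1},\zeta^{j}y_{i-1})$ and leaves the other handles untouched.

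I would then combine these. Using \cref{decoupe} and the normal form of the remark following \cref{imagelattice}, I would bring the translation data to $(mu,v,u,0,\dots,0)$, where $\{u,v\}$ is a basis of $M$ and $m=\mathrm{Vol}/\mathrm{Area}(M)$, so that the third handle is $(u,0)$, carrying a bare basis vector with a trivial partner. The crucial point, for which $g\ge 3$ is needed, is that I must rotate the handle carrying an incommensurable vector while keeping a rank-two sublattice of $M$ among the remaining generators. Concretely, if $\zeta^{j}u\notin\mathbb{Q}M$ I conjugate the third handle by $a_1^{j}$; the generators become $mu,v,\zeta^{j}u$, and when $m\ne 0$ these contain the lattice $L=\Z\,mu+\Z v$ with $\mathbb{Q}L=\mathbb{Q}M$ together with $\zeta^{j}u\notin\mathbb{Q}L$, so the new group $M'$ is not a lattice by the criterion. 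When the distinguished multiplier $m$ vanishes, I would first apply a transvection mixing the second and third handles to deposit a copy of $u$ in the second handle before rotating, which restores the needed rank-two lattice. If instead it is $v$ whose twist escapes $\mathbb{Q}M$, I would move $v$ into a bare handle by a transvection (or use the analogous normal form for the basis $\{v,-u\}$) and argue identically.

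The hard part will be exactly this last bookkeeping: guaranteeing that after a single handle-rotation the surviving generators still span a full-rank sublattice of $M$ rather than collapsing to rank one. This is where $g\ge 3$ enters, furnishing a spare handle (and, in the normal form, a spare trivial slot) in which to store a copy of the vector being rotated. The degenerate value $m=0$ and the choice between $u$ and $v$ are the only cases one must check, but each is handled by a single preliminary transvection; since all these transvections lie in the $\Sp_{2(g-1)}(\Z)$ action they preserve $M$ and hence do not disturb the criterion, which is applied only once at the very end.
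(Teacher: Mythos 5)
Your proof is correct and follows essentially the same route as the paper: bring the translation part to the normal form of \cref{imagelattice}, use \cref{decoupe} to conjugate a single translation handle by a power of $\rho(a_1)$ so that one generator leaves $\mathbb{Q}M$ while another handle retains a finite-index sublattice of $M$, and conclude by the discreteness criterion. Your explicit treatment of the degenerate case $m=0$ (and of whether it is $u$ or $v$ whose twist escapes $\mathbb{Q}M$) is bookkeeping the paper elides, and it is handled correctly.
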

\begin{proof}
Let us suppose that it is a lattice, and let $u, v$ be generators of this lattice. There exists $\xi$ an $n$-th root of unity such that $\xi u$ or $\xi v$ is not in $\mathbb Q u \oplus \mathbb Q v$. 
We can assume that $x_2 = u$, $y_2 = v$, $x_3 = mu$, $y_3 = v$ and $x_j = y_j = 0$ for every $j\geqslant 4$, where $m\in \Z$ by \cref{imagelattice} and \cref{decoupe}. Again by \cref{decoupe} we may conjugate $\rho(\gamma)$ by $\xi$, $\gamma\in \{a_1, b_1, a_2, b_2\}$, since $\xi$ is a power of $\rho(a_1)$. We thus have replaced $x_2$ and $y_2$ with $\xi x_2$ and $\xi y_2$. These new $x_i$ and $y_i$ do not generate a lattice.
\end{proof}

We now suppose that $\Lambda$ is a lattice. Recall that $\zeta = e^{2\pi i / n}$. We can assume that the map $z\mapsto \zeta z$ is in the image of $\rho$ by \cref{OOOD}; conjugating $\rho$ with a translation if necessary. Thus if $a\in \Lambda$, we also have $\zeta a\in \Lambda$ because one can conjugate $z+a$ with $\zeta z$. We also have $\zeta^2 a\in \Lambda$, thus $\zeta^2\in \mathbb Q \oplus \zeta \mathbb Q$. 
Therefore $[\mathbb Q(\zeta) : \mathbb Q]\leqslant 2$. Since $\varphi(n) = [\mathbb Q(\zeta) : \mathbb Q]$, we have $\varphi(n) \leqslant 2$, where $\varphi$ denotes the Euler's totient function. This implies $n\in \{1, 2, 3, 4, 6\}$.
In fact we may even characterize the lattices that arise in this fashion for $n\in \{3, 4, 6\}$. Let $\omega = \exp(2\pi i /3)$.

\begin{lemma}
There exists $v\in \C^\star$ such that 
$$v\Lambda = \left\{
\begin{array}{ll}
	\Z[i]& \mbox{if } n= 4\\
	\Z[\omega] & \mbox{if } n \in \{3,6\}.
\end{array}
\right.$$
\end{lemma}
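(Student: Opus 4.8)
The plan is to exploit the fact that $\Lambda$ is stable under multiplication by $\zeta$. Indeed, by construction $\Lambda$ is the subgroup of $\C$ generated by the elements $\xi x_i$ and $\xi y_i$ as $\xi$ ranges over the $n$-th roots of unity and $1\leqslant i < g$; since multiplication by $\zeta$ permutes this generating set, one has $\zeta\Lambda = \Lambda$. Consequently $\Lambda$ is a module over the subring $\Z[\zeta]\subset \C$. The first step is to identify this ring: for $n=4$ we have $\zeta = i$ and $\Z[\zeta] = \Z[i]$, while for $n=3$ we have $\zeta = \omega$ and $\Z[\zeta] = \Z[\omega]$; for $n=6$ one checks that $\zeta^2 = \omega$ and $\zeta = 1 + \omega$, so again $\Z[\zeta] = \Z[\omega]$. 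In every case $\Z[\zeta]$ is the full ring of integers of an imaginary quadratic field of class number one, hence a principal ideal domain.

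Next I would invoke the structure theory of modules over a principal ideal domain. The lattice $\Lambda$ is a finitely generated $\Z[\zeta]$-module, and it is torsion-free because it sits inside the field $\C$. A finitely generated torsion-free module over a principal ideal domain is free, so $\Lambda\cong \Z[\zeta]^r$ for some $r\geqslant 0$. To pin down $r$ I would compare $\Z$-ranks: $\Lambda$ is a lattice in $\C\simeq \R^2$, so it has $\Z$-rank $2$, while $\Z[\zeta]$ has $\Z$-rank $2$; since a free $\Z[\zeta]$-module of rank $r$ has $\Z$-rank $2r$, we must have $r=1$. Thus $\Lambda = w\,\Z[\zeta]$ for some $w\in \C^\star$, and taking $v = w^{-1}$ yields $v\Lambda = \Z[\zeta]$, which is $\Z[i]$ when $n=4$ and $\Z[\omega]$ when $n\in\{3,6\}$, as required.

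There is no serious obstacle here; the argument is essentially bookkeeping once the $\zeta$-invariance of $\Lambda$ has been recorded. The only points demanding a little care are the identification $\Z[\zeta] = \Z[\omega]$ in the case $n=6$, where one must verify that the primitive sixth root of unity generates the Eisenstein integers, and the fact that $\Z[i]$ and $\Z[\omega]$ are principal ideal domains, which is what licenses the freeness of $\Lambda$. Alternatively, one could bypass the structure theorem by choosing a nonzero $w\in\Lambda$ of minimal modulus and checking directly that $\Lambda = w\,\Z[\zeta]$, but the module-theoretic route seems cleaner.
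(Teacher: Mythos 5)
Your proof is correct, but it takes a genuinely different route from the paper. The paper argues via geometry of numbers: it rescales $\Lambda$ by a nonzero vector of minimal length, observes that $\Z[\zeta]\subset\Lambda$, and then shows that every element of $\Lambda$ lies at distance strictly less than $1$ from some point of $\Z[i]$ (resp.\ $\Z[\omega]$), so that the difference, being an element of $\Lambda$ of modulus less than the minimal length, must vanish. This is exactly the norm-Euclidean estimate for these rings, used directly and with no machinery. You instead record that $\Lambda$ is a finitely generated torsion-free module over $\Z[\zeta]$, invoke that $\Z[i]$ and $\Z[\omega]$ are principal ideal domains so that $\Lambda$ is free, and compute the rank by comparing $\Z$-ranks; the conclusion $\Lambda=w\,\Z[\zeta]$ then follows because the module action is multiplication in $\C$, so a module isomorphism $\Z[\zeta]\to\Lambda$ is multiplication by the image of $1$. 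Your route is more conceptual and generalizes immediately to any order of class number one, whereas the paper's is self-contained and elementary --- indeed the principality of $\Z[i]$ and $\Z[\omega]$ that you rely on is usually established by the very covering-radius estimate the paper writes out. The one point you rightly flag, and which the paper elides, is the identification $\Z[e^{i\pi/3}]=\Z[\omega]$ in the case $n=6$; your verification $\zeta=1+\omega$, $\zeta^2=\omega$ settles it.
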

\begin{proof}
Choose a vector of minimal length $v$ in $\Lambda\setminus \{0\}$, and replace $\Lambda$ with $\frac 1 v \Lambda$. Since $\Lambda$ is stable by multiplication by $e^{2\pi i / n}$, $\mathbb Z[e^{2\pi i / n}]$ is a lattice included in $\Lambda$. Let us suppose that $n=4$. Let $v\in \Lambda$. There exists $w\in \Z[i]$ such that $v - w = a + ib$ satisfies $-\frac 1 2 \leqslant a,b\leqslant \frac{1}{2}$. Therefore $|v-w| < 1$ and $w = v$. Thus $\Lambda = \Z[i]$. Similarly if $n\in \{3, 6\}$, let $v\in \Lambda$. There exists $w\in \Z[\omega]$ such that $v - w = a + \omega b$, where $-\frac 1 2\leqslant a,b \leqslant \frac{1}{2}$. We have $|v-w| \leqslant |a| + |b| \leqslant 1$. Moreover one of these inequalities must be strict, hence $w = v$ thus $\Lambda = \Z[\omega]$.
\end{proof}

We can thus assume that $\Lambda = \mathbb Z[i]$ if $n = 4$ and $\Lambda = \mathbb Z[\omega]$ if $n\in \{3, 6\}$. We now show that we may also assume that the group $\Lambda$ is generated by the $x_i$ and $y_i$ for $1\leqslant i < g$.

\begin{lemma}
We may assume that the group generated by $x_i, y_i$ is $\Lambda$, replacing $\rho$ with some $\rho'\in \Aut(\Gamma)\times \mathrm{Aff}(\C)\cdot \rho$.
\end{lemma}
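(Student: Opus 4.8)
The plan is to work entirely with the translation parts. Write $R=\Z[\zeta]$, which equals $\Z[i]$ when $n=4$ and $\Z[\omega]$ when $n\in\{3,6\}$; after the preceding normalization $\Lambda=R$, a rank two lattice with $\Z$-basis $\{1,\zeta\}$. Set $M=\langle x_1,y_1,\dots,x_{g-1},y_{g-1}\rangle_{\Z}\subseteq\Lambda$. Since conjugating a translation $z+a$ by $\rho(a_1)=\zeta z$ yields $z+\zeta a$, the lattice $\Lambda$ is exactly the $R$-submodule generated by the $x_i,y_i$, that is $\Lambda=RM$; as $R=\Z\oplus\Z\zeta$ as an abelian group this reads $\Lambda=M+\zeta M$. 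The goal is to enlarge $M$ to all of $\Lambda$ within the $\Aut(\Gamma)\times\mathrm{Aff}(\C)$-orbit.

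I would use two families of moves, both furnished by \cref{decoupe} applied to the separating curve cutting off the first (rotation) handle, whose holonomy $[\zeta z,z]$ is trivial. First, the action of $\Aut(\Gamma_{g-1})=\Sp_{2(g-1)}(\Z)$ on the translation handles; this only re-expresses $M$ and leaves it unchanged, and by \cref{imagelattice} it puts the translation representation $\chi\in\Hom(\Gamma_{g-1},\C)$ into the normal form $(mu,v,u,0,\dots,0)$ whenever $M$ is a lattice. Second, for any single translation handle I may, after bringing it next to the rotation handle by the first kind of move, take $\Sigma_h$ to consist of that handle together with the first one and conjugate the $\Sigma_h$-part by $\rho(a_1)^k=\zeta^k z$; this is an inner automorphism of $\Gamma_h$, hence admissible by \cref{decoupe}, it fixes the rotation handle (rotations commute) and all the other handles, and it multiplies the chosen pair $(x_j,y_j)$ by $\zeta^k$. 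Thus I can adjoin $\zeta$-multiples of elements of $M$ to the generating set.

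Now I would induct on the index $[\Lambda:M]$. If $M\neq\Lambda$ then, because $\Lambda=M+\zeta M$, there is $w\in M$ with $\zeta w\notin M$. Using the first family of moves I arrange a configuration in which one spare handle equals $(w,0)$ while the remaining handles still generate all of $M$, so that $w$ is a redundant generator; applying the second move to that handle replaces $(w,0)$ by $(\zeta w,0)$ and enlarges $M$ to $\langle M,\zeta w\rangle\supsetneq M$. The index strictly drops, so after finitely many steps $M=\Lambda$, which is the assertion. The rank one case is the same in spirit: there $M=\Z a$ with $a$ forced to be a unit of $R$ (since $RM=aR=R$), I normalize $a=1$ by an $\mathrm{Aff}(\C)$-conjugation by $a^{-1}$ (which fixes $\zeta z$ and preserves $\Lambda=R$), reduce the resulting primitive integer vector to $(1,0,\dots,0)$ by $\Sp$ as in \cref{ModCycl}, duplicate the entry $1$ into a second handle, and multiply that copy by $\zeta$ to reach $\langle 1,\zeta\rangle=R$.

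The main obstacle is the bookkeeping in the inductive step: loading $w$ into a spare handle with zero second coordinate while keeping a full generating set of $M$ intact, all while respecting the invariant $\mathrm{Vol}(\chi)$, which both families of moves preserve (the second because $\zeta$ has modulus $1$). This is exactly where \cref{imagelattice} is used, to park the volume together with a $\Z$-basis of $M$ in two handles, after which symplectic transvections load $w$ into a further handle as a $\Z$-combination of that stored basis; this is also why a spare translation handle is required. I would therefore record separately that enough translation handles are available in the range of $g$ relevant here (so that $g-1\geq 2$, matching the hypotheses of \cref{notAlattice} and \cref{imagelattice}), treating the smallest genera directly if needed.
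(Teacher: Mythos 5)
Your argument is correct in substance, and it replaces the heart of the paper's proof with a genuinely different descent mechanism. Both proofs share the same toolbox: \cref{decoupe} to conjugate the rotation handle together with one adjacent translation handle by $\rho(a_1)^k=\zeta^k z$, \cref{imagelattice} to normalize the translation part inside its $\Aut(\Gamma_{g-1})$-orbit, and an induction on the index of the group $L$ generated by the $x_i,y_i$ (your $M$) inside $\Lambda$. The difference lies in how the index is made to drop. The paper conjugates a handle carrying a full basis $(u,v)$ of $L$, so the new group $L'=\langle \zeta u,\zeta v,u,mv\rangle$ need not contain $L$; to see that its covolume still decreases, the paper first normalizes the angle $\theta$ between $u$ and $v$ into $[\pi/3,2\pi/3]$ via the fundamental domain of $\SL{\Z}$ on $\mathbb H^2$ and then uses the inequality $0<|\sin(\theta+2\pi/n)|<|\sin\theta|$, with separate treatment of the boundary angles and of the case $(n,\theta)=(4,\pi/2)$. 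You instead keep an untouched collection of handles generating all of $L$ and conjugate only a redundant handle carrying a basis vector $w$ with $\zeta w\notin L$ (such a $w$ exists among the basis vectors precisely because $\Lambda=L+\zeta L$), so the new group is $\langle L,\zeta w\rangle\supsetneq L$ and the index drops for free. This buys a termination argument with no trigonometry and no exceptional cases; the cost is the extra room needed to park the redundant generator, and the observation $\Lambda=L+\zeta L$, which is correct since $\Z[\zeta]=\Z\oplus\Z\zeta$ for $n\in\{3,4,6\}$.

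The one place your bookkeeping does not quite close is the minimal genus $g=3$, where there are only two translation handles: if one of them is literally $(w,0)$, the other must simultaneously generate $L$ and carry the whole volume $m\,\mathrm{Area}(L)$, which forces $m=1$. The fix is to relax ``spare handle equal to $(w,0)$'' to ``handle that is redundant for generating $L$'': by \cref{imagelattice} you may pass to the configuration $(u,v,u,(m-1)v)$ (or $(u,v,(m-1)u,v)$), whose first handle already generates $L$, and conjugate the second handle by $\zeta$ to obtain a group containing $\langle L,\zeta u\rangle$ (resp.\ $\langle L,\zeta v\rangle$). With that adjustment your induction covers all $g\geqslant 3$, which is the range in which the lemma is used. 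Finally, your rank-one digression is vacuous: since $\mathrm{Vol}(\rho)>0$ the $x_i,y_i$ cannot be collinear, so $L$ is always a rank-two lattice, as the paper notes at the start of its proof.
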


\begin{proof}
Let us suppose that the group $L$ generated by the $x_i,y_i$ is different from $\Lambda$. Observe we cannot have $n \in \{1, 2\}$, thus $n\in \{3, 4, 6\}$.
The subgroup $L$ of $\Lambda$ is a lattice in $\C$: the vectors $x_i,y_i$ cannot be colinear since $\mathrm{Vol}(\rho) >0$. Let $u, v$ be generators of $L$. Let us show that we can assume that the angle between $u$ and $v$ is in $[\pi / 3, 2\pi/3]$. We can normalize the lattice $L$ so that $v = 1$. The action of $\SL\Z$ on the generators of $L$ is then the action of $\SL \Z$ on $\mathbb H^2$. It is well known that the domain $D = \{x\in \mathbb H^2\mid -1/2\leqslant \Re (x)\leqslant 1/2\}\cap \{x\in \mathbb H^2\mid |x|\geqslant 1\}$ is a fundamental domain for this action, see for example \cite[Chapter 7]{Serre}. Therefore we now assume that the angle $\theta$ between $u$ and $v$ is in $[\pi / 3, 2\pi / 3]$. We may also assume that $u = e^{i \theta}v$ if $\theta\in \{\pi / 3, 2\pi / 3\}$. In this case $L = v\Z[\omega]$  and $\Lambda = L$ if $n\in \{3, 6\}$, since $L$ is invariant under multiplication by $e^{i\pi/3}$.
By \cref{imagelattice} we can suppose that $$\rho = (\zeta z, z,  z + u, z + v,z + u, z + mv, z,\ldots, z)$$
where $m\in \mathbb Z$.
By \cref{decoupe} we may conjugate $\rho(\gamma)$ with $\rho(a_1)$ for $\gamma\in \{a_1, b_1, a_2, b_2\}$, and thus replace $\rho$ with
$$\rho = (\zeta z, z,  z + \zeta u, z + \zeta v,z + u, z + mv, z,\ldots, z).$$
The angle between $u$ and $\zeta v$ is $\theta + \frac{2\pi}{n}$ therefore they generate a lattice unless $(n, \theta)\in \{(4, \frac \pi 2), (3, \frac \pi 3), (6, \frac {2\pi } 3)\}$. Indeed $0 < |\sin(\theta + \frac{2\pi} n)| < |\sin \theta|$ for others $(n, \theta)$, see \cref{graphe}. Since we are done in the case where $(n, \theta)\notin \{(3, \frac \pi 3), (6, \frac{2 \pi} 3)\}$, let us suppose that $(n, \theta) = (4, \frac \pi 2)$. Observe that if $|u| = |v|$, then $L$ is invariant by multiplication by $i$, thus $L = \Lambda$. We can thus assume that $|u| < |v|$. 
There exist $\sigma\in \Aut(\Gamma)$ and $m\in \Z$ such that 
$$\sigma\cdot \rho = (i z, z, z +  u, z, z + u, z+mv, z, \ldots, z).$$
By \cref{decoupe}, we may conjugate $\rho(\gamma)$ by $\rho(a_1)$ for $\gamma\in \{a_1, b_1, a_2, b_2\}$ and thus replace $\rho$ as before with
$$\rho = (i z, z, z + iu, z, z + u, z + mv, z\ldots, z).$$
The new lattice $L'$ satisfies $\langle u, iu\rangle \subset L'$. Since $|\det(u, iu)| = |u|^2$ and $|u||v| = |\det(u, v)|$, we have $\mathrm{Area}(L') \leqslant |\det(u, iu)| < |\det(u, v)| \leqslant \mathrm{Area}(L)$.
Therefore in each case we may replace the lattice $L$ with another lattice $L'$ such that either $L' = \Lambda$ or $[\Lambda : L'] < [\Lambda : L]$. 
By iterating this process, we end up after a finite number of steps with $L = \Lambda$.
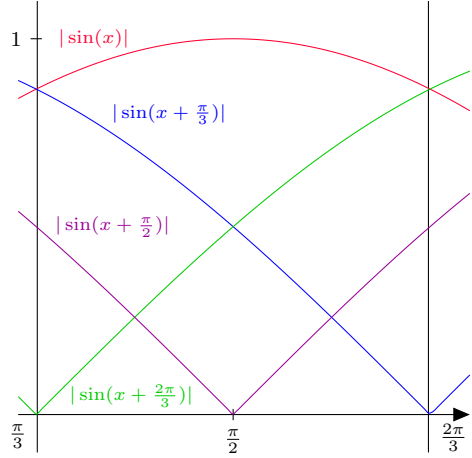
\begin{figure}[h]
    \centering
    \def\svgwidth{\columnwidth}
\definecolor{zzqqzz}{rgb}{0.6,0,0.6}
\definecolor{qqccqq}{rgb}{0,0.8,0}
\definecolor{qqqqff}{rgb}{0,0,1}
\definecolor{ffqqtt}{rgb}{1,0,0.2}
\begin{tikzpicture}[line cap=round,line join=round,>=triangle 45,x=5cm,y=5cm]
\draw[->,color=black] (1,0) -- (2.2,0);
\draw[shift={(2*3.1415926535/3,0)},color=black]
 node[below right] {\footnotesize $\frac {2\pi} 3$};
\draw[shift={(3.1415926535/3,0)},color=black]
 node[below left] {\footnotesize $\frac {\pi} 3$};
\draw[shift={(3.1415926535/3,1)},color=black](2pt,0pt) -- (-2pt,0pt)
 node[left] {\footnotesize $1$};
\draw[shift={(3.1415926535/2,0)},color=black](0pt,2pt) -- (0pt,-2pt)
 node[below] {\footnotesize $\frac \pi 2$};

\clip(1,-0.1) rectangle (2.2,1.1);
\draw (1.05,-0.1) -- (1.05,1.1);
\draw (2.09,-0.1) -- (2.09,1.1);
\draw[color=ffqqtt, smooth,samples=100,domain=1.0:2.2] plot(\x,{abs(sin(((\x))*180/pi))});
\draw[color=qqqqff, smooth,samples=100,domain=1.0:2.2] plot(\x,{abs(sin(((\x)+3.1415926535/3)*180/pi))});
\draw[color=qqccqq, smooth,samples=100,domain=1.0:2.2] plot(\x,{abs(sin(((\x)+2*3.1415926535/3)*180/pi))});
\draw[color=zzqqzz, smooth,samples=100,domain=1.0:2.2] plot(\x,{abs(sin(((\x)+3.1415926535/2)*180/pi))});
\begin{scriptsize}
\draw[color=ffqqtt] (1.2,1) node {$|\sin(x)|$};
\draw[color=qqqqff] (1.4,0.8) node{$|\sin(x + \frac \pi 3)|$};
\draw[color=qqccqq] (1.3,0.05) node{$|\sin(x + \frac {2\pi} 3)|$};
\draw[color=zzqqzz] (1.25,0.5) node{$|\sin(x + \frac {\pi} 2)|$};
\end{scriptsize}
\end{tikzpicture}
\caption{Sinusoidal functions}
    \label{graphe}
\end{figure}
\end{proof}
%
%
%
%
%
%
%
\subsubsection{Euclidean representation with infinite linear part}
In \cite{Ghazouani}, Ghazouani studied the action of the mapping class group on $\Hom(\Gamma, \mathrm{Aff}(\C)) / \mathrm{Aff}(\C)$ and in particular he gave a precise description of the closure $H_\alpha\cdot \rho$ in $\Hom(\Gamma, \mathrm{Aff}(\C))/\mathrm{Aff}(\C)$, where $\rho\in \Hom(\Gamma, \mathrm{Aff}(\C))$ and $\alpha = \mathrm{Li}\circ \rho$ and $H_\alpha = \{\sigma\in \Aut(\Gamma) \mid \sigma\cdot \alpha = \alpha\}$. In particular he showed the following,  see \cite[Lemma 5.3, Proposition 6.4]{Ghazouani}.
\begin{prop}[Ghazouani]\label{SelimG}
If $\rho\in \Hom(\Gamma, \mathrm{Isom}^+(\mathbb E^2))$ is such that
 $\mathrm{Vol}(\rho) > 0$ and $\mathrm{Li}\circ \rho$ has infinite image, then the closure of $H_\alpha\times \mathrm{Aff}(\C) \cdot \rho$ is the set of $\rho'\in \Hom(\Gamma, \mathrm{Aff}(\C))$ such that $\mathrm{Vol}(\rho') > 0 $ and $\mathrm{Li}\circ \rho' = \mathrm{Li}\circ \rho$.
\end{prop}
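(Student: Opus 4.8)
The plan is to translate everything into twisted group cohomology and then reduce the statement to a density result for a linear action. Write $\alpha=\mathrm{Li}\circ\rho$, a character valued in $\mathbb S^1$ with infinite image. A representation $\rho'\in\Hom(\Gamma,\mathrm{Aff}(\C))$ with $\mathrm{Li}\circ\rho'=\alpha$ is determined by its translation part $t=\mathrm{Tr}\circ\rho'$, which by the identity $\mathrm{Tr}(h_1h_2)=\mathrm{Tr}(h_1)+\mathrm{Li}(h_1)\mathrm{Tr}(h_2)$ is precisely a $1$-cocycle in $Z^1(\Gamma,\C_\alpha)$, where $\C_\alpha$ denotes $\C$ with the module structure $\gamma\cdot z=\alpha(\gamma)z$. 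A direct computation shows that conjugating $\rho'$ by the translation $z\mapsto z+\mu$ adds the coboundary $\gamma\mapsto(1-\alpha(\gamma))\mu$ to $t$, that conjugating by $z\mapsto\lambda z$ multiplies $t$ by $\lambda$, and that $H_\alpha$ acts $\C$-linearly on $Z^1(\Gamma,\C_\alpha)$ and on $H^1(\Gamma,\C_\alpha)$. Since $\alpha$ is nontrivial, $H^0=H^2=0$ and $\dim_\C H^1(\Gamma,\C_\alpha)=2g-2$. Using \cref{forme_volume} I would check that $\mathrm{Vol}(\rho')$ depends only on the class $[t]\in H^1(\Gamma,\C_\alpha)$ and equals, up to a positive constant, the value $Q([t])$ of the Hermitian form obtained from the cup product $H^1(\Gamma,\C_\alpha)\times H^1(\Gamma,\C_{\alpha^{-1}})\to H^2(\Gamma,\C)=\C$ together with the conjugation isomorphism $\overline{\C_\alpha}\simeq\C_{\alpha^{-1}}$ afforded by the unitarity of $\alpha$. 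Poincaré duality makes $Q$ nondegenerate; it is $H_\alpha$-invariant and is multiplied by $|\lambda|^2>0$ under $z\mapsto\lambda z$, so the whole orbit, together with its limits, keeps the linear part $\alpha$ and stays in the positive cone $\{Q>0\}$.

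Under this dictionary the proposition becomes the assertion that the orbit of $[t_0]$ under $H_\alpha$ and the scaling $\C^*$ is dense in $\{Q>0\}\subset H^1(\Gamma,\C_\alpha)$. Since the scaling action of $\C^*$ moves freely in the radial direction and between the positive level sets $\{Q=c\}$, $c>0$, it is enough to show that the $H_\alpha$-orbit of $[t_0]$ is dense in the single level set $\{Q=Q([t_0])\}$. The form $Q$ has signature $(p,q)$ with $p+q=2g-2$, and $p\ge1$ because $Q([t_0])=\mathrm{Vol}(\rho)>0$; hence $U(Q)\simeq U(p,q)$ acts transitively on each nonempty set $\{Q=c>0\}$, and it suffices to prove that the closure in $U(Q)$ of the image of $H_\alpha$ acts transitively there, for instance that it contains $SU(Q)$.

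Producing enough elements of $H_\alpha$ is the heart of the argument and the step I expect to be hardest. A Dehn twist $T_\delta$ along a simple closed curve $\delta$ with $\alpha([\delta])=1$ lies in $H_\alpha$ and, the flat line bundle being trivialized along $\delta$, acts on $H^1(\Gamma,\C_\alpha)$ as a $Q$-unitary transvection. Such curves are plentiful whatever $\alpha$ is: every separating curve and every bounding pair qualifies, and when $\ker(\alpha\colon H_1(\Sigma,\Z)\to\mathbb S^1)$ is nontrivial so does every nonseparating curve carrying a class in it. From configurations of these that are pairwise disjoint or meet once I would assemble transvections in enough independent directions. The genuinely delicate point is then to upgrade the subgroup they generate to one whose closure acts transitively on the level sets of $Q$ — ideally containing $SU(Q)$. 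I would attempt this by a ping-pong and strong-approximation argument in the spirit of \cref{ptitlemme} and of the density theorem of Previte and Xia, or by computing the Zariski closure of the transvection group and then excluding the proper closed subgroups of $U(Q)$ that fail to act transitively on positive level sets. Granting transitivity of the closure, density of the orbit on a level set, and hence in the positive cone, follows, which yields the inclusion $\supseteq$; the inclusion $\subseteq$ is immediate, since $\mathrm{Li}$ is continuous and the analysis takes place inside the positive-volume stratum, where $Q>0$ is preserved.
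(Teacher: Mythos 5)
A preliminary remark: the paper contains no proof of this proposition --- it is imported verbatim from Ghazouani \cite{Ghazouani} (the citation is to Lemma 5.3 and Proposition 6.4 there), so there is no internal argument to compare yours against. Your framework is nonetheless essentially the one Ghazouani himself uses: translation parts as cocycles in $Z^1(\Gamma,\C_\alpha)$, passage to $\mathrm{H}^1(\Gamma,\C_\alpha)\simeq\C^{2g-2}$, the volume as the Hermitian form induced by cup product, and the reduction of the whole statement to density of the image of $H_\alpha$ in the isometry group of that form. The easy parts (the effect of conjugation by translations and homotheties, invariance of $\alpha$ and of the positive cone, Witt transitivity of $\mathrm{U}(Q)$ on positive level sets) are correctly identified.

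The genuine gap is the one you flag yourself: you never prove that the closure of the image of $H_\alpha$ in $\mathrm{U}(Q)$ acts transitively on the positive level sets of $Q$ (e.g.\ contains $\mathrm{SU}(Q)$). This is not a routine step to be deferred to ``a ping-pong and strong-approximation argument'' or ``computing the Zariski closure'' --- it is the entire content of the theorem and occupies the bulk of Ghazouani's paper. Two of the inputs you intend to feed into it are also unjustified as stated: (i) that a Dehn twist along a curve $\delta$ with $\alpha(\delta)=1$ acts on twisted cohomology as a $Q$-unitary transvection requires the twisted Picard--Lefschetz formula, and for \emph{separating} curves the twist need not act by a rank-one unipotent at all, so ``every separating curve qualifies'' is not obviously usable; (ii) you only establish $p\geqslant 1$ for the signature $(p,q)$, whereas any transitivity argument in $\mathrm{U}(p,q)$ genuinely depends on knowing the signature (it is $(g-1,g-1)$ for unitary $\alpha$, by Hodge theory for flat unitary line bundles). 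As it stands the proposal is a correct reduction together with an honest admission that the main step is open, so it cannot be accepted as a proof. A separate minor point: an orbit dense in the open cone $\{Q>0\}$ has closure containing boundary points with $Q=0$ (indeed $\mathrm{Li}\circ\rho$ itself is a limit of conjugates by \cref{lineaire}), so the set equality in the statement must be read as density within the positive-volume stratum, and your claim that the inclusion ``$\subseteq$'' is immediate should be phrased accordingly.
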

It follows that under the same hypotheses, the closure of $H_\alpha\times \mathrm{Isom}^+(\mathbb E^2)\cdot \rho$ is the set of $\rho'\in \Hom(\Gamma, \mathrm{Isom}^+(\mathbb E^2))$ such that $\mathrm{Vol}(\rho) = \mathrm{Vol}(\rho')$.
Since we have seen that $\Aut(\Gamma)\cdot \alpha$ is dense in $\Hom(\Gamma, \mathbb S^1)$ when $\alpha\in \Hom(\Gamma, \mathbb S^1)$ has infinite image, we get the following.
\begin{prop}\label{orbiteEucl}
Let $\rho\in \Hom(\Gamma, \mathrm{Isom}^+(\mathbb E^2))$ be such that $\mathrm{Vol}(\rho) > 0$ and $\mathrm{Li}\circ \rho \in \Hom(\Gamma, \mathbb S^1)$ has infinite image.
The closure of $\Aut(\Gamma)\times \mathrm{Isom}^+(\mathbb E^2)\cdot \rho$ is the set of $\rho'\in \Hom(\Gamma, \mathrm{Isom}^+(\mathbb E^2))$  such that $\mathrm{Vol}(\rho') = \mathrm{Vol}(\rho)$.
\end{prop}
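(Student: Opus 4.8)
The plan is to prove the two inclusions separately, writing $\mathcal C = \overline{\Aut(\Gamma)\times\mathrm{Isom}^+(\mathbb E^2)\cdot\rho}$ and $V = \mathrm{Vol}(\rho)$. For $\mathcal C\subseteq\{\rho' \mid \mathrm{Vol}(\rho') = V\}$ I would invoke that $\mathrm{Vol}$ is invariant under $\Aut(\Gamma)$ and under conjugation by $\mathrm{Isom}^+(\mathbb E^2)$, so every representation in the orbit has volume $V$; since $\mathrm{Vol}$ is continuous on $\Hom(\Gamma,\mathrm{Isom}^+(\mathbb E^2))$ (letting the equivariant function $f$ of \cref{forme_volume} vary continuously with $\rho$, or using the explicit expression of $\mathrm{Vol}$ in the linear and translation parts), the level set is closed and contains $\mathcal C$.

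For the reverse inclusion I would factor through the two facts at hand: the consequence of \cref{SelimG} that for any Euclidean $\rho_0$ with $\mathrm{Vol}(\rho_0) > 0$ and infinite linear part $\alpha_0$, the closure of $H_{\alpha_0}\times\mathrm{Isom}^+(\mathbb E^2)\cdot\rho_0$ is the whole \emph{slice} $S_{\alpha_0} := \{\rho' \mid \mathrm{Li}\circ\rho' = \alpha_0,\ \mathrm{Vol}(\rho') = \mathrm{Vol}(\rho_0)\}$ (the linear-part constraint being forced, since $H_{\alpha_0}$ and $\mathrm{Isom}^+(\mathbb E^2)$ both fix the linear part); and the density of $\Aut(\Gamma)\cdot\alpha$ in $\Hom(\Gamma,\mathbb S^1)$ from \cref{ptitlemme}, where $\alpha = \mathrm{Li}\circ\rho$ has infinite image. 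Since $\mathcal C$ is closed and $\Aut(\Gamma)\times\mathrm{Isom}^+(\mathbb E^2)$-invariant and contains each $\sigma\cdot\rho$ (which has infinite linear part $\sigma\cdot\alpha$ and volume $V$), it follows that $S_{\sigma\cdot\alpha}\subseteq\mathcal C$ for every $\sigma\in\Aut(\Gamma)$.

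The first main step is then to show that every $\rho'$ with $\mathrm{Vol}(\rho') = V$ and $\mathrm{Li}\circ\rho'$ of infinite image lies in $\mathcal C$. Writing $\alpha' = \mathrm{Li}\circ\rho'$ and $t' = \mathrm{Tr}\circ\rho'$, I choose $\sigma_n$ with $\sigma_n\cdot\alpha\to\alpha'$. As $\alpha'$ has infinite image, the $\C$-linear relation "$\mathrm{Tr}$ of the product of commutators vanishes" has a nonzero coefficient at $\alpha'$, so by the implicit function theorem it can be solved for translation parts $t_n\to t'$ compatible with the linear part $\sigma_n\cdot\alpha$; this gives $\rho_n\to\rho'$ with $\mathrm{Li}\circ\rho_n = \sigma_n\cdot\alpha$ and $\mathrm{Vol}(\rho_n)\to V$. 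Rescaling all translation parts of $\rho_n$ by the real factor $s_n = \sqrt{V/\mathrm{Vol}(\rho_n)}\to 1$ — which fixes the linear part, preserves the commutator relation since its solution set is a $\C$-linear subspace, and multiplies the volume by $s_n^2$ — produces $\rho_n'\in S_{\sigma_n\cdot\alpha}\subseteq\mathcal C$ with $\rho_n'\to\rho'$, hence $\rho'\in\mathcal C$ by closedness.

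It remains to see that the set $U$ of volume-$V$ representations with infinite linear part is dense in $\{\mathrm{Vol} = V\}$, which then gives $\mathcal C\supseteq\overline U = \{\mathrm{Vol} = V\}$ and finishes the proof. For a target $\rho'$ with $\alpha'$ \emph{nontrivial} this is again the implicit-function-plus-rescaling argument, perturbing $\alpha'$ to a nearby infinite linear part. The step I expect to be the main obstacle is the case where $\alpha'$ is trivial, i.e. $\rho'$ is a translation surface: there the commutator relation degenerates (all its coefficients vanish at $\alpha'$), so the fibres of $\mathrm{Li}$ cease to be a smooth family of hyperplanes and $t'$ cannot be lifted naively. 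I would handle this by a second-order computation: writing the perturbed linear part via small angles $\theta^a_j,\theta^b_j$, the relation reads $-i\sum_j(\theta^b_j t'_{a_j} - \theta^a_j t'_{b_j}) + O(|\theta|^2) = 0$, which is two real linear conditions on the $2g$ angles. For $g\geqslant 2$ there is a nonzero solution, and choosing it with irrational angle-ratios makes the linear part infinite while forcing the distance from $t'$ to the fibre to be $O(|\theta|^2/|\theta|) = O(|\theta|)\to 0$; rescaling the volume as before then yields a sequence in $U$ converging to $\rho'$. The dimension count failing for $g=1$ reflects that the statement is to be read with $g\geqslant 2$.
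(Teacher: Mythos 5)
Your proof is correct and follows essentially the same route as the paper: density of the $\Aut(\Gamma)$-orbit of the linear part (\cref{ptitlemme}), Ghazouani's description of the fixed-linear-part slice (\cref{SelimG}), and the volume-forced normalization $|\lambda_n|\to 1$ (your rescaling by $s_n$ is conjugation by $z\mapsto s_n z$, i.e.\ the paper's trick). The only substantive addition is your final density step --- in particular the second-order analysis when $\mathrm{Li}\circ\rho'$ is trivial and the cocycle condition on the translation parts degenerates --- which makes explicit a point the paper's proof passes over when it asserts that the Ghazouani closure of $\sigma\cdot\rho$ meets the given neighborhood of $\rho'$.
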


\begin{proof}
Let $\rho'\in \Hom(\Gamma, \mathrm{Isom}^+(\mathbb E^2))$ be such that $\mathrm{Vol}(\rho') = \mathrm{Vol}(\rho)$.
Let us first show that $\rho'\in \overline{\Aut(\Gamma)\times \mathrm{Aff}(\C)\cdot \rho}$. Let $U$ be a neighborhood of $\rho'$ in $\Hom(\Gamma, \mathrm{Aff}(\C))$. There exist a neighborhood $U_1$ of $\mathrm{Li}\circ\rho' $ and a neighborhood $U_2$ of $\mathrm{Tr}\circ \rho'$ in the set of maps $\Gamma\to \C$ endowed with the pointwise convergence topology so that $\varphi^{-1}(U_1\times U_2)\subset U$, where $$\varphi : p\in \Hom(\Gamma, \mathrm{Aff}(\C))\mapsto (\mathrm{Li}\circ p, \mathrm{Tr}\circ p).$$

There exists $\sigma\in \Aut(\Gamma)$ such that $\mathrm{Li}\circ \rho''\in U_1$, where $\rho'' = \sigma\cdot \rho$ by \cref{ptitlemme}.
By \cref{SelimG}, there exists $f\in \mathrm{Aff}(\C)$ and $\sigma'\in H_{\mathrm{Li}\circ \rho''}$ such that $(\sigma', f)\cdot \rho''\in U$. Therefore $\rho'\in \overline{\Aut(\Gamma)\times \mathrm{Aff}(\C)\cdot \rho}$.
Let us now show that $\rho'\in \overline{\Aut(\Gamma)\times \mathrm{Isom}^+(\mathbb E^2)\cdot \rho}$.
There exists a sequence $(f_n)_n\in \Hom(\Gamma, \mathrm{Aff}(\C))^{\mathbb N}$ and $(\sigma_n)_n\in \Aut(\Gamma)^{\mathbb N}$ such that $(\sigma_n, f_n)\cdot\rho\to \rho'$. Let us write $f_n = \lambda_n z + \mu_n$, $\lambda_n = \mathrm{Li}(f_n)$ and $\mu_n = \mathrm{Tr}(f_n)$. The sequence $|\lambda_n|$ tends to $1$ since $\mathrm{Vol}((\sigma_n, f_n)\cdot \rho) = |\lambda_n|^2\mathrm{Vol}(\rho)$. The sequence defined by  $h_n = \lambda_n/|\lambda_n| z + \mu_n/|\lambda_n|$ is in $\Hom(\Gamma, \mathrm{Isom}^+(\mathbb E^2))$ and is such that $(\sigma_n, h_n)\cdot \rho\to \rho'$.
\end{proof}

\section{Branched projective structures}\label{BPS}
\subsection{Definitions}

\begin{defn}
A branched projective atlas on $\Sigma$ is the datum of an open covering $(U_\alpha)_\alpha$ of $\Sigma$ together with maps, called \emph{charts}, $\varphi_\alpha : U_\alpha\to V_\alpha$, where $V_\alpha$ is an open set of $\CP$. We require that the $\varphi_\alpha$ are topologically conjugated to the map from the unit disk to itself $z\mapsto z^n$, for some $n\geqslant 1$. We also require that when $U_\alpha\cap U_\beta\neq \emptyset$, there exists $h\in \PSL \C$ such that $\varphi_\alpha = h\circ \varphi_\beta$ on $U_\alpha\cap U_\beta$.
\end{defn}

Two branched atlases are said to be equivalent if their union is still a branched atlas.
Let us denote by $\mathbf{BP}(\Sigma)$ the set of equivalence classes of branched projective atlases on $\Sigma$, or equivalently the set of maximal atlases.
Given a maximal atlas we may associate to each point $p\in \Sigma$ an integer $n\geqslant 0$ as follows. We can pick a chart $\varphi_\alpha$ around $p$ that is topologically conjuated to $z\mapsto z^{n+1}$ and such that $p$ correspond to $0$. This integer $n\geqslant 0$ does not depend on the chart and vanishes everywhere but on a discrete, hence finite, set. We thus define a divisor on $\Sigma$ called the \textit{branch divisor}. A point where this integer $n$ is at least $1$ is called a branch point, or a conical point. We say that $n$ is the order of this branch point, or that this conical point has angle $2\pi(n+1)$.
The group $\mathrm{Homeo}_+(\Sigma)$ acts on $\mathbf{BP}(\Sigma)$ by precomposing the charts.
\begin{defn}
A branched projective structure on $\Sigma$ is an element of $\mathcal{BP}(\Sigma) = \mathbf{BP}(\Sigma)/\mathrm{Homeo}_0(\Sigma)$.
\end{defn}
The branch divisor of a branched projective structure is not well-defined but the datum of the orders of the branch points is. Hence $\mathcal {BP}$ decomposes as the union of $\mathcal P(n_1, \ldots, n_k)$ of branched projective structures with $k$ branch points of order $n_1, \ldots, n_k$.

Given $X\in \mathbf{BP}(\Sigma)$, we may choose a chart $U_\alpha$ and consider its analytic continuation $f : \widetilde \Sigma\to \CP$ as usual with geometric structures, see \cite[Chapter 3]{Thurston}. There exists a unique $\rho\in \Hom(\Gamma, \PSL \C)$ such that $f$ is $\rho$-equivariant: $f(\gamma\cdot z) = \rho(\gamma)\cdot f(z)$ for all $z\in \widetilde \Sigma$, and $\gamma\in \Gamma$. Choosing another chart would change $f$ to $h\circ f$ and $\rho$ to $h\rho h^{-1}$ for some $h\in \PSL \C$. The map $f$ is called a developing map of $X$ and $\rho$ the holonomy of $X$.
Note that one can give an element of $\mathbf{BP}(\Sigma)$ by giving its developing map.
Observe that the holonomy $\Hol(X) = [\rho]\in \Hom(\Gamma, \PSL \C)/\PSL \C$ satisfies $\Hol(h\cdot X) = \Hol(X)$ for every $h\in \mathrm{Homeo}_0(\Sigma)$. Indeed if $h$ is isotopic to the identity, then it admits a $\Gamma$-equivariant lift $H : \tilde \Sigma\to \tilde \Sigma$. A developing map for $g\cdot X$ is thus $\mathrm{dev}(X)\circ H$, that is $\rho$-equivariant.
Therefore we have a well-defined map \[\Hol : \mathcal {BP}\to \Hom(\Gamma, \PSL \C) / \PSL \C.\]
In the rest of the paper, we will abuse notation and identify $\Hol(\mathcal P(n_1, \ldots, n_k))$ with the subset of $\Hom(\Gamma, \PSL \C)$ of representations $\rho$ whose conjugacy class is in $\Hol(\mathcal P(n_1, \ldots, n_k))$.

Let us give some examples of branched projective surfaces.
\begin{ex}
\begin{enumerate}
\item A (possibly branched) hyperbolic structure on $\Sigma$ gives a developing map $\tilde \Sigma\to \mathbb H^2$ that is $\rho$-equivariant for some $\rho\in \Hom(\Gamma, \PSL \R)$.
\item Suppose $S$ is a surface endowed with a branched projective structure $X$. Let us consider $f : \Sigma\to S$ a branched cover. We define the \textit{pullback} of $X$ by $f$, that we denote $f^* X$, as follows. A developing map for $f^* X$ is given by $\mathrm{dev}\circ \tilde f$, where $\mathrm{dev} : \tilde S\to \CP$ is a developing map for $X$ and $\tilde f : \tilde \Sigma\to \tilde S$ is a lift of $f$.
\end{enumerate}
\end{ex}

\subsection{The mapping class group action on $\mathcal{BP}$}

The mapping class group $\Mod(\Sigma) = \mathrm{Homeo}_+(\Sigma) / \mathrm{Homeo}_0(\Sigma)$ acts naturally on $\mathcal{BP}$. This action preserves the sets $\mathcal P(n_1, \ldots, n_k)$. The map $\Hol$ is equivariant with respect to the actions of $\Mod(\Sigma)$ on $\mathcal{BP}$ and on $\Hom(\Gamma, \PSL \C)/ \PSL \C$.
%
\begin{prop}
Let $\rho\in \Hom(\Gamma, \PSL \C)$ and $\sigma\in \Aut(\Gamma)$ and $h\in \PSL \C$. For every $1\leqslant n_1\leqslant \ldots \leqslant n_k$, we have: $$\rho\in \Hol (\mathcal P(n_1, \ldots, n_k)) \iff h(\sigma\cdot \rho) h^{-1}\in \Hol(\mathcal P(n_1, \ldots, n_k)).$$
\end{prop}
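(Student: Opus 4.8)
The plan is to handle the two operations—conjugation by $h$ and precomposition by $\sigma$—separately, since each is invariant for $\Hol(\mathcal P(n_1, \ldots, n_k))$ for a different elementary reason. For the conjugation by $h$, I would first recall the convention fixed just above, namely that $\Hol(\mathcal P(n_1, \ldots, n_k))$ denotes the set of representations whose \emph{conjugacy class} lies in the image of $\Hol$. As $\rho$ and $h\rho h^{-1}$ have the same conjugacy class, the equivalence $\rho \in \Hol(\mathcal P(n_1, \ldots, n_k)) \iff h\rho h^{-1}\in \Hol(\mathcal P(n_1, \ldots, n_k))$ is then immediate. If one prefers to argue at the level of structures: when $f\colon \tilde\Sigma\to\CP$ is a $\rho$-equivariant developing map of some $X\in\mathcal P(n_1, \ldots, n_k)$, the map $h\circ f$ is $(h\rho h^{-1})$-equivariant, and since $h\in\PSL\C$ is a biholomorphism of $\CP$, post-composition by $h$ does not alter the local form $z\mapsto z^{n+1}$ of the charts; hence $h\circ f$ develops a structure in the same stratum $\mathcal P(n_1, \ldots, n_k)$.

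For the action of $\sigma\in\Aut(\Gamma)$, I would reduce to the mapping class group action. By the Dehn--Nielsen--Baer theorem recalled in \cref{rappel}, the isomorphism $\Mod(\Sigma)\simeq\mathrm{Out}^+(\Gamma)$ guarantees that the class of $\sigma$ in $\mathrm{Out}^+(\Gamma)$ is realized by a homeomorphism $\phi\in\mathrm{Homeo}_+(\Sigma)$ fixing the basepoint; that is, $\sigma=\iota\circ\phi_\star$ for some inner automorphism $\iota\in\mathrm{Inn}(\Gamma)$, where $\phi_\star$ denotes the automorphism of $\Gamma$ induced by $\phi$. Since $\mathrm{Inn}(\Gamma)$ acts trivially on $\Hom(\Gamma,\PSL\C)/\PSL\C$, the representations $\sigma\cdot\rho$ and $\phi_\star\cdot\rho$ have the same conjugacy class.

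Next I would invoke the equivariance of $\Hol$ discussed just above: the mapping class $[\phi]$ acts on $\mathcal{BP}$ preserving each stratum $\mathcal P(n_1, \ldots, n_k)$, and $\Hol$ intertwines this action with the action of $\mathrm{Out}^+(\Gamma)\simeq\Mod(\Sigma)$ on $\Hom(\Gamma,\PSL\C)/\PSL\C$. Consequently, if $\rho=\Hol(X)$ with $X\in\mathcal P(n_1, \ldots, n_k)$, then $\phi\cdot X$ again lies in $\mathcal P(n_1, \ldots, n_k)$ and $\Hol(\phi\cdot X)$ is the conjugacy class of $\phi_\star\cdot\rho$, hence of $\sigma\cdot\rho$; this yields $\sigma\cdot\rho\in\Hol(\mathcal P(n_1, \ldots, n_k))$. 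Applying the same reasoning to $\sigma^{-1}$ gives the reverse implication, and combining this with the conjugation invariance of the first paragraph delivers the full equivalence.

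There is no genuine obstacle here: the statement is essentially a bookkeeping consequence of the definitions together with Dehn--Nielsen--Baer and the equivariance of $\Hol$. The only point demanding a little care is the passage from the abstract automorphism $\sigma$ to an actual homeomorphism $\phi$ and the resulting discrepancy by an inner automorphism, which is harmless precisely because it disappears upon passing to conjugacy classes of representations.
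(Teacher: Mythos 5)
Your proposal is correct and follows exactly the route the paper intends: the paper states this proposition without a separate proof because it is an immediate consequence of the preceding remarks that $\Mod(\Sigma)$ acts on $\mathcal{BP}$ preserving each stratum $\mathcal P(n_1,\ldots,n_k)$, that $\Hol$ is equivariant for this action via Dehn--Nielsen--Baer, and that $\Hol(\mathcal P(n_1,\ldots,n_k))$ is by convention a union of conjugacy classes. Your handling of the discrepancy by an inner automorphism is the right (and only) point requiring care, and you resolve it correctly.
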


\subsection{The Ehresmann-Thurston principle}
We prove a version of the Ehresman-Thurston principle, \cref{ET}, adapting an argument of Kapovich \cite[Theorem 2.7]{Kapovich}.
We first show that given a projective structure on $\Sigma$, one can find a triangulation of $\Sigma$ whose edges are sent to broken circular arcs and whose triangles develop injectively. Let us fix $P\subset \tilde \Sigma$ a fundamental polygon for the action of $\Gamma$ on $\tilde \Sigma$. 
The edges of $P$ are paired and identified in $\Sigma$ with $\gamma_1, \ldots, \gamma_m\in\Gamma$.
Let us consider the compact set \[K = P\cup \bigcup_{i=1}^m \gamma_i\cdot P.\]
\begin{lemma}\label{triangulation}
Let us consider a branched projective structure $X$ on $\Sigma$.
There exists a triangulation $T$ of $\Sigma$ such that \begin{enumerate}
\item Every conical point of $X$ is a vertex of $T$
\item Every edge of $T$ is a broken circular arc: it can be decomposed into segments that are sent to circular arcs by the developing map. We also require that two contiguous circular arcs are not tangent.
\item Each lift $\tilde T$ of a triangle $T$ that lies in $K$ is send injectively by the developing map onto a set of area less than $\frac{1}{3} \mathrm{Area}(\CP)$, where $\mathrm{Area}$ is the area for the round metric on $\CP$.
\end{enumerate}
\end{lemma}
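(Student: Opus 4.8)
The plan is to pull the projective structure back to an auxiliary setting, produce a sufficiently fine smooth triangulation whose combinatorics are adapted to the branch divisor, and then straighten its edges into broken circular arcs inside neighbourhoods on which the developing map is injective.

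Let $f\colon\tilde\Sigma\to\CP$ be a developing map of $X$, and fix an auxiliary smooth Riemannian metric on $\Sigma$, lifted to $\tilde\Sigma$, whose only role is to measure diameters. Away from the preimages of the conical points $f$ is a local diffeomorphism, so every such point has a neighbourhood developed injectively onto an embedded disc; near a lift of a conical point of order $n$ the map $f$ is conjugate to $z\mapsto z^{n+1}$ and is therefore injective exactly on sectors of opening angle strictly less than $2\pi/(n+1)$. I record, by compactness of the set $K$ and continuity of $f$, a constant $\delta>0$ such that every subset of $K$ that avoids the lifts of the conical points and has diameter at most $\delta$ is developed injectively by $f$ onto a region of area less than $\tfrac13\mathrm{Area}(\CP)$.

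First I would handle the conical points. Around each conical point $p_i$ of order $n_i$ I choose a small embedded disc $D_i$, the $D_i$ pairwise disjoint, and subdivide $D_i$ into a fan of $m_i$ triangles with common vertex $p_i$, taking $m_i$ large enough that each triangle subtends an angle less than $2\pi/(n_i+1)$ at $p_i$; after shrinking $D_i$ each of these triangles is developed injectively onto a region of arbitrarily small area. I then triangulate the complement $\Sigma\setminus\bigcup_i\mathrm{int}(D_i)$, which contains no conical point, so that it agrees with the fans along the circles $\partial D_i$ and so that every triangle has diameter at most $\delta$. The resulting smooth triangulation $T$ satisfies (1) by construction, and it satisfies (3) because only finitely many triangle lifts lie in the compact set $K$ and each of them, having diameter at most $\delta$ and either avoiding or fanning around the conical points, is developed injectively onto area less than $\tfrac13\mathrm{Area}(\CP)$.

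It remains to achieve (2) while preserving (1) and (3), and this is where the care lies. Because every element of $\PSL\C$ carries circular arcs to circular arcs, being a broken circular arc is invariant under the holonomy, so I may modify the edges directly on $\Sigma$ and each lift will automatically inherit the broken circular form. For an edge $e=[u,v]$ the union of the two triangles adjacent to $e$ lifts to a set developed injectively onto an embedded disc $R\subset\CP$; choosing an affine chart in which $R$ is bounded, straight segments are circular arcs, so I can join the developed images of $u$ and $v$ inside $R$ by a broken circular arc isotopic rel endpoints to the developed image of $e$, and pull it back through $f^{-1}$ to replace $e$. Performing this edge by edge leaves the combinatorics of $T$ unchanged and keeps each triangle inside its former injectivity neighbourhood, so (1) and (3) survive, and a final generic perturbation of the breakpoints makes consecutive arcs meet non-tangentially. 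The main obstacle is precisely the interaction of the two requirements at the branch points: the triangles incident to a conical point develop injectively only when they are thin sectors, and one must check that replacing their edges by broken circular arcs neither creates a crossing nor enlarges a developed triangle beyond area $\tfrac13\mathrm{Area}(\CP)$; both are guaranteed by performing every straightening strictly inside the embedded developing image of the pair of triangles adjacent to the edge.
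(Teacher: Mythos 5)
Your proof follows essentially the same route as the paper's (which is considerably terser): take a fine triangulation adapted to the branch divisor so that every triangle develops injectively onto a small-area region, then approximate each edge by a broken circular arc inside an injectively developed neighbourhood. The extra detail you supply about the fans at conical points and about where the straightening takes place is all in the right spirit.

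There is one quantitative slip worth fixing. At a conical point of order $n_i$ you take each fan triangle to subtend an angle less than $2\pi/(n_i+1)$, which does make each \emph{individual} triangle develop injectively. But your edge-straightening step requires the \emph{union} of the two triangles adjacent to an edge to develop injectively onto an embedded disc $R$; for an edge emanating from the conical point that union subtends an angle up to $4\pi/(n_i+1)$, whose developed image wraps by an angle up to $4\pi$ and need not be embedded. So the region $R$ you straighten inside may not exist for those edges as stated. The repair is immediate: take $m_i$ large enough that each fan triangle subtends an angle less than $\pi/(n_i+1)$ (or, more simply, require that any two adjacent triangles of $T$ together develop injectively), after which the rest of your argument goes through unchanged.
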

\begin{proof}
Let us start with an arbitrary triangulation of $\Sigma$. We refine this triangulation so that each triangle is included in a chart and such that the image of each lift of these triangles in $K$ has area less than $\frac{1}{3}\mathrm{Area}(\CP)$. We can again refine the triangulation so that each branch point is a vertex of this triangulation and such that each triangle develops injectively. We then approximate every edge with a broken circular line. We thus change our triangulation so that it satisfies the hypotheses of \cref{triangulation}.
\end{proof}

We now prove \cref{ET}.

\begin{proof}
Let $X\in \mathcal P(n_1, \ldots, n_k)$ and let $\rho \in \Hom(\Gamma, \PSL \C)$ be its holonomy. We fix a sequence $(\rho_n)_n$ in $\Hom(\Gamma, \PSL \C)$ such that $\rho_n\to \rho$.
Let us choose a triangulation of $X$ as in \cref{triangulation}.
Let us consider the graph $G$ on $\Sigma$ obtained from the triangulation by subdivising its edges as follows. 
Let us color the vertices of this triangulation in black.
We add green vertices on the points that are sent to endpoints of the circular arcs.
We then add a red vertex in the interior of each edge of this new graph.
Let us lift $G$ to $\tilde G$ on the universal cover $\tilde \Sigma$ of $\Sigma$. We fix $p_1, \ldots, p_l$ a system of representatives of the vertices of $\tilde G$ modulo the action of $\Gamma$. For each vertex $p$ in $\tilde G$, we can write in a unique way $p = \gamma p_i$ with $\gamma\in \Gamma$ and $1\leqslant i\leqslant l$.
We let $f_n(p) = \rho_n(\gamma)\cdot f(p_i)$.
Let us consider two contiguous edges with vertices $e_1, e_2$ and $e_3$, with $e_2$ the only red one, which is in the middle of $e_1$ and $e_3$. We define $f_n$ on this edge so that it sends it to the circular arc of $\CP$ that passes through $f_n(e_2)$ and that is bounded by $f_n(e_1)$ and $f_n(e_3)$. Let us fix $T_1, \ldots, T_l\subset K$ a system of representatives of the triangles. If $n$ is large enough, then the image of the edges of each $T_i$ bound two triangles in $\CP$. We extend $f_n$ on $T_i$, by sending $T_i$ homeomorphically onto the one with the smallest area.
We then extend $f_n$ by $\rho$-equivariance.
Let us now show that $f_n$ is a branched covering map when $n$ is large enough, with singularities of the type $z\mapsto z^{n_1+1}, \ldots, z\mapsto z^{n_k+1}$.
We must show that $f_n$ is locally injective outside some of the black vertices, where it is a $k$-fold covering map if $f$ is a $k$-fold covering map, for $n$ large enough. By $\rho$-equivariance, it suffices to check it in the interior of $K$. Let $z$ be in the interior of $K$. If $z$ not on the graph $\tilde G$, then $f_n$ is locally injective around $z$. Let us suppose that $z$ a black vertex of the graph $\tilde G$. The angles formed by the circular arcs around $f_n(z)$ converge to the angles formed by the circular arcs around $f(z)$, and their sum is in $2\pi \mathbb Z$. Hence $f_n$ is a $k$-fold cover around $z$ for $n$ large enough, if $f$ is itself a $k$-fold cover around $z$. If $z$ is a point on the interior of an edge of the triangulation, or a vertex that is not black, then $f$ is locally injective around $z$. Indeed, one just need to check that the two triangles around $z$ are not folded on top of each other by $f_n$. Since this is the case with $f$, and that the image of the triangles by $f_n$ are converging to the image of the triangles by $f$, it does not happen when $n$ is large enough.
\end{proof}
For another point of view and proof of \cref{ET}, we refer to \cite[Theorem 1.3, Remark 2.4]{Loray}. 
Let us give a useful consequence of \cref{ET}.
\begin{cor}\label{corET}
Let $\rho\in \Hom(\Gamma, \PSL \C)$. If there exists $\rho_\infty\in \Hol(\mathcal P(n_1, \ldots, n_k))$ satisfying $\rho_\infty\in \overline{\Aut(\Gamma)\times \PSL \C \cdot \rho}$, then $\rho\in \Hol(\mathcal P(n_1, \ldots, n_k))$.
\end{cor}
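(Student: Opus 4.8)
The plan is to derive the corollary formally from the openness of $\Hol(\mathcal P(n_1,\ldots,n_k))$ proven in \cref{ET}, together with the invariance of this set under the $\Aut(\Gamma)\times\PSL\C$ action recorded in the proposition above. No genuinely new argument is needed: the statement is a purely topological consequence of those two facts.

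First I would invoke \cref{ET}. Since $\rho_\infty\in\Hol(\mathcal P(n_1,\ldots,n_k))$ and this subset of $\Hom(\Gamma,\PSL\C)$ is open, there is an open neighborhood $U$ of $\rho_\infty$ with $U\subseteq\Hol(\mathcal P(n_1,\ldots,n_k))$. Next, the hypothesis that $\rho_\infty$ lies in $\overline{\Aut(\Gamma)\times\PSL\C\cdot\rho}$ means precisely that every neighborhood of $\rho_\infty$ meets the orbit $\Aut(\Gamma)\times\PSL\C\cdot\rho$; applying this to $U$ produces $\sigma\in\Aut(\Gamma)$ and $h\in\PSL\C$ with $h(\sigma\cdot\rho)h^{-1}\in U\subseteq\Hol(\mathcal P(n_1,\ldots,n_k))$. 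Finally, the equivariance proposition above gives the equivalence $\rho\in\Hol(\mathcal P(n_1,\ldots,n_k))\iff h(\sigma\cdot\rho)h^{-1}\in\Hol(\mathcal P(n_1,\ldots,n_k))$, so from the membership just obtained we conclude $\rho\in\Hol(\mathcal P(n_1,\ldots,n_k))$, as desired.

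I do not expect a real obstacle here, since all the substantive content lives in \cref{ET}. The only point to keep straight is that the topology on $\Hom(\Gamma,\PSL\C)$ used in the openness statement (pointwise convergence) must be the same one underlying the orbit closure in the hypothesis, so that the neighborhood $U$ furnished by openness is exactly the kind of neighborhood tested by the closure condition; this is the case, and it is what makes the two inputs combine cleanly.
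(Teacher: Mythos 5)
Your proof is correct and is exactly the argument the paper intends: \cref{ET} gives openness, the closure hypothesis then places some $h(\sigma\cdot\rho)h^{-1}$ inside $\Hol(\mathcal P(n_1,\ldots,n_k))$, and the equivariance proposition transfers membership back to $\rho$. The paper treats this as immediate and gives no separate proof, so your write-up matches its (implicit) reasoning.
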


\subsection{Surgeries on branched projective structures}
We now describe some ways to construct new projective structures from old ones.

\subsubsection{Cut and paste}
Let us consider a non necessarily connected surface $S$ with a branched projective structure.
Let $\gamma,\delta : [0,1]\to S$ be injective arcs on $S$ whose image intersect at most in one point, in which case this point is $\gamma(1) = \delta(0)$. Let us suppose that there exists charts $\varphi_\gamma : U_\gamma\to \CP$ and $\varphi_\delta : U_\delta\to \CP$ such that the image of $\gamma$ (resp. $\delta$) is contained in $U_\gamma$ (resp. $U_\delta$), and such that $\varphi_\gamma\circ \gamma(t) = \varphi_\delta \circ \delta(t)$ for all $0\leqslant t \leqslant 1$.
We cut open $S$ along the image of $\gamma$ and along the image of $\delta$. This yields four boundary components $\gamma^{\pm}$ and $\delta^{\pm}$. We glue back the boudary $\gamma^{\pm}$ with $\delta^{\mp}$ by identifying $\gamma(t)$ with $\delta(t)$.
See also \cite[Section 2.2]{CalsamigliaDeroin} and in particular \cite[Figure 1]{CalsamigliaDeroin}.
This surgery adds a handle connecting the two arcs and increases the conical angles at the endpoints of the arcs. 

\subsubsection{Bubbling along a closed curve}
There exists a surgery on branched projective structures called \textit{bubbling}, see for example \cite[Section 2.3]{CalsamigliaDeroin}. It amounts to considering an arc $\gamma$ on $\Sigma$ that is contained in a chart and develops injectively and using the cut and paste surgery describe above on the surface $\Sigma\sqcup \CP$ with the arcs $\gamma$ and $\delta$, where $\delta$ is the image of $\gamma$ in a chart.  We now describe a slightly modified version of this surgery, that increases the branching order of a single point: bubbling along a closed curve.

\begin{prop}\label{bubbling}
Let us consider a projective structure $X\in \mathcal P(n_1, \ldots, n_k)$ with developing map $f$. Let $c \colon [0,1]\to \Sigma$ be a closed curve such that $c(0)$ is a branch point of order $n_1\geqslant 0$,  and let $\tilde c \colon [0,1]\to \tilde \Sigma$ be a lift of $c$. Suppose that $f\circ \tilde c\colon [0,1]\to \CP$ is injective. There exists a projective structure $Y\in \mathcal P(n_1+2n, \ldots, n_k)$ with the same holonomy as $X$, for every $n\geqslant 1$. 
\end{prop}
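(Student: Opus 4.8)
The plan is to obtain $Y$ from $X$ by a single cut-and-paste surgery of the kind introduced earlier in this section, grafting onto $X$ a branched sphere carrying exactly $2n$ units of branching, arranged so that all of it collapses onto the point $c(0)$. First I would let $\gamma\in\Gamma$ be the element represented by $c$, so that $\tilde c(1)=\gamma\cdot\tilde c(0)$, and set $\delta=f\circ\tilde c$. By hypothesis $\delta$ is an embedded arc in $\CP$ running from $p=f(\tilde c(0))$ to $q=\rho(\gamma)\,p=f(\tilde c(1))$; note that injectivity forces $p\neq q$. After a small homotopy of $c$ rel $c(0)$, which affects none of the hypotheses, I may assume that $c$ is an embedded loop meeting the branch locus of $X$ only at $c(0)$, so that the surgery is supported away from the branch points of orders $n_2,\dots,n_k$.

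Next I would construct the sphere to be grafted. Let $\beta\colon B\to\CP$ be the degree-$n$ cover totally branched over the two (distinct) points $p$ and $q$; by Riemann--Hurwitz $B$ is a sphere, and with the pullback projective structure it is a copy of $\CP$ carrying two branch points $\hat p,\hat q$ of order $n-1$ (local model $z\mapsto z^{n}$), developing map $\beta$, and trivial holonomy. Following the sheet of $\beta^{-1}(\delta)$ issuing from $\hat p$ gives an embedded arc $\hat\delta$ in $B$ from $\hat p$ to $\hat q$ with $\beta\circ\hat\delta=\delta$, meeting the branch locus of $B$ only at its endpoints. For $n=1$ the cover is trivial and this is ordinary bubbling along a loop.

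I would then cut-and-paste $\Sigma\sqcup B$ along the arcs $c$ and $\hat\delta$: the chart-compatibility required by the surgery holds because both arcs develop to the same arc $\delta$ (the development of $c$ is $f\circ\tilde c=\delta$, and $\beta\circ\hat\delta=\delta$). Since $B$ is a sphere, this graft preserves the genus and yields a closed surface $Y$ of genus $g$, exactly as for bubbling. A developing map for $Y$ is obtained by gluing $f$ on the $\Sigma$-part to $\beta$ on the $B$-part; the two maps agree along the seam, and as $\beta$ has trivial holonomy the glued map is $\rho$-equivariant, whence $\Hol(Y)=\rho=\Hol(X)$.

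Finally I would read off the branch divisor. Away from $c(0)$ nothing is altered, so the branch points of orders $n_2,\dots,n_k$ survive untouched. At $c(0)$ the two ends of the loop coincide, so in $Y$ both corners $\hat p,\hat q$ of $B$ are identified with $c(0)$; the two radial directions of $c$ split the original cone of angle $2\pi(n_1+1)$ into two sectors, and tracing the link of this point around the seam one finds the single cyclic pattern (sector of $\Sigma$) $\to$ ($\hat p$-corner) $\to$ (sector of $\Sigma$) $\to$ ($\hat q$-corner), whose tip-angles are $2\pi n$ each. The cone angle at $c(0)$ is therefore $2\pi(n_1+1)+2\cdot 2\pi n=2\pi\bigl((n_1+2n)+1\bigr)$, i.e. a single branch point of order $n_1+2n$, so $Y\in\mathcal P(n_1+2n,n_2,\dots,n_k)$. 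The step needing the most care is precisely this local analysis: one must check that cutting $\Sigma$ along the loop and regluing through the two totally ramified tips of $\beta$ fuses the pieces into one cone point rather than splitting $c(0)$, and that the angles add as stated. Here the coincidence of the two endpoints of $c$ and the total ramification of $\beta$ over $p$ and $q$ are both essential.
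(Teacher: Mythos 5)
Your construction is correct, but it reaches the order $n_1+2n$ by a different mechanism than the paper. The paper performs only the basic move --- cut $\CP$ itself along $\delta=f\circ\tilde c$, cut $\Sigma$ along $c$, and reglue crosswise --- which raises the order at $c(0)$ by exactly $2$, and then observes that $c$ still develops injectively in the new structure, so the move can be iterated $n$ times. You instead graft, in a single step, the degree-$n$ cyclic cover $\beta\colon B\to\CP$ totally branched over the two endpoints $p$ and $q$ of $\delta$ (which are distinct precisely because $f\circ\tilde c$ is injective); the two totally ramified points of $B$ open up into corners of angle $2\pi n$ each, and these fuse with the two sectors of $\Sigma$ at $c(0)$ into a single cone point of angle $2\pi(n_1+1)+4\pi n$. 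Your local analysis of the link of $c(0)$ --- the cyclic pattern sector, $\hat p$-corner, sector, $\hat q$-corner, forced by the fact that both $p$ and $q$ are identified with the one point $c(0)$ --- is exactly the content the paper leaves implicit, and it checks out (including the triviality of the local monodromy, since all the gluings along the seam are realized by the identity of $\PSL\C$ in the developed picture). The trade-off: the paper's route needs nothing beyond the $n=1$ surgery plus the persistence of injectivity under the surgery, while yours avoids the induction at the cost of the branched sphere and a more delicate one-shot angle count; for $n=1$ the two coincide. One caveat: your claim that a small homotopy of $c$ rel $c(0)$ makes $c$ embedded is not justified. Injectivity of $f\circ\tilde c$ does not force $c$ to be simple (two parameters can have the same image in $\Sigma$ while their lifts differ by a nontrivial deck transformation), and no homotopy removes essential self-intersections. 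Cutting ``along the image of $c$'' really presupposes that $c$ is embedded; this is harmless because every curve to which the proposition is applied in the paper is an explicit embedded loop, and the paper's own proof makes the same silent assumption, but you should state it as a hypothesis rather than claim to derive it.
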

In the statment of \cref{bubbling}, $n_1 = 0$ means that $c(0)$ is not a branch point.

\begin{proof}
We may assume that $c$ does not meet any branch point except $c(0)$.
Let us cut open $\CP$ along the image of $f\circ \tilde c$, and denote by $\delta^{\pm}$ the resulting boundary components. Let us cut open $\Sigma$ along the image $c$, and denote by $\gamma^{\pm}$ the resulting boundary components. We glue back $\gamma^{\pm}$ to $\delta^{\mp}$ by identifying $c(t)$ with $f(\tilde c(t))$ for each $t\in [0,1]$. This new projective structure has a conical point of order $n_1 + 2$ if $c(0)$ is a branch point of order $n_1$, and has the same holonomy as $X$. Since $c$ still develops injectively, we can repeat this process.
\end{proof}

\subsubsection{Breaking up a conical point}
Eskin, Masur and Zorich described in \cite[8.1]{EskinMasurZorich} a surgery on translation surfaces that breaks a conical point into two conical points with smaller angles. Let us describe this surgery, that we adapt to branched projective structures. It breaks a branch point of order $n$ into two branch points of order $a,b$ such that $a + b = n$. This surgery is local: it leaves the charts of the inital projective structures unchanged outside of a disk embedded in $\Sigma$. Therefore it does not modify its holonomy.
\begin{prop}\label{explosion}
Let $X\in \mathcal P(n_1, \ldots, n_k)$ and let $a,b\geqslant 1$ be such that $a + b = n_1$. There exists $Y\in \mathcal P(a, b, n_2, \ldots, n_k)$ with the same holonomy as $X$.
\end{prop}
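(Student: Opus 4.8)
The statement to prove is \cref{explosion}: given $X\in \mathcal P(n_1, \ldots, n_k)$ and $a,b\geqslant 1$ with $a+b = n_1$, there exists $Y\in \mathcal P(a, b, n_2, \ldots, n_k)$ with the same holonomy as $X$. The key word in the paper's own description is \emph{local}: the surgery must leave the developing map unchanged outside a small embedded disk, which immediately guarantees that the holonomy is preserved, since the holonomy is determined by the germ of the developing map away from that disk.

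\begin{proof}[Proof plan]
The plan is to work entirely inside a chart around the branch point of order $n_1$. First I would choose an embedded closed disk $D\subset \Sigma$ containing the branch point $p$ of order $n_1$ in its interior and meeting no other branch point; after shrinking, $D$ lies in a single chart, so a lift $\tilde D\subset \tilde\Sigma$ develops via $f$ to a neighborhood of $f(p)\in \CP$ on which $f$ has the local form $z\mapsto z^{n_1+1}$ in suitable coordinates. Composing with an element of $\PSL \C$ I may assume $f(p)=0$ and that the chart identifies the developing map on $\tilde D$ with the map $w\mapsto w^{n_1+1}$ from the unit disk to $\CP$. The task is then purely local: replace this model $z^{n_1+1}$-branched disk with a disk carrying two branch points of orders $a$ and $b$ (with $a+b=n_1$) whose developing map agrees with $z^{n_1+1}$ on the boundary circle $\partial D$.

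The heart of the construction is therefore to build, on the standard disk, a branched projective structure with two branch points of orders $a$ and $b$ and with prescribed boundary developing behavior matching $w\mapsto w^{n_1+1}$. Following Eskin--Masur--Zorich \cite[8.1]{EskinMasurZorich}, I would produce this by a cut-and-paste: make a slit from the center of the model disk, and glue in a sector (or perform the standard ``figure-eight'' / slit-and-regluing move) so that the single ramification of total order $n_1$ is split into two ramifications of orders $a$ and $b$ separated by a short geodesic segment. Concretely, cut the $z^{n_1+1}$ cone along an arc emanating from $p$, and reglue after inserting the appropriate number of sheets, arranging that the two resulting cone points absorb $a+1$ and $b+1$ sheets respectively. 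Because all the new charts have transition maps in $\PSL\C$ (indeed one only ever glues along arcs that develop to coinciding circular arcs, exactly as in the cut-and-paste surgery already described in the excerpt), the result is a genuine branched projective atlas on $D$ with the two desired branch orders.

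Once this local model $Y_D$ is constructed so that its developing map agrees with that of $X$ on the collar $\partial D$, I would glue $Y_D$ back into $\Sigma\setminus \mathrm{int}(D)$ along $\partial D$. The transition maps across $\partial D$ lie in $\PSL\C$ by construction, so the glued atlas is a branched projective structure $Y$ on $\Sigma$; its branch divisor is that of $X$ with the point of order $n_1$ replaced by two points of orders $a,b$, i.e.\ $Y\in \mathcal P(a,b,n_2,\ldots,n_k)$. Since the developing map of $Y$ coincides with that of $X$ outside $D$ and the surgery is supported in a simply connected region, the $\rho$-equivariance survives unchanged, whence $\Hol(Y)=\Hol(X)=\rho$. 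The main obstacle is the local construction of the two-cone model with matching boundary holonomy: one must verify that splitting $z^{n_1+1}$ into cones of orders $a$ and $b$ can be done with the developing map fixed on $\partial D$, which is where the slit-regluing combinatorics of \cite{EskinMasurZorich} are essential; the gluing and holonomy-invariance steps are then formal.
\end{proof}
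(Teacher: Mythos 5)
Your proposal is correct and follows essentially the same route as the paper: both localize to a small disk around the order-$n_1$ branch point, perform the Eskin--Masur--Zorich slit-and-reglue surgery to split the cone point into cone points of orders $a$ and $b$, defer the combinatorics to \cite[Section 8.1]{EskinMasurZorich}, and conclude that the holonomy is unchanged because the modification is supported in a simply connected region and fixes the developing map near the boundary. The only quibble is your phrase ``inserting the appropriate number of sheets'': no sheets are added --- one only changes the identification pattern of the existing $2(n_1+1)$ half-disks, exactly as in the paper's Figures \ref{macron1} and \ref{macron2} --- but this does not affect the validity of the argument.
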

\begin{proof}
Locally around a branch point of order $n_1$, the projective structure is made of $2(n_1+1)$ half-disks glued together as in \cref{macron1} for $n_1 = 2$. We modify the gluing patern of their boundaries as in \cref{macron2} for $a=b=1$. We refer to \cite[Section 8.1]{EskinMasurZorich} for more details on this construction. See also \cite[Section 4.2]{KontsevichZorich}.
\end{proof}

\begin{figure}[h]
    \centering    
    \def\svgwidth{\columnwidth}
	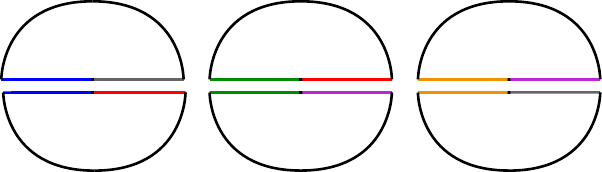
	\caption{Neighborhood of a branch point of order $2$}
    \label{macron1}
\end{figure}

\begin{figure}[h]
    \centering    
    \def\svgwidth{\columnwidth}
	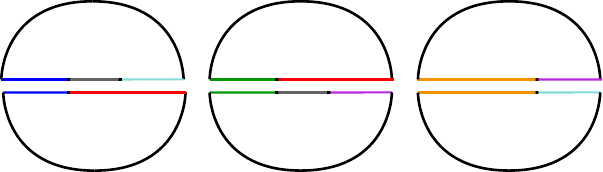
	\caption{Two branch points of order $1$}
    \label{macron2}
\end{figure}

\begin{rmk}\label{corexplo}
Let $\rho\in \Hol(\mathcal P(d))$. It follows from \cref{explosion} that $\rho\in \Hol(\mathcal P(n_1, \ldots, n_k))$ if $\sum_i n_i = d$.
\end{rmk}

\section{Geometrization}\label{SGEO}
The aim of this section is to prove \cref{mainth}. We will consider representations $\rho\in \Hom(\Gamma, \PSL \C)$ that satisfy the conditions of the obstructions and prove that they are \textit{geometric}, that is, that they are the holonomy of a projective structure with the given datum of branch points. Along the way we will also explain the obstructions that we did not explain in the introduction.
\subsection{Branched cover of the sphere}
Let us fix $g\geqslant 0$. We consider the special case where $\rho\in \Hom(\Gamma, \PSL \C)$ is the trivial representation: $\rho(\gamma) = \pm\begin{pmatrix}
1 & 0\\
0 & 1
\end{pmatrix}$ for every $\gamma\in \Gamma$. The aim of this part is to prove the following.

\begin{prop}\label{trivial}
The trivial representation $\rho$ is in $\mathcal \Hol(\mathcal P(n_1, \ldots, n_k))$ if and only if  $\sum_i n_i = 2g+2l$ for some integer $l \geqslant 1$ such that $\max_i n_i \leqslant l$.
\end{prop}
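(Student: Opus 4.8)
The plan is to recognize that a branched projective structure with trivial holonomy is exactly a branched cover of the sphere, and then to realize the prescribed ramification data by an explicit monodromy construction. First I would note the correspondence: since $\rho$ is trivial, a developing map $f\colon\tilde\Sigma\to\CP$ is $\Gamma$-invariant, hence descends to a branched covering $\bar f\colon\Sigma\to\CP$ whose ramification points are precisely the branch points of the structure, the branch point of order $n_i$ being a ramification point of local degree $n_i+1$. Conversely, any branched cover $\Sigma\to\CP$ is a branched projective structure whose $\Gamma$-invariant developing map has trivial holonomy. So the statement is equivalent to characterizing the branch data realized by a connected branched cover $\Sigma\to\CP$.

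For the necessity I would simply invoke Riemann--Hurwitz for $\bar f$. Writing $d\ge1$ for its degree, \[2-2g=\chi(\Sigma)=d\,\chi(\CP)-\sum_i n_i=2d-\sum_i n_i,\] so $\sum_i n_i=2g+2(d-1)$, and setting $l=d-1$ gives $\sum_i n_i=2g+2l$. Each ramification index $n_i+1$ is bounded by the degree $d=l+1$, which yields $\max_i n_i\le l$; and since there is at least one branch point the cover is not a homeomorphism, so $d\ge2$ and $l\ge1$.

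The substance is the converse. Given $\sum_i n_i=2g+2l$ with $l\ge1$ and $\max_i n_i\le l$, I would construct a connected degree $d=l+1$ branched cover of $\CP$ with branch orders $n_1,\dots,n_k$, described through its monodromy. Fixing base points $q_1,\dots,q_m$, such a cover corresponds to permutations $\sigma_1,\dots,\sigma_m\in\Sy d$ with $\sigma_1\cdots\sigma_m=\id$ that generate a transitive subgroup; the branch orders are then the numbers $\ell-1$ over all nontrivial cycles of length $\ell$ appearing in the $\sigma_j$, and, given this cycle data together with transitivity, Riemann--Hurwitz forces the genus to be $g$. Thus everything reduces to the combinatorial problem of writing $\id\in\Sy d$ as a transitive product of disjoint cycles whose lengths are exactly the $n_i+1$ (with $2\le n_i+1\le d$). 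The necessary parity condition, that $\sum_i n_i$ be even so that $\id$ is realized as an even permutation, holds because $\sum_i n_i=2g+2l$.

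I expect this monodromy realization to be the main obstacle, and I would carry it out by adapting the techniques of Edmonds and Kulkarni \cite{EdmondsKulkarni}. The surgeries available reduce its scope: the breaking-up surgery of \cref{explosion} splits a branch point of order $a+b$ into orders $a,b$ with no change of holonomy, so it lets me pass from coarser to finer branch data, while the cut-and-paste surgery raises the genus by one at the cost of increasing two branch orders (hence $\sum_i n_i$ by two). These let me induct on $g$ from small seeds such as the cyclic cover $z\mapsto z^{d}$, which realizes the genus-zero datum $\{l,l\}$. The genuinely delicate point, already visible in the genus-zero, sphere-to-sphere situation, is that not every admissible multiset is a refinement of such seeds, so the core realization in $\Sy d$ must still be established directly; here the hypothesis $\max_i n_i\le l$ is exactly what guarantees that each required cycle of length $n_i+1\le d$ fits into $\{1,\dots,d\}$, while $\sum_i n_i\ge 2l$ keeps $g\ge0$, so no obstruction beyond the stated ones survives. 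I would finish by treating the finitely many low-degree or few-branch-point configurations separately.
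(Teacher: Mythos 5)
Your proposal is correct and follows essentially the same route as the paper: identify trivial-holonomy structures with branched covers of $\CP$, get necessity from Riemann--Hurwitz, and reduce sufficiency to realizing the Hurwitz data $\{[n_i+1,1,\ldots,1]\}$ by a transitive product of cycles equal to $\id$ in $\Sy{l+1}$, handled by the reduction methods of Edmonds--Kulkarni. The one caveat is that the combinatorial heart you defer to ``adapting \cite{EdmondsKulkarni}'' is exactly where the paper does its real work (an induction on $d$ for three branch points, producing a cycle of length $n_1+n_2+1$ or a $d$-/$(d-1)$-cycle from two given cycles, followed by an induction on $k$), so your sketch names the right strategy but does not yet supply that argument.
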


The developing map $f : \tilde \Sigma\to \CP$ of a spherical structure with holonomy $\rho$ induces a branched cover $\Sigma\to \mathbb S^2$. Conversely, any branched cover $\Sigma\to \mathbb S^2$ gives a branched projective structure on $\Sigma$ with trivial holonomy. Therefore we are reduced to the classical problem of the existence of branched covers of the sphere with prescribed branching data. With this point of view it is clear that \cref{trivial} is equivalent to \cref{branchedcover}.
As observed in \cite{MondelloPanov}, it is the $g=0$ case a necessary and sufficient condition to the existence of a spherical metric with conical angles $2\pi(n_1+1), \ldots, 2\pi(n_k+1)$ since its holonomy must be trivial.

Let us recall the classical statement of the Hurwitz realization problem, following the notations of \cite{EdmondsKulkarni}.
A \textit{branch data} is a set of partitions of an integer $d>0$. A branched cover $\Sigma\to \mathbb S^2$ of degree $d$ induces a branch data in the following way: for each point $p\in \mathbb S^2$ which is the image of a branch point, the orders $a_i$ of its pre-images give a partition of $d$. A branch data that arises in this way is said to be \textit{realizable}. The Hurwitz realization problem is to characterize the branch data that are realizable. 
The Riemann-Hurwitz formula gives obstructions to realizability. For a partition  $A = [a_1, \ldots, a_l]$, let $v(A) = \sum_i (a_i - 1)$, and for a branch data $\mathcal D = \{A_1, \ldots, A_k\}$, let $v(\mathcal D) = \sum_i v(A_i)$. If $\mathcal D$ is realizable, then the Riemann-Hurwitz formula gives \[\chi(\Sigma) = 2d - v(\mathcal D).\]
In particular, $v(\mathcal D)$ must be even and at least $2d-2$. However it is well known that these conditions are not sufficient, see for example \cite{EdmondsKulkarni, Pervova}. This problem was considered by Hurwitz in \cite{Hurwitz} where he showed the following theorem. We identify the conjugacy classes of the symmetric group $\Sy d$ with the partitions of $\{1, \ldots, d\}$ as follows: the lengths of the orbits of the action of the group generated by $\sigma\in \Sy d$ on $\{1, \ldots, d\}$ determine a partition of $d$.

\begin{hurwitz}
The branch data $\mathcal D = \{A_1, \ldots, A_k\}$ is realizable if and only if there exists permutations $\alpha_1\in A_1, \ldots \alpha_k\in A_k$ such that
\begin{enumerate}
\item $\alpha_1\alpha_2\ldots\alpha_k = \id$
\item The group generated by the $\alpha_i$ acts transitively on $\{1, \ldots, d\}$.
\end{enumerate}
\end{hurwitz}
We show the following statement, that implies \cref{trivial}.

\begin{prop}\label{realiz}
Let $d \geqslant 2$ and $1\leqslant n_1\leqslant\ldots \leqslant n_k < d$. The branch data $\mathcal D = \{A_1, \ldots, A_k\}$ where $A_i = [n_i+1, 1,\ldots, 1]$ is realizable if and only $v(\mathcal D) = \sum_i n_i$ is even and $v(\mathcal D) \geqslant 2d-2$.
\end{prop}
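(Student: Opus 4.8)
The plan is to verify that the two necessary conditions are forced by the Riemann--Hurwitz formula, and then to produce the permutations demanded by the Hurwitz criterion recalled above.

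\emph{Necessity.} A realization of $\mathcal D$ is a degree-$d$ branched cover $\Sigma\to\mathbb S^2$ whose source is a closed orientable surface of some genus $g\geqslant 0$. The formula $\chi(\Sigma)=2d-v(\mathcal D)$ gives $v(\mathcal D)=2d-\chi(\Sigma)=2d-2+2g$; since $g\geqslant 0$ this is $\geqslant 2d-2$, and since $\chi(\Sigma)$ is even so is $v(\mathcal D)$. Equivalently, the relation $\alpha_1\cdots\alpha_k=\id$ forces $\prod_i(-1)^{n_i}=(-1)^{v(\mathcal D)}=1$, which is the parity obstruction.

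\emph{Sufficiency, first reduction.} By the Hurwitz criterion it suffices to exhibit cycles $\alpha_i\in\Sy d$ of length $n_i+1$ with $\alpha_1\cdots\alpha_k=\id$ generating a transitive subgroup. The first reduction is the combinatorial avatar of \cref{explosion}: from the identity $(c_0\,c_1\cdots c_{a+b})=(c_0\cdots c_a)(c_a\cdots c_{a+b})$, if a branch datum with a part equal to $a+b$ (with $a,b\geqslant 1$) is realizable, then so is the datum obtained by replacing that part with $a$ and $b$, the two new cycles sharing only the symbol $c_a$; this substitution preserves both transitivity and the product relation. Hence it suffices to realize any coarsening of $\{n_1,\dots,n_k\}$ all of whose parts remain $\leqslant d-1$. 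Merging pairs whose sum is $\leqslant d-1$ as long as this is possible, I reduce to the case of \emph{unmergeable} data, i.e. $n_1+n_2\geqslant d$ (so every pair of parts sums to at least $d$). If $k=1$ then $n_1=v(\mathcal D)\geqslant 2d-2>d-1$, which is impossible; if $k=2$ then $n_1+n_2=v(\mathcal D)\leqslant 2d-2$ combined with $v(\mathcal D)\geqslant 2d-2$ forces $n_1=n_2=d-1$ and $g=0$, realized by $\alpha_1=(1\,2\cdots d)$ and $\alpha_2=\alpha_1^{-1}$.

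\emph{The core case.} There remains the unmergeable case $k\geqslant 3$, where transitivity comes for free: since $(n_1+1)+(n_2+1)\geqslant d+2$, I can place $\alpha_1$ and $\alpha_2$ on supports overlapping in at least two symbols whose union is all of $\{1,\dots,d\}$, so that $\alpha_1,\alpha_2$ already generate a transitive subgroup. It then remains to choose cycles $\alpha_3,\dots,\alpha_k$ of the prescribed lengths with $\alpha_3\cdots\alpha_k=(\alpha_1\alpha_2)^{-1}=:\sigma$, a factorization problem from which the transitivity constraint has disappeared. This last step is the main obstacle: one must show that $\sigma$, over which I retain freedom through the placement of $\alpha_1,\alpha_2$, can be written as a product of cycles of lengths $n_3+1,\dots,n_k+1$. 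Solvability is governed by the budget $\sum_{i\geqslant 3}n_i=v(\mathcal D)-n_1-n_2$ being at least $v(\sigma)$ and of the same parity, and the unmergeable hypothesis guarantees that $g$ is large, which is exactly what creates the slack; making this precise is carried out by the explicit permutation bookkeeping of \cite{EdmondsKulkarni}.
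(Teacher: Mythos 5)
Your necessity argument and your first reduction are both sound: the Riemann--Hurwitz parity and lower bound are exactly the paper's obstructions, and the splitting identity $(c_0\,c_1\cdots c_{a+b})=(c_0\cdots c_a)(c_a\cdots c_{a+b})$ is the combinatorial counterpart of \cref{explosion}, so it is indeed enough to realize a coarsening of the datum. The problem is that your ``core case'' is not a proof. You reduce everything to writing $\sigma=(\alpha_1\alpha_2)^{-1}$ as a product of cycles of lengths $n_3+1,\ldots,n_k+1$, you acknowledge this is the main obstacle, and then you assert that solvability is ``governed by'' the budget $\sum_{i\geqslant 3}n_i$ being at least $v(\sigma)$ and of the same parity, deferring the rest to the bookkeeping of \cite{EdmondsKulkarni}. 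That criterion is false as a general principle: the identity permutation has $v(\sigma)=0$, and a budget of $1+3=4$ with even parity, yet $\id$ is not the product of a transposition and a $4$-cycle (a cycle and its inverse must have the same length). So the step you lean on is not a known black box with verified hypotheses; it is precisely the content that has to be established, and the freedom you retain in placing $\alpha_1,\alpha_2$ is not exploited in any concrete way.

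The paper closes this gap by running the induction in the opposite direction. Rather than splitting down to an unmergeable datum and then factoring, it inducts downward on $k$ by \emph{merging} $A_1$ and $A_2$: \cref{prodcycle} (the $k=3$ case, itself proven by induction on $d$) guarantees cycles $\alpha\in A_1$, $\beta\in A_2$ whose product is a single cycle of length $n_1+n_2+1$ when $n_1+n_2\leqslant d-1$, or a $d$- or $(d-1)$-cycle when $n_1+n_2\geqslant d$; the datum with $A_1,A_2$ replaced by the class of $\alpha\beta$ still satisfies the Riemann--Hurwitz inequality, so induction applies, and conjugating the resulting factorization reinstates $\alpha_1,\alpha_2$. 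If you want to salvage your route, you would need to prove an actual factorization lemma for $\sigma=\alpha_3\cdots\alpha_k$ with control on the cycle type of $\sigma$ --- which in effect forces you to reprove something equivalent to \cref{prodcycle} anyway.
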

We follow the reduction methods of \cite{EdmondsKulkarni}, and prove this proposition by induction. Let us first check this proposition for $k=3$.

\begin{lemma}\label{prodcycle}
\cref{realiz} holds for $k = 3$.
\end{lemma}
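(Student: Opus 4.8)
The plan is to translate the statement, via the Hurwitz conditions, into a concrete construction of three cycles in $\Sy{d}$. Writing $m_i = n_i + 1$, the partition $A_i = [n_i+1, 1, \ldots, 1]$ is exactly the cycle type of a permutation of $\{1, \ldots, d\}$ whose only nontrivial cycle has length $m_i$. Hence the Hurwitz conditions say that $\mathcal D$ is realizable if and only if there are cycles $\sigma_1, \sigma_2, \sigma_3 \in \Sy{d}$ of lengths $m_1, m_2, m_3$ with $\sigma_1\sigma_2\sigma_3 = \id$ generating a transitive subgroup. The necessity of the two conditions is immediate: since $\mathrm{sgn}(\sigma_i) = (-1)^{m_i - 1} = (-1)^{n_i}$, the relation $\sigma_1\sigma_2\sigma_3 = \id$ forces $\sum_i n_i$ to be even, and the Riemann--Hurwitz formula $\chi(\Sigma) = 2d - v(\mathcal D)$ together with $\chi(\Sigma) = 2 - 2g \leq 2$ gives $v(\mathcal D) = \sum_i n_i \geq 2d-2$. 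I also record the inequalities these force, which will be used repeatedly: since $m_3 \le d$ and $m_1 + m_2 + m_3 \geq 2d+1$, one has $m_1 + m_2 \ge d+1$ and, symmetrically, each of the three ``triangle'' inequalities $m_i + m_j \ge m_l + 1$ (for $\{i,j,l\} = \{1,2,3\}$).

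For sufficiency, the key reduction is that it suffices to produce an $m_1$-cycle $\sigma_1$ and an $m_2$-cycle $\sigma_2$ whose product $\sigma_1\sigma_2$ is a single $m_3$-cycle and with $\mathrm{supp}(\sigma_1) \cup \mathrm{supp}(\sigma_2) = \{1, \ldots, d\}$. Indeed, then $\sigma_3 := (\sigma_1\sigma_2)^{-1}$ has the correct cycle type, and because the two supports cover everything and meet (their intersection has size $m_1 + m_2 - d \ge 1$), the group $\langle \sigma_1, \sigma_2\rangle$ is automatically transitive. Thus \cref{realiz} for $k=3$ reduces to the following purely group-theoretic claim: for $2 \le m_1, m_2 \le d$ and $2 \le c \le d$ with $m_1 + m_2 + c$ odd and $m_1 + m_2 + c \ge 2d+1$, there exist an $m_1$-cycle and an $m_2$-cycle on $\{1, \ldots, d\}$, with supports covering everything, whose product is a $c$-cycle.

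I would prove this claim by induction on the genus $g \ge 0$ defined by $2g = m_1 + m_2 + c - 1 - 2d$. A short counting argument shows that in any transitive solution exactly $2g+1$ of the points are moved by all three of $\sigma_1, \sigma_2, \sigma_3$ and the remaining $d - (2g+1)$ points by exactly two of them. For the base case $g = 0$ there is a single such ``triple'' point; I would split the other $d-1$ points into three blocks of sizes $a = (m_1 + m_2 - c - 1)/2$, $b = (m_1 + c - m_2 - 1)/2$ and $c' = (m_2 + c - m_1 - 1)/2$, which are nonnegative integers precisely by the parity hypothesis and the triangle inequalities above, and then write down three cycles passing through the triple point and running over the two blocks assigned to each, arranged in a planar cyclic order so that the product of two of them is a single $c$-cycle. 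For the inductive step I would perform a local handle move near a suitably chosen point that raises the number of triple points by two and lowers $d$ by one while keeping $m_1, m_2, c$ fixed; this takes a genus $g-1$ solution on $d+1$ points to a genus $g$ solution on $d$ points.

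The main obstacle is exactly the verification that the constructed product is a \emph{single} cycle and not a union of several: both the genus-$0$ arrangement and the handle move must be set up so that no spurious splitting occurs, and confirming this amounts to tracking the cyclic orders carefully (equivalently, checking that the associated ribbon graph is connected of the prescribed genus). The remaining ingredients---the parity and Euler-characteristic bookkeeping, the nonnegativity of $a,b,c'$, and the passage through the Hurwitz conditions---are routine once the inequalities of the first paragraph are in hand.
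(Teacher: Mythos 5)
Your setup is sound: the translation through the Hurwitz conditions, the necessity direction via signs and Riemann--Hurwitz, the reduction to finding an $m_1$-cycle and an $m_2$-cycle with covering supports whose product is a single $m_3$-cycle, and the count of $2g+1$ triple points are all correct. But the two steps that constitute the actual content of the lemma are never carried out. In the base case $g=0$ you specify the block sizes $a,b,c'$ and assert that three cycles can be ``arranged in a planar cyclic order'' so that the product works out, and in the inductive step you invoke ``a local handle move near a suitably chosen point'' that converts three double points into two triple points --- but neither the cycles nor the move is written down, and you yourself flag that verifying the product is a \emph{single} cycle is the main obstacle. That verification is not a routine afterthought here; it is the lemma. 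Moreover, the inductive step has an unaddressed applicability issue: the handle move presumably needs double points of prescribed types (fixed by a prescribed $\sigma_i$) in a suitable mutual position, and nothing guarantees that the genus-$(g-1)$ solution produced by the previous step contains such a configuration --- the distribution of double points among the three types is not determined by $(m_1,m_2,m_3,d)$, so this must be arranged for explicitly along the induction. As it stands the argument is a plausible plan, not a proof.

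For comparison, the paper avoids both difficulties by inducting on $d$ rather than on the genus: assuming $n_3<d-1$, it applies the inductive hypothesis to $(n_1-1,\,n_2-1,\,n_3)$ on $d-1$ points, picks a point $i$ lying in the supports of both $\alpha_1'$ and $\alpha_2'$ (which exists because $\alpha_1'\alpha_2'$ is a cycle), and sets $\alpha_1=\alpha_1'(d-1\ d)$, $\alpha_2=(d-1\ d)\alpha_2'$, $\alpha_3=\alpha_3'$; multiplying by the transposition at a common support point visibly lengthens each of the first two cycles by one and preserves the product and transitivity. The only remaining case, $n_1=n_2=n_3=d-1$, is handled explicitly by $\alpha_1=\alpha_2=(1\ \cdots\ d)$ and $\alpha_3=\alpha_1^{-2}$, a $d$-cycle since $d$ is odd. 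If you want to salvage your genus induction, you would need to exhibit the genus-$0$ triple explicitly (this is classical and doable) and then formulate the handle move as a concrete identity in $\Sy d$, proving both that it preserves the single-cycle property and that its input configuration can always be arranged.
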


\begin{proof}
We proceed by induction on $d$. If $d = 2$ then $n_1=n_2=n_3=1$ and there is no realizable branch data. 
Let us suppose that $v(\mathcal D) = n_1 + n_2 + n_3$ is even and that $v(\mathcal D) \geqslant 2d-2$.
If $n_1 = n_2 = n_3 = d-1$, then $d$ is odd and we can take $\alpha_1 = \alpha_2$, with $\alpha_1 = (1, \ldots, d)$. Then $\alpha_3 = \alpha_1^{-2}$ is a $d$-cycle, and these $\alpha_i$ satisfy the Hurwitz conditions.
Let us now suppose that  $\min\{n_1, n_2, n_3\} < d-1$. We may assume that $n_3 < d-1$. Let $n'_1 = n_1-1$, $n'_2 = n_2 - 1 $ and $n'_3 = n_3$ and $\mathcal D' = \{[n'_1+1, 1\ldots, 1], [n'_2 + 1, 1\ldots, 1], [n'_3 + 1, 1\ldots 1]\}$ . We have $v(\mathcal D') = v(\mathcal D) - 2$. Hence $v(\mathcal D')$ is even and at least $2d' - 1$, where $d' = d-1$. Therefore by induction the datum $\mathcal D'$ is realizable. Hence there exist $\alpha'_1$, $\alpha'_2$ and $\alpha'_3$ that are cycles in $\Sy {d-1}$ of length $n'_1+1$, $n'_2 + 1$ and $n'_3+1$ and that generate a group that acts transitively on $\{1, \ldots, d-1\}$ and such that  $\alpha'_1\alpha'_2\alpha'_3=\id$. There exists $1\leqslant i \leqslant d-1$ that is in the support of both $\alpha'_1$ and $\alpha'_2$, because $\alpha'_1\alpha'_2$ is a cycle. We may suppose, conjugating the $\alpha'_i$ if necessary, that $i = d-1$ is in their support. We let $\alpha_1 = \alpha'_1 (d-1, d)$, $\alpha_2 = (d-1, d)\alpha'_2$ and $\alpha_3 = \alpha'_3$.
The permutations $\alpha_1$, $\alpha_2$ and $\alpha_3$ act transitively on $\{1, \ldots, d\}$ and $\alpha_i \in A_i$ for $1\leqslant i \leqslant 3$. Moreover $\alpha_1\alpha_2\alpha_3 = \id$.
%
\end{proof}

We now prove \cref{realiz}.

\begin{proof}
We proceed by induction on $k$.
It is obvious for $k=2$ since in this case $n_1 = n_2 = d-1$. It thus suffices to consider $d$-cycles $\alpha_1$ and  $\alpha_2= \alpha_1^{-1}$ to satisfy the Hurwitz conditions. 
We now assume that $k > 3$. Let us first suppose that there exist $1\leqslant i < k$ such that $n_i + n_{i+1} \leqslant d-1$. There exist a $(n_i+1)$-cycle $\alpha$ and a $(n_{i+1}+1)$-cycle $\beta$ such that $\alpha\beta$ is a $(n_i+n_{i+1}+1)$-cycle. Indeed we can give explicit examples, or use \cref{prodcycle} in $\Sy {n_i + n_{i+1} + 1}$. Let $A$ be the partition associated to $\alpha\beta$. 
We may assume that $i = 1$.
The branch data $\mathcal D' = \{A, A_3, \ldots, A_k\}$ is realizable by induction, since $v(\mathcal D') = v(\mathcal D)$. Therefore there exist $\sigma\in A$, $\alpha_3\in A_3, \ldots, \alpha_k\in A_4$ that act transitively on $\{1, \ldots, d\}$ such that $\sigma \alpha_3\ldots\alpha_k = \id$. There exists $\gamma\in \Sy d$ such that $\sigma = \gamma \alpha\beta\gamma^{-1}$. Let $\alpha_1 = \gamma \alpha \gamma^{-1}$ and $\alpha_2 = \gamma \beta \gamma^{-1}$. The permutations $\alpha_1,\ldots, \alpha_k$ satisfy the Hurwitz conditions.

We now suppose that for every $1\leqslant i < k $, we have $n_i + n_{i+1} \geqslant d$. By \cref{prodcycle}, there exist $\alpha\in A_1$ and $\beta\in A_2$  such that $\alpha\beta$ is a $d$-cycle or a $(d-1)$-cycle. 
This can also be found in \cite[Corollary 4.4, Lemma 4.5]{EdmondsKulkarni}. As before let $A$ be the partition associated to $\alpha\beta$. The branch data $\mathcal D' = \{A, A_3, \ldots, A_k\}$ satisfies $v(\mathcal D') \geqslant d-2 + n_3 + n_4 \geqslant  2d-2$. Therefore by induction there exist $\sigma\in A$, $\alpha_3\in A_3, \ldots, \alpha_k\in A_4$ that act transitively on $\{1, \ldots, d\}$ and such that $\sigma \alpha_3\ldots\alpha_k = \id$. There exists $\gamma\in \Sy d$ such that $\sigma = \gamma \alpha\beta\gamma^{-1}$. Let $\alpha_1 = \gamma \alpha \gamma^{-1}$ and $\alpha_2 = \gamma \beta \gamma^{-1}$. The permutations $\alpha_1, \ldots, \alpha_k$ satisfy the Hurwitz conditions.
\end{proof}

\subsection{Spherical structures}
In this section we consider representations that fix a point in $\mathbb H^3$. We can conjugate $\rho$ so that $\rho\in \Hom(\Gamma, \SO)$.
\subsubsection{General obstructions}
Suppose we have a branched projective structure $X$ on $\Sigma$ with holonomy $\rho\in \Hom(\Gamma, \SO)$. Let us denote by $f : \tilde \Sigma\to \CP$ its developing map. The pullback $f^*\omega$ of the volume form $\omega$ associated with the round metric is $\Gamma$-invariant. Indeed for every $\gamma\in \Gamma$, $$\gamma^* f^*\omega = (f\circ \gamma)^* \omega = (\rho(\gamma)\circ f)^*\omega = f^* \rho(\gamma)^* \omega = f^* \omega.$$
 Therefore it induces a form $\omega_\Sigma$ on $\Sigma$. The Gauss-Bonnet formula gives, as observed in \cite[Section 1.2]{MondelloPanov}:
\[\sum_{i=1}^k n_i - \frac 1 {2\pi} \int_\Sigma \omega = 2g-2.\]
Therefore, we must have $\sum_i n_i > 2g-2$. This explains \cref{obstrMin} in the spherical case.

Let us suppose that $\rho$ takes its values in a finite subgroup of $\SO$ of order $n\geqslant 1$. We can consider the cover $S$ of $\Sigma$ associated with the subgroup $\ker \rho$ of $\Gamma$. The developing map $f$ induces a branched cover $f : S\to \CP$ of degree $d$. The Riemann-Hurwitz formula gives
\[n\chi(\Sigma) = 2d - n\sum_{i=1}^k n_i.\]
The degree $d$ must satisfy $d\geqslant \max_i n_i + 1$. This explains \cref{obstrRH}.

\subsubsection{Cyclic holonomy}\label{sectcycl}
We focus here on representations $\rho\in \Hom(\Gamma, \SL \C)$ with finite image included in $\mathrm{SO}_2(\R)$. 
Let us observe that any representation $\rho\in \Hom(\Gamma, \mathrm{SO}_2(\R))$ lifts to $\SL \C$. Indeed the $2$-sheeted cover of $\mathrm{SO}_2(\R) = \{\pm \begin{pmatrix}
\alpha & 0\\
0 & \alpha^{-1}
\end{pmatrix} \mid \alpha\overline \alpha = 1\}$ is the subgroup $\widetilde{\mathrm{SO}_2(\R)} = \{\begin{pmatrix}
\alpha & 0\\
0 & \alpha^{-1}
\end{pmatrix} \mid \alpha\overline \alpha = 1\}$ of $\PSL \C$ that is abelian, hence every commutator in $\widetilde{\mathrm{SO}_2(\R)}$ is trivial. The finite subgroups of $\mathrm{SO}_2(\R)$ are cyclic and we fix $\Z_n$ such a cyclic group of order $n\geqslant 2$.
Let $\rho$ be a surjective representation in $\Hom(\Gamma, \Z_n)$.
\begin{prop}\label{cyclic}
Let $1\leqslant n_1\leqslant \ldots \leqslant n_k$ be such that $\sum_i n_i = 2g + 2l$ for some $l\geqslant 0$. We have $\rho\in \mathcal \Hol(\mathcal P(n_1, \ldots, n_k))$ if and only if $n_k < n(l+1)$.
\end{prop}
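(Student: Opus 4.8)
The plan is to read off necessity from the Riemann--Hurwitz obstruction and to prove sufficiency by realizing a suitable branched cover of the quotient sphere.

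\emph{Necessity.} Since $\rho$ has image the finite group $\Z_n$, \cref{obstrRH} applies with $\chi(\Sigma) = 2-2g$. Substituting $\sum_i n_i = 2g+2l$ gives $n(2-2g+2g+2l) = 2n(l+1) \geqslant 2(n_k+1)$, that is $n_k + 1 \leqslant n(l+1)$, which is exactly $n_k < n(l+1)$.

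\emph{Reduction of sufficiency to a Hurwitz problem.} Assume $n_k < n(l+1)$. Conjugate so that $\Z_n = \langle z\mapsto \zeta z\rangle$ with $\zeta = e^{2\pi i/n}$, a group of rotations of $\CP$ fixing $0$ and $\infty$. The quotient $\CP/\Z_n$, via $w = z^n$, carries the pushed-forward round metric: it is the spherical football with two cone points of angle $2\pi/n$ at the images $\bar 0,\bar\infty$ of $0,\infty$, and its (multivalued) developing map $w\mapsto w^{1/n}$ has holonomy the inclusion $\Z_n\hookrightarrow \mathrm{SO}_2(\R)$. I would realize $\rho$ by pulling back this structure under a branched cover $F\colon \Sigma\to \CP\cong \CP/\Z_n$. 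Indeed, if $F$ has ramification index divisible by $n$ over $\bar 0$ and $\bar\infty$ and arbitrary elsewhere, then $w^{1/n}\circ F$ is smooth over $F^{-1}\{\bar 0,\bar\infty\}$ and has a branch point of the same order as $F$ over every other critical value; the pulled-back structure has holonomy $\gamma\mapsto \zeta^{W(\gamma)}$, where $W\colon \pi_1(\Sigma)\to\Z_n$ is the reduction mod $n$ of the winding number of $F$ around $\bar 0$, so its image lies in $\mathrm{SO}_2(\R)$. Conversely, the developing map of any structure with holonomy in $\Z_n$ descends to such an $F$. Writing $d=\deg F$, Riemann--Hurwitz reads $2-2g = 2d - v(F)$; since the ramification over $\bar 0$ (resp.\ $\bar\infty$) consists of $r_0$ (resp.\ $r_\infty$) parts that are multiples of $n$ summing to $d$, and the remaining ramification contributes $\sum_i n_i = 2g+2l$, one gets $d=n(l+1)$, forcing $r_0=r_\infty=l+1$ and the profile over each of $\bar 0,\bar\infty$ to be $[n,\ldots,n]$. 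Thus it suffices to realize the branch data $\mathcal D = \{[n^{l+1}],[n^{l+1}],[n_1+1,1,\ldots,1],\ldots,[n_k+1,1,\ldots,1]\}$ of degree $d=n(l+1)$, and the hypothesis $n_k<n(l+1)$ is exactly what makes each $[n_i+1,1,\ldots,1]$ a valid partition of $d$.

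\emph{Realization.} I would produce permutations realizing $\mathcal D$ by an Edmonds--Kulkarni style induction, as in \cref{realiz}, now carrying along the two extra profiles $[n^{l+1}]$; the surgeries of \cref{bubbling,explosion} let me trade a configuration of many small orders for one with fewer, larger orders (provided the largest stays below $n(l+1)$), reducing to a short list of base configurations. Beyond transitivity of the monodromy (so that $\Sigma$ is connected of genus $g$), the new requirement is that the associated $\Z_n$-cover $S\to\Sigma$ pulled back from $z\mapsto z^n$ be connected, equivalently that $W$, hence $\rho$, be onto $\Z_n$; by \cref{ModCycl} any surjective homomorphism to $\Z_n$ is then in the correct $\Aut(\Gamma)$-orbit, so exhibiting one surjective holonomy suffices.

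The hard part will be this last step: unlike in \cref{realiz}, the data contains the two non-cyclic profiles $[n^{l+1}]$, so one cannot simply quote Hurwitz existence for a single long cycle, and one must simultaneously control the connectedness of the cover $S$ (the surjectivity of $W$) while running the induction. The extremal cases $n_k = n(l+1)-1$ and the base case $l=0$, where the partitions have the least slack, are where this bookkeeping is most delicate.
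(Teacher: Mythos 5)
Your necessity argument is correct and coincides with the paper's: it is a direct substitution into \cref{obstrRH}. Your reframing of sufficiency as a Hurwitz realization problem — a degree $n(l+1)$ branched cover of $\CP/\Z_n$ with profile $[n,\ldots,n]$ over the two orbifold points and $[n_i+1,1,\ldots,1]$ over the remaining branch values, subject to transitivity and to surjectivity of the winding-number homomorphism $W$ — is a legitimate translation of the problem into covering-space language, and it is essentially equivalent to the paper's geometric setup.

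The gap is that the realization itself, which is the entire content of the sufficiency direction, is never carried out: you explicitly defer it, and the strategy you sketch for it breaks down. Trading many small orders for fewer, larger ones via \cref{explosion} and \cref{bubbling} requires first realizing a coarser configuration, and coarsening is blocked exactly by the constraint you note in parentheses: merging branch points can push an order past $n(l+1)$ even though every $n_i$ is below it. For instance with $n=2$, $g=2$, $l=1$ the data $(2,2,2)$ satisfies $n_k=2<4=n(l+1)$, but every coarsening $(4,2)$ or $(6)$ violates the bound, so $(2,2,2)$ cannot be reached from any "base configuration" with fewer branch points; and \cref{bubbling} only increases the order of a single point by multiples of $2$, so it goes the wrong way as well. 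This regime, where $\sum_i n_i \geqslant n(l+1)$ and the branching must be spread out because no single point can absorb it, is precisely the delicate case. The paper resolves it by an explicit cut-and-paste construction distributing slits over $l+1$ copies of $\CP$ (the gluing patterns of \cref{fig1}, \cref{figtrick} and \cref{fig2}), with separate treatment of $n$ even versus $n$ odd and of $k=1$ versus $k\geqslant 2$, verifying the genus with \cref{GBplat} and surjectivity of the holonomy so that \cref{ModCycl} applies. An equally explicit construction of the permutations, or of the cover, together with a verification of the surjectivity of $W$, is what your proof still needs.
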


Let us denote by $P_n$ a pyramid with basis a regular $n$-gon. We can identify a branched projective structure with holonomy in $\Hom(\Gamma, \Z_n)$ with a branched structure modeled on $P_n$ with holonomy in the cyclic of order $n$ isometry group of $P_n$, see \cref{pyramid}. 
\begin{figure}[h]
    \centering    
    \def\svgwidth{\columnwidth}
  	\def\svgwidth{0.3\textwidth}

	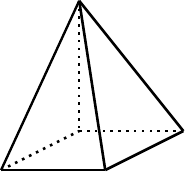
	\caption{The pyramid model $P_4$.}
    \label{pyramid}
\end{figure}

One benefit of this point of view is that a surface equipped with such a structure inherits a conical Euclidean structure as studied for example in \cite{Troyanov}: it is obtained by gluing Euclidean triangles along their boundaries. Let us recall the Gauss-Bonnet formula in this setting.
\begin{lemma}[Troyanov]\label{GBplat}
Let us consider a conical Euclidean structure on the surface $\Sigma_{g_S}$ of genus $g_S$. Let us denote by $2\pi(\theta_1+1), \ldots, 2\pi(\theta_s+1)$ the total angles of its conical points. We have 
$$\sum_{i=1}^s \theta_i = 2g_S-2.$$
\end{lemma}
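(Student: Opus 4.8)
The plan is to prove this combinatorial Gauss--Bonnet identity by a double count of angles on the triangulation underlying the flat structure. Since the conical Euclidean structure is by hypothesis obtained by gluing Euclidean triangles along their edges, we are handed a triangulation $T$ of $\Sigma_{g_S}$ whose faces are Euclidean triangles and whose edge identifications are isometries. In such a structure the metric is locally Euclidean away from the vertices: in the interior of a face it is that of a Euclidean triangle, and across the interior of an edge the two adjacent half-disks contribute angle $\pi$ each, for a smooth total of $2\pi$. Hence the conical points are necessarily among the vertices of $T$, the remaining vertices being smooth, i.e. of total angle $2\pi$, which corresponds to $\theta = 0$ in the convention where the angle at a cone point is $2\pi(\theta+1)$. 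Denote by $V$, $E$, $F$ the numbers of vertices, edges and triangles of $T$.

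The key step is to compute the sum $S$ of all interior angles of all triangles in two ways. On the one hand, each Euclidean triangle has angle sum $\pi$, so $S = \pi F$. On the other hand, grouping the angles according to the vertex at which they sit, the angles around a vertex $v$ add up to the total cone angle there, so
\[
S = \sum_{v} 2\pi(\theta_v + 1) = 2\pi V + 2\pi \sum_v \theta_v,
\]
where $\theta_v = 0$ at every smooth vertex, so that $\sum_v \theta_v = \sum_{i=1}^s \theta_i$. Equating the two expressions for $S$ gives $\sum_v \theta_v = F/2 - V$.

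It then remains to identify $F/2 - V$ with $2g_S - 2$, which is pure topology. Since $T$ is a triangulation, every face has three edges and every edge is shared by exactly two faces, so $3F = 2E$, i.e. $E = 3F/2$. Substituting into Euler's formula $V - E + F = \chi(\Sigma_{g_S}) = 2 - 2g_S$ yields $V - F/2 = 2 - 2g_S$, that is $F/2 - V = 2g_S - 2$. Combining with the previous paragraph gives $\sum_{i=1}^s \theta_i = 2g_S - 2$, as claimed.

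There is no serious obstacle here; the only points requiring a word of care are the observation that the cone points are automatically located at vertices of $T$ and the bookkeeping of smooth vertices (those with $\theta_v = 0$), so that the angle defect is correctly localized at the $s$ conical points. Alternatively, one could invoke the classical Gauss--Bonnet theorem for singular flat metrics, whose curvature is a sum of atoms of mass $-2\pi\theta_i$ concentrated at the cone points, and read off $\sum_i (-2\pi\theta_i) = 2\pi\,\chi(\Sigma_{g_S})$; but the triangle count above is more elementary and self-contained, and matches the description of the structure as a gluing of Euclidean triangles.
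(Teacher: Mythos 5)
Your proof is correct: the double count of triangle angles combined with $3F=2E$ and Euler's formula is exactly the standard argument, and it is essentially the one in the reference \cite[Section 3]{Troyanov} that the paper cites in lieu of a proof. The two points you flag (cone points sit at vertices, smooth vertices contribute $\theta_v=0$) are indeed the only bookkeeping needed, and your treatment of them is fine.
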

We refer to \cite[Section 3]{Troyanov} for a short proof of \cref{GBplat}.
%
We now prove \cref{cyclic}.
It suffices to exhibit a branched projective structure with given branch data and surjective holonomy in $\Hom(\Gamma, \Z_n)$ by \cref{ModCycl}.

\begin{proof}
We are going to explain how to construct a genus $g$ surface with holonomy a surjective homomorphism of $\Hom(\Gamma, \Z_n)$ and branch data given by $n_1, \ldots, n_k$.
Let us start by considering $l+1$ copies of $\CP$. 
Let us choose an annulus neighborhood of the equator in $\CP$. We identify this neighborhood with a rectangle $[0, n]\times [0,1]$ with its vertical sides identified. A horizontal translation of length $1$ in this model corresponds to an order $n$ rotation in $\CP$. 
We are going to cut open the corresponding annuli in these spheres along segments and glue the boundaries back in a different pattern.
Let us begin with the case $k = 1$ and $n$ even. We cut open an annulus along a length one horizontal segment and then along another, that is obtained by applying a length $1$ horizontal translation to the first one. We then make $g+l-1$ other slits in the annuli, along segments that are translates of the first one by an integer, see \cref{fig1}. We make sure that there is at least one slit on each annulus. We also make sure that these segments do not intersect. Observe that we can put $\frac{n}{2}$ of these line segments in each annulus. We have room to cut along these $g+ l+1$ segments since we have $n_1 = 2g + 2l$, thus $\frac n 2(l+1)> \frac{n_1} 2\geqslant g+l$. We glue back the boundary components with translations as indicated in \cref{fig1}. 
\begin{figure}[h]
    \centering    
    \def\svgwidth{\columnwidth}
	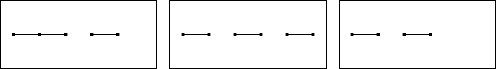
	\caption{Construction for $(n,k,l,g)=(6,1,2,5)$.}
    \label{fig1}
\end{figure}

The surface we obtain has genus $g$. Indeed let us denote by $g'$ this genus. Using the pyramidal model, we get a flat structure on this surface. The sum of the angles $\theta_i$ on a pyramid is $-2$ by \cref{GBplat}, thus again by \cref{GBplat} we have $2g'-2 = n_1 - 2(l+1) = 2g - 2$. Moreover the holonomy of the resulting surface is a surjective homomorphism of $\Hom(\Gamma, \Z_n)$.

Let us now turn to the case $k = 1$ and $n$ odd. Observe that we can now put $\frac {n+1} 2$ line segments in the first annulus, but only $\frac {n-1} 2$ ones in the others using the same techniques. If there is enough room to put the $g + l + 1$ line segments as before, then we proceed as in \cref{fig1}. If not, we may increase the total angle contained in two annuli by replacing the cuts of \cref{fig1} with those of \cref{figtrick}.

\begin{figure}[h]
    \centering    
    \def\svgwidth{\columnwidth}
	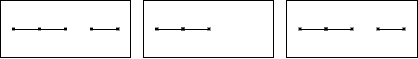
	\caption{Construction for $(n,k,l,g)=(5,1,2,5)$.}
    \label{figtrick}
\end{figure}
Each of the annuli in which this pattern is used contributes to the total angle by $2\pi n$. Since $n_1 < n(l+1)$, it is possible the get an angle of $2\pi(n_1 + 1)$. It follows from \cref{GBplat} that we get a genus $g$ surface. It has a projective structure with holonomy a surjective homomorphism of $\Hom(\Gamma, \Z_n)$.

Let us now turn to the case $k \geqslant 2$. We are again going to make slits in the annuli and glue the boundaries back using a pattern that we now describe.
We consider $n_1 +1$ vertical line segments that are obtained one from the other by horizontal translations. We will identify their boundaries in a cyclic way, see \cref{fig2}. Let us suppose that $n_1 < n_2$. We put $n_2 - n_1$ parallel line segments below, such that the top of exactly one of these segments touches one of the $n_1 +1$. We will identify these segments in a cylic way, as indicated in \cref{fig2}. Making slits along these segments and identifying the boundaries following this pattern will create three conical points: one of angle $2\pi(n_1 + 1)$, one of angle $2\pi(n_2 + 1)$ and one of angle $2\pi(n_2 - n_1 + 1)$ lines above. We continue similarly: we put $n_3 - (n_2 - n_1)$ line segments below, so that each touches the one above in a single point. We continue this construction until the gluing process gives the desired angles $n_1, \ldots, n_{k-1}$. We then change the last angle with horizontal slits as in \cref{fig2}.
If $n_1 = n_2$, we consider $n_1+1$ line segments that we identify in a cyclic way, as in the top of \cref{fig2}. We thus make the construction of the pattern for $n_3, \ldots, n_k$ below these segments. 
\begin{figure}[h]
    \centering    
    \def\svgwidth{\columnwidth}
	\scalebox{0.6}{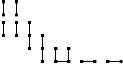}
	\caption{Gluing pattern for $k=6$ and $(n_i)_{1\leqslant i\leqslant 6} = (2, 2, 2, 3, 3, 6)$.}
    \label{fig2}
\end{figure}

This pattern has $g+l$ connected components, unless $k$ is even and $n_{2i} = n_{2i-1}$ for every $1\leqslant i \leqslant \frac k 2$, where it has $g+l+1$ connected components. Indeed one can count directly or argue as follows. If we place each connected component of this pattern on a different sphere, cut open along the segments and glue as indicated in the pattern, we get a genus $1$ surface with conical points $n_1, \ldots, n_k$. It follows from \cref{GBplat} that the number $C$ of connected components of this pattern satisfies $\sum_i n_i - 2 C = 0$. In the special case $n_{2i} = n_{2i-1}$ for each $i$, then we get a genus $0$ surface, thus $\sum_i n_i -2C = -2$.

We put the connected components of this pattern on the annuli as in \cref{cycliquegeom}. 
\begin{figure}[h]
    \centering    
    \def\svgwidth{\columnwidth}
	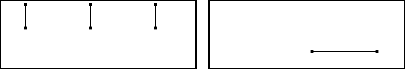
	\caption{Construction for $(n,k,l,g, n_1, n_2, n_3) = (3, 3, 1, 5, 2, 5, 5)$.}
    \label{cycliquegeom}
\end{figure}
We make sure that there is at least one line segment on each of the $l+1$ spheres. We have room to place the vertical segments in the annuli since $n_i < n(l+1)$. As before, there may not be enough room to put the horizontal segments when $n$ is odd. We can then use the pattern described in the case $k=1$ in \cref{figtrick} for the horizontal line segments.
The resulting surface is connected, has genus $g$ by \cref{GBplat}, and has holonomy a surjective homomorphism of $\Hom(\Gamma, \Z_n)$.
\end{proof}
\subsubsection{Dihedral holonomy}

We now consider representations $\rho\in \Hom(\Gamma, \D_n)$. We show that \cref{obstrSW}, \cref{obstrMin} and \cref{obstrRH} are the only obstructions of being in $\Hol(\mathcal P(n_1, \ldots, n_k))$.
Let us consider a bypiramid $B_n$ obtained by gluing two pyramids $P_n$ along their base, see \cref{bipyramid}. As in the cyclic holonomy case, a branched projective structure with holonomy in $\Hom(\Gamma, \D_n)$ may be identified with a structure modeled on $B_n$ with holonomy in its isometry group, which is isomorphic to $\D_n$. It thus inherits an Euclidean structure as before.
\begin{figure}[h]
    \centering    
    \def\svgwidth{\columnwidth}
  	\def\svgwidth{0.2\textwidth}
	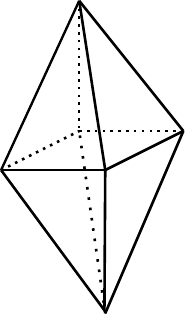
	\caption{The bipyramid model $B_4$.}
    \label{bipyramid}
\end{figure}

\begin{prop}\label{geodiedr}
Let $\rho\in \Hom(\Gamma, \mathrm{D}_n)$ be a surjective homomorphism.
The representation $\rho$ is in $\Hol(\mathcal P(n_1, \ldots n_k))$ if and only if
\begin{enumerate}
\item  $\sum_i n_i = 2g + 2l$, where $l\geqslant 0$ is such that $\max_i n_i  < 2n(l+1)$ if $\rho$ lifts to $\SL \C$.
\item $\sum_i n_i = 2g + 2l - 1$ where $l\geqslant 0$ is such that $\max_i n_i  < n(2l+1)$ if $\rho$ does not lift to $\SL \C$.
\end{enumerate}
\end{prop}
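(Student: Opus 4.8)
Necessity of the stated conditions is immediate from the general obstructions. As $\rho$ is surjective onto the finite group $\D_n$ it is elementary, and being finite it is spherical (a finite subgroup of $\PSL \C$ fixes a point of $\mathbb H^3$, hence is conjugate into $\mathrm{PSU}_2$); note that it is not affine, since the flips in $\D_n$ exchange $0$ and $\infty$, so no affine-specific obstruction intervenes. Thus \cref{obstrMin} gives $\sum_i n_i \geq 2g-1$, while \cref{obstrSW} forces $\sum_i n_i$ to be even if $\rho$ lifts to $\SL \C$ and odd otherwise. Combining these, $\sum_i n_i = 2g+2l$ with $l \geq 0$ in the lifting case and $\sum_i n_i = 2g+2l-1$ with $l \geq 0$ in the non-lifting case. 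Finally, \cref{obstrRH} with image of order $2n$ and $\chi(\Sigma) = 2-2g$ reads $n(2-2g+\sum_i n_i) \geq \max_i n_i + 1$; substituting the two expressions for $\sum_i n_i$ turns this inequality into $\max_i n_i < 2n(l+1)$ and $\max_i n_i < n(2l+1)$ respectively.

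For the converse I would realize each admissible datum by an explicit structure. Since $\Hol(\mathcal P(n_1, \ldots, n_k))$ is $\Mod(\Sigma)$-invariant and, by \cref{diedr} (for $g\geq 2$), the surjective homomorphisms to $\D_n$ form a single $\Aut(\Gamma)$-orbit when $n$ is odd and two orbits, distinguished by $\sw$, when $n$ is even, it suffices to exhibit one structure in case (1) and, when $n$ is even, one structure in case (2). I would build these in the bipyramid model $B_n$, so that the underlying surface carries a Euclidean cone metric and Troyanov's Gauss--Bonnet formula \cref{GBplat} applies, with $\sum_i \theta_i = -2$ on each copy of $B_n$.

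The construction mirrors the cyclic argument of \cref{cyclic} with $P_n$ replaced by $B_n$: one starts from copies of $B_n$, makes slits along controlled families of segments in the equatorial bands, and reglues the resulting boundaries by dihedral isometries. Rotational regluings reproduce the cyclic part $\ker \epsilon$ of the holonomy and let one prescribe the cone orders $n_1, \ldots, n_k$ exactly as before; inserting at least one pole-swapping flip makes the holonomy surject onto $\D_n$, and arranging the parity of the handles whose two generators map to distinct commuting involutions fixes the value of $\sw$ via \cref{sw1}. The genus is then read off from \cref{GBplat}.

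The main difficulty, as in \cref{cyclic}, is the combinatorial bookkeeping of the slit pattern: one must fit all the cone angles while producing genus exactly $g$, surjectivity onto $\D_n$, and the correct lift class. The available room is measured by the degree of the branched cover $S \to \CP$ attached to $\ker \rho$, which equals $2n(l+1)$ in the lifting case and $n(2l+1)$ in the non-lifting case, so that the inequalities $\max_i n_i < 2n(l+1)$ and $\max_i n_i < n(2l+1)$ are precisely the condition that the segments fit. The non-lifting case is the most delicate: the corresponding degree is not a multiple of $2n$, so the surface cannot be assembled symmetrically from a whole number of copies of $B_n$, and the configuration must be set up asymmetrically in the spirit of the odd-$n$ adjustment of \cref{figtrick}. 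Checking that the slit patterns can always be completed under the room condition, in all four cases ($n$ even or odd, $\rho$ lifting or not), is the technical heart of the proof.
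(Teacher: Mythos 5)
Your necessity argument is correct and matches the paper's: the dihedral image of order $2n$ is finite, hence spherical, so \cref{obstrMin} gives $\sum_i n_i \geqslant 2g-1$, \cref{obstrSW} fixes the parity, and \cref{obstrRH} applied with group order $2n$ yields exactly the bounds $\max_i n_i < 2n(l+1)$ and $\max_i n_i < n(2l+1)$. Your plan for sufficiency is also the same as the paper's: reduce to one representative per orbit via \cref{diedr}, work in the bipyramid model so that \cref{GBplat} controls the genus, realize the cyclic part by the slit patterns of \cref{cyclic} placed in equatorial annuli, insert a pole-swapping flip to achieve surjectivity, and use a half-object (half-sphere/half-bipyramid) in the non-lifting case. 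You even correctly identify the total available room as $2n(l+1)$ and $n(2l+1)$.

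The problem is that you stop exactly where the proof begins. You write that checking the slit patterns can always be completed "is the technical heart of the proof" and then do not check it. The paper's proof consists precisely of this verification: it places $2(l+1)$ annuli (a northern and a southern one on each of $l+1$ spheres), reuses the patterns of \cref{cyclic}, and then must argue (i) that the pattern has enough connected components ($\geqslant g+l$, using $g\geqslant 2$) to guarantee one slit per sphere, two slits glued by an order-$n$ rotation, and a northern/southern pair glued by the flip $s$, so that the result is connected with surjective holonomy; and (ii) in the odd case, that after reserving the arcs $P_N, P_S, P^\pm$ of the great circle and spending $\tfrac{n_{i_0}-1}{2}$ slits on an odd $n_{i_0}$, the remaining branch data still fits in the annuli. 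None of this is routine bookkeeping that can be waved at: the inequalities $\max_i n_i < 2n(l+1)$ and $\max_i n_i < n(2l+1)$ are exactly tight against these counts, and the connectivity/surjectivity/lift-class requirements each consume slits. Separately, you restrict to $g\geqslant 2$ when invoking \cref{diedr} but never return to $g=1$, where the statement still has content for $n=2$ (the paper handles it by a bespoke half-sphere gluing followed by \cref{bubbling} and \cref{corexplo}). As it stands the sufficiency direction is a program, not a proof.
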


\paragraph{The genus $1$ case.} Let us suppose that $g=1$.
Recall that there is no surjective homomorphism in $\Hom(\Gamma_1, \D_n)$ if $n\geqslant 3$ since $\Gamma_1\simeq \Z^2$ is abelian.
Let us also recall that the action of $\Aut(\Gamma_1)$ on the set of surjective homomorphisms of $\Hom(\Gamma_1, \D_2)$ is transitive by \cref{DG1}.
The surjective homomorphisms of $\Hom(\Gamma_1, \D_2)$ do not lift to $\SL \C$. \cref{obstrRH} is trivially satisfied since $\sum_i n_i = 2l + 1$ thus $\max_i n_i < 2(2l+1)$. Therefore we may rephrase \cref{geodiedr} as follows.
\begin{lemma}
Let $\rho\in \Hom(\Gamma_1, \D_2)$ be a surjective homomorphism. We have $\rho\in \Hol(\mathcal P(n_1, \ldots, n_k))$ if and only if $\sum_i n_i $ is odd. 
\end{lemma}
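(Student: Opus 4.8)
The ``only if'' direction is immediate from the switch obstruction. Since $\Gamma_1\simeq \Z^2$ is abelian and $\rho$ surjects onto $\D_2\simeq \Z_2\times \Z_2$, the elements $\rho(a_1)$ and $\rho(b_1)$ are two distinct commuting involutions of $\PSL\C$. By \cref{sw1} the commutator of any two lifts of them to $\SL\C$ equals $-\Id$, so $\rho$ does not lift to $\SL\C$ and $\sw(\rho)=1$. Hence \cref{obstrSW} forces $\sum_i n_i$ to be odd for every $\rho\in\Hol(\mathcal P(n_1,\ldots,n_k))$.

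For the converse I would first reduce the problem. By \cref{DG1} all surjective homomorphisms in $\Hom(\Gamma_1,\D_2)$ lie in a single $\Aut(\Gamma_1)$-orbit, and since the sets $\Hol(\mathcal P(n_1,\ldots,n_k))$ are invariant under $\Aut(\Gamma_1)$ and under conjugation, it suffices to geometrize one fixed such $\rho$. Moreover, by \cref{explosion} (see \cref{corexplo}) it is enough to realize $\rho$ with a \emph{single} branch point: if $\rho\in\Hol(\mathcal P(d))$ with $d=\sum_i n_i$, then breaking that point repeatedly yields $\rho\in\Hol(\mathcal P(n_1,\ldots,n_k))$ for every partition $n_1+\dots+n_k=d$ into positive parts. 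Thus the whole ``if'' direction comes down to producing, for each $l\geq 0$, a branched projective structure on the torus with holonomy $\rho$ and a single conical point of order $d=2l+1$, that is of angle $2\pi(2l+2)$.

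The construction I would use mirrors the cyclic case of \cref{cyclic}, working in the bipyramid model $B_2$ (see \cref{bipyramid}), whose orientation-preserving isometry group is $\D_2$ and which carries a flat conical metric. Starting from $l+1$ copies of this $\D_2$-sphere, I would cut along carefully chosen geodesic slits inside annular regions and reglue the resulting boundary arcs, choosing the identifications so that both generating involutions $r$ and $s$ of $\D_2$ occur (forcing the holonomy to be onto), so that the resulting surface is connected, and so that all of the branching collapses onto a single vertex. The genus is then read off from Troyanov's Gauss--Bonnet formula \cref{GBplat}: the sum of the cone exponents of the resulting flat surface equals $2g-2$, and the slit pattern is designed precisely so that this sum vanishes, giving $g=1$. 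The main obstacle I expect is exactly this last combinatorial step: exhibiting an explicit slit-and-reglue pattern on the $l+1$ copies of $B_2$ that simultaneously produces a connected genus-one surface, a surjective $\D_2$-holonomy, and a single branch point of the prescribed odd order $2l+1$, with the Gauss--Bonnet count coming out correctly. Everything preceding this step is formal.
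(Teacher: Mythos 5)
Your ``only if'' direction is correct and matches the paper: two distinct commuting involutions generate the image, so by \cref{sw1} the representation does not lift to $\SL\C$, and \cref{obstrSW} forces $\sum_i n_i$ odd. Your reduction steps for the converse are also sound: transitivity of the $\Aut(\Gamma_1)$-action (\cref{DG1}) lets you fix one $\rho$, and \cref{explosion}/\cref{corexplo} reduce everything to realizing a single branch point of each odd order. The problem is that after these formal reductions you stop exactly where the actual content of the lemma begins. You propose a slit-and-reglue pattern on $l+1$ copies of the bipyramid $B_2$ for each $l$, but you never exhibit such a pattern, and you explicitly flag its existence as ``the main obstacle I expect.'' The existence of a connected genus-one surface with surjective $\D_2$-holonomy and one conical point of angle $2\pi(2l+2)$ is precisely what needs to be proven, so as written the proof has a genuine gap.

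The paper closes this gap by a different and lighter route, which you could adopt. It builds only the base case $\mathcal P(1)$: cut $\mathbb S^2$ along the great circle $C$ through the fixed points of $s=\rho(a_1)$ and $t=\rho(b_1)$, decompose the boundary circle of the resulting half-sphere into four arcs $P_N$, $P_S=s(P_N)$, $P^{\pm}$, and glue $P_N$ to $P_S$ by $s$ and $P^-$ to $P^+$ by $t$. This yields a torus with holonomy $\rho$ and a single conical point of angle $4\pi$. The higher odd orders are then obtained not by new slit patterns but by \cref{bubbling}: the image of $P_S$ is a closed curve based at the conical point that develops injectively, so bubbling along it gives $\rho\in\Hol(\mathcal P(2d+1))$ for all $d\geqslant 0$, and \cref{corexplo} finishes. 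The advantage of this approach is that the combinatorial verification is confined to one explicit four-arc gluing rather than an infinite family of patterns whose connectedness, genus count, and surjectivity of holonomy would each have to be checked for every $l$.
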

\begin{proof}
Let $s=\rho(a_1)$ and $t=\rho(b_1)$.
Let us consider the round metric on $\CP \simeq \mathbb S^2$. Let $C$ be the great circle on $\mathbb S^2$ passing through the fixed points of both $s$ and $t$. Let us denote by $S$ and $N$ the fixed points of $t$. Let $d$ be the distance on $\CP$ associated with the round metric. We decompose $C$ into $4$ arcs: $P_N = \{x\in C\mid d(x, N) \leqslant 1\}$, $P_S = s(P_N)=\{x\in C\mid d(x, S) \leqslant 1\}$, and $P^\pm$ the (closure of the) two connected components of $C\setminus (P_S\cup P_N)$ in $C$. Let us cut $\mathbb S^2$ along $C$ to get a half-sphere with boundary $C$. We glue back its boundary in the following way. We glue $P_N$ to $P_S$ with $s$ and $P^-$ to $P^+$ with $t$, see \cref{JP}.
We thus get a torus with holonomy $\rho$ and a single conical point of total angle $2\pi(1 + 1)$.
There exists a curve based at the conical point, namely the image of $P_S$ in the torus, that develops injectively. Hence by \cref{bubbling} we have $\rho\in \Hol(\mathcal P(2d + 1))$ for every $d\geqslant 0$ and $\rho\in\Hol(\mathcal P(n_1, \ldots, n_k))$ for every $1\leqslant n_1\leqslant\ldots\leqslant n_k$ such that $\sum_i n_i$ is odd by \cref{corexplo}.
\end{proof}

\begin{figure}
    \centering 
    \def\svgwidth{\columnwidth}
  	\def\svgwidth{0.5\textwidth}
	\hspace*{1cm}
\begingroup%
  \makeatletter%
  \providecommand\color[2][]{%
    \errmessage{(Inkscape) Color is used for the text in Inkscape, but the package 'color.sty' is not loaded}%
    \renewcommand\color[2][]{}%
  }%
  \providecommand\transparent[1]{%
    \errmessage{(Inkscape) Transparency is used (non-zero) for the text in Inkscape, but the package 'transparent.sty' is not loaded}%
    \renewcommand\transparent[1]{}%
  }%
  \providecommand\rotatebox[2]{#2}%
  \newcommand*\fsize{\dimexpr\f@size pt\relax}%
  \newcommand*\lineheight[1]{\fontsize{\fsize}{#1\fsize}\selectfont}%
  \ifx\svgwidth\undefined%
    \setlength{\unitlength}{54.29772834bp}%
    \ifx\svgscale\undefined%
      \relax%
    \else%
      \setlength{\unitlength}{\unitlength * \real{\svgscale}}%
    \fi%
  \else%
    \setlength{\unitlength}{\svgwidth}%
  \fi%
  \global\let\svgwidth\undefined%
  \global\let\svgscale\undefined%
  \makeatother%
  \begin{picture}(1,0.82517705)%
    \lineheight{1}%
    \setlength\tabcolsep{0pt}%
    \put(0,0){\includegraphics[width=\unitlength,page=1]{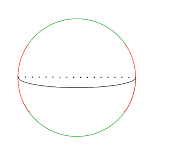}}%
    \put(0.37722573,0.76920675){\makebox(0,0)[lt]{\lineheight{1.25}\smash{\begin{tabular}[t]{l}$P_N$\end{tabular}}}}%
    \put(0.38957349,0.01737381){\makebox(0,0)[lt]{\lineheight{1.25}\smash{\begin{tabular}[t]{l}$P_S$\end{tabular}}}}%
    \put(-0.00611501,0.40179239){\makebox(0,0)[lt]{\lineheight{1.25}\smash{\begin{tabular}[t]{l}$P^-$\end{tabular}}}}%
    \put(0.74442866,0.40532405){\makebox(0,0)[lt]{\lineheight{1.25}\smash{\begin{tabular}[t]{l}$P^+$\end{tabular}}}}%
  \end{picture}%
\endgroup%

	\caption{Half-sphere.}
    \label{JP}
\end{figure}

\paragraph{The genus $g\geqslant 2$ case.} We now prove \cref{geodiedr} for $g\geqslant 2$.
\begin{proof}
Let $n_1, \ldots, n_k$ satisfying the hypotheses of \cref{geodiedr}. By \cref{diedr}, it suffices to exhibit a projective structure with branch data $n_1, \ldots, n_k$ and holonomy a surjective homomorphism of $\Hom(\Gamma, \D_n)$.
Let us begin with the case where $\sum_i n_i$ is even. 
Let us consider $l+1$ copies of $\mathbb S^2 = \CP$. We consider an annular neighborhood of some longitudinal line in the northern hemisphere of $\mathbb S^2$, and its image by the flip $s$ that interchanges the north pole with the south pole. We therefore have $2(l+1)$ annuli, see \cref{noel}. 
\begin{figure}[h]

\begin{tikzpicture}
\draw (2,0) arc(0:-180:2cm and 0.2cm) ;
\draw[dashed] (2,0) arc(0:180:2cm and 0.2cm) ;

\draw[color=blue] (1.4142, 1.4142) arc(0:-180:1.41cm and 0.1cm) ;
\draw[dashed, color=blue] (1.4142, 1.4142) arc(0:180:1.41cm and 0.1cm) ;
\draw[color=blue] (1.7321, 1) arc(0:-180: 1.73cm and 0.15cm) ;
\draw[dashed, color=blue] (1.7321, 1) arc(0:180: 1.73cm and 0.15cm) ;

\draw[color=blue] (1.4142, -1.4142) arc(0:-180:1.41cm and 0.19cm) ;
\draw[dashed, color=blue] (1.4142, -1.4142) arc(0:180:1.41cm and 0.19cm) ;

\draw[color=blue] (1.7321, -1) arc(0:-180: 1.73cm and 0.18cm) ;
\draw[dashed, color=blue] (1.7321, -1) arc(0:180: 1.73cm and 0.18cm) ;

\draw (0,0) circle (2) ;

\end{tikzpicture}

%
	\caption{Two annuli embedded in $\CP$.}
    \label{noel}
\end{figure}
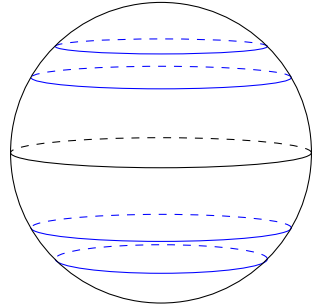
We make the same slits as in the proof of \cref{cyclic}. We make sure that there is a slit in each sphere, that there are two slits in an annulus glued with an order $n$ rotation. We also make sure that there are at least two slits, one in the northern annulus and one in the southern annulus of one of the spheres, that are glued together by $s$. This is possible since the pattern of the slits described in \cref{cyclic} has at least $g + l $ connected components and $g\geqslant 2$. Moreover recall that in the case where it has exactly $g+l$ connected components, there are two slits glued by a rotation of order $n$ in the pattern.
We get a projective structure on a surface of genus $g$ by the same argument as in \cref{cyclic}: it is a consequence of \cref{GBplat}. Its holonomy is a surjective homomorphism of $\Hom(\Gamma, \D_n)$ that lifts to $\SL \C$. Indeed this follows from the fact that $\sum_i n_i$ is even.

Let us now turn to the case where $n$ is even and $\sum_i n_i$ is odd: we have $\sum_i n_i = 2g + 2l - 1$.
We consider $l$ copies of $\mathbb S^2 = \CP$ and a half-sphere: we cut $\CP$ along the great circle $C$ that goes through both the fixed points of $s$ and the fixed points $N$ and $S$ of the order $2$ element of $\ker \epsilon$. 
Since $\sum_i n_i$ is odd, at least one of the $n_i$ is odd, say $n_{i_0}$.
Let us decompose the boundary of the half-sphere into $4$ circular arcs as before. One of these arcs is the set $P_N = \{x\in C \mid d(N, x) < 1\}$, where $d$ denotes the distance associated with the round metric on $\mathbb S^2$, and $P_S = s(P_N) = \{x \in C\mid d(S, x) < 1\}$. The other two parts $P^\pm$ are the (closure of the) connected components of $C\setminus (P_N\cup P_S)$, see the circle in green and red of the left bypyramid of \cref{noelbis}. We glue $P_N$ with $P_S$, these are the green arcs in \cref{noelbis}, with the map $s$.
We now make $\frac {n_{i_0}-1} 2$ slits along circular arcs obtained from $P^- $ by a rotation of order $n$ in the spheres and identify the boundaries components, see the red arcs in \cref{noelbis}. We have room to make these slits since we may put $\frac n 2 -1$ of them in the half-sphere, and $n$ in each of the $l$ spheres and $n_{i_0} < 2nl + n$.
We now consider $2l$ annular neighborhood of longitudinal circles in the spheres as before, away from these slits, see the blue annuli in the right bypiramid of \cref{noelbis}. We also consider the same annular neighborhood of length $n$ in the half-sphere: see the blue annulus on the left side of \cref{noelbis}.
\begin{figure}[h]
    \centering    
    \def\svgwidth{\columnwidth}
  	\def\svgwidth{0.5\textwidth}
	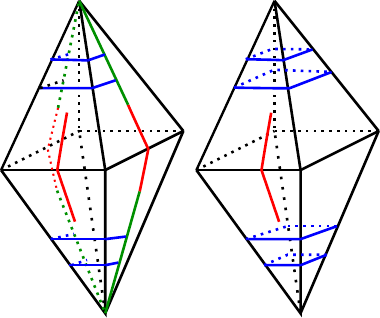
	\caption{Construction for $(n, k,l,g,n_1) = (4, 1, 1, 2, 5)$.}
    \label{noelbis}
\end{figure}
In these annuli, we make surgeries associated with the branch data $\{n_1, \ldots, n_k\}\setminus \{n_{i_0}\}$ as explained above in the case where $\rho$ lifts in $\SL \C$.
The gluing pattern we have described has at least $\frac 1 2 (\sum_i n_i -1) = g + l - 1$ connected components, and since $g\geqslant 2$ it is possible to put a slit on each sphere. Since $\max_i n_i < 2nl + n$, we have room to put these slits in the annuli. We thus get a connected surface, with charts in $\CP$, branch datum corresponding to $n_1, \ldots, n_k$. This surface has genus $g$ since by \cref{GBplat}, its genus $g'$ satisfies $\sum_i n_i = 2g' + 2l - 1$.
Its holonomy is a surjective homomorphism of $\Hom(\Gamma, \D_n)$ that does not lift so $\SL \C$ since $\sum_i n_i$ is odd.
\end{proof}

\subsubsection{Tetrahedron}

A branched projective structure with holonomy in $\Hom(\Gamma, \mathfrak A_4)$ may be identified as before with a branched structure modeled on the regular tetrahedron with holonomy in its isometry group $\mathfrak A_4$. We show that the first three obstructions are the only ones to being in $\Hol(\mathcal P(n_1, \ldots, n_k))$, where $1\leqslant n_1\leqslant\ldots \leqslant n_k$.

\begin{prop}\label{geotetra}
Let $\rho\in \Hom(\Gamma, \mathfrak A_4)$ be a surjective homomorphism. We have $\rho\in \Hol(\mathcal P(n_1, \ldots, n_k))$ if and only if 
\begin{enumerate}
\item $\sum_i n_i = 2g + 2l $, for $l\geqslant 0$ such that $\max_i n_i < 12(l+1)$ if $\rho$ lifts to $\SL \C$
\item $\sum_i n_i = 2g + 2l - 1$ for $l\geqslant 0$ such that $\max_i n_i < 12l + 6$ if $\rho$ does not lift to $\SL \C$.
\end{enumerate}
\end{prop}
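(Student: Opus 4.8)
The necessity of the stated conditions is already contained in the general obstructions. Obstruction \cref{obstrSW} makes $\sum_i n_i$ even precisely when $\rho$ lifts to $\SL \C$, Obstruction \cref{obstrMin} gives $\sum_i n_i \geqslant 2g-1$ (so that writing $\sum_i n_i = 2g+2l$ or $2g+2l-1$ forces $l \geqslant 0$), and Obstruction \cref{obstrRH}, applied with $|\A 4| = 12$ and $\chi(\Sigma) = 2-2g$, reads $6\bigl(2-2g+\sum_i n_i\bigr) \geqslant \max_i n_i + 1$. Substituting $\sum_i n_i = 2g+2l$ turns this into $\max_i n_i \leqslant 12(l+1)-1$, and substituting $\sum_i n_i = 2g+2l-1$ turns it into $\max_i n_i \leqslant 12l+5$; these are exactly the two conditions in the statement. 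It therefore remains to prove the converse.

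By \cref{AutA4} there are exactly two $\Aut(\Gamma)$-orbits of surjective homomorphisms $\Gamma \to \A 4$, and they are distinguished by whether they lift to $\SL \C$. Since each set $\Hol(\mathcal P(n_1,\ldots,n_k))$ is $\Aut(\Gamma)$-invariant and the lift type is forced by the parity of $\sum_i n_i$, it suffices, for each admissible datum, to build one branched projective structure whose holonomy surjects onto $\A 4$ and has the correct parity of $\sum_i n_i$: its holonomy then lies in the same $\Aut(\Gamma)$-orbit as the prescribed $\rho$, whence $\rho \in \Hol(\mathcal P(n_1,\ldots,n_k))$. (The hypothesis that $\rho$ is surjective forces $g \geqslant 2$, since $\A 4$ is nonabelian.)

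For this I would imitate the constructions of \cref{cyclic} and \cref{geodiedr}, replacing the pyramid and bipyramid by the regular tetrahedron, whose orientation-preserving isometry group is $\A 4$ and whose intrinsic flat cone metric (four cone points of angle $\pi$) has $\sum_i \theta_i = -2$ by \cref{GBplat}. In the lifting case I take $l+1$ copies of $\CP$ carrying this tetrahedral structure, cut $\A 4$-equivariant slits along geodesic arcs (each family of arcs being the orbit of one arc under a cyclic subgroup), and reglue the boundary components cyclically as before; each slit family manufactures one cone point of prescribed order $n_i$, and \cref{GBplat} yields $\sum_i n_i - 2(l+1) = 2g-2$, so the resulting closed surface has genus exactly $g$. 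Choosing the gluing maps so that they generate $\A 4$ makes the holonomy surjective, and it lifts to $\SL \C$ because $\sum_i n_i$ is even. For the non-lifting case I replace one copy of $\CP$ by a half-tetrahedron, obtained by cutting along the great circle through the two fixed points of a double transposition and regluing that boundary by the involution; this half-piece contributes an extra $-1$ to the Gauss-Bonnet count, giving $\sum_i n_i = 2g+2l-1$, and produces holonomy that does not lift, again by \cref{obstrSW}.

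The crux is the combinatorial bookkeeping that matches the room available for slits to the allowed size of $\max_i n_i$. Exactly as in the cyclic and dihedral cases, each tetrahedral sphere accommodates slit families of total branching weight at most $12$, and the half-tetrahedron at most $6$; hence a single cone point of order $n_i$ can be cut out precisely when $n_i < 12(l+1)$ in the lifting case and $n_i < 12l+6$ in the non-lifting case, which is Obstruction \cref{obstrRH}. The delicate points, as in \cref{cyclic}, are to place the slits disjointly and symmetrically for every admissible configuration and to modify the gluing pattern so as to fit in the last slits when $\max_i n_i$ is near its extremal value; once the placement is achieved, the genus, the branch data, and the holonomy all follow from \cref{GBplat} and the construction, which completes the proof.
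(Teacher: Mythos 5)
Your proposal is correct and follows essentially the same route as the paper: necessity from Obstructions 1--3 (with the same arithmetic giving $\max_i n_i < 12(l+1)$ resp. $12l+6$), and sufficiency by reducing via \cref{AutA4} to constructing a single example, built by slitting and regluing $l+1$ regular tetrahedra (or $l$ tetrahedra and a half-tetrahedron in the non-lifting case) following the cyclic/dihedral patterns, with the genus verified by \cref{GBplat}. The level of detail you leave to the combinatorial placement of slits matches what the paper itself supplies.
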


\begin{proof}
Let us consider $n_1, \ldots, n_k$ satisfying the hypotheses of \cref{geotetra}. It suffices to exhibit a branched projective structure in $\mathcal P(n_1, \ldots, n_k)$ with holonomy a surjective homomorphism of $\Hom(\Gamma, \A 4)$ thanks to \cref{AutA4}.
Let us start with the case where $\sum_i n_i = 2g + 2l$ for some $l\geqslant 0$.
Let us consider $l+1$ copies of the regular tetrahedron $T$. We will make slits along line segments in the tetrahedra and glue the boundaries back with isometries of $T$. We use the pattern of \cref{fig2}. Let us describe how to put this pattern in the tetrahedra. Start by considering a line segment in one of the triangles of one of the tetrahedra as in \cref{T}. We consider $n_1$ line segments obtained from this one by applying an isometry of $T$. We identify the boundaries of the slits obtained by cutting open the tetrahedra along these segments with the corresponding isometries, following the pattern of \cref{fig2}. We continue to put the pattern of the line segments in this way.

We have room to make these slits since $\max_i n_i < 12(l+1)$.
We make sure to have one slit on each tetrahedron and that one of the tetrahedra has slits that are glued together with isometries generating $\mathfrak A_4$.
This construction gives a projective structure on a closed surface of genus $g$ with holonomy a surjective homomorphism of $\Hom(\Gamma, \mathfrak A_4)$. Indeed the genus $g'$ of the resulting surface can be computed using \cref{GBplat}: it satisfies $2g' - 2 = \sum_i n_i - 2(l+1) = 2g - 2$.

Let us now turn to the case $\sum_i n_i = 2g + 2l - 1$ for some $l\geqslant 0$. 
We consider $l$ copies of the regular tetrahedron and a half-tetrahedron: we cut a tetrahedron in half along a plane going through the midpoints of two pairs of opposite edges. We divide the resulting boundary into $2$ pairs as in the dihedral case: two red line segments and two green ones. We identify the green lines with an isometry of the tetrahedron.
One of the $n_i$, say $n_{i_0}$ must be odd.
Let us make $\frac{n_{i_0} - 1} 2$ slits on the tetrahedra and the half-tetrahedron, along line segments obtained by applying isometries to one of the red boundaries. We identify the resulting boundaries in a cyclic way.
We then make the slits and gluing associated with the branching data $\{n_1, \ldots, n_k\}\setminus \{n_{i_0}\}$.
We have room to make all these slits because of the assumption $\max_i n_i < 12 l + 6$.
We make sure to obtain a connected surface (each tetrahedron must have at least one slit) and that the resulting holonomy is a surjective homomorphism of $\Hom(\Gamma, \A 4)$. By \cref{GBplat}, the genus of the resulting surface is $g$ and its holonomy does not lift to $\SL \C$ since $\sum_i n_i$ is odd.
\end{proof}
\begin{figure}
    \centering    
    \def\svgwidth{\columnwidth}
  	\def\svgwidth{0.5\textwidth}

\begingroup%
  \makeatletter%
  \providecommand\color[2][]{%
    \errmessage{(Inkscape) Color is used for the text in Inkscape, but the package 'color.sty' is not loaded}%
    \renewcommand\color[2][]{}%
  }%
  \providecommand\transparent[1]{%
    \errmessage{(Inkscape) Transparency is used (non-zero) for the text in Inkscape, but the package 'transparent.sty' is not loaded}%
    \renewcommand\transparent[1]{}%
  }%
  \providecommand\rotatebox[2]{#2}%
  \newcommand*\fsize{\dimexpr\f@size pt\relax}%
  \newcommand*\lineheight[1]{\fontsize{\fsize}{#1\fsize}\selectfont}%
  \ifx\svgwidth\undefined%
    \setlength{\unitlength}{157.83600926bp}%
    \ifx\svgscale\undefined%
      \relax%
    \else%
      \setlength{\unitlength}{\unitlength * \real{\svgscale}}%
    \fi%
  \else%
    \setlength{\unitlength}{\svgwidth}%
  \fi%
  \global\let\svgwidth\undefined%
  \global\let\svgscale\undefined%
  \makeatother%
  \begin{picture}(1,1.00002518)%
    \lineheight{1}%
    \setlength\tabcolsep{0pt}%
    \put(0,0){\includegraphics[width=\unitlength,page=1]{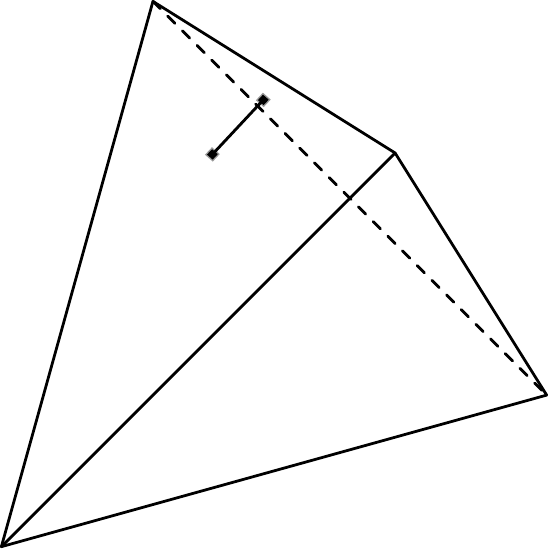}}%
    \put(0.41552109,0.76845726){\rotatebox{46.93468952}{\makebox(0,0)[lt]{\lineheight{1.25}\smash{\begin{tabular}[t]{l}$1$\end{tabular}}}}}%
    \put(0.46918514,0.72647251){\rotatebox{59.01833376}{\makebox(0,0)[lt]{\lineheight{1.25}\smash{\begin{tabular}[t]{l}$2$\end{tabular}}}}}%
    \put(0,0){\includegraphics[width=\unitlength,page=2]{t.pdf}}%
    \put(0.34885396,0.50210699){\rotatebox{95.51741967}{\makebox(0,0)[lt]{\lineheight{1.25}\smash{\begin{tabular}[t]{l}$2$\end{tabular}}}}}%
    \put(0.41289641,0.50643244){\rotatebox{95.51741961}{\makebox(0,0)[lt]{\lineheight{1.25}\smash{\begin{tabular}[t]{l}$3$\end{tabular}}}}}%
    \put(0,0){\includegraphics[width=\unitlength,page=3]{t.pdf}}%
    \put(0.68937753,0.27173008){\rotatebox{83.94866336}{\makebox(0,0)[lt]{\lineheight{1.25}\smash{\begin{tabular}[t]{l}$1$\end{tabular}}}}}%
    \put(0.62070588,0.27905831){\rotatebox{83.94866336}{\makebox(0,0)[lt]{\lineheight{1.25}\smash{\begin{tabular}[t]{l}$3$\end{tabular}}}}}%
  \end{picture}%
\endgroup%

	\caption{Construction for $(l,g,k,n_1, n_2) = (0, 2, 2, 2, 2).$}
    \label{T}
%
%
%
\end{figure}

\subsubsection{Cube} We can identify the branched projective strutures with holonomy in the group of the isometries of the cube with a branched structure modeled on the cube with holonomy in its isometry group.
Again we may geometrize a surjective homomorphism $\rho\in \Hom(\Gamma, \Sy 4)$ that satisfies the conditions of the first three obstructions.

\begin{prop}\label{geocube}
Let $\rho\in \Hom(\Gamma, \mathfrak S_4)$ be a surjective homomorphism. We have $\rho\in \Hol(\mathcal P(n_1, \ldots, n_k))$ if and only if 
\begin{enumerate}
\item $\sum_i n_i = 2g + 2l $, for $l\geqslant 0$ such that $\max_i n_i < 24(l+1)$ if $\rho$ lifts to $\SL \C$
\item $\sum_i n_i = 2g + 2l - 1$ for $l\geqslant 0$ such that $\max_i n_i < 24l + 12$ if $\rho$ does not lift to $\SL \C$.
\end{enumerate}
\end{prop}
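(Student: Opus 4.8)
The necessity of the stated conditions follows from the obstructions already established. Indeed \cref{obstrSW} forces $\sum_i n_i$ to be even exactly when $\rho$ lifts to $\SL \C$, while \cref{obstrRH}, applied with image order $|\Sy 4| = 24$ and $\chi(\Sigma) = 2 - 2g$, reads $24(\chi(\Sigma) + \sum_i n_i) \geq 2(\max_i n_i + 1)$; substituting $\sum_i n_i = 2g + 2l$ gives $24 \cdot 2(l+1) \geq 2(\max_i n_i + 1)$, that is $\max_i n_i < 24(l+1)$, and substituting $\sum_i n_i = 2g + 2l - 1$ gives $24(2l+1) \geq 2(\max_i n_i + 1)$, that is $\max_i n_i < 24l + 12$. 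For sufficiency, the plan is to invoke the orbit classification of \cref{GpeFin}: there are exactly two $\Aut(\Gamma)$-orbits of surjective homomorphisms $\Gamma \to \Sy 4$, and these are distinguished by whether the representation lifts to $\SL \C$ (by \cref{sw1}, as in the $\Sy 4$ orbit analysis). Since $\Hol(\mathcal P(n_1, \ldots, n_k))$ is $\Aut(\Gamma)$-invariant, it then suffices to exhibit, for each admissible branch datum, a single branched projective structure with the prescribed orders whose holonomy is a surjective homomorphism onto $\Sy 4$ of the appropriate value of $\sw$.

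The construction parallels the tetrahedral case \cref{geotetra} verbatim, with the cube and its rotation group $\Sy 4$ (of order $24$) replacing the regular tetrahedron and its rotation group $\A 4$; I would therefore model the desired structure on the cube. In the lifting case $\sum_i n_i = 2g + 2l$ I would take $l+1$ copies of the cube, cut them open along line segments obtained from a single segment by applying isometries of the cube, and reglue the resulting boundaries using the cyclic gluing pattern of \cref{fig2}, thereby producing branch points of the prescribed orders $n_1, \ldots, n_k$. The hypothesis $\max_i n_i < 24(l+1)$ is precisely what guarantees enough room to place all slits across the $l+1$ cubes, each cube accommodating a total cone-angle contribution of $24$; one arranges the slits so that each cube carries at least one slit and so that the gluing isometries of a single cube already generate $\Sy 4$, ensuring connectedness and surjectivity. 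That the resulting closed surface has genus exactly $g$ would follow from the Euclidean Gauss--Bonnet formula \cref{GBplat}, as in every previous case: the genus $g'$ satisfies $2g' - 2 = \sum_i n_i - 2(l+1) = 2g - 2$.

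For the non-lifting case $\sum_i n_i = 2g + 2l - 1$ I would instead use $l$ copies of the cube together with one half-cube, cut along a symmetry plane through the midpoints of opposite edges, mimicking the half-sphere and half-tetrahedron devices of \cref{geodiedr} and \cref{geotetra}. Since $\sum_i n_i$ is odd, some $n_{i_0}$ is odd; I would realize this odd order on the half-cube by decomposing its boundary circle into paired green and red arcs, gluing the green arcs by the relevant isometry and making $\tfrac{n_{i_0}-1}{2}$ further slits along rotated copies of a red arc, and then realize the remaining branch data $\{n_1, \ldots, n_k\} \setminus \{n_{i_0}\}$ by the same annular slit-and-reglue procedure as in the lifting case. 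Here the bound $\max_i n_i < 24l + 12$, with the half-cube contributing $12 = 24/2$, is what provides the necessary room; \cref{GBplat} again yields genus $g$, and the holonomy fails to lift to $\SL \C$ precisely because $\sum_i n_i$ is odd.

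I expect the main obstacle to be bookkeeping rather than conceptual: one must verify that the room bounds $\max_i n_i < 24(l+1)$ and $\max_i n_i < 24l + 12$ are sharp enough to fit every slit while simultaneously arranging the gluing isometries to generate all of $\Sy 4$ and to keep the surface connected. The delicate point will be the extremal configurations of small $n_i$, where little room is available, which must still be realized with a surjective holonomy of the correct sign; this is handled exactly as in \cref{geotetra}, with generation of $\Sy 4$ by suitable rotations of the cube replacing generation of $\A 4$ by rotations of the tetrahedron.
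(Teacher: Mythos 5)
Your proposal is correct and follows exactly the route of the paper, whose proof of this proposition is literally the single sentence ``the construction is analogous to the one of the tetrahedral case'': you replace the regular tetrahedron and $\A 4$ by the cube and its order-$24$ rotation group $\Sy 4$, use $l+1$ cubes (resp.\ $l$ cubes and a half-cube) with the slit-and-reglue pattern, check the genus via the flat Gauss--Bonnet formula, and reduce to a single representative per $\Aut(\Gamma)$-orbit via the two-orbit classification. Your necessity computation from the Riemann--Hurwitz obstruction is also the intended one and checks out.
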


\begin{proof}
The construction is analogous to the one of \cref{geotetra}, see \cref{cubcub}.
\end{proof}

\begin{figure}[h]
    \centering    
    \def\svgwidth{\columnwidth}
  	\def\svgwidth{0.4\textwidth}

	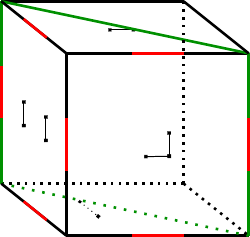
	\caption{Construction for $(l,g,k,n_1, n_2) = (0, 11, 2, 10, 11).$}
    \label{cubcub}
\end{figure}
\subsubsection{Icosahedron}

As before, a branched projective structure with holonomy in $\Hom(\Gamma, \mathfrak A_5)$ may be identified with a branched structure modeled on the icosahedron. Therefore we will geometrize the surjective representations of $\Hom(\Gamma, \mathfrak A_5)$ by gluing together regular icosahedra.

\begin{prop}
Let $\rho\in \Hom(\Gamma, \mathfrak A_5)$ be a surjective homomorphism. We have $\rho\in \Hol(\mathcal P(n_1, \ldots, n_k))$ if and only if 
\begin{enumerate}
\item $\sum_i n_i = 2g + 2l $, for $l\geqslant 0$ such that $\max_i n_i < 60(l+1)$ if $\rho$ lifts to $\SL \C$.
\item $\sum_i n_i = 2g + 2l - 1$ for $l\geqslant 0$ such that $\max_i n_i < 60l + 30$ if $\rho$ does not lift to $\SL \C$.
\end{enumerate}
\end{prop}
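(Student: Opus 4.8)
The plan is to follow the same two-step strategy used for the other finite subgroups of $\SO$, most recently the cube in \cref{geocube} and the tetrahedron in \cref{geotetra}, replacing the regular cube or tetrahedron by the regular icosahedron, whose orientation-preserving isometry group is $\A 5$, of order $60$.

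First, the \emph{only if} direction follows from the obstructions already established. Since $\rho$ has finite, hence elementary and spherical, image, \cref{obstrMin} gives $\sum_i n_i \geqslant 2g-1$, while \cref{obstrSW} forces $\sum_i n_i$ to be even exactly when $\rho$ lifts to $\SL \C$. Combining these shows $\sum_i n_i = 2g + 2l$ with $l \geqslant 0$ in the lifting case and $\sum_i n_i = 2g + 2l - 1$ with $l \geqslant 0$ otherwise. Finally, \cref{obstrRH} with $n = 60$ reads $60(\chi(\Sigma) + \sum_i n_i) \geqslant 2(\max_i n_i + 1)$; substituting $\chi(\Sigma) = 2-2g$ and the two possible values of $\sum_i n_i$ turns this inequality into exactly $\max_i n_i < 60(l+1)$ and $\max_i n_i < 60l + 30$ respectively. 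Thus the stated numerical conditions are precisely the content of the three obstructions.

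For the \emph{if} direction it suffices, by \cref{modA5} together with the invariance of $\Hol(\mathcal P(n_1,\ldots,n_k))$ under $\Aut(\Gamma)$ and conjugation (see \cref{corET}), to exhibit one branched projective structure with the prescribed branch data in each of the two $\Aut(\Gamma)$-orbits of surjective homomorphisms to $\A 5$: one whose holonomy lifts to $\SL \C$ and one whose holonomy does not. I would build these by gluing regular icosahedra, each of which carries a conical Euclidean structure modeled on the icosahedron with holonomy in its isometry group $\A 5$. In the even (lifting) case, take $l+1$ copies of the icosahedron, slit them along segments obtained from a single segment by applying isometries in $\A 5$, and reglue the slit boundaries following the pattern of \cref{fig2}; arranging at least one slit per icosahedron guarantees connectedness, and choosing the gluing isometries on one icosahedron so that they generate $\A 5$ guarantees surjective holonomy. \cref{GBplat} then forces the resulting closed surface to have genus $g$, and its holonomy lifts because $\sum_i n_i$ is even. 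In the odd (non-lifting) case, replace one icosahedron by a half-icosahedron cut along a plane through two pairs of opposite edges, exactly as the half-tetrahedron in \cref{geotetra}: divide its boundary circle into a pair of green arcs glued by an order-two flip and into red arcs; using one odd $n_{i_0}$, make $\frac{n_{i_0}-1}{2}$ red slits glued cyclically, and then realize the remaining data $\{n_1,\ldots,n_k\}\setminus\{n_{i_0}\}$ as in the even case. The parity of $\sum_i n_i$ makes the holonomy non-liftable.

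The one point requiring care, as in the tetrahedral and cubic cases, is checking that there is always enough room to place all the slits. Each additional icosahedron increases the available slitting room in proportion to the group order $60$, and the half-icosahedron contributes about half as much, so the bounds $\max_i n_i < 60(l+1)$ and $\max_i n_i < 60l + 30$ are exactly what guarantee that the required slits fit. I expect this packing bookkeeping, rather than any new idea, to be the main (and essentially only) obstacle, and it can be dispatched verbatim as in \cref{geotetra}.
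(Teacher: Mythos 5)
Your proposal is correct and follows essentially the same route as the paper, which itself simply declares the icosahedral case "analogous" to \cref{geotetra} and \cref{geocube}: the only-if direction is the specialization of Obstructions 1--3 to $n=60$ (your arithmetic checks out), and the if direction reduces via \cref{modA5} to exhibiting one slit-and-glue construction on $l+1$ icosahedra for each parity of $\sum_i n_i$, with the half-icosahedron handling the non-lifting case and \cref{GBplat} confirming the genus. Your accounting of the available slitting room matching the bounds $60(l+1)$ and $60l+30$ is exactly the point the paper leaves implicit.
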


Again, the constructions are analogous to those of \cref{geotetra} and \cref{geocube}, see \cref{Icoco}.

\begin{figure}[h]
\begin{center}
\definecolor{mycolor}{RGB}{0,144,0}
    \tdplotsetmaincoords{60}{100}
    \begin{tikzpicture}[tdplot_main_coords,scale=0.8,line join=round]
    \pgfmathsetmacro\a{2}
    \pgfmathsetmacro{\phi}{\a*(1+sqrt(5))/2}
    \path 
    coordinate(A) at (0,\phi,\a)
    coordinate(B) at (0,\phi,-\a)
    coordinate(C) at (0,-\phi,\a)
    coordinate(D) at (0,-\phi,-\a)
    coordinate(E) at (\a,0,\phi)
    coordinate(F) at (\a,0,-\phi)
    coordinate(G) at (-\a,0,\phi)
    coordinate(H) at (-\a,0,-\phi)
    coordinate(I) at (\phi,\a,0)
    coordinate(J) at (\phi,-\a,0)
    coordinate(K) at (-\phi,\a,0)
    coordinate(L) at (-\phi,-\a,0)
   	coordinate(M) at (-\a*0.8, -\phi*0.2, \phi*0.8 + \a*0.2)
   	coordinate(N) at (-\a*0.2, -\phi*0.8, \phi*0.2 + \a*0.8)
   	coordinate(O) at (\a*0.8, \phi*0.2, -\phi*0.8  -\a*0.2)
   	coordinate(P) at (\a*0.2, \phi*0.8,-\phi*0.2 -\a*0.8)
   	coordinate(Q) at (\phi*0.5, -\a/2-\phi*0.5, -\a*0.5)
   	coordinate(R) at (-\phi*0.5, \a/2+\phi*0.5, \a*0.5)
   	coordinate(S) at (\phi*0.8 + \a*0.2, \a*0.8, \phi*0.2)
   	coordinate(T) at (\phi*0.2 + \a*0.8, \a*0.2, \phi*0.8)
   	coordinate(U) at (\a*0.8 + \phi*0.2, -\a*0.2, \phi*0.8)
   	coordinate(V) at (\a*0.2 + \phi*0.8, -\a*0.8, \phi*0.2)

; 
    \draw[dashed, thick]    (B) -- (H) -- (F) 
    (D) -- (L) -- (H) --cycle 
    (K) -- (L)
    (H) --(K)
    (K) -- (L) -- (G) --cycle
    (C) -- (L) (B)--(K) (A)--(K)
    ;
    
        \draw[thick]
        (A) -- (I) -- (B) --cycle 
        (F) -- (I) -- (B) --cycle 
        (F) -- (I) -- (J) --cycle
        (F) -- (D) -- (J) --cycle
        (C) -- (D) -- (J) --cycle
        (C) -- (E) -- (J) --cycle
        (I) -- (E) -- (J) --cycle
        (I) -- (E) -- (A) --cycle
        (G) -- (E) -- (A) --cycle
        (G) -- (E) -- (C) --cycle
        ; 
        
\draw[thick, color=red] (M) -- (N) 
							  (O) -- (P);
\draw[thick, color=mycolor](N) -- (C)
							  (M) -- (G)
							  (O) -- (F)
							  (P) -- (B)
							  (Q) -- (F)
							  (Q) -- (C);
\draw[dashed, color=mycolor]						  
							  (G) -- (R)
							  (R) -- (B)
							  ;
\draw[thick, color=red]
								(S) -- (T);
\draw[thick, color=red]
								(U) -- (V);				  
\end{tikzpicture}
\caption{Construction for $(l,g,k,n_1) = (0,3, 1, 5)$}
\label{Icoco}
\end{center}
\end{figure}
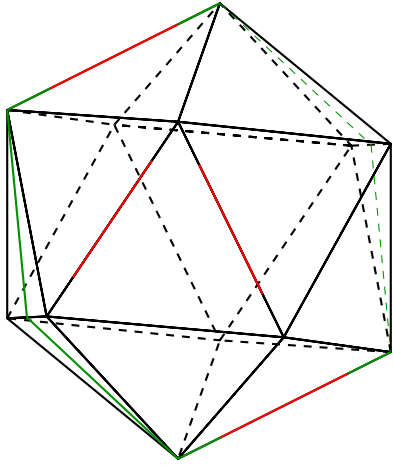
\subsubsection{Holonomy with infinite image in $\SO$}
We characterize the representations $\Hom(\Gamma, \SO)$ with infinite image that are in $\Hol(\mathcal P(n_1, \ldots, n_k))$. The infinite closed subgroups of $\SO$ are isomorphic to $\mathrm{SO}_2(\R)$, $\mathrm O_2(\R)$ or are the whole group $\SO$.
Let us fix $1\leqslant n_1\leqslant\ldots\leqslant n_k$.

\paragraph{Holonomy with dense image in $\mathrm{SO}_2(\R)$} 
Recall that any representation with image in $\mathrm{SO}_2(\R)$ lifts to $\SL \C$. We show that \cref{obstrSW} and \cref{obstrMin} are the only obstructions in this case.
\begin{prop}
A representation $\rho\in \Hom(\Gamma, \mathrm{SO}_2(\R)$ is in $\Hol(\mathcal P(n_1, \ldots, n_k))$ if and only if $\sum_i n_i$ is even and at least $2g$.
\end{prop}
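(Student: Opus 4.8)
The plan is to treat the two implications separately: the "only if" direction is an immediate combination of the obstructions already recorded, while the "if" direction is a density-plus-geometrization argument that reduces everything to the cyclic case via the Ehresmann--Thurston principle.

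For necessity, recall from the beginning of \cref{sectcycl} that every $\rho\in \Hom(\Gamma, \mathrm{SO}_2(\R))$ lifts to $\SL \C$. Hence \cref{obstrSW} forces $\sum_i n_i$ to be even. Since $\rho$ takes its values in $\mathrm{SO}_2(\R)\subset \SO$ it is spherical, so the spherical part of \cref{obstrMin} gives $\sum_i n_i\geqslant 2g-1$; as $2g-1$ is odd while $\sum_i n_i$ is even, this sharpens to $\sum_i n_i\geqslant 2g$. Note that $\rho$ has dense, hence infinite, image in $\mathrm{SO}_2(\R)$, so \cref{obstrRH} imposes no further constraint.

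For sufficiency, assume $\sum_i n_i = 2g+2l$ with $l\geqslant 0$. The idea is to exhibit, inside the closure of the $\Aut(\Gamma)$-orbit of $\rho$, a representation with finite cyclic image that already realizes the prescribed branch data, and then invoke \cref{corET}. Concretely, set $n = n_k+1\geqslant 2$ and choose any surjective $\rho'\in \Hom(\Gamma,\Z_n)$, viewing $\Z_n$ as the cyclic subgroup of order $n$ of $\mathrm{SO}_2(\R)$ (such a $\rho'$ exists because $g\geqslant 1$, as $\rho$ has infinite image). Since $n_k < n \leqslant n(l+1)$, \cref{cyclic} yields $\rho'\in \Hol(\mathcal P(n_1,\ldots,n_k))$. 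Because $\rho$ has infinite image, \cref{ptitlemme} asserts that $\Aut(\Gamma)\cdot\rho$ is dense in $\Hom(\Gamma,\mathrm{SO}_2(\R))$; as $\rho'\in \Hom(\Gamma,\mathrm{SO}_2(\R))$ we conclude $\rho'\in \overline{\Aut(\Gamma)\cdot\rho}\subseteq \overline{\Aut(\Gamma)\times \PSL\C\cdot\rho}$. Applying \cref{corET} with this $\rho_\infty=\rho'$ gives $\rho\in \Hol(\mathcal P(n_1,\ldots,n_k))$, as desired.

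Every step here is a direct appeal to an earlier result, so there is no genuine difficulty beyond assembling them correctly. The only points requiring attention are the arithmetic in the necessity direction (upgrading the spherical bound $2g-1$ to $2g$ using parity) and the choice of $n$ large enough to satisfy the inequality $n_k < n(l+1)$ of \cref{cyclic}, for which any $n>n_k$ suffices regardless of the value of $l$.
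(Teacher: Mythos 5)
Your proof is correct and follows essentially the same route as the paper: produce a finite cyclic representation realizing the branch data via \cref{cyclic}, locate it in the closure of $\Aut(\Gamma)\cdot\rho$ using \cref{ptitlemme}, and conclude by the Ehresmann--Thurston principle \cref{corET}. Your explicit verification of the necessity direction (the parity upgrade from $2g-1$ to $2g$) and the explicit choice $n=n_k+1$ are details the paper leaves implicit, but the argument is the same.
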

\begin{proof}
Let us suppose that $\sum_i n_i$ is even and at least $2g$. By \cref{cyclic}, there exists a representation in $\Hom(\Gamma, \mathrm{SO}_2(\R))$ with finite image in $\Hol(\mathcal P(n_1, \ldots, n_k))$. Therefore since the closure $\Aut(\Gamma)\cdot \rho$ contains such a representation by\cref{ptitlemme}, it follows from the Ehresmann-Thurston principle (\cref{corET}) that $\rho\in \Hol(\mathcal P(n_1, \ldots, n_k))$.
\end{proof}

\paragraph{Holonomy with dense image in $\mathrm{O}_2(\R)$} We now suppose that $\rho\in \Hom(\Gamma, \mathrm{O}_2(\R))$ has dense image.
\begin{prop}\label{dense}
The representation $\rho$ is in $\Hol(\mathcal P(n_1, \ldots, n_k))$ if and only if $\sum_i n_i > 2g-2$ and $\sum_i n_i$ is even (resp. odd) if $\rho$ lifts to $\SL \C$ (resp. does not lift to $\SL \C$). 
\end{prop}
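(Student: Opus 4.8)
The plan is to prove the two directions separately, reusing the machinery already in place: the obstructions for necessity, and the combination of the orbit-closure description (\cref{denseDiedr}), the realization of dihedral holonomy (\cref{geodiedr}), and the Ehresmann--Thurston principle (\cref{corET}) for sufficiency. Note first that since $\rho$ has dense image in $\mathrm{O}_2(\R)\subset \SO$, it is in particular spherical, and its image is infinite.

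For the necessity, I would simply invoke the relevant obstructions. As $\rho$ is spherical, \cref{obstrMin} forces $\sum_i n_i\geqslant 2g-1$, that is $\sum_i n_i > 2g-2$, and \cref{obstrSW} forces $\sum_i n_i$ to be even when $\rho$ lifts to $\SL\C$ and odd otherwise. Because the image of $\rho$ is infinite, \cref{obstrRH} imposes no constraint. This yields exactly the stated conditions.

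For the sufficiency, assume the conditions hold. The idea is to locate, inside the closure of the orbit of $\rho$, a dihedral representation that we have already geometrized. By \cref{denseDiedr}, the closure $\overline{\Aut(\Gamma)\cdot\rho}$ equals the set $\Delta$ of all $\rho'\in\Hom(\Gamma,\mathrm{O}_2(\R))$ with $\epsilon\circ\rho'$ surjective and $\sw(\rho')=\sw(\rho)$. I would therefore choose an integer $n$ (taking $n$ even when $\rho$ does not lift, so that \cref{diedr} supplies a surjective non-lifting homomorphism onto $\D_n$) and pick a surjective $\rho'\in\Hom(\Gamma,\D_n)$ with $\sw(\rho')=\sw(\rho)$. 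Any such $\rho'$ has $\epsilon\circ\rho'$ surjective, hence lies in $\Delta=\overline{\Aut(\Gamma)\cdot\rho}$. Writing $\sum_i n_i=2g+2l$ (resp.\ $2g+2l-1$) with $l\geqslant 0$ according to the parity and size hypotheses, the cone-angle inequality $\max_i n_i < 2n(l+1)$ (resp.\ $\max_i n_i < n(2l+1)$) of \cref{geodiedr} is met as soon as $n$ is large enough, which is precisely the freedom exploited in choosing $n$. Thus \cref{geodiedr} gives $\rho'\in\Hol(\mathcal P(n_1,\ldots,n_k))$, and \cref{corET} then yields $\rho\in\Hol(\mathcal P(n_1,\ldots,n_k))$, as desired.

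The point requiring the most care is the coordination in this last paragraph: one must guarantee that a \emph{surjective} dihedral representation with the prescribed lifting behavior $\sw(\rho')=\sw(\rho)$ exists for arbitrarily large $n$, so that the size constraints of \cref{geodiedr} become automatic. When $\rho$ does not lift this forces $n$ even, but that is compatible with taking $n$ large, and since dense image in the nonabelian group $\mathrm{O}_2(\R)$ already requires $g\geqslant 2$, \cref{diedr} applies and provides the needed representation in each of the two orbits.
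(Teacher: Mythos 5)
Your proposal is correct and follows essentially the same route as the paper: the paper's proof likewise produces a surjective $\rho'\in\Hom(\Gamma,\D_n)$ in $\Hol(\mathcal P(n_1,\ldots,n_k))$ for $n$ large via \cref{geodiedr}, places it in $\overline{\Aut(\Gamma)\cdot\rho}$ via \cref{denseDiedr}, and concludes with \cref{corET}. You merely spell out the parity/$\sw$ bookkeeping and the necessity direction (via \cref{obstrSW} and \cref{obstrMin}) that the paper leaves implicit.
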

\begin{proof}
Let $\rho\in \Hom(\Gamma, \mathrm O_2)$ with dense image. There exists $\rho'\in \Hom(\Gamma, \D_n)$ in $\Hol(\mathcal P(n_1\ldots, n_k))$ for $n$ large enough by \cref{geodiedr}.
By  \cref{denseDiedr}, $\rho'$ is in the closure of $\Aut(\Gamma)\cdot\rho$ thus $\rho\in \Hol(\mathcal P(n_1, \ldots, n_k))$ by the Ehresmann-Thurston principle \cref{corET}.
\end{proof}
\paragraph{Holonomy with dense image in $\SO$}

\begin{prop}
A representation $\rho\in \Hom(\Gamma, \SO)$ with dense image is in $\Hol(\mathcal P(n_1, \ldots, n_k))$ if and only if both $\sum_i n_i > 2g-2$ and $\sum_i n_i$ is even if $\rho$ lifts to $\SL \C$ and odd otherwise.
\end{prop}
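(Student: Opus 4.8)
The plan is to follow the same two-step scheme as in the two preceding propositions: establish necessity directly from the obstructions, and establish sufficiency by combining the Ehresmann--Thurston principle (\cref{corET}) with the density of the orbit furnished by \cref{previtxia}. For necessity, note first that $\rho$ is spherical, so \cref{obstrMin} forces $\sum_i n_i\geqslant 2g-1$, that is $\sum_i n_i>2g-2$. Moreover $\rho$ takes its values in $\SO=\mathrm{PSU}_2$, and for such a representation lifting to $\SL\C$ is equivalent to lifting to $\SU$, because the central $\Z_2$-extensions $\SU\to\SO$ and $\SL\C\to\PSL\C$ are compatible and carry the same obstruction class. Thus $\sw(\rho)$ records exactly whether $\rho$ lifts, and \cref{obstrSW} yields the parity condition on $\sum_i n_i$.

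For sufficiency, suppose $\sum_i n_i>2g-2$ has the parity prescribed by $\sw(\rho)$. The key point is that, by \cref{previtxia}, the orbit of $[\rho]$ under $\Mod(\Sigma)$ is dense in its connected component of $\Hom(\Gamma,\SO)/\SO$, and this component is determined solely by the value of $\sw$. Hence it suffices to produce a single representation $\rho'\in\Hom(\Gamma,\SO)$ with $\sw(\rho')=\sw(\rho)$ which already lies in $\Hol(\mathcal P(n_1,\ldots,n_k))$: such a $\rho'$ automatically lies in $\overline{\Aut(\Gamma)\times\PSL\C\cdot\rho}$, and \cref{corET} then gives $\rho\in\Hol(\mathcal P(n_1,\ldots,n_k))$.

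To produce $\rho'$ I would use a finite-image representation into a cyclic or dihedral group of large order, so as to sidestep the Riemann--Hurwitz constraint \cref{obstrRH} that is present for finite image but absent for dense image. If $\sw(\rho)=0$, then $\sum_i n_i$ is even and at least $2g$; writing $\sum_i n_i=2g+2l$ with $l\geqslant 0$ and choosing $n>n_k$, \cref{cyclic} provides a surjective $\rho'\in\Hom(\Gamma,\Z_n)$ geometrizing $(n_1,\ldots,n_k)$, and every such representation lifts to $\SL\C$, so $\sw(\rho')=0$. If $\sw(\rho)=1$, then $\sum_i n_i$ is odd and at least $2g-1$; writing $\sum_i n_i=2g+2l-1$ with $l\geqslant 0$ and choosing an even $n>\max_i n_i$, \cref{geodiedr} provides a surjective $\rho'\in\Hom(\Gamma,\D_n)$ lying in $\Hol(\mathcal P(n_1,\ldots,n_k))$ that does not lift to $\SL\C$, so $\sw(\rho')=1$. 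In both cases the inequalities $n_k<n(l+1)$ and $\max_i n_i<n(2l+1)$ demanded by \cref{cyclic} and \cref{geodiedr} are met because $n$ may be taken arbitrarily large, and surjectivity onto $\Z_n$ (resp. the existence of a non-lifting orbit in $\D_n$) holds for $g\geqslant 2$.

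The main subtlety I expect to check carefully is precisely this last point. For finite image the realizability of the branch data is genuinely restricted by \cref{obstrRH}, whereas a dense-image representation carries no such restriction; one must therefore be certain that the auxiliary finite group can always be enlarged enough to realize the very same branch data while remaining in the prescribed connected component. Using $\Z_n$ and $\D_n$ with $n$ unbounded resolves this, since enlarging $n$ weakens the relevant inequalities without affecting either surjectivity or the value of $\sw$, which is all that distinguishes the two components in which $\rho$ and $\rho'$ must agree.
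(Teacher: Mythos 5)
Your proposal is correct and follows essentially the same route as the paper: produce a finite-image representation $\rho'$ with the same Stiefel--Whitney class realizing the branch data (the paper uses \cref{geodiedr}, you use \cref{cyclic} or \cref{geodiedr} with $n$ large), then invoke \cref{previtxia} to place $\rho'$ in the closure of the orbit of $\rho$ and conclude by \cref{corET}. The necessity argument via \cref{obstrSW} and \cref{obstrMin} and the care you take with the inequalities $n_k<n(l+1)$ and $\max_i n_i<n(2l+1)$ match the paper's reasoning.
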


\begin{proof}
As in \cref{dense}, there exists $\rho'\in \Hom(\Gamma, \SO)$ in $\Hol(\mathcal P(n_1, \ldots, n_k))$. For example we can take $\rho'$ with its image in a dihedral group $\D_n$ by \cref{geodiedr}.
This also follows from \cite[Theorem A]{MondelloPanov} where the existence of spherical metrics with conical points of prescribed angles satisfying the Gauss-Bonnet constraint is shown.
By \cref{previtxia}, $\rho'$ is in the closure of $\Aut(\Gamma)\times \SO\cdot \rho$. Therefore $\rho\in \Hol(\mathcal P(n_1, \ldots, n_k))$ by the Ehresmann-Thurston principle, \cref{corET}.
\end{proof}

\subsection{Affine holonomy}
In this subsection we  geometrize affine representations.
An example of a branched projective structure with affine holonomy is an affine structure, as studied for example in \cite{Ghazouani}.
\begin{defn}
An \textit{affine structure} is a branched projective structure with affine holonomy whose developing map takes its values in $\C\subset \CP$.
\end{defn}

This definition generalizes for example the notion of translation surface. 
\begin{defn}
A translation surface structure $X$ on $\Sigma$ is an affine structure with holonomy in $\Hom(\Gamma, \C)$, where $\C\subset \mathrm{Aff(\C)}$ is the subgroup of translations. An even half-translation structure is an affine structure with holonomy in $\Hom(\Gamma, \{\pm 1\}\rtimes \C)$.
\end{defn}

\begin{rmk}
This amounts to requiring that $\alpha = \mathrm{Li\circ \rho}$ is the trivial homomorphism for the translation surface case, and that $\alpha$ has its image in $\{\pm 1\}$ for the half-translation case.
\end{rmk}

\begin{rmk}
Our definition of even half-translation surface is equivalent to the usual definitions of half-translation surfaces, see for example \cite{Zorich, Lanneau}, with the extra requirement that the total angle of every conical point is even in the following sense: it is of the form $k\pi$, with $k\in 2\mathbb N$. In other words, following the notations of \cite{Lanneau}, we are looking at the half-translation surfaces that are either translation surfaces or correspond to quadratic differentials in  $$\bigcup_{n_i\geqslant 1 \mid \sum_i n_i = 2g-2} \mathcal Q(2n_1, 2n_2, \ldots, 2n_k).$$
This extra requirement is necessary to obtain a holonomy defined on $\Gamma$, and not on the fundamental group of a punctured surface.
\end{rmk}

\subsubsection{General obstructions}
Let us show that the divisor of a branched projective structure with affine holonomy has degree at least $2g-2$ and that we may read in this degree whether such a structure is an affine one or not.

\begin{prop}\label{liredegre}
If $\rho\in \Hom(\Gamma, \mathrm{Aff}(\C))$ is the holonomy of a projective structure in $\mathcal P(n_1, \ldots, n_k)$ with developing map $f$, then $\sum_i n_i \geqslant 2g-2$. Moreover $\sum n_i = 2g-2 $ if and only if $f(\tilde \Sigma)\subset \C$. 
\end{prop}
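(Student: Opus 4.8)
The plan is to pull back a well-chosen meromorphic $1$-form from $\CP$ and read the inequality off a degree count. First I would record that the projective structure endows $\Sigma$ with a genuine complex structure for which $f$ is holomorphic and the branch points are honest ramification points (the local models $z\mapsto z^{n+1}$ are holomorphic). On $\CP$ consider $\omega = dz$, the meromorphic $1$-form whose divisor is $-2[\infty]$: it is holomorphic and nonvanishing on $\C$ and has a double pole at $\infty$. Since every $g\in \mathrm{Aff}(\C)$, $g(z)=az+b$, satisfies $g^*\omega = a\,\omega = \mathrm{Li}(g)\,\omega$, the pullback $\eta = f^*\omega$ is a meromorphic $1$-form on $\widetilde\Sigma$ satisfying $\gamma^*\eta = (\mathrm{Li}\circ\rho)(\gamma)\,\eta$ for all $\gamma\in\Gamma$. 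Multiplication by a nonzero scalar does not move zeros or poles, so the divisor $\mathrm{div}(\eta)$ is $\Gamma$-invariant and descends to a divisor $D$ on $\Sigma$.

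Next I would compute $D$ locally. A point $p$ with $f(p)\in\C$ and branch order $n$ contributes a zero of order $n$ to $\eta$, since in a branch chart $f(z)=f(p)+z^{n+1}$ and $f^*\omega=(n+1)z^n\,dz$; in particular unbranched points mapping into $\C$ contribute nothing. A point $q$ with $f(q)=\infty$ and branch order $n$ contributes a pole of order $n+2$: in the coordinate $w=1/z$ one has $\omega=-dw/w^2$, and $f$ given by $z\mapsto z^{n+1}$ in the $w$-chart yields $f^*\omega = -(n+1)z^{-n-2}\,dz$. Writing $A$ for the sum of the branch orders of the branch points mapping into $\C$, $B$ for the sum of the branch orders of the points over $\infty$, and $m$ for the number of points of $\Sigma$ over $\infty$ (finite, since $f^{-1}(\infty)$ is $\Gamma$-invariant and discrete and $\Sigma$ is compact), this gives $\deg D = A - (B + 2m)$.

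The crucial input is that $\deg D = 2g-2$. The form $\eta$ is a meromorphic section of $K_\Sigma\otimes L_\alpha$, where $L_\alpha$ is the flat line bundle associated to the character $\alpha=\mathrm{Li}\circ\rho:\Gamma\to\C^*$; flat line bundles have degree $0$, so $\deg D = \deg K_\Sigma = 2g-2$. Equivalently, and more in the spirit of Kapovich's argument, one fixes any single-valued meromorphic $1$-form $\mu$ on $\Sigma$ and observes that $h=\eta/\tilde\mu$, with $\tilde\mu$ the pullback of $\mu$ to $\widetilde\Sigma$, transforms by the same scalar cocycle; hence $d\log h$ is $\Gamma$-invariant, and integrating $\frac{1}{2\pi i}d\log h$ over the boundary of a fundamental polygon the contributions of paired sides cancel, whence $\deg(\mathrm{div}\, h)=0$ and $\deg D = \deg(\mathrm{div}\,\mu)=2g-2$. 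Combining with the local computation, $A - B - 2m = 2g-2$, and since $\sum_i n_i = A+B$ we obtain $\sum_i n_i = 2g-2 + 2(B+m)$. As $B,m\geq 0$ this proves $\sum_i n_i\geq 2g-2$, with equality if and only if $B=m=0$, that is if and only if $f^{-1}(\infty)=\emptyset$, i.e. $f(\widetilde\Sigma)\subset\C$.

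The main obstacle is precisely the justification that $\deg D = 2g-2$ in the presence of the nontrivial multiplier $\mathrm{Li}\circ\rho$: one must argue that twisting the canonical bundle by the flat bundle $L_\alpha$ does not change the degree, equivalently that the divisor of a multiplicative multivalued function has degree zero. Once that is secured the remainder is a bookkeeping of local orders, where the only delicate point is confirming the pole order $n+2$ over $\infty$, including the unbranched case $n=0$.
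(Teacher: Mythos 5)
Your proof is correct, and it takes a genuinely different route from the paper's. The paper (also adapting Kapovich's argument) works with the flat rank-two bundle $V=\tilde\Sigma\times\C^2/\Gamma$, the line subbundle $L\subset V$ cut out by the developing map, and the bundle morphism $\phi\colon L\to\Lambda$ given by projection onto the flat line bundle of the diagonal character; it proves $\deg L\leqslant\deg\Lambda=0$ with equality if and only if $\phi$ is injective, i.e.\ if and only if $\infty\notin f(\tilde\Sigma)$, and then imports from Gallo--Kapovich--Marden the formula $\deg L=g-1-\tfrac12\sum_i n_i$. You instead pull back the form $dz$, whose equivariance up to the multiplier $\mathrm{Li}\circ\rho$ makes $\eta=f^*(dz)$ a meromorphic section of $K_\Sigma\otimes L_\alpha$, and compute its divisor by hand. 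The two decisive inputs coincide in substance --- flat line bundles have degree zero, and a degree count isolates the contribution of $\infty$ --- but your version is more self-contained (the imported degree formula for $L$ is replaced by the elementary computation of vanishing and pole orders) and it delivers the sharper identity $\sum_i n_i = 2g-2+2(B+m)$, which quantifies the failure of equality rather than merely detecting it. Your local orders are right, including the order-$2$ pole at an unramified preimage of $\infty$, and both of your justifications that $\deg D=2g-2$ --- the vanishing of the degree of the flat twist, or the residue/contour argument applied to the multiplicative function $h=\eta/\tilde\mu$ --- are sound; the latter is exactly the classical argument that the divisor of a Prym-type differential has canonical degree.
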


The following proof of \cref{liredegre} is an adaptation of \cite[Section 6]{Kapovich}, where Kapovich shows the same result in the realm of holonomies with values in the subgroup of translations $\C\subset \mathrm{Aff(\C)}$.

\begin{proof}
Let us assume that $\rho$ is the holonomy of a branched projective structure with holomorphic developing map $f : \tilde \Sigma\to \CP$ for a complex structure on $\Sigma$.
Let us choose a lift $\tilde \rho\in \Hom(\Gamma, \SL\C)$ of $\rho$ and define $\varphi\in \Hom(\Gamma, \C^\star)$ by $\varphi(\gamma) = \lambda_\gamma$ where $\tilde \rho(\gamma) = \begin{pmatrix}
\lambda_\gamma^{-1} & t_\gamma\\
0  & \lambda_\gamma
\end{pmatrix}$. Let us consider the vector bundles $V = \tilde \Sigma\times \C^2/\Gamma$ and  $\Lambda = \tilde \Sigma\times \C/\Gamma$, where $\Gamma$ acts by $\gamma\cdot(\tilde x,v) = (\gamma\cdot \tilde x , \tilde \rho(\gamma) \cdot v)$ and $\gamma\cdot(\tilde x, z) = (\gamma \cdot \tilde x, \varphi(\gamma)\cdot z)$. The projection on the second coordinate gives a morphism of holomorphic bundles $\phi : V\to \Lambda$. The developing map gives for each point $\tilde x\in \tilde \Sigma$ a line $f(\tilde x)\subset \C^2$, and by $\rho$-equivariance induces a line subbundle $L\subset V$. We restrict $\phi$ to get a morphism of line bundles $\phi : L\to \Lambda$, which is injective if and only if $f(\tilde \Sigma)$ does not contain $\infty\in \CP$.

\begin{lemma}
We have  $\deg L \leqslant \deg \Lambda$ with equality if and only if $\phi$ is injective.
\end{lemma}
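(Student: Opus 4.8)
The plan is to package the bundle morphism $\phi\colon L\to\Lambda$ as a single global holomorphic object and then read off both the degree inequality and the equality case from the classical theory of sections of line bundles on a compact Riemann surface. The whole statement is really the standard fact that a nonzero morphism of holomorphic line bundles can only increase degree, and is an isomorphism exactly when it never vanishes; the only work is to set this up correctly and to keep the notion of injectivity consistent with the sentence preceding the lemma.

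First I would note that, $\Sigma$ being equipped with the complex structure making $f$ holomorphic, both $L$ and $\Lambda$ are holomorphic line bundles and $\phi$ is a holomorphic bundle map. Such a $\phi$ is the same datum as a global holomorphic section $s$ of $\mathrm{Hom}(L,\Lambda)\cong L^{-1}\otimes\Lambda$, a line bundle of degree $\deg\Lambda-\deg L$. I would then check that $s\not\equiv 0$: since $f$ is the developing map of a branched projective structure it is a nonconstant holomorphic map $\tilde\Sigma\to\CP$, hence not identically equal to $\infty$. As $\ker\phi=\C\times\{0\}$ is exactly the fibre direction over $\infty=[1:0]$, there is some $\tilde x$ with $f(\tilde x)\neq\infty$, and at the corresponding point $\phi$ is fibrewise nonzero; thus $\phi\not\equiv0$.

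Next I would invoke the standard fact that a nonzero holomorphic section of a line bundle $M$ over a compact Riemann surface vanishes on a finite effective divisor whose degree equals $\deg M$, so in particular $\deg M\ge 0$. Applying this to $M=L^{-1}\otimes\Lambda$ gives $\deg\Lambda-\deg L=\deg(\mathrm{div}\,s)\ge 0$, which is the inequality $\deg L\le\deg\Lambda$. For the equality case, $\deg L=\deg\Lambda$ holds precisely when $s$ has no zeros, i.e. when $\phi_x\colon L_x\to\Lambda_x$ is nonzero — hence an isomorphism of one-dimensional fibres — for every $x\in\Sigma$. This is exactly the fibrewise injectivity of $\phi$, matching the usage in the line above the lemma (and, via the computation of $\ker\phi$ above, matching the condition $f(\tilde\Sigma)\not\ni\infty$).

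I expect no analytic difficulty here; the one point requiring care is purely bookkeeping about the word \emph{injective}. A nonzero morphism of line bundles is automatically injective as a sheaf map, since line bundles are torsion-free, so "injective" cannot refer to that; it must mean fibrewise nonvanishing, which is precisely the no-zeros condition on $s$. Keeping these two meanings aligned, rather than any estimate, is the main thing to get right.
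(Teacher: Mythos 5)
Your proof is correct, and it takes a genuinely different route from the paper's. You package $\phi$ as a nonzero holomorphic section of $\mathrm{Hom}(L,\Lambda)\cong L^{-1}\otimes\Lambda$ and invoke the classical fact that such a section has an effective divisor of degree $\deg\Lambda-\deg L$; the inequality and the equality case then fall out immediately, and your verification that $\phi\not\equiv 0$ (via $f$ nonconstant, hence not identically $\infty$) and your care about what ``injective'' means (fibrewise nonvanishing, not injectivity of the sheaf map) are exactly the right points to nail down. The paper instead argues with a \emph{smooth} section $s$ of $L$ transverse to the zero section: it compares the signed zero counts of $s$ and of $\phi\circ s$, checks that each zero of $\phi$ contributes $+1$ because $\phi$ is holomorphic there, and concludes $\deg\Lambda=\deg L+\#\{p\mid\phi_p=0\}$. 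The two arguments are two dressings of the same underlying Chern-class computation: the paper's is more elementary and self-contained (only transversality and local sign bookkeeping), while yours is shorter and leans on the standard degree--divisor correspondence for holomorphic line bundles on a compact Riemann surface. One small thing your route gives for free that the paper's statement leaves implicit is that the defect $\deg\Lambda-\deg L$ equals the number of zeros of $\phi$ counted \emph{with multiplicity}, not just the set-theoretic count; this does not matter for the application, but it is the sharper statement.
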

\begin{proof}
Let us pick a smooth section $s : \Sigma\to L$ transverse to the zero section whose zeroes are different from those of $\phi : L\to \Lambda$. We also assume that $\phi\circ s$ is transverse to the zero section. The degree of $L$ is the intersection number between $s(\Sigma)$ and the zero section: $\deg L = \sum_{p\in s^{-1}(0)} \mathrm{sign}_p(s)$, where $\mathrm{sign}_p(s)$ is the sign of $\det(ds_p)$ where $s$ is viewed as $s : U\subset \C\to \C$ in a local chart around $p$, and a local trivialization of $L$.
The zeroes of the section $\phi\circ s$ of $\Lambda$ are of two types: the zeroes of $s$ and the zeroes of $\phi$. Each zero of $\phi$ has sign $1$, because in a local chart centered at $0$ and a local trivialization, we have $\phi\circ s : z\mapsto \alpha(z) s(z)$ where $\alpha$ is holomorphic, $\alpha(0) = 0$ and $s(0) \neq 0$. Therefore $d(\phi\circ s) = s(0)\cdot d\alpha_0$ has positive determinant. If $p$ is a zero of $s$, we have $\alpha(0) \neq 0$ and $s(0) = 0$, thus $d(\phi\circ s)_0 = \alpha(0)\cdot ds_0$ and $\mathrm{sign}_p(\phi\circ s) = \mathrm{sign}_p(s)$. We have
\[
\deg \Lambda = \sum_{p\in (\phi\circ s)^{-1}(0)} \mathrm {sign}_p(\phi\circ s)\\
			 =  \sum_{p\in s^{-1}(0)} \mathrm{sign}_p(\phi\circ s) + \#\{p\in \Sigma \mid \phi = 0\}.
\]
Therefore, $\deg \Lambda = \deg L + \#\{p\in\Sigma \mid \phi = 0 \} \geqslant \deg L$ with equality if and only if $\phi$ is injective.
\end{proof}

\begin{lemma}
We have $\deg \Lambda = 0$.
\end{lemma}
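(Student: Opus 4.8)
The statement to prove is that the flat line bundle $\Lambda = \tilde\Sigma\times\C/\Gamma$, on which $\Gamma$ acts through the character $\varphi\in \Hom(\Gamma, \C^\star)$, has degree zero. The plan is to exhibit a nowhere-vanishing smooth section of $\Lambda$. Since the degree computed in the previous lemma is the signed count of zeroes of a generic smooth section, a nowhere-vanishing section immediately forces $\deg\Lambda = 0$.

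First I would lift $\varphi$ to an additive character. Because $\C^\star$ is abelian, $\varphi$ factors through the abelianization $\Gamma^{ab} = \mathrm H_1(\Sigma,\Z)$, which is the \emph{free} abelian group on $[a_1],[b_1],\ldots,[a_g],[b_g]$. Choosing arbitrary logarithms $\ell([a_i]),\ell([b_i])\in\C$ of $\varphi(a_i),\varphi(b_i)$ and extending them freely defines a homomorphism $\ell\colon\Gamma\to\C$ with $\exp\circ\ell = \varphi$; it is precisely the freeness of $\mathrm H_1(\Sigma,\Z)$ that guarantees these independent choices assemble into a homomorphism, with no relation to check.

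Next I would produce a smooth $\ell$-equivariant function $h\colon\tilde\Sigma\to\C$, that is, one satisfying $h(\gamma\cdot\tilde x) = h(\tilde x) + \ell(\gamma)$. This is the exact analogue of the construction in the proof of \cref{forme_volume}: the bundle $\tilde\Sigma\times\C/\Gamma$ on which $\Gamma$ acts by the translations $z\mapsto z+\ell(\gamma)$ is an affine bundle whose structure group, the translations of $\C$, is contractible, hence it is topologically trivial and admits a smooth section, which lifts to the desired $h$. Setting $s = \exp(h)$ then gives $s(\gamma\cdot\tilde x) = \exp(h(\tilde x)+\ell(\gamma)) = \varphi(\gamma)\,s(\tilde x)$, so $s$ descends to a nowhere-vanishing smooth section of $\Lambda$, and therefore $\deg\Lambda = 0$.

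The only genuinely non-formal point, and thus the main obstacle, is the existence of the equivariant primitive $h$; but this is settled exactly as in \cref{forme_volume}, through the contractibility of the translation group. As an alternative route one could bypass $h$ entirely: the space $\Hom(\Gamma,\C^\star)\simeq(\C^\star)^{2g}$ is path-connected, the degree of the associated family of line bundles is an integer depending continuously on $\varphi$, hence locally constant, hence constant; evaluating at the trivial character, where $\Lambda$ is the trivial bundle of degree $0$, yields the claim.
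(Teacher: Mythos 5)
Your proof is correct, but it takes a genuinely different route from the paper. The paper argues by deformation: since $\Hom(\Gamma,\C^\star)\simeq(\C^\star)^{2g}$ is path-connected, it joins $\varphi$ to the trivial character by a path $(\varphi_t)$, assembles the corresponding bundles into a bundle $W$ over $\Sigma\times[0,1]$, and uses parallel transport for a connection on $W$ to identify $\Lambda_0$ and $\Lambda_1$ as oriented real plane bundles, so that the degree is constant along the path and equals $0$ at the trivial character. This is exactly the ``alternative route'' you sketch in your last sentence (your version leaves the continuity of the degree unjustified; the parallel-transport argument is what makes it precise). Your main argument instead produces a nowhere-vanishing section directly: you lift $\varphi$ through $\exp$ to an additive character $\ell\in\Hom(\Gamma,\C)$ — legitimate because $\varphi$ factors through the free abelian group $\mathrm H_1(\Sigma,\Z)$ — then obtain a smooth $\ell$-equivariant $h:\tilde\Sigma\to\C$ from the contractibility of the fibres of the associated affine bundle (the same mechanism as in \cref{forme_volume}), and set $s=\exp(h)$, which descends to a nowhere-vanishing section of $\Lambda$ and kills the Euler class. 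Both arguments are sound; yours is more constructive and makes visible the general principle that a flat complex line bundle over a closed surface has vanishing first Chern class, while the paper's reuses the homotopy-invariance machinery and avoids having to lift $\varphi$ at all.
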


\begin{proof}
It suffices to show that this degree does not depend on the homomorphism $\varphi \in \Hom(\Gamma, \C^\star)$, since for $\varphi : \gamma\mapsto 1$, the bundle $\Lambda$ is trivial. Let $(\varphi_t)_{t\in [0,1]}$ be a path in $\Hom(\Gamma, \C^\star)$, and $\Lambda_t$ be the bundle over $\Sigma$ defined as $\Lambda$ with the action of $\varphi_t$. Let $W$ be the vector bundle over $\Sigma\times [0,1]$ defined by $W = \tilde \Sigma\times \C\times [0,1]/\Gamma$ where $\Gamma$ acts by $\gamma\cdot (\tilde x, z, t) = (\tilde \gamma\cdot x, \varphi_t(\gamma)\cdot z, t)$. Let us choose a linear connection on $W$. The parallel transport along curves $x\times [0,1]$, $x\in \Sigma$, defines an isomorphism between the underlying oriented real bundles $\Lambda_0$ and $\Lambda_1$. Therefore $\Lambda_0$ and $\Lambda_1$ have the same Euler class and thus the same degree. Since $\Hom(\Gamma, \C^\star) \simeq (\C^\star)^{2g}$ is path-connected, the degree of $\Lambda$ does not depend on $\varphi$.
\end{proof}
It is shown in \cite[Section C]{GKM} that \[\deg L = g - 1 - \frac{1}{2}\sum_i n_i.\]
Therefore we have $\sum_i n_i \geqslant 2g-2$, with equality if and only if $\infty \notin f(\tilde \Sigma)$.
\end{proof}

We have seen in \cref{finite_lin} that the group $\mathrm{GL}_2^+(\R)$ acts on $\Hom(\Gamma, \Z_2\rtimes \C)$, the set of homomorphisms with image in the group $\mathrm{Aff}(\C)$ and linear part with finite image of order at most $2$. As a corollary of \cref{liredegre}, we see that this action preserves the property of being in the image of $\Hol$ if $\sum_i n_i = 2g-2$.
\begin{cor}
Let $\rho\in \Hom(\Gamma, \Z_2\rtimes \C)$ and $A\in \mathrm{GL}_2^+(\R)$. For every $1\leqslant n_1\leqslant\ldots\leqslant n_k$ such that $\sum_i n_i = 2g-2$, we have:
\[\rho\in \Hol(\mathcal P(n_1, \ldots, n_k)) \iff A\cdot \rho\in \Hol(\mathcal P(n_1, \ldots, n_k)).\]
\end{cor}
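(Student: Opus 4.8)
The plan is to realize the $\mathrm{GL}_2^+(\R)$ action geometrically, by postcomposing the developing map of an affine structure with $A$. Since $A^{-1}\in\mathrm{GL}_2^+(\R)$ as well, it suffices to prove one implication, say that $\rho\in\Hol(\mathcal P(n_1,\ldots,n_k))$ forces $A\cdot\rho\in\Hol(\mathcal P(n_1,\ldots,n_k))$.

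The first step is to identify the given action with honest conjugation. Viewing $A$ as an orientation-preserving $\R$-linear homeomorphism of $\C=\R^2$, I would observe that $A$ commutes with $-\Id$, so for every element $z\mapsto \pm z+b$ of $\Z_2\rtimes\C$ one has $A(\pm z+b)A^{-1}=\pm z+Ab$. Hence conjugation by $A$ preserves $\Z_2\rtimes\C\subset\mathrm{Aff}(\C)\subset\PSL\C$, and comparing with the definition $A\cdot\rho(\gamma)\colon z\mapsto (\mathrm{Li}\circ\rho)(\gamma)\,z+A\,(\mathrm{Tr}\circ\rho)(\gamma)$ shows that $A\cdot\rho=A\rho A^{-1}$ as representations into $\PSL\C$. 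The next step is to bring in the hypothesis on the degree: assuming $\rho=\Hol(X)$ for some $X\in\mathcal P(n_1,\ldots,n_k)$ with $\sum_i n_i=2g-2$, I would apply \cref{liredegre} to conclude that $X$ is an affine structure, so that a developing map $f\colon\tilde\Sigma\to\CP$ of $X$ takes its values in $\C$ and the charts $\varphi_\alpha$ take values in $\C$ with transition maps $h_{\alpha\beta}\in\Z_2\rtimes\C$. This is exactly what makes postcomposition by $A$ meaningful.

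Finally I would set $\psi_\alpha=A\circ\varphi_\alpha$ and check that $\{\psi_\alpha\}$ is again a branched projective atlas. Its transition maps are $A\varphi_\alpha\varphi_\beta^{-1}A^{-1}=Ah_{\alpha\beta}A^{-1}\in\Z_2\rtimes\C\subset\PSL\C$ by the first step, and each $\psi_\alpha$ is topologically conjugate to $z\mapsto z^{n+1}$ because $A$ is a homeomorphism of $\C$, so the branch orders are unchanged; thus the resulting structure $Y$ lies in $\mathcal P(n_1,\ldots,n_k)$. Its developing map is $A\circ f$, which is $A\rho A^{-1}$-equivariant since $Af(\gamma\cdot x)=A\rho(\gamma)A^{-1}(Af(x))$, so $\Hol(Y)=A\rho A^{-1}=A\cdot\rho$, giving $A\cdot\rho\in\Hol(\mathcal P(n_1,\ldots,n_k))$. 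The point demanding care is that $A$ is merely $\R$-linear rather than holomorphic; the argument nonetheless produces a genuine branched projective structure for two reasons, both hinging on $A$ commuting with $\pm\Id$: the transition maps stay in $\Z_2\rtimes\C\subset\PSL\C$ (for a general nontrivial linear part, $A$-conjugation would not even land in $\mathrm{Aff}(\C)$, which is why the action is restricted to linear parts of order at most $2$), and postcomposition by the orientation-preserving homeomorphism $A$ leaves the topological local model $z\mapsto z^{n+1}$ of each branch point intact. The hypothesis $\sum_i n_i=2g-2$ enters precisely to guarantee, via \cref{liredegre}, that the developing map avoids $\infty$, so that $A$ can be applied at all.
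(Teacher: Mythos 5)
Your proposal is correct and follows essentially the same route as the paper: invoke \cref{liredegre} to conclude from $\sum_i n_i = 2g-2$ that the developing map lands in $\C$, then postcompose with $A$ and observe that the resulting structure has the same branch data and holonomy $A\cdot\rho$. The extra verifications you supply (that $A$ commutes with $\pm\Id$ so the new transition maps stay in $\Z_2\rtimes\C\subset\PSL\C$, and that the topological local models $z\mapsto z^{n+1}$ survive postcomposition by the homeomorphism $A$) are exactly the details the paper leaves implicit.
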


\begin{proof}
If $\rho\in \Hol(\mathcal P(n_1, \ldots, n_k))$, then there exists a developing map $f : \tilde \Sigma\to \CP$ that is $\rho$-equivariant. By \cref{liredegre}, $f$ takes its values in $\C$. Let us see $A$ as a map $A : \C\to \C$.
The map $A\circ f$ is a developing map for a projective structure in $\mathcal P(n_1, \ldots, n_k)$ with holonomy $A\cdot \rho$.
\end{proof}
\subsubsection{Euclidean holonomy}\label{Euclide}

We now focus on the case where $\rho$ is Euclidean. Recall that it means that the linear part $\alpha = \mathrm {Li}\circ \rho\in \Hom(\Gamma, \C^\star)$ of $\rho$ has its image in $\mathbb S^1 = \{z\in \C \mid z\overline{z} = 1\}$. 
Let us explain \cref{obstrVol}.
\begin{prop}
Suppose that $\rho\in \Hom(\Gamma, \mathrm{Isom}^+(\mathbb E^2))$ is the holonomy of an affine structure. We have $\mathrm{Vol}(\rho) > 0$.
\end{prop}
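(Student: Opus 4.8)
The plan is to compute the volume of $\rho$ directly from the developing map, rather than from an arbitrary equivariant function. Since $\rho$ is the holonomy of an affine structure $X$, the underlying projective structure endows $\Sigma$ with a complex structure for which the developing map $f = \mathrm{dev}(X)\colon \tilde\Sigma \to \CP$ is holomorphic, and by definition of an affine structure $f$ takes its values in $\C \subset \CP$. In particular $f$ is a smooth $\rho$-equivariant function $\tilde\Sigma \to \C$, so by \cref{forme_volume} we may use it to compute $\mathrm{Vol}(\rho) = \int_\Sigma \omega$, where $\omega$ is the form on $\Sigma$ induced by the $\Gamma$-invariant form $f^*(dx\wedge dy)$.

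Next I would check that $f^*(dx\wedge dy)$ is a non-negative area form. Fixing a local holomorphic coordinate $z$ on $\Sigma$ compatible with its orientation, the map $f$ is given by a holomorphic function $w(z)$, and the real Jacobian of a holomorphic map equals the square modulus of its complex derivative; concretely $f^*(dx\wedge dy) = |w'(z)|^2\, (d\mathrm{Re}\,z \wedge d\mathrm{Im}\,z)$, by the Cauchy--Riemann equations. Hence $\omega \geqslant 0$ pointwise with respect to the orientation of $\Sigma$.

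Finally I would argue strict positivity. The developing map of a branched projective structure is a local biholomorphism away from its branch points, where it has the local normal form $z\mapsto z^{n_i+1}$; thus $w'(z)$ vanishes only on the finite branch set. Consequently $\omega$ is strictly positive on the complement of this finite set, which has full measure in $\Sigma$, and therefore $\mathrm{Vol}(\rho) = \int_\Sigma \omega > 0$.

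The only point requiring care is that the complex structure underlying the affine structure be compatible with the fixed orientation of $\Sigma$, so that $\omega \geqslant 0$ rather than $\omega \leqslant 0$; this holds because both the projective charts and the branch model $z\mapsto z^{n+1}$ are orientation-preserving. Granting this, the statement reduces to the elementary fact that a holomorphic map has non-negative Jacobian, vanishing only at its critical points, so I expect no genuine obstacle.
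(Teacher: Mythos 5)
Your proof is correct and follows essentially the same route as the paper: both take the developing map itself as the smooth $\rho$-equivariant function computing $\mathrm{Vol}(\rho)$ via \cref{forme_volume}, observe that the Jacobian of a holomorphic map is nonnegative, and conclude positivity because it is not identically zero. Your explicit identification of the vanishing locus of the Jacobian with the finite branch set is a slightly more detailed justification of the last step than the paper's remark that $\det(df)$ is continuous and nonzero, but the argument is the same.
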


\begin{proof}

We can identify $\tilde \Sigma$ with $Y$, where $Y = \mathbb H^2$ if $g\geqslant 2$ and $Y = \C$ if $g=1$,  by the uniformization theorem and consider a holomorphic developing map $f : Y\to \C$. Let us choose $D\subset Y$ a compact fundamental domain for the action of $\Gamma$ on $Y$. We have: $$\mathrm{Vol}(\rho) = \int_D f^*(dx\wedge dy) = \int_D \det(df) dx\wedge dy.$$
Since $f$ is holomorphic and nonconstant, $\det(df) \geqslant 0$ everywhere and $\det(df)$ is continuous and nonzero.
\end{proof}

\paragraph{Trivial linear part}

Let us first assume that $\alpha$ is the trivial homomorphism.
Let $1\leqslant n_1 \leqslant \ldots \leqslant n_k$ be such that $\sum_i n_i = 2g-2$. 
By \cref{liredegre} we are asking if $\rho$ is the holonomy of a translation surface with prescribed singularities. This question was addressed by the author in \cite{Moi} and independently by Bainbridge, Johnson, Judge and Park in \cite{Judge}, where the following is proven.
\begin{HauptMoi}\label{I}
We have $\rho\in \Hol(\mathcal P(n_1, \ldots, n_k))$ if and only if 
\begin{enumerate}
\item $\mathrm{Vol}(\rho) > 0$
\item $\mathrm {Vol}(\rho) \geqslant (\max_i n_i+1)\mathrm{Area}(\C / \Lambda)$ if $\Lambda = \{z_0\in \C\mid z + z_0\in \rho(\Gamma)\}$ is a lattice in $\C$.
\end{enumerate}
\end{HauptMoi}
Let us remark that \cref{I} shows that \cref{obstrVol} and \cref{obstrHaupt} are the only obstructions of being in $\Hol(\mathcal P(n_1, \ldots, n_k))$ for a representation $\rho\in \Hom(\Gamma, \C)$ if $\sum_i n_i = 2g-2$.

We now suppose $\sum_i n_i \geqslant 2g$.
Since $\sum_i n_i$ is not minimal, the developing maps we are considering have $\infty\in \CP$ in their image. Therefore our projective structures may be obtained by performing surgeries on $\CP$.
\begin{prop}\label{trivialLin}
Let $1\leqslant n_1 \leqslant \ldots \leqslant n_k$ be such that $\sum_i n_i$ is even and at least $2g$. There exists a branched projective structure in $\mathcal P(n_1, \ldots, n_k)$ with holonomy $\rho$.
\end{prop}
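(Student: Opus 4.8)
The plan is to move $\rho$ to a much simpler representation in the closure of its orbit, geometrize that one by an explicit surgery, and transport the conclusion back via the Ehresmann--Thurston principle. First, I may assume $\rho$ is nontrivial: the trivial representation is exactly the branched-cover case treated in \cref{trivial}, and (having finite, order one, image) it is subject to the extra constraint $\max_i n_i \leqslant l$ of \cref{obstrRH}, which is why it is excluded from the present statement. So let $\rho \in \Hom(\Gamma, \C)$ have infinite image.

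Being nontrivial, $\rho$ is not conjugate into $\SO$, so \cref{ecrasement} applies in its case $n = 1$ and yields $\rho_\infty = (z+1, z, \ldots, z) \in \overline{\Aut(\Gamma) \times \mathrm{Aff}(\C) \cdot \rho} \subset \overline{\Aut(\Gamma)\times\PSL\C\cdot\rho}$. By \cref{corET} it therefore suffices to prove that $\rho_\infty \in \Hol(\mathcal P(n_1, \ldots, n_k))$. This representation is far simpler than $\rho$: its image is the infinite cyclic group generated by the translation $T \colon z \mapsto z+1$, which fixes $\infty \in \CP$. Furthermore, by \cref{corexplo} it is enough to realize $\rho_\infty$ with a single branch point, that is to show $\rho_\infty \in \Hol(\mathcal P(d))$ where $d = \sum_i n_i$; and since $d = 2g + 2l$ with $l \geqslant 0$, \cref{bubbling} reduces us to the case $d = 2g$, as soon as our structure carries a closed curve based at the branch point that develops injectively, along which we may bubble $l$ times.

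It then remains to construct a branched projective structure of genus $g$ with a single branch point of order $2g$ and holonomy $\rho_\infty$ (or, by the $\Aut(\Gamma)$-invariance of $\Hol(\mathcal P(n_1, \ldots, n_k))$, any representative of its orbit, for instance $(T, \id, T, \id, \ldots, T, \id)$). I would carry this out by a slit-and-reglue surgery on $\CP$ entirely parallel to the $k=1$ case in the proof of \cref{cyclic}, the role of the order $n$ rotation of the pyramid being played here by the translation $T$ acting on $\C \subset \CP$. Starting from $\CP$, one performs $g$ cut-and-paste surgeries whose defining arcs all have a common endpoint $p$ and whose regluings are realized by $T$; each surgery attaches one handle and concentrates an extra angle $2\pi \cdot 2$ at $p$. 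A direct Euler characteristic count gives $\chi = 2 - 2g$, so the closed surface has genus $g$; the total angle at $p$ is $2\pi(2g+1)$, so $p$ is the unique branch point and has order $2g$; and the holonomy takes values in $\langle T \rangle$, hence lies in the $\Aut(\Gamma)$-orbit of $\rho_\infty$. An arc running through one of the handles develops to an embedded arc in $\C$, providing the injectively developing curve required for \cref{bubbling}.

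The main obstacle is this explicit construction together with its bookkeeping: arranging the $g$ arcs and their $T$-regluings so that all the excess cone angle collects at the single point $p$, that the result is connected of genus $g$, that its holonomy lands precisely in $\langle T\rangle$, and that an injectively developing curve persists. Unlike the spherical cases, no invariant flat cone metric is available here, because $T$ does not preserve the round metric and $\infty$ lies in the developing image (consistently with \cref{liredegre}, since $\sum_i n_i = 2g > 2g - 2$); so the genus must be verified by the topological Euler characteristic rather than by the Gauss--Bonnet formula \cref{GBplat}, while otherwise the verifications are of the same nature as those already performed in \cref{cyclic,geodiedr,geotetra}. Once $\rho_\infty \in \Hol(\mathcal P(2g))$ is established, \cref{bubbling} gives $\rho_\infty \in \Hol(\mathcal P(d))$, \cref{corexplo} upgrades this to $\rho_\infty \in \Hol(\mathcal P(n_1, \ldots, n_k))$, and \cref{corET} finally yields $\rho \in \Hol(\mathcal P(n_1, \ldots, n_k))$.
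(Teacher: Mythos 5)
Your proposal is correct and follows essentially the same route as the paper: reduce to $\rho_\infty=(z+1,z,\ldots,z)$ via \cref{ecrasement} and \cref{corET}, realize it explicitly by slit-and-reglue surgeries on $\CP$ producing a genus $g$ surface with a single branch point of order $2g$ and holonomy in $\Aut(\Gamma)\cdot\rho_\infty$, then conclude with \cref{bubbling} and \cref{corexplo}. The paper's construction (cutting $\CP$ along $g+1$ segments glued cyclically by translations, \cref{trivlin}) is just a concrete instance of the chained cut-and-paste pattern you describe, and your remark that the genus must be checked by an Euler characteristic count rather than \cref{GBplat} is apt.
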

\begin{proof}
We construct a projective structure on $\Sigma_g$ with branching data $n_1, \ldots, n_k$ and holonomy in $\overline{(\Aut(\Gamma)\times \mathrm{Aff}(\C))\cdot \rho}$. It will then follow from \cref{corET} that $\rho\in \Hol(\mathcal P(n_1, \ldots, n_k))$.

Let us consider the Riemann sphere $\CP$. We choose $x\in \C\subset \CP$ and cut open $\CP$ along the segment lines $l_0 = [x, x + 1]$ and $l_k = [x + 2k-1, x + 2k]$ for $1\leqslant k \leqslant g$. We glue the bottom side of $l_i$ to the top side of $l_{i+1}$, in cyclic notation, as indicated in \cref{trivlin}.
\begin{figure}[h]
    \centering    
    \def\svgwidth{\columnwidth}
  	\def\svgwidth{0.5\textwidth}

	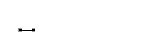
	\caption{Construction for $g=4.$}
    \label{trivlin}
\end{figure}

We thus get a genus $g$ surface with charts in $\CP$, and a unique branch point of order $2g$. Moreover its holonomy is given by $(z + 1, z+1, z + 2, z, \ldots z + 2, z)$, that lies in $\Aut(\Gamma)\cdot \rho_\infty$, where $\rho_\infty$ comes from \cref{ecrasement}. Hence this holonomy is in $\overline{\Aut(\Gamma)\times \mathrm{Aff}(\C)\cdot \rho}$ by \cref{ecrasement}. Observe that the closed curve following $l_0$ develops injectively. Therefore by \cref{bubbling}, $\rho_\infty\in \Hol(\mathcal P(2k))$ for every $k\geqslant g$. It follows from \cref{corexplo} and the Ehresmann-Thurston principle \cref{corET} that $\rho\in \Hol(\mathcal P(n_1, \ldots, n_k))$.
\end{proof}

\paragraph{Nontrivial linear part}
We now suppose that $\alpha$ has finite image of order $n\geqslant 2$.
Let  us first suppose that $1\leqslant n_1\leqslant \ldots\leqslant n_k$ are such that $\sum_i n_i$ is even and at least $2g$.
We may perform the same surgeries as in \cref{trivialLin}.
\begin{prop}\label{degree_at_least_2g}
There exists a branched projective structure in $\Hol(n_1, \ldots, n_k)$ with holonomy $\rho$.
\end{prop}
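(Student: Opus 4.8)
The plan is to follow the scheme of \cref{trivialLin} line for line, the only new ingredient being the order $n$ rotation in the holonomy. Set $\zeta = e^{2\pi i/n}$ and let $\rho_\infty = (\zeta z, z, z+1, z, \ldots, z)$ be the representation produced by \cref{ecrasement}. Since $\rho$ is Euclidean, is not spherical, and has linear part of order $n\geqslant 2$, \cref{ecrasement} gives $\rho_\infty \in \overline{\Aut(\Gamma)\times\mathrm{Aff}(\C)\cdot\rho}$. By the Ehresmann--Thurston principle \cref{corET} it therefore suffices to prove that $\rho_\infty \in \Hol(\mathcal P(n_1, \ldots, n_k))$. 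Moreover, as the strata $\Hol(\mathcal P(n_1, \ldots, n_k))$ are invariant under the $\Aut(\Gamma)\times\PSL\C$ action, it is in fact enough to realize \emph{any} representation in the orbit of $\rho_\infty$.

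First I would build a single structure in $\mathcal P(2g)$ whose holonomy lies in this orbit. Because $\sum_i n_i \geqslant 2g > 2g-2$, \cref{liredegre} forces the developing map to meet $\infty\in\CP$, so the unique branch point should develop to $\infty$; this is the common fixed point of the rotation $z\mapsto\zeta z$ and of all the translations, which is exactly what makes the construction coherent. I would then run the slit surgery of \cref{trivialLin} on $\CP$: cut along $g+1$ arcs issuing from a common point lying over $\infty$ and glue their sides cyclically by affine maps fixing $\infty$, taking the first gluing map to be the rotation $z\mapsto\zeta z$ and the remaining $g$ to be translations. As in \cref{trivialLin}, the cyclic gluing collapses all arc endpoints to a single cone point, and the same Euler-characteristic count shows that the resulting closed surface has genus $g$ and a single branch point of order $2g$. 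By construction its holonomy has linear part of order $n$ concentrated on one handle and translation part generating the required group, so after a mapping class group move and an affine rescaling it is conjugate to $\rho_\infty$; hence $\rho_\infty \in \Hol(\mathcal P(2g))$.

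It then remains to pass from the single branch point of order $2g$ to arbitrary data $n_1, \ldots, n_k$ with $\sum_i n_i$ even and at least $2g$, which is carried out exactly as in \cref{trivialLin}. Writing $\sum_i n_i = 2g + 2m$ with $m\geqslant 0$, I would check that the closed curve following the arc glued by $\zeta$ develops injectively, so that \cref{bubbling} applies at the branch point and yields $\rho_\infty \in \Hol(\mathcal P(2g+2m))$ for every $m\geqslant 0$. Breaking up this single point by \cref{corexplo} gives $\rho_\infty \in \Hol(\mathcal P(n_1, \ldots, n_k))$, and \cref{corET} upgrades this to $\rho \in \Hol(\mathcal P(n_1, \ldots, n_k))$, as desired.

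The main obstacle is the explicit arc-and-gluing picture of the second paragraph: one must choose the arcs and the single rotational gluing so that (i) exactly one branch point is created, of order precisely $2g$, (ii) the genus is exactly $g$, and (iii) the holonomy lands in the $\Aut(\Gamma)\times\PSL\C$-orbit of $\rho_\infty$, in particular that composing the rotational gluing with the translational ones produces a linear part of order exactly $n$ supported on a single generator. Verifying that the bubbling curve develops injectively is the remaining technical check. Both points are direct analogues of the translation case treated in \cref{trivialLin}, the only novelty being the bookkeeping forced by the rotation $\zeta$.
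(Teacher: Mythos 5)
Your overall skeleton is exactly the paper's: reduce to $\rho_\infty = (\zeta z, z, z+1, z, \ldots, z)$ via \cref{ecrasement} and \cref{corET}, build a single structure in $\mathcal P(2g)$ with holonomy in $\Aut(\Gamma)\cdot\rho_\infty$, check that a curve along one slit develops injectively so that \cref{bubbling} gives $\mathcal P(2k)$ for all $k\geqslant g$, and finish with \cref{corexplo}. The problem is that the one genuinely new step --- the explicit surgery producing the $\mathcal P(2g)$ structure --- is both left as an acknowledged ``obstacle'' and set up in a way that cannot work. You route all $g+1$ slits through a common point at $\infty$. Then the finite endpoints of the slits are only $g+1$ points, each contributing cone angle $2\pi$ before regluing, so after the cyclic identification they can assemble into cone points of total angle at most $2\pi(g+1)$; they can never form a single cone point of angle $2\pi(2g+1)$, i.e.\ of order $2g$. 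Moreover each far endpoint is glued to a \emph{different} slit's endpoint, so every cycle among them has length at least $2$ and produces a genuine branch point in $\C$; Gauss--Bonnet then forces $\infty$ to become singular as well. The best you could hope for is something like $\mathcal P(g,g)$, and since bubbling only increases the order of a single point and \cref{explosion} only splits points, you could never recover $\mathcal P(2g)$ (a one-part partition is not a refinement of a two-part one), so the case $k=1$, $n_1=2g$ would be out of reach. The slip that led you here is the inference ``\cref{liredegre} forces $\infty$ into the image of the developing map, \emph{so} the branch point should develop to $\infty$'': the first clause is true but the second does not follow, and in the correct construction the branch point develops into $\C$ while $\infty$ lies in the image simply because each chart is a full copy of $\CP$ minus slits.

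The paper's fix is simpler than what you propose: keep the slits as finite segments in $\C$, exactly as in \cref{trivialLin}, namely $[x,x+1]$, $[x+1,x+2]$, $[x+3,x+4], \ldots, [x+2g-3, x+2g-2]$, and replace one of them by the rotated segment $[\zeta x, \zeta x + \zeta]$, so that one of the cyclic gluings is the rotation $z\mapsto\zeta z$ and the rest are translations. The $2g+1$ distinct finite endpoints then contribute total angle $2\pi(2g+1)$ and collapse to a single branch point of order $2g$, the genus count is the same Euler-characteristic computation as in the $n=1$ case, the holonomy visibly lies in $\Aut(\Gamma)\cdot\rho_\infty$, and the curve following $[x,x+1]$ develops injectively so that your bubbling and breaking-up steps go through verbatim. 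With that substitution your argument is complete; as written, the central construction fails.
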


\begin{proof}
Our construction is very similar to the case $n=1$, where we cut and glue $\CP$ as in \cref{trivlin}. We will just flip one of the segment lines, see \cref{finitehol}. More precisely, we start as before with the Riemann sphere $\CP$. We cut open the sphere $\CP$ along the line segments $[x, x+1]$, $[x + 1, x + 2], [x + 3 , x + 4],\ldots, [x + 2g-3, x + 2g-2]$ and $[\zeta x, \zeta x + \zeta ]$, where $x\in \C$ is chosen so that these line segments do not intersect. We glue back the resulting boundary components as indicated in \cref{finitehol}.
\begin{figure}[h]
    \centering    
    \def\svgwidth{\columnwidth}
  	\def\svgwidth{0.5\textwidth}

	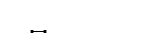
	\caption{Construction for $g=4$ and $n = 4$.}
    \label{finitehol}
\end{figure}
We thus get a genus $g$ surface with charts in $\CP$, a single branch point of order $2g$ and holonomy in $\Aut(\Gamma)\cdot \rho_\infty$, thus in $\overline{\Aut(\Gamma)\times \mathrm{Aff}(\C)\cdot \rho}$. 
The curve following the line segment $[x, x+1]$ develops injectively, thus $\rho_\infty\in \Hol(\mathcal P(2k))$ for every $k\geqslant g$ by \cref{bubbling}. Therefore $\rho\in\Hol(\mathcal P(n_1, \ldots, n_k))$ by \cref{corexplo} and \cref{corET}.
\end{proof}

\subparagraph{Minimal degree}
We suppose that $\alpha$ has finite image of order $n\geqslant 2$ and that $1\leqslant n_1\leqslant \ldots\leqslant n_k$ are such that $\sum_i n_i = 2g-2$. 
Let us first make an observation on the volume of the pullback.
\begin{rmk}
Suppose that $f : \Sigma\to S$ is a branched cover and that $S$ is endowed with a branched projective structure $X$ with holonomy $\rho$.
The volume of the holonomy of $f^* X$ is $\deg(f)\cdot \mathrm{Vol}(\rho)$.
\end{rmk}

This observation leads to another obstruction to being in $\Hol(\mathcal P(n_1, \ldots, n_k))$ that we now describe. Let $\Lambda$ be the subgroup of $\C$ generated by the complex numbers $a$ such that the M\"obius transformation $z+a$ belongs to $\ker \alpha$. Suppose that $\Lambda$ is a lattice in $\C$. If $\rho$ is the holonomy of the developing map $f$, then since $f$ has image in $\C$ by \cref{liredegre}, it induces a branched cover $S\to \C/\Lambda$, where $S$ is the cover of $\Sigma$ associated with $\ker \alpha$. The degree $d$ of this branched cover must satisfy $d \geqslant \max_i n_i + 1$, since the branched cover has the local form $z\mapsto z^{n_i+1}$ at the conical points. Moreover $d$ is also given by $d = n \mathfrak{\mathrm{vol}(\rho)}/{\mathrm{Area(\Lambda)}}$. This explains \cref{obstrHaupt}. Note that when $n=1$ this obstruction is included in the  statement of the refined Haupt's theorem.

\begin{ex}\label{exemple}
Every half-translation surface of genus $2$ with a single conical point is a translation surface. Indeed let $\rho$ be the holonomy of an even half-translation surface with nontrivial $\alpha = \mathrm{Li}\circ \rho$. Since $\alpha(\Gamma) = \{\pm 1\}$, we can suppose that $\rho = (-z, z, z + z_2, z + z_3)$ by \cref{OOOD}. We have $\Lambda = \langle z_2, z_3 \rangle$ and $\mathrm{Area}(\Lambda) = \det(z_2, z_3) = \mathrm{Vol}(\rho)$. Thus the group $\Lambda$ generated by $z_2$ and $z_3$ is a lattice in $\C$. \cref{obstrHaupt} gives $2\mathrm{Vol}(\rho)\geqslant (n_i +1)\mathrm{Vol}(\rho)$, thus $n_i = 1$ for all $i$. Since $\sum_i n_i = 2g-2 = 2$, there are $2$ conical points. In other words, we have $\mathcal Q(4) = \emptyset$, following the notation of \cite{Lanneau} ; see also
 \cite{MasurSmillie, Lanneau} for alternative proofs of this fact.
\end{ex}
%
Let us show that \cref{obstrVol} and \cref{obstrHaupt} are in this case the only obstructions to being in $\Hol(\mathcal P(n_1, \ldots, n_k))$.
\begin{prop}\label{mineucl}
If $\mathrm{Vol}(\rho) > 0$ and $n\mathrm{Vol}(\rho)\geqslant (\max_i n_i +1)\mathrm{Area}(\Lambda)$ if $\Lambda$ is a lattice, then $\rho\in \Hol(\mathcal P(n_1, \ldots, n_k))$.
\end{prop}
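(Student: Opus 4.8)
By \cref{corET} it suffices to exhibit a geometric representation inside $\overline{\Aut(\Gamma)\times\mathrm{Aff}(\C)\cdot\rho}$, so the plan is to first normalize $\rho$ using the orbit–closure analysis of \cref{finite_lin} and then build a flat structure. By \cref{OOOD} I may assume
$\rho = (\zeta z, z, z+x_1, z+y_1, \ldots, z+x_{g-1}, z+y_{g-1})$ with $\zeta = e^{2\pi i/n}$, recalling that $\Lambda$ is generated by the $\xi x_i$ and $\xi y_i$ for $\xi$ an $n$-th root of unity. Since $\sum_i n_i = 2g-2$, \cref{liredegre} tells us any structure with holonomy $\rho$ is an affine structure, developing into $\C$; thus I am constructing a Euclidean cone surface with prescribed cone angles $2\pi(n_i+1)$ and holonomy $\rho$. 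The guiding idea is to pass to the regular $\Z_n$-cover attached to $\ker\alpha$.

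Let $p\colon S\to\Sigma$ be the cover associated with $\ker\alpha$: a degree-$n$ unramified cover of the closed surface with free deck action generated by an order-$n$ element $\tau$, and $2g_S-2 = n(2g-2)$. Pulling back, an affine structure on $\Sigma$ with holonomy $\rho$ becomes a \emph{translation} surface on $S$ whose holonomy is $\chi = \rho|_{\ker\alpha}$, with image exactly $\Lambda$ and volume $\mathrm{Vol}(\chi) = n\,\mathrm{Vol}(\rho)$; because $p$ is unramified and a small loop around a cone point is null-homotopic in the closed surface, each conical point $p_i$ of order $n_i$ has exactly $n$ preimages of order $n_i$, cyclically permuted by $\tau$, while $\tau$ acts on the developing image as multiplication by $\zeta$. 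Conversely, a $\tau$-equivariant translation surface on $S$ with this singularity profile descends to the desired affine structure on $\Sigma$. Crucially, the two hypotheses translate exactly into the hypotheses of the refined Haupt theorem \cref{I} for $\chi$ on $S$: positivity $\mathrm{Vol}(\chi)>0$, and, when $\Lambda$ is a lattice, the degree bound $\mathrm{Vol}(\chi)\geq(\max_i n_i+1)\mathrm{Area}(\Lambda)$ for the induced branched cover $S\to\C/\Lambda$ (its degree being $d = n\,\mathrm{Vol}(\rho)/\mathrm{Area}(\Lambda)$). So the \emph{non-equivariant} existence is handed to us; what remains is to carry out the construction $\Z_n$-equivariantly.

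I would realize the equivariant surface by a symmetric cut-and-paste: assemble $n$ copies of a single slitted flat building block, rotated by $\zeta^{j}$, and glue the slits so that $\tau$ cyclically permutes the copies, the vectors $x_i,y_i$ and their rotates producing the holonomy; the volume inequality (equivalently $d\geq\max_i n_i+1$) leaves room to open all required slits and realize each angle $2\pi(n_i+1)$ once per sheet, with the $n$ copies of the $i$-th cone point permuted cyclically. When $\Lambda$ is a lattice I first use the lemmas of \cref{finite_lin} to reduce to $n\in\{2,3,4,6\}$ with $\Lambda = \Z[i]$ $(n=4)$, $\Z[\omega]$ $(n\in\{3,6\})$, or a general lattice $(n=2)$, and with $\Lambda$ generated by the $x_i,y_i$; then $\mathcal O = (\C/\Lambda)/\Z_n$ is one of the Euclidean quotient orbifolds and the structure is a $\Z_n$-orbifold branched cover $\Sigma\to\mathcal O$, which I would realize by a Hurwitz-type argument in the spirit of the branched-cover section. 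When $\Lambda$ is not a lattice the only constraint is $\mathrm{Vol}(\rho)>0$; after reducing to $g\geq 3$ (the few remaining small-genus and few-branch-point cases being checked by hand) I use \cref{notAlattice} to keep the $x_i,y_i$ from spanning a lattice and build the block with no degree constraint. The main obstacle is precisely enforcing exact $\Z_n$-symmetry of the gluings while producing the prescribed singularity data and creating no spurious cone points over the fixed points of the rotation; this equivariant bookkeeping, rather than the bare existence of a translation surface (which \cref{I} supplies), is the crux of the argument.
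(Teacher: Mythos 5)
Your reduction to the cyclic cover is set up correctly: the degree-$n$ unramified cover $S\to\Sigma$ attached to $\ker\alpha$ does carry a translation structure with holonomy $\chi=\rho|_{\ker\alpha}$, image $\Lambda$, volume $n\,\mathrm{Vol}(\rho)$, and $n$ preimages of each cone point, and the hypotheses of the proposition do match those of the refined Haupt theorem for $\chi$. But this is where the argument stops being a proof. The refined Haupt theorem produces \emph{some} translation surface on $S$ with holonomy $\chi$ and the prescribed singularities; it gives you no control whatsoever over whether that surface admits a free, order-$n$, affine automorphism covering $z\mapsto\zeta z$ that permutes the singularities in $n$-element orbits and has quotient of genus $g$. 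Producing such an equivariant structure is not a refinement of \cref{I} — it is the entire content of the proposition, and you say so yourself ("this equivariant bookkeeping \ldots is the crux"). The one-sentence plan "assemble $n$ copies of a slitted building block rotated by $\zeta^j$" does not address the two points where such constructions actually fail: (i) keeping the $\Z_n$-action free, i.e.\ preventing the rotation centre from surviving as an orbifold point of the quotient with the wrong angle, and (ii) arranging that the quotient holonomy is $\rho$ itself (with the specific translation parts $x_i,y_i$) rather than merely some representation with linear part $\alpha$. Likewise, "the few remaining small-genus cases being checked by hand" waves away $g=2$ and the $(n,g)=(2,3)$ non-lattice case, which require separate, non-obvious constructions. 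As written, the proposal is a correct problem reformulation plus an unproved existence claim.

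For comparison, the paper never passes to the cover: it builds the affine structure on $\Sigma$ directly. For $g=2$ it adds a handle to the torus $\C/\langle x_2,y_2\rangle$ by slitting along two circular arcs $[z_0,\zeta z_0]$, $[\zeta z_0,\zeta^2 z_0]$ around the origin and regluing (\cref{genre2fin}), which realizes the rotation handle and creates exactly one cone point of order $2$; for $g\geqslant 3$ in the non-lattice case it grafts this handle onto a translation surface on $\Sigma_{g-1}$ supplied by the refined Haupt theorem (using \cref{notAlattice} to keep the $x_i,y_i$ non-lattice); the case $n=2$ needs \cref{LatticeClosure}, \cref{Moi4} and \cref{lemmeGenre2}; and in the lattice case it works inside $\C/\Lambda$ with the annulus surgeries of \cref{sectcycl}, concluding via \cref{imagelattice}. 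If you want to salvage your route, you would have to carry out the equivariant cut-and-paste in at least the same level of detail as these constructions — at which point you would essentially be redoing them upstairs.
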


We suppose that $\rho$ satisfies the conditions of \cref{obstrVol} and \cref{obstrHaupt}.
By \cref{OOOD}, we may assume that $$\rho = (e^{2\pi i / n} z, e^{2\pi i / n} z, z + x_2, z + y_2, \ldots, z + x_g, z + y_g).$$

\paragraph{Case $g = 2$.}
Let us first suppose that $g = 2$ and $n\geqslant 3$. 
We consider a parallelogram $P\subset \C$ whose vertices are $z_0$, $z_0 + x_2$, $z_0 + y_2$ and $z_0 + x_2 + y_2$, where $z_0\in \C$ is such that $0$ is in the interior of $P$. Note that it is indeed a parallelogram since $\mathrm{Vol}(\rho) = \det(x_2, y_2) > 0$.
 We are going to add a handle to the torus obtained by gluing its opposite edges. Let $\epsilon > 0$ be such that the disk $D = \{z \in \C \mid  |z| < \epsilon\}$ is included in the interior of $P$. Let us cut open $P$ along the circular segment joining $z_0 \in D$ and $e^{2\pi / n} z_0$, and along the circular segment joining $e^{2\pi / n}z_0$ and $e^{4\pi / n} z_0$. We then glue the boundary components as indicated in \cref{genre2fin}. 
\begin{figure}[h]
    \centering    
    \def\svgwidth{\columnwidth}
  	\def\svgwidth{0.5\textwidth}

\begingroup%
  \makeatletter%
  \providecommand\color[2][]{%
    \errmessage{(Inkscape) Color is used for the text in Inkscape, but the package 'color.sty' is not loaded}%
    \renewcommand\color[2][]{}%
  }%
  \providecommand\transparent[1]{%
    \errmessage{(Inkscape) Transparency is used (non-zero) for the text in Inkscape, but the package 'transparent.sty' is not loaded}%
    \renewcommand\transparent[1]{}%
  }%
  \providecommand\rotatebox[2]{#2}%
  \newcommand*\fsize{\dimexpr\f@size pt\relax}%
  \newcommand*\lineheight[1]{\fontsize{\fsize}{#1\fsize}\selectfont}%
  \ifx\svgwidth\undefined%
    \setlength{\unitlength}{105.72760631bp}%
    \ifx\svgscale\undefined%
      \relax%
    \else%
      \setlength{\unitlength}{\unitlength * \real{\svgscale}}%
    \fi%
  \else%
    \setlength{\unitlength}{\svgwidth}%
  \fi%
  \global\let\svgwidth\undefined%
  \global\let\svgscale\undefined%
  \makeatother%
  \begin{picture}(1,0.43271575)%
    \lineheight{1}%
    \setlength\tabcolsep{0pt}%
    \put(0,0){\includegraphics[width=\unitlength,page=1]{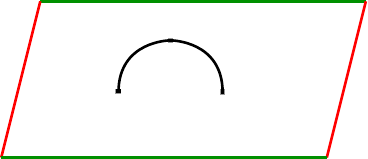}}%
    \put(0.62322967,0.161536){\makebox(0,0)[lt]{\lineheight{1.25}\smash{\begin{tabular}[t]{l}$\epsilon$\end{tabular}}}}%
    \put(0.44293367,0.34184544){\makebox(0,0)[lt]{\lineheight{1.25}\smash{\begin{tabular}[t]{l}$i\epsilon$\end{tabular}}}}%
    \put(0.23046118,0.1759888){\makebox(0,0)[lt]{\lineheight{1.25}\smash{\begin{tabular}[t]{l}$-\epsilon$\end{tabular}}}}%
    \put(0.52697437,0.24197357){\makebox(0,0)[lt]{\lineheight{1.25}\smash{\begin{tabular}[t]{l}$+$\end{tabular}}}}%
    \put(0.58512442,0.27310748){\makebox(0,0)[lt]{\lineheight{1.25}\smash{\begin{tabular}[t]{l}$-$\end{tabular}}}}%
    \put(0.30977307,0.27023896){\makebox(0,0)[lt]{\lineheight{1.25}\smash{\begin{tabular}[t]{l}$+$\end{tabular}}}}%
    \put(0.36611614,0.25204262){\makebox(0,0)[lt]{\lineheight{1.25}\smash{\begin{tabular}[t]{l}$-$\end{tabular}}}}%
  \end{picture}%
\endgroup%

	\caption{Construction for $g=2$ and $n = 4$.}
    \label{genre2fin}
\end{figure}
The topological result of this surgery is to add a handle. Moreover the resulting genus $2$ surface comes naturally with charts in $\C$ and has holonomy $\rho$. Thus $\rho\in \Hol(\mathcal P(2))$ and also $\rho\in \Hol(\mathcal P(1, 1))$ by \cref{explosion}.
If $n = 2$, then we have seen in \cref{exemple} that we cannot have $\rho\in \Hol(\mathcal P(2))$. However we have $\rho\in \Hol(\mathcal P(1, 1))$: it suffices to cut open $P$ along a line segment $l$ and along $-l$. We make sure that these line segments do not intersect and are both in $P$. We then identify the opposites sides of the boundary components as in \cref{genre2finfin} with the map $z\mapsto -z$. Note that this shows that $\mathcal Q(2, 2) \neq \emptyset$.

\begin{figure}[h]
    \centering    
    \def\svgwidth{\columnwidth}
  	\def\svgwidth{0.5\textwidth}

\begingroup%
  \makeatletter%
  \providecommand\color[2][]{%
    \errmessage{(Inkscape) Color is used for the text in Inkscape, but the package 'color.sty' is not loaded}%
    \renewcommand\color[2][]{}%
  }%
  \providecommand\transparent[1]{%
    \errmessage{(Inkscape) Transparency is used (non-zero) for the text in Inkscape, but the package 'transparent.sty' is not loaded}%
    \renewcommand\transparent[1]{}%
  }%
  \providecommand\rotatebox[2]{#2}%
  \newcommand*\fsize{\dimexpr\f@size pt\relax}%
  \newcommand*\lineheight[1]{\fontsize{\fsize}{#1\fsize}\selectfont}%
  \ifx\svgwidth\undefined%
    \setlength{\unitlength}{105.72760377bp}%
    \ifx\svgscale\undefined%
      \relax%
    \else%
      \setlength{\unitlength}{\unitlength * \real{\svgscale}}%
    \fi%
  \else%
    \setlength{\unitlength}{\svgwidth}%
  \fi%
  \global\let\svgwidth\undefined%
  \global\let\svgscale\undefined%
  \makeatother%
  \begin{picture}(1,0.43271575)%
    \lineheight{1}%
    \setlength\tabcolsep{0pt}%
    \put(0,0){\includegraphics[width=\unitlength,page=1]{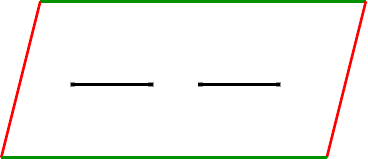}}%
    \put(0.50852171,0.1563949){\makebox(0,0)[lt]{\lineheight{1.25}\smash{\begin{tabular}[t]{l}$x$\end{tabular}}}}%
    \put(0.72965518,0.15822701){\makebox(0,0)[lt]{\lineheight{1.25}\smash{\begin{tabular}[t]{l}$y$\end{tabular}}}}%
    \put(0.39327266,0.15871072){\makebox(0,0)[lt]{\lineheight{1.25}\smash{\begin{tabular}[t]{l}$-x$\end{tabular}}}}%
    \put(0.13827305,0.15693985){\makebox(0,0)[lt]{\lineheight{1.25}\smash{\begin{tabular}[t]{l}$-y$\end{tabular}}}}%
    \put(0.62155368,0.15453612){\makebox(0,0)[lt]{\lineheight{1.25}\smash{\begin{tabular}[t]{l}$+$\end{tabular}}}}%
    \put(0.62862008,0.2139645){\makebox(0,0)[lt]{\lineheight{1.25}\smash{\begin{tabular}[t]{l}$-$\end{tabular}}}}%
    \put(0.28877008,0.21695048){\makebox(0,0)[lt]{\lineheight{1.25}\smash{\begin{tabular}[t]{l}$-$\end{tabular}}}}%
    \put(0.28080782,0.15775882){\makebox(0,0)[lt]{\lineheight{1.25}\smash{\begin{tabular}[t]{l}$+$\end{tabular}}}}%
  \end{picture}%
\endgroup%

	\caption{Construction for $g=2$ and $n = 2$.}
    \label{genre2finfin}
\end{figure}
\begin{rmk}\label{Q(vide)}
Observe that if $g = 1$, then $\rho$ must be spherical by \cref{OOOD}. Indeed, we can suppose that $\rho = (e^{2\pi i /n} z, z)$, therefore we cannot have $\sum_i n_i = 0$. In particular this shows that $\mathcal Q(\emptyset) = \emptyset$.
\end{rmk}
We now suppose that $g\geqslant 3$.
\paragraph{The generic case.}
Let us first deal with the case where the group $\Lambda$ that is generated by the $e^{2\pi ki / n}x_j$ and $e^{2\pi ki / n}y_j$, $k\in \mathbb Z$ is not a lattice. We also suppose that $n\geqslant 3$. It follows from \cref{notAlattice} that we can assume that the group generated by the $x_j$ and $y_j$ is not a lattice. 
We may consider a translation surface structure on $\Sigma_{g-1}$ with holonomy $(z + x_2, \ldots, z + y_g)$ and a single conical point $p\in \Sigma_{g-1}$ by the refined Haupt's theorem. We can translate the charts so that there exist $\epsilon > 0$ and a chart $\varphi : U\to V$, where $U\subset S$ and $V\subset \C$ are open, such that $p\in U$, and $\varphi(p)\in D(0, \epsilon)\setminus \{0\}$ and $D(0, \epsilon)\subset V$. We can then add a handle with holonomy $(e^{2\pi i / n} z, z)$ as in the genus $g=2$ case. We make sure that the new slit starts from the conical point of the surface. The resulting surface has holonomy $\rho$ and a single conical point of order $2g-2$. It follows from \cref{corexplo} that $\rho\in \Hol(\mathcal P(n_1, \ldots, n_k))$.

Let us turn to the case $n = 2$.
If $g\geqslant 4$, there exists $\rho_\infty \in \Aut(\Gamma)\cdot \rho$ such that $\Lambda_\infty  = \{a\in \C\mid z + a\in \rho_\infty(\Gamma)\}$ is a lattice satisfying $\mathrm{Vol}(\rho_\infty) \geqslant (\max_i n_i +1)\mathrm{Area}(\Lambda_\infty)$ by \cref{LatticeClosure}.
We will see in the next paragraph that this implies $\rho_\infty\in \Hol(\mathcal P(n_1, \ldots, n_k))$. Therefore $\rho\in \mathcal \Hol(\mathcal P(n_1, \ldots, n_k))$ by the Ehresmann-Thurston principle \cref{corET}.
We now turn to the case $g = 3$ and $n = 2$. 
Let us recall a result from \cite[Section 4, Proposition 4.2]{Moi}.
\begin{lemma}\label{Moi4} Let $\chi \in \Hom(\Gamma_2, \C)$ with positive volume and image $\chi(\Gamma_2)$ that is not a lattice. There exists $\chi'\in \Aut(\Gamma_2)\times \mathrm{GL}_2^+(\R)\cdot\chi$ such that $\chi'(a_1) = 1$, $\chi'(b_1) = i$ and $|\chi'(a_2)| < 1$.
\end{lemma}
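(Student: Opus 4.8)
The plan is to recast the conclusion as a single open, $\mathrm{GL}_2^+(\R)$-invariant condition on the $\Sp_4(\Z)$-orbit of $\chi$, and then to satisfy that condition by using the density of the orbit. I identify $\Hom(\Gamma_2,\C)$ with $\C^4$ via $\chi\mapsto(x_1,y_1,x_2,y_2)=(\chi(a_1),\chi(b_1),\chi(a_2),\chi(b_2))$, so that $\mathrm{Vol}(\chi)=\Im(\overline{x_1}y_1)+\Im(\overline{x_2}y_2)$, the group $\Aut(\Gamma_2)$ acts as $\Sp_4(\Z)$ (see \cref{rappel}), and $\mathrm{GL}_2^+(\R)$ acts diagonally on the four vectors $x_1,y_1,x_2,y_2\in\C\simeq\R^2$. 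Whenever $\det{}_\R(x_1,y_1)>0$ there is a unique $A\in\mathrm{GL}_2^+(\R)$ sending $(x_1,y_1)$ to $(1,i)$, and writing $x_2=\alpha x_1+\beta y_1$ gives $Ax_2=\alpha+\beta i$; hence, after this normalisation, the requirement $\chi'(a_1)=1$, $\chi'(b_1)=i$, $|\chi'(a_2)|<1$ is exactly the requirement that $x_2$ lie in the open ellipse $E(x_1,y_1)=\{\alpha x_1+\beta y_1:\alpha^2+\beta^2<1\}$. The key observation is that the set
\[\mathcal U=\{(x_1,y_1,x_2,y_2)\in\C^4:\det{}_\R(x_1,y_1)>0,\ x_2\in E(x_1,y_1)\}\]
is open, nonempty (it contains $(1,i,0,1)$), and invariant under the diagonal $\mathrm{GL}_2^+(\R)$-action, since $B\cdot E(x_1,y_1)=E(Bx_1,By_1)$ for $B\in\mathrm{GL}_2^+(\R)$. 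It therefore suffices to produce $\sigma\in\Sp_4(\Z)$ with $\sigma\cdot\chi\in\mathcal U$, and then to apply the unique $A$ normalising the first handle of $\sigma\cdot\chi$.

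First I would reduce to a normalised first handle. Since $\mathrm{Vol}(\chi)>0$, at least one partial volume $\Im(\overline{x_i}y_i)$ is positive; after swapping the two handles by an element of $\Sp_4(\Z)$ if necessary, assume $\det{}_\R(x_1,y_1)>0$ and apply $A$ so that $(x_1,y_1)=(1,i)$. The hypotheses that $\chi(\Gamma_2)$ is not a lattice and that $\mathrm{Vol}(\chi)>0$ (which forces the image to span $\R^2$) together mean that $M=\Z[i]+\Z x_2+\Z y_2$ is dense in $\C$. If the subgroup $\langle x_2,y_2\rangle$ is itself dense, the conclusion is immediate: density provides $(m,n)\neq(0,0)$ with $0<|mx_2+ny_2|<1$, and dividing by $d=\gcd(m,n)$ yields a primitive pair $(m',n')$ with $|m'x_2+n'y_2|\leq|mx_2+ny_2|<1$; realising $(m',n')$ as the first row of a matrix of $\mathrm{SL}_2(\Z)$ acting on the second handle (which leaves the first handle equal to $(1,i)$) gives $\chi'(a_2)=m'x_2+n'y_2$ of modulus $<1$.

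The main obstacle is the remaining case, where $\langle x_2,y_2\rangle$ is \emph{not} dense although $M$ is: then no operation on the second handle alone can shorten $x_2$, and every handle-mixing element of $\Sp_4(\Z)$ necessarily perturbs the first handle. Indeed one checks that the stabiliser in $\Sp_4(\Z)$ of the first handle acts on the second handle only through $\mathrm{SL}_2(\Z)$, so that once the first handle is renormalised the second-handle datum evolves in a genuinely nonlinear way. To overcome this I would invoke Kapovich's description of the closures of $\Aut(\Gamma)\times\mathrm{GL}_2^+(\R)$-orbits on $\Hom(\Gamma,\C)$ from \cite{Kapovich} (see also \cite[Proposition 2.10]{CalsamigliaDeroin} and its use in \cref{LatticeClosure}): for $\chi$ of positive volume with dense image, this orbit is dense in the locus of positive-volume representations. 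As $\mathcal U$ is open, nonempty and $\mathrm{GL}_2^+(\R)$-invariant, the orbit meets $\mathcal U$; by invariance some $\sigma\cdot\chi$ already lies in $\mathcal U$, and normalising its first handle completes the argument.

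The one delicate point is the availability of the cited orbit-closure statement in genus $2$, since the companion \cref{LatticeClosure} is stated only for $g\geq 3$. If one wishes to avoid it, the same conclusion should be reachable by hand: iterate a single handle-mixing transvection, track the induced nonlinear action on the renormalised vector $x_2$, and exploit the density of the image of $\langle x_2,y_2\rangle$ in the torus $\C/\Z[i]$ to drive $|x_2|$ below $1$. I expect this explicit bookkeeping, rather than the final normalisation, to be the most technical part of the proof.
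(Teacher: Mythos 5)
The paper does not prove this lemma at all: it imports it verbatim from \cite[Proposition 4.2]{Moi}, and it does so precisely because the orbit-closure machinery it uses elsewhere (\cref{LatticeClosure}, via \cite{Kapovich} and \cite[Proposition 2.10]{CalsamigliaDeroin}) is only stated for $g\geqslant 3$. Your reformulation of the conclusion as membership of the $\Sp_4(\Z)$-orbit in the open, $\mathrm{GL}_2^+(\R)$-invariant set $\mathcal U$ is correct and clean, and your treatment of the case where $\Z x_2+\Z y_2$ contains a short primitive vector is fine. But the remaining case is where the entire content of the lemma lies, and there the argument has genuine gaps.

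First, your dichotomy is incomplete: the hypotheses do \emph{not} force $M=\Z[i]+\Z x_2+\Z y_2$ to be dense in $\C$. Take $\chi=(1,i,\sqrt2,5+i)$: its volume is $1+\sqrt2>0$, its image equals $(\Z+\Z\sqrt2)\oplus\Z i$, which is not a lattice, yet the closure of the image is $\R\oplus\Z i$, not $\C$; moreover $\Z\sqrt2+\Z(5+i)$ is a lattice all of whose nonzero vectors have modulus greater than $1$, so no operation on the second handle alone succeeds. This example lies in neither of your two cases. Second, and more seriously, in the case you do address you invoke an orbit-closure density statement that (a) is quoted under the hypothesis of \emph{dense} image, which as the example shows need not hold, and (b) is exactly the statement whose availability in genus $2$ is the issue: the paper restricts \cref{LatticeClosure} to $g\geqslant 3$ and imports \cref{Moi4} from \cite{Moi} precisely to replace that missing genus-$2$ input by an explicit computation. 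You flag this yourself, but your proposed repair --- iterating a handle-mixing transvection and tracking the nonlinear action on the renormalised $x_2$ --- is only a statement of intent; it is the hard part of the proof (for the example above one needs, say, the symplectic basis $(a_1,\,b_1+b_2,\,a_2-a_1,\,b_2)$ followed by renormalisation, and one must show such a move always exists). As written, the argument establishes the lemma only when $\Z x_2+\Z y_2$ already contains a nonzero vector of modulus less than $1$.
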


We may assume that our representation has the form $\rho = (-z, z, x_1, y_1, x_2, y_2)$, with the representation $\chi'$ defined by $\chi'(a_i) = x_i$ and $\chi'(b_i) = y_i$ for $i = 1,2$ as in \cref{Moi4}, by \cref{decoupe}.
We consider a parallelogram $P$ in $\C$ whose sides are given by $x_1$ and $y_1$. Let us consider a line segment $\ell = [z_0, z_0 + x_2]$ that is contained in $P$. We may translate both $P$ and $\ell$, so that $\ell$ is so close to $0$ that $-\ell$ is also contained in $P$. We make slits along $\ell$ and $-\ell$, and glue back a handle given by identifying two sides of a parallelogram whose sides are given by $x_2$ and $y_2$, as indicated in \cref{g3n2}.

\begin{figure}[h]
    \centering    
    \def\svgwidth{\columnwidth}
  	\def\svgwidth{0.7\textwidth}

\begingroup%
  \makeatletter%
  \providecommand\color[2][]{%
    \errmessage{(Inkscape) Color is used for the text in Inkscape, but the package 'color.sty' is not loaded}%
    \renewcommand\color[2][]{}%
  }%
  \providecommand\transparent[1]{%
    \errmessage{(Inkscape) Transparency is used (non-zero) for the text in Inkscape, but the package 'transparent.sty' is not loaded}%
    \renewcommand\transparent[1]{}%
  }%
  \providecommand\rotatebox[2]{#2}%
  \newcommand*\fsize{\dimexpr\f@size pt\relax}%
  \newcommand*\lineheight[1]{\fontsize{\fsize}{#1\fsize}\selectfont}%
  \ifx\svgwidth\undefined%
    \setlength{\unitlength}{105.48390069bp}%
    \ifx\svgscale\undefined%
      \relax%
    \else%
      \setlength{\unitlength}{\unitlength * \real{\svgscale}}%
    \fi%
  \else%
    \setlength{\unitlength}{\svgwidth}%
  \fi%
  \global\let\svgwidth\undefined%
  \global\let\svgscale\undefined%
  \makeatother%
  \begin{picture}(1,0.28977816)%
    \lineheight{1}%
    \setlength\tabcolsep{0pt}%
    \put(0,0){\includegraphics[width=\unitlength,page=1]{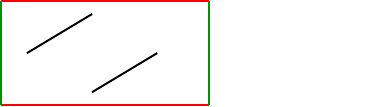}}%
    \put(0.27337219,0.14915323){\makebox(0,0)[lt]{\lineheight{1.25}\smash{\begin{tabular}[t]{l}$0$\end{tabular}}}}%
    \put(0,0){\includegraphics[width=\unitlength,page=2]{g3n2.pdf}}%
    \put(0.33229854,0.04133102){\rotatebox{32.194397}{\makebox(0,0)[lt]{\lineheight{1.25}\smash{\begin{tabular}[t]{l}$++$\end{tabular}}}}}%
    \put(0.86430829,0.0808815){\rotatebox{32.217192}{\makebox(0,0)[lt]{\lineheight{1.25}\smash{\begin{tabular}[t]{l}$++$\end{tabular}}}}}%
    \put(0.76879146,0.19493392){\rotatebox{32.194397}{\makebox(0,0)[lt]{\lineheight{1.25}\smash{\begin{tabular}[t]{l}$--$\end{tabular}}}}}%
    \put(0.15998702,0.15569625){\rotatebox{32.194397}{\makebox(0,0)[lt]{\lineheight{1.25}\smash{\begin{tabular}[t]{l}$--$\end{tabular}}}}}%
    \put(0.12547498,0.18761516){\rotatebox{32.194397}{\makebox(0,0)[lt]{\lineheight{1.25}\smash{\begin{tabular}[t]{l}$==$\end{tabular}}}}}%
    \put(0.30322722,0.08096382){\rotatebox{32.194397}{\makebox(0,0)[lt]{\lineheight{1.25}\smash{\begin{tabular}[t]{l}$==$\end{tabular}}}}}%
  \end{picture}%
\endgroup%

	\caption{Construction for $g=3$ and $n = 2$.}
    \label{g3n2}
\end{figure}
We thus get a genus $3$ surface with holonomy given by 
\[ (z + 1, z + i, z + x, -z + y, z-x, -z).\]
It follows from \cref{lemmeGenre2} that this representation is in $\Aut(\Gamma)\times \mathrm{GL}_2^+(\R)\cdot \rho$. Therefore $\rho\in \Hol(\mathcal P(n_1, \ldots, n_k))$.

\paragraph{The lattice case.}

Let us suppose that $n = 4$ and $\Lambda = L = \mathbb Z[i]$. We consider a rectangle $R$ whose sides are given by $m = \mathrm{Vol}(\rho)$ and $1$. Let us identify the opposite sides of $R$ with translations.
Let us draw $m$ annuli in $R$ obtained by translating one of them by integers, see \cref{lattice4}. We make the slits and gluing in these annuli explained in \cref{sectcycl} for the branched datum $n_1, \ldots, n_k$. Since $\max_i n_i < 4m$, we have room to make the slits. Since $g\geqslant 3$, we can also make sure that the resulting holonomy $\rho_0$ is a surjective homomorphism of $\Hom(\Gamma, \Z_4\rtimes \C)$, with $\Lambda(\rho_0) = \mathbb Z[i]$ and such that $\mathrm{Vol}(\rho) = m$. It follows from \cref{imagelattice} that $\rho\in \Hol(\mathcal P(n_1, \ldots, n_k))$.
\begin{figure}[h]
    \centering    
    \def\svgwidth{\columnwidth}
  	\def\svgwidth{0.5\textwidth}

	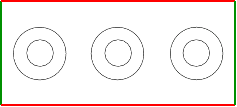
	\caption{Annuli in $R$ for $(n,m)=(4,3)$}
    \label{lattice4}
\end{figure}

If $\Lambda = L = \mathbb Z[\omega]$, that is $n\in \{3, 6\}$, we consider the parallelogram whose sides are given by $m = \mathrm{Vol}(\rho)$ and $\omega = e^{2\pi i /3}$, and draw $m$ annuli obtained by integer translations. We then make slits and gluing in these annuli corresponding to $n_1,\ldots, n_k$.

If $n = 2$, we may assume that $L = \Lambda = \mathbb Z[i]$: we may replace $\rho$ with $A\cdot \rho$, $A\in \mathrm{GL}_2^+(\R)$ if necessary. The construction is then similar to the case $n=4$.

\paragraph{Infinite linear part} Let us suppose that $\rho\in \Hom(\Gamma, \mathrm{Isom}^+(\mathbb E^2))$ is such that $\alpha = \mathrm{Li}\circ \rho$ is infinite. Suppose $1\leqslant n_1\leqslant \ldots\leqslant n_k$ are such that $\sum_i n_i = 2g-2$. Let us show that here, \cref{obstrVol} is the only obstruction to being in $\Hol(\mathcal P(n_1, \ldots, n_k))$.
\begin{prop}
The representation $\rho$ is in $\Hol(\mathcal P(n_1, \ldots, n_k))$ if and only if $\mathrm{Vol}(\rho) > 0$.
\end{prop}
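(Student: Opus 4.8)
The plan is to prove the two directions separately, the forward implication being immediate and the converse resting on the orbit-closure description \cref{orbiteEucl} together with the Ehresmann--Thurston principle \cref{corET}.

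For the \emph{only if} direction, suppose $\rho=\Hol(X)$ for some $X\in\mathcal P(n_1,\dots,n_k)$ with $\sum_i n_i=2g-2$. By \cref{liredegre} the developing map of $X$ then takes its values in $\C$, so $X$ is an affine structure; hence $\mathrm{Vol}(\rho)>0$ by \cref{obstrVol}, which is exactly the positivity established at the beginning of \cref{Euclide}.

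For the converse, assume $\mathrm{Vol}(\rho)>0$. I would first observe that $g\geq 2$: a Euclidean representation with a global fixed point admits a constant equivariant map and thus has zero volume, while in genus $1$ any $\rho\in\Hom(\Gamma_1,\mathrm{Isom}^+(\mathbb E^2))$ with $\mathrm{Li}\circ\rho$ infinite fixes a point of $\C$ (two commuting nontrivial rotations share their center, and a nontrivial rotation commutes only with rotations of the same center), contradicting $\mathrm{Vol}(\rho)>0$. Next I would exhibit a convenient translation representation of the same volume: choosing $\chi\in\Hom(\Gamma,\C)$ with $\chi(a_1)=1$, $\chi(b_1)=i\,\mathrm{Vol}(\rho)$, $\chi(a_2)=\theta$ for a real irrational $\theta$, and $\chi(\gamma)=0$ for the remaining generators, one gets $\mathrm{Vol}(\chi)=\mathrm{Vol}(\rho)>0$ while the image $\chi(\Gamma)=(\Z+\theta\Z)+i\,\mathrm{Vol}(\rho)\,\Z$ is dense in $\R\times\mathrm{Vol}(\rho)\Z$, hence is not a lattice in $\C$. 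Since its image is not a lattice, the refined Haupt's theorem (\cref{I}) applies with the single hypothesis $\mathrm{Vol}(\chi)>0$, giving $\chi\in\Hol(\mathcal P(n_1,\dots,n_k))$.

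It remains to connect $\chi$ to $\rho$. Regarding $\chi$ as a Euclidean representation with trivial linear part, \cref{orbiteEucl} identifies the closure of $\Aut(\Gamma)\times\mathrm{Isom}^+(\mathbb E^2)\cdot\rho$ with the set of all Euclidean representations of volume $\mathrm{Vol}(\rho)$; as $\mathrm{Vol}(\chi)=\mathrm{Vol}(\rho)$, this places $\chi$ in that closure, and a fortiori $\chi\in\overline{\Aut(\Gamma)\times\PSL\C\cdot\rho}$. Applying \cref{corET} to the geometric representation $\chi$ lying in this closure then yields $\rho\in\Hol(\mathcal P(n_1,\dots,n_k))$, as desired. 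The main obstacle I anticipate is purely bookkeeping: making sure the auxiliary $\chi$ genuinely meets the non-lattice hypothesis of the refined Haupt's theorem (so that no area condition is triggered) and carries exactly the right volume with the correct sign; once that is arranged, the two cited results close the argument immediately.
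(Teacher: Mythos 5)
Your proof is correct and follows essentially the same route as the paper: produce a Euclidean representation of the same volume that is already known to lie in $\Hol(\mathcal P(n_1,\ldots,n_k))$, place it in the orbit closure of $\rho$ via \cref{orbiteEucl}, and conclude with the Ehresmann--Thurston principle \cref{corET}. The only (cosmetic) difference is that you build the auxiliary translation representation $\chi$ explicitly and geometrize it with the refined Haupt's theorem, whereas the paper takes an arbitrary translation surface in the stratum and rescales it to the required volume.
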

\begin{proof}
Let us suppose that $\mathrm{Vol}(\rho) > 0$.
There exists $\rho'\in\Hom(\Gamma, \mathrm{Isom}^+(\mathbb E^2))$ such that $\rho'\in \Hol(\mathcal P(n_1, \ldots, n_k))$ and $\mathrm{Vol(\rho')} = \mathrm{Vol(\rho)}$. Indeed we can for example consider a translation surface with branch data $n_1, \ldots, n_k$ and rescale this translation surface so that it has the required volume. We then take $\rho'$ to be its holonomy. 
Since $\rho'$ is in the closure of $\Aut(\Gamma)\times \mathrm{Isom}^+(\mathbb E^2)\cdot \rho$ by \cref{orbiteEucl}, we have $\rho\in \Hol(\mathcal P(n_1, \ldots, n_k))$ by the Ehresmann-Thurston principle \cref{corET}.
\end{proof}

\subsubsection{Strictly affine holonomy}

In this section we geometrize \textit{strictly affine }representations.
By strictly affine, we mean that $\rho\in \Hom(\Gamma, \mathrm{Aff}(\C))$ is such that $\alpha = \mathrm{Li}\circ \rho\in \Hom(\Gamma, \C^\star)$ does not have its image contained in $\mathbb S^1\subset \C^\star$. This is equivalent to saying that $\rho$ is not Euclidean. We show that the only obstruction to being in $\Hol(\mathcal P(n_1, \ldots, n_k))$ is that $\sum_i n_i$ must be even and at least $2g-2$.

\begin{prop}\label{strictaffine}
Let $\rho$ be a strictly affine representation. We have $\rho \in \Hol(\mathcal P(n_1, \ldots, n_k))$ if $\sum_i n_i$ is even and at least $2g-2$.
\end{prop}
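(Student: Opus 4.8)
The conditions that $\sum_i n_i$ be even and at least $2g-2$ are exactly the ones forced by \cref{obstrSW} (recall that every affine representation lifts to $\SL\C$) and by \cref{liredegre}, so only their sufficiency needs proof. The plan is to invoke the Ehresmann--Thurston principle (\cref{corET}): it suffices to produce, for the given data, a single representative $\rho_\infty \in \Hol(\mathcal P(n_1,\ldots,n_k))$ lying in $\overline{\Aut(\Gamma)\times \mathrm{Aff}(\C)\cdot \rho}$. Writing $\sum_i n_i = 2g-2+2m$ with $m\geq 0$ (possible precisely because $2g-2$ is even), I would further reduce the combinatorics: by \cref{corexplo} it is enough to realize $\rho_\infty$ with a single branch point of order $2g-2+2m$, and by \cref{bubbling}, applied to a based loop that develops injectively, such a structure is obtained from one with a single branch point of the minimal order $2g-2$. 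Thus the heart of the matter is to build an \emph{affine structure} of genus $g$ with one conical point of order $2g-2$ whose holonomy is a convenient representative $\rho_\infty$ in the orbit closure of $\rho$.

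To pin down $\rho_\infty$, first I would exploit that $\rho$ is strictly affine: its linear part $\alpha = \mathrm{Li}\circ \rho$ takes some value of modulus $\neq 1$, hence has infinite image, and not all generators can map into $\mathbb S^1$. Acting by $\Aut(\Gamma)$ on the abelian-valued $\alpha$ through $\Sp_{2g}(\Z)$ (as in \cref{ModCycl}) and conjugating by $\mathrm{Aff}(\C)$, I would arrange that one handle carries a pure dilation $z\mapsto \lambda z$ with $|\lambda|\neq 1$ fixing $0$. Passing to the orbit closure then lets me crush the translation parts of the remaining handles by conjugating with powers of this dilation, exactly in the spirit of \cref{OOOD} and \cref{ecrasement}; this yields a simple strictly affine $\rho_\infty$ with a distinguished dilation handle.

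For the construction I would proceed by a polygon model paralleling \cref{trivialLin} and \cref{degree_at_least_2g} but gluing by genuine affine maps. The base case $g=1$ is the dilation torus $\C^\star/\langle z\mapsto \lambda z\rangle$, developed by $\exp$, which is smooth with holonomy $(z,\lambda z)$ and has no conical point, consistent with $2g-2=0$. For higher genus I would identify the sides of a $4g$-gon in $\C$ by the affine maps $\rho_\infty(a_i),\rho_\infty(b_i)$, all vertices being glued to a single cone point; the Gauss--Bonnet relation \cref{GBplat} forces its order to be $2g-2$. The essential advantage of the strictly affine setting is that a dilation, unlike a translation, matches two segments of different lengths, giving enough freedom to close up the polygon. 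One side then provides a based loop developing injectively, so \cref{bubbling} raises the single cone order to any $2g-2+2m$, \cref{corexplo} distributes it as $n_1,\ldots,n_k$, and \cref{corET} finishes.

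The hard part will be the bookkeeping that ties the two halves together: ensuring that the side-pairing affine maps of the polygon (or the arcs used in successive handle attachments) have their linear parts and fixed points arranged so that the resulting holonomy is exactly the chosen strictly affine $\rho_\infty$, while simultaneously keeping the developing map injective on one based loop for the bubbling step and controlling the cone structure through \cref{GBplat}. The hypothesis $\sum_i n_i\geq 2g-2$ is what guarantees the angular room for these slits, and the strict inequality $|\lambda|\neq1$ is what makes the affine side-pairings realizable; reconciling the abstract normal-form reduction with a genuinely embedded polygon is the step I expect to require the most care.
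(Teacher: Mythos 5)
Your high-level reductions are the same as the paper's: reduce via \cref{corET} to a representative in the orbit closure, via \cref{corexplo} and \cref{bubbling} to a single branch point of order $2g-2$, and observe (as in \cref{lineaire}) that crushing translation parts puts the linear part $\alpha=\mathrm{Li}\circ\rho$ in the closure of the conjugation orbit, so that it suffices to geometrize $\alpha\in\Hom(\Gamma,\C^\star)$. The gap is in the core construction. After your reduction, the side-pairing maps of your proposed $4g$-gon are the pure linear maps $z\mapsto\alpha(\gamma)z$, and nothing prevents many of these from being the identity (e.g.\ after normalizing $\alpha$ by $\Sp_{2g}(\Z)$ one typically has $\alpha(\gamma)=1$ on most generators); you cannot identify two distinct sides of an embedded polygon by the identity, and more generally the existence of a straight-sided $4g$-gon closed up by prescribed dilations all fixing $0$ is exactly the nontrivial point, not a routine consequence of ``a dilation matches segments of different lengths.'' Your appeal to \cref{GBplat} is also not directly licit here, since that lemma concerns Euclidean cone structures and a dilation surface carries no such metric.

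The idea you are missing is the logarithm/exponential trick. Write $\alpha=\exp(\chi)$ for some $\chi\in\Hom(\Gamma,\C)$; the branch ambiguity $\chi(\gamma)\mapsto\chi(\gamma)+2\pi i k$ gives exactly the freedom your polygon lacks. Using it one can force $\det(\chi(a_1),\chi(b_1))\neq 0$ and then make $\mathrm{Vol}(\chi)$ so large that the refined Haupt conditions hold, so $\chi$ is the holonomy of a translation surface with a single cone point of order $2g-2$ (and, by the constructions of \cite{Moi}, with a based simple closed curve developing injectively onto a line not parallel to the imaginary axis). Composing its developing map with $\exp$ produces the desired affine structure with holonomy $\alpha$, and the image of that curve still develops injectively, which is what makes your bubbling step legitimate. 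Without this detour through translation surfaces, the ``hard part'' you defer is not bookkeeping but the actual content of the proposition.
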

By \cref{lineaire} and the Ehresmann-Thurston principle \cref{corET}, it suffices to show that $\mathrm{Li}\circ\rho\in \Hol(\mathcal P(n_1, \ldots, n_k))$.

\begin{defn}
The exponential of a representation $\chi\in \Hom(\Gamma, \C)$ is the representation defined by $\exp(\chi) : \gamma\in \Gamma\mapsto  \exp(\chi(\gamma))z \in \mathrm{Aff}(\C)$.
\end{defn}

\begin{rmk}
Suppose $\chi\in \Hom(\Gamma, \C)$ is the holonomy of a translation surface structure with developing map $f : \tilde \Sigma\to \C$. The map $\exp \circ f$ is $\exp(\chi)$-equivariant.
\end{rmk}

\begin{lemma}
There exists a representation $\chi\in \Hom(\Gamma, \C)$ such that: 
\begin{enumerate}
\item $\exp(\chi) = \mathrm{Li}\circ \rho$
\item $\chi$ is the holonomy of a translation surface with a single conical point.
\end{enumerate}
\end{lemma}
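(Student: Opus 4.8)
The plan is to produce $\chi$ by lifting $\alpha=\mathrm{Li}\circ\rho$ through the exponential and then adjusting the branches of the logarithm. Since $\exp\colon\C\to\C^\star$ is surjective and $\C$ is abelian, choosing for each generator a logarithm of its $\alpha$-image defines a homomorphism $\chi_0\in\Hom(\Gamma,\C)$ with $\exp(\chi_0)=\alpha$ (the relation $\prod_i[a_i,b_i]=\id$ is automatic in $\C$). Every other lift is $\chi=\chi_0+2\pi i\,\psi$ with $\psi\in\Hom(\Gamma,\Z)\simeq\Z^{2g}$, so the lifts form a $\Z^{2g}$-torsor. By \cref{liredegre} a lift $\chi$ is the holonomy of a translation surface with a single conical point (necessarily of order $2g-2$) precisely when it satisfies the two conditions of the refined Haupt's theorem \cref{I}, so it suffices to find $\psi$ realizing them. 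Moreover, since $\Hol(\mathcal P(2g-2))$ is $\Aut(\Gamma)$-invariant and being a translation-surface holonomy with one conical point is preserved by $\Aut(\Gamma)$, I may replace $\alpha$ by $\sigma\cdot\alpha$ for any $\sigma\in\Aut(\Gamma)$ acting as $\Sp_{2g}(\Z)$; once $\chi$ is found for $\sigma\cdot\alpha$, the representation $\sigma^{-1}\cdot\chi$ works for $\alpha$.

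First I would treat the volume condition. Writing $r_i=\log|\alpha(a_i)|$ and $s_i=\log|\alpha(b_i)|$ for the (lift-independent) real parts of the logarithms, a direct computation with the alternating area form gives
\[
\mathrm{Vol}(\chi)=\mathrm{Vol}(\chi_0)+2\pi\sum_{i=1}^{g}\bigl(r_i\,\psi(b_i)-s_i\,\psi(a_i)\bigr).
\]
Because $\rho$ is strictly affine, $\alpha$ does not take values in $\mathbb S^1$, i.e.\ the vector $(r_1,s_1,\dots,r_g,s_g)\in\R^{2g}$ is nonzero; hence the linear functional $\psi\mapsto\sum_i\bigl(r_i\,\psi(b_i)-s_i\,\psi(a_i)\bigr)$ on $\Z^{2g}$ is not identically zero. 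Consequently $\mathrm{Vol}(\chi)$ can be made an arbitrarily large positive number, so condition (1) of \cref{I} is easily arranged. The subtlety is that the very shifts that raise the volume also move the period group $\Lambda(\chi)=\chi(\Gamma)$, so condition (2) must be controlled at the same time.

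The heart of the matter is condition (2): if $\Lambda(\chi)$ is a lattice one needs $\mathrm{Vol}(\chi)\geq(2g-1)\,\mathrm{Area}(\Lambda(\chi))$. I would split into two cases. If some lift makes $\Lambda(\chi)$ fail to be a lattice (for instance when, after normalizing with $\Sp_{2g}(\Z)$ as in \cref{notAlattice}, one of the periods falls outside the $\mathbb Q$-span of a fixed independent pair), then condition (2) is vacuous and the volume adjustment above finishes the argument. The remaining, genuinely delicate, case is when every lift forces $\Lambda(\chi)$ to be a lattice; this occurs exactly when $2\pi i$ together with all the logarithms $\chi_0(a_i),\chi_0(b_i)$ lie in one two-dimensional $\mathbb Q$-subspace of $\C$. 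There the idea is to raise the volume without enlarging the covolume: choose $a_1$ with $r_1=\log|\alpha(a_1)|\neq0$, fix an integer $M$ with $2\pi i M\in\Lambda(\chi_0)$ (possible since $2\pi i$ is then a $\mathbb Q$-combination of periods), arrange by $\Sp_{2g}(\Z)$ that $2\pi i M$ already lies in the group generated by the generators other than $b_1$, and shift $\chi(b_1)$ by $2\pi i\,MN$. Such a shift keeps $\Lambda(\chi)$ equal to the fixed lattice $\Lambda(\chi_0)$, so $\mathrm{Area}(\Lambda(\chi))$ is unchanged while $\mathrm{Vol}(\chi)=\mathrm{Vol}(\chi_0)+2\pi r_1 MN\to+\infty$; hence the degree $\mathrm{Vol}(\chi)/\mathrm{Area}(\Lambda(\chi))$, an integer by \cref{imagelattice}, eventually exceeds $2g-1$.

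I expect this decoupling of volume growth from covolume growth — keeping precise track of which lattice is generated after a shift — to be the main obstacle, and the bookkeeping is cleanest if one first uses \cref{imagelattice} and \cref{LatticeClosure} to reduce the forced-lattice case to a standard lattice such as $\Z[i]$ or $\Z[\omega]$. The genus one case is degenerate: $2g-2=0$ asks for no branch point, condition (2) only demands degree $\geq 1$, and one merely chooses the branch of $\log\alpha(b_1)$ so that $\chi(a_1),\chi(b_1)$ are $\R$-independent with positive volume, which is possible since $r_1\neq0$. Once such a $\chi$ is produced, post-composing its developing map with $\exp$ yields the required branched structure for $\mathrm{Li}\circ\rho$ with a single branch point of order $2g-2$, and together with \cref{corET} this proves $\mathrm{Li}\circ\rho\in\Hol(\mathcal P(2g-2))$, completing \cref{strictaffine}.
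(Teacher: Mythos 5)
Your overall strategy (lift $\alpha=\mathrm{Li}\circ\rho$ through $\exp$, note that the lifts form a torsor under $2\pi i\,\Hom(\Gamma,\Z)$, and tune the lift until both conditions of the refined Haupt's theorem hold) is the same as the paper's, and your volume-variation formula is correct. But the way you handle Haupt's second condition has genuine gaps. The missing idea is a \emph{uniform} bound that makes any case analysis on $\Lambda(\chi)$ unnecessary: if $\chi(a_1)$ and $\chi(b_1)$ are $\R$-independent, then any lattice containing them has area at most $|\det(\chi(a_1),\chi(b_1))|$. The paper first uses $\Aut(\Gamma)$ to make \emph{two} handles non-unitary ($\log|\alpha(a_1)|\neq 0$ and $\log|\alpha(a_2)|\neq 0$ — this is where strict affineness enters, since $\log|\alpha|$ is a nontrivial real character and $\Sp_{2g}(\Z)$ can make two coordinates nonzero), then shifts $\chi(b_1)$ by $2\pi i k$ to force $\det(\chi(a_1),\chi(b_1))\neq 0$, and finally shifts only $\chi(b_2)$ to push $\mathrm{Vol}(\chi)$ above $2g\,|\det(\chi(a_1),\chi(b_1))|$. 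The last shift does not touch $\chi(a_1),\chi(b_1)$, so the area bound is preserved and Haupt's inequality holds no matter what $\Lambda(\chi)$ turns out to be.

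By contrast, your branches do not close up as written. In the ``some lift is not a lattice'' branch, the subsequent volume-raising shift changes the period group, so it may well reinstate a lattice; positivity of the volume and non-latticeness are not achieved by the same lift without further argument. In the ``forced lattice'' branch, the characterization via a two-dimensional $\mathbb Q$-subspace containing $2\pi i$ is asserted rather than proved, and the key step — arranging by $\Sp_{2g}(\Z)$ that $2\pi i M$ lies in the subgroup generated by the periods \emph{other than} $\chi(b_1)$, simultaneously with $\log|\alpha(a_1)|\neq 0$ — is nontrivial: the normal form of \cref{imagelattice} puts the second lattice generator precisely on $b_1$, so the remaining periods generate only a rank-one subgroup, and you would need a different (and justified) normalization. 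Each of these could probably be repaired, but the resulting argument is substantially longer than the two-line decoupling above.
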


\begin{proof}
These properties are invariant by the action of $\Aut(\Gamma)$. We can thus modify $\rho$ so that both $\alpha(a_1)$ and $\alpha(a_2)$ are not in $\mathbb S^1$, where $\alpha = \mathrm{Li}\circ \rho$.
Let us first choose $\chi(\gamma)\in \C$ such that $\alpha(\gamma) = \exp(\chi(\gamma))$ for each $\gamma\in \{a_1,\ldots,  b_g\}$. We thus have $\exp(\chi) = \alpha$. We may now replace $\chi(b_1)$ with $\chi(b_1) + 2\pi i k$ where $k\in \mathbb Z$ so that $\det(\chi(a_1), \chi(b_1))\neq 0$. We can then replace $\chi(b_2)$ with $\chi(b_2) + 2\pi i k$ so that $\mathrm{Vol}(\chi) \geqslant 2g |\det (\chi(a_1), \chi(b_1))|$. Therefore if the group $\Lambda$ generated by the $\chi(\gamma)$ for $\gamma\in \{a_1, \ldots, b_g\}$ is a lattice, its area is bounded above by $|\det(\chi(a_1), \chi(b_1))|$, and $\mathrm{Vol}(\chi) \geqslant 2g\mathrm{Area}(\Lambda)$. It follows from the refined Haupt's theorem that $\chi$ is the holonomy of a translation surface with a single conical point of angle $2\pi(2g-1)$.
\end{proof}

The explicit constructions made in \cite{Moi} show that if $\chi$ is the holonomy of a translation surface with a single conical point, then $\chi$ is also the holonomy of another translation surface with a single conical point such that there exists a simple closed curve $c$ that develops injectively on a line that is not parallel to the imaginary axis. The exponential of the associated developing map gives a translation surface with a single conical point and holonomy $\alpha = \exp(\chi)$. The closed curve $c$ based on the conical point develops injectively. Therefore by \cref{bubbling}, we have $\alpha\in \Hol(\mathcal P(2d))$ for each $d\geqslant g-1$, and thus $\alpha\in \Hol(\mathcal P(n_1, \ldots, n_k))$ if $\sum_i n_i$ is even and at least $2g-2$ by \cref{corexplo}.
\subsection{Dihedral holonomy}

In this section, we suppose that $\rho \in \Hom(\Gamma, \PSL \C)$ is \textit{dihedral} but not affine, that is its image $\rho(\Gamma)$ fixes globally a pair of points in $\CP$, but not pointwise. After conjugating $\rho$, we may assume that $\rho(\Gamma)$ is included in the group $\{z\mapsto c z^\epsilon \mid c\in \mathbb C, \epsilon = \pm 1\}$. We require that $\rho$ is not conjugated into $\SO$ nor in $\mathrm{Aff}(\C)$, that is there exist $\gamma$ and  $\delta$ in $\Gamma$ such that $\rho(\gamma) = c_\gamma z$ with $|c_\gamma|\neq 1$ and $\rho(\delta) = c_\delta z^{\epsilon(\delta)}$ with $\epsilon(\delta) = -1$.

Observe that $\epsilon : \Gamma\to \{\pm 1\}$ is a group homomorphism. Thus we may assume that it satisfies $\epsilon(a_1) = -1$ and $\epsilon(\gamma) = 1$ for $\gamma\in \{b_1, a_2, \ldots b_g\}$ by \cref{ModCycl}. We may conjugate $\rho$ with an isometry of the form $\begin{pmatrix}
\lambda & 0\\
0 & \lambda^{-1}
\end{pmatrix}$ so  that $c_\gamma = 1$ for $\gamma = a_1$. Since $\rho([a_i, b_i]) = \id$ for $i\geqslant 2$, we must have $\rho([a_1, b_1]) = \id$. Therefore $c_\gamma\in \{1, -1\}$ for $\gamma = b_1$.
It follows from \cref{sw1} that $\rho$ lifts to $\SL \C$ if and only if $c_\gamma = 1$, where $\gamma = b_1$.
Note that $\rho(\Gamma)$ is not abelian. Therefore we assume that $g\geqslant 2$ in this section.

\subsubsection{Even Stiefel Whitney class}

We first assume that $\rho$ lifts to $\SL \C$: $c_\gamma = 1$ for $\gamma = b_1$.
We may extend the definition of the exponential of a representation to the representations of the form $\rho' : \gamma \mapsto \epsilon(\gamma)z + c_\gamma$, where $\epsilon\in \Hom(\Gamma, \{\pm 1\})$.

\begin{defn}
\begin{enumerate}
\item The exponential of $\rho'$ is the representation $$\exp(\rho') : \gamma\mapsto \exp(c_\gamma)z^{\epsilon(\gamma)}.$$
\item The exponential of a half-translation surface $Y$ with developing map $f$ is the projective structure $X = \exp(Y)$ with developing map $\exp\circ f$.
\end{enumerate}
\end{defn}

\begin{rmk}
We have $\Hol\circ \exp = \exp \circ \Hol$.
\end{rmk}

\begin{prop}
Let $X\in \mathcal P(n_1, \ldots, n_k)$ with $\sum_i n_i = 2g-2$ and holonomy $\rho$. There exists a half-translation surface structure $Y$ on $\Sigma$ such that $X = \exp(Y)$.
\end{prop}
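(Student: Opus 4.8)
The strategy is to take a fiberwise logarithm of the developing map of $X$; this makes sense precisely because, in the minimal degree case $\sum_i n_i = 2g-2$, the image of the developing map avoids the two points fixed by the dihedral group.

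First I would show that the developing map $f \colon \tilde \Sigma \to \CP$ of $X$ takes its values in $\C^\star = \CP \setminus \{0, \infty\}$. This is the main point. Consider the surjective homomorphism $\epsilon \colon \Gamma \to \{\pm 1\}$ and the connected double cover $p \colon \hat \Sigma \to \Sigma$ associated with $\ker \epsilon$; by Riemann--Hurwitz its genus is $\hat g = 2g-1$. Since $p$ is unramified, the pullback $\hat X = p^\star X$ lies in $\mathcal P(n_1, n_1, \ldots, n_k, n_k)$, so the sum of its branch orders is $2\sum_i n_i = 4g-4 = 2\hat g - 2$. The restriction $\hat \rho = \rho_{|\ker \epsilon}$ takes its values in $\{z \mapsto c z\} \subset \mathrm{Aff}(\C)$, hence is affine, and the developing map of $\hat X$ is again $f$ since the universal covers coincide. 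Applying \cref{liredegre} to $\hat X$, the equality in the degree bound forces $f(\tilde \Sigma) \subset \C$, that is $f$ avoids $\infty$. Conjugating $\hat X$ by the involution $z \mapsto 1/z$, which preserves both the affine form of $\hat \rho$ and the branch data, and applying \cref{liredegre} once more shows that $1/f$ avoids $\infty$, that is $f$ avoids $0$. Therefore $f \colon \tilde \Sigma \to \C^\star$.

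Second, since $\tilde \Sigma$ is simply connected and $\exp \colon \C \to \C^\star$ is a holomorphic covering map, $f$ lifts to a holomorphic map $\tilde f \colon \tilde \Sigma \to \C$ with $\exp \circ \tilde f = f$, unique up to an additive constant in $2\pi i \Z$. I would then check that $\tilde f$ is equivariant for a representation into $\{\pm 1\} \rtimes \C$ lifting $\rho$. Writing $\rho(\gamma) \colon w \mapsto c_\gamma w^{\epsilon(\gamma)}$ and $c_\gamma = \exp(b_\gamma)$, the equality $\exp(\tilde f(\gamma x)) = \exp(b_\gamma + \epsilon(\gamma) \tilde f(x))$ yields $\tilde f(\gamma x) = \epsilon(\gamma) \tilde f(x) + b_\gamma + 2\pi i\, m(\gamma, x)$, where the integer $m(\gamma, x)$ is locally constant in $x$, hence constant. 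Setting $\rho'(\gamma) \colon z \mapsto \epsilon(\gamma) z + (b_\gamma + 2\pi i\, m_\gamma)$ thus makes $\tilde f$ $\rho'$-equivariant. Because $\tilde f$ is an open map, the affine maps $\rho'(\gamma_1 \gamma_2)$ and $\rho'(\gamma_1) \rho'(\gamma_2)$ agree on a nonempty open set, hence everywhere, so $\rho' \in \Hom(\Gamma, \{\pm 1\} \rtimes \C)$ is a genuine homomorphism, and $\exp(\rho') = \rho$ by construction.

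Finally, $\tilde f$ is a local biholomorphism away from the preimages of the branch points of $X$, where it has the same local model $z \mapsto z^{n_i+1}$ as $f$ since $\exp$ is a local biholomorphism on $\C^\star$. Hence $\tilde f$ is the developing map of an affine structure $Y$ with holonomy $\rho'$, whose cone angles $2\pi(n_i+1)$ are even multiples of $\pi$; thus $Y$ is an even half-translation surface in $\mathcal P(n_1, \ldots, n_k)$ satisfying $X = \exp(Y)$. The only delicate step is the first one, the vanishing of $f$ at $0$ and $\infty$ ruled out via the double cover and \cref{liredegre}; everything after it is the routine lifting of a map through the covering $\exp \colon \C \to \C^\star$, available here since $\tilde\Sigma$ is simply connected.
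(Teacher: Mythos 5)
Your proof is correct and follows essentially the same route as the paper: pull back to the double cover associated with $\ker\epsilon$, use the degree equality in \cref{liredegre} to see that the developing map avoids $\infty$, then lift through $\exp\colon\C\to\C^\star$ and verify equivariance by openness of the developing map. The only (equally valid) variation is how you exclude $0$ from the image: you conjugate the pulled-back affine structure by $z\mapsto 1/z$ and reapply \cref{liredegre}, whereas the paper simply notes that some $z\mapsto\lambda z^{-1}$ lies in $\rho(\Gamma)$, so equivariance and the avoidance of $\infty$ already force the avoidance of $0$.
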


\begin{proof}
Let us consider a developing map $f : \tilde{\Sigma}\to \C$ associated with $X$. We consider $S$ the double-cover of $\Sigma$ associated with $\ker \epsilon\triangleleft \Gamma$. The developing map induces an affine structure on $S$ with a branching divisor of degree $-2\chi(\Sigma) = -\chi(S)$. Therefore the image of $f$ does not contain $\infty\in \CP$ by \cref{liredegre}. There exists $\lambda\in \C^\star$ such that $z\mapsto \lambda z^{-1}$ is in $\rho(\Gamma)$, thus $f(\tilde{\Sigma})$ does not contain $0$ either.
Now let $\tilde f : \Sigma\to \C$ be the lift of $f : \tilde{\Sigma}\to \C^\star$ to the universal cover $\C$ of $\C^\star$. Recall that $\exp : \C\to \C^\star$ is the associated covering map. 
For every $\gamma\in \Gamma$, and for every $z\in \C$, we have $\exp(\tilde f(\gamma\cdot z)) = f(\gamma\cdot z) = c_\gamma f(z)^{\epsilon(\gamma)}$, where $\rho(\gamma) : z \mapsto c_\gamma z^{\epsilon(\gamma)}$. Therefore, $\tilde f(\gamma\cdot z) = \epsilon(\gamma) \tilde f(z) + l_\gamma$ where $\exp(l_\gamma) = c_\gamma$. By continuity, $l_\gamma$ does not depend on $z$. We thus define a map $\rho' : \Gamma\to \mathrm{Isom}^+(\mathbb E^2)$ by $\rho'(\gamma) = \epsilon(\gamma) z + l_\gamma$. It follows from the $\rho'$-equivariance of $\tilde f$ that $\rho'$ is a group homomorphism. Indeed the relation $$\tilde f(\gamma_1\gamma_2 \cdot z) = \rho'(\gamma_1\gamma_2)\cdot\tilde f(z) = \rho'(\gamma_1)\rho'(\gamma_2)\cdot\tilde f(z)$$ gives $\rho'(\gamma_1\gamma_2) = \rho'(\gamma_1)\rho'(\gamma_2)$ since $\tilde f$ is open and any element of $\PSL\C$ is determined by its action on $3$ points in $\CP$. The half-translation surface structure $Y$ with developing map $\tilde f$ satisfies $\exp(Y) = X$. 
\end{proof}

We now explain \cref{obstrGenre2}. Let us consider a projective structure $X$ on $\Sigma_2$ with branching divisor of degree $2$ and with dihedral holonomy that is not affine.
There exists a half-translation structure $Y$ on $\Sigma_2$ such that $X = \exp(Y)$. We have seen in \cref{exemple} that it is not possible for $Y$ to have a single conical point, thus it must have two conical points of angle $4\pi$ each and so must $X$.

Let us see that this obstruction only occurs when $(g,k)=(2,1)$.
\begin{prop}\label{justavan}
Suppose $1\leqslant n_1 \leqslant \ldots \leqslant n_k$ are such that $\sum_i n_i = 2g-2$. The representation $\rho$ is in $\Hol(\mathcal P(n_1, \ldots, n_k))$ if and only if $(g,k) \neq (2,1)$. 
\end{prop}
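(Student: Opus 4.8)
The plan is to run the correspondence $X=\exp(Y)$ between projective structures with even dihedral holonomy and even half-translation surfaces in both directions, reducing the statement to the realizability of an order-$2$ Euclidean representation, which is governed by \cref{mineucl}. Throughout I use the normalization preceding this section: since $\rho$ lifts to $\SL\C$, \cref{sw1} gives $\rho(a_1)=z^{-1}$, $\rho(b_1)=z$, and $\rho(\gamma)=c_\gamma z$ for $\gamma\in\{a_2,\dots,b_g\}$, with $|c_\gamma|\neq 1$ for some $\gamma$ because $\rho$ is not spherical.

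For the \emph{only if} direction, the excluded case $(g,k)=(2,1)$ is exactly $\sum_i n_i=2=2g-2$ with a single branch point of order $2$. The proposition proved just above shows that any such $X$ would be the exponential of an even half-translation surface $Y$ on $\Sigma_2$ with a single conical point; but \cref{exemple} rules this out, since a genuine half-translation surface of genus $2$ with one conical point does not exist ($\mathcal Q(4)=\emptyset$). Hence $\rho\notin\Hol(\mathcal P(2))$.

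For the \emph{if} direction, assume $(g,k)\neq(2,1)$. I first build a Euclidean lift $\rho'\in\Hom(\Gamma,\{\pm1\}\rtimes\C)$ with $\exp(\rho')=\rho$ by setting $\rho'(a_1)=-z$, $\rho'(b_1)=z$, and $\rho'(\gamma)=z+l_\gamma$ with $\exp(l_\gamma)=c_\gamma$. This $\rho'$ is a homomorphism: the commutator $[\rho'(a_1),\rho'(b_1)]$ is the identity because $\rho'(b_1)=z$, and the remaining handles are commuting translations; by the definition of the exponential one then checks $\exp(\rho')=\rho$. Since $\exp\colon\C\to\C^\star$ is a local biholomorphism it preserves branch orders, and the relation $\Hol\circ\exp=\exp\circ\Hol$ shows that it suffices to realize $\rho'$ as the holonomy of an even half-translation surface $Y\in\mathcal P(n_1,\dots,n_k)$; then $X=\exp(Y)\in\mathcal P(n_1,\dots,n_k)$ has holonomy $\rho$.

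It remains to choose the lifts so that \cref{mineucl} applies to $\rho'$ with $n=2$, i.e.\ so that $\mathrm{Vol}(\rho')>0$ and, when $\Lambda=\Lambda(\rho')$ is a lattice, the Haupt inequality $2\,\mathrm{Vol}(\rho')\geqslant(\max_i n_i+1)\,\mathrm{Area}(\Lambda)$ holds. The available freedom is the shifts $l_\gamma\mapsto l_\gamma+2\pi i m$, $m\in\Z$, which preserve $\exp(\rho')=\rho$, exactly as in the proof of \cref{strictaffine}. The hypothesis $(g,k)\neq(2,1)$ enters through the low-genus bookkeeping, which I expect to be the main obstacle. For $g\geqslant 3$ there is ample room: shifting $l_{b_2}$ makes $\mathrm{Vol}(\rho')>0$, and by \cref{notAlattice} I may assume $\Lambda$ is not a lattice, so the inequality is vacuous and \cref{mineucl} applies. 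For $g=2$ one has $\mathrm{Vol}(\rho')=\det(l_{a_2},l_{b_2})$ and $\Lambda=\langle l_{a_2},l_{b_2}\rangle$, whence $\mathrm{Area}(\Lambda)=\mathrm{Vol}(\rho')$ for \emph{every} admissible choice of lifts; when $k=2$ the branch data is $(1,1)$, so the Haupt inequality reads $\mathrm{Vol}(\rho')\geqslant\mathrm{Area}(\Lambda)$ and holds with equality, and after shifting $l_{b_2}$ to make the volume positive \cref{mineucl} yields $Y$, whereas when $k=1$ the branch datum is $(2)$ and the required $2\,\mathrm{Vol}(\rho')\geqslant 3\,\mathrm{Area}(\Lambda)$ can never hold, which is precisely the excluded case. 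The delicate point is thus that in genus $2$ the shifts by $2\pi i\Z$ are the only freedom and move $\mathrm{Vol}(\rho')$ and $\mathrm{Area}(\Lambda)$ in lockstep, so the single obstruction $\mathcal Q(4)=\emptyset$ survives exactly at $(g,k)=(2,1)$.
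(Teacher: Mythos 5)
Your overall strategy is the paper's: reduce to an order-$2$ Euclidean lift $\rho'$ with $\exp(\rho')=\rho$, geometrize $\rho'$ by \cref{mineucl}, and exponentiate; and your treatment of the obstruction at $(g,k)=(2,1)$ and of the case $g=2$, $k=2$ (where $2\,\mathrm{Vol}(\rho')=2\,\mathrm{Area}(\Lambda)$ makes the Haupt inequality hold with equality for branch data $(1,1)$ and fail for $(2)$) is correct and essentially identical to the paper's.

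There is, however, a genuine gap in your case $g\geqslant 3$. You invoke \cref{notAlattice} to ``assume $\Lambda$ is not a lattice,'' but that lemma does not say this: it is proved under the standing hypothesis that $\Lambda$ (the group generated by the $\xi x_i,\xi y_i$ over all $n$-th roots of unity $\xi$) is \emph{already} not a lattice, and its conclusion only concerns the subgroup generated by the $x_i,y_i$ alone. For $n=2$ that subgroup coincides with $\Lambda$, so the lemma is vacuous here; and whether $\Lambda(\rho')$ is a lattice is invariant under $\Aut(\Gamma)$, so no automorphism can destroy it. It cannot even be destroyed by the remaining freedom in the lifts: if, say, every $c_\gamma$ lies in $\exp(2\pi\Z+2\pi i\Z)$, then every admissible choice of the $l_\gamma$ produces $\Lambda\subset 2\pi\Z+2\pi i\Z$, which is a lattice whenever the volume is positive. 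So your argument simply does not address the Haupt inequality when $\Lambda$ is a lattice. The repair is the one the paper uses (and the one already used in the lemma inside the proof of \cref{strictaffine}): after arranging $|c_{a_2}|\neq 1$ and $|c_{a_3}|\neq 1$ (possible since $\gamma\mapsto\log|c_\gamma|$ is a nontrivial homomorphism on $\Gamma_{g-1}$), first shift $l_{b_2}$ by $2\pi i\Z$ so that $\det(l_{a_2},l_{b_2})\neq 0$, then shift $l_{b_3}$ by $2\pi i\Z$ to make $\mathrm{Vol}(\rho')$ positive and arbitrarily large; since any lattice $\Lambda$ contains $\langle l_{a_2},l_{b_2}\rangle$, its area is bounded above by $|\det(l_{a_2},l_{b_2})|$ independently of the last shift, and the inequality $2\,\mathrm{Vol}(\rho')\geqslant(\max_i n_i+1)\,\mathrm{Area}(\Lambda)$ is achieved. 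This two-handle manoeuvre is exactly where the hypothesis $g\geqslant 3$ is used, and it is missing from your write-up.
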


\begin{proof}
Let us first suppose that $g\geqslant 3$. We may suppose that $|c_\gamma|\neq 1$ for $\gamma \in \{a_2, a_3\}$, since the map $\gamma\in \Gamma_{g-1}\mapsto \log(|c_\gamma|)\in \R$ is a non trivial homomorphism of $\Hom(\Gamma_{g-1}, \R)$. We take a representation $\rho' : \gamma\mapsto \epsilon (\gamma)z + l_\gamma$ with $\exp(\rho') = \rho$. We may also suppose that $\rho'$ is geometric: changing $\rho'(\gamma)$ with $\rho'(\gamma) + 2k\pi i$ for $\gamma \in\{b_2, b_3\}$, we can make sure that $\mathrm{Vol}(\rho') > 0$ and that $2\mathrm{Vol}(\rho') \geqslant (n_k + 1) \mathrm{Area}(\Lambda)$ in the case where $\Lambda = \{z_0\in \mathbb C \mid z + z_0\in \rho'(\Gamma)\}$ is a lattice in $\C$. Therefore, there exists by \cref{mineucl} a half-translation structure $Y$ with holonomy $\rho'$ and a single conical point. We consider its exponential $X = \exp(Y)$.

Let us now turn to the case $g = 2$ and $k \geqslant 2$. Observe that $k = 2$ and $n_1 = n_2 = 1$ since $\sum_i n_i = 2$. We may consider $\rho'$ as before such that $\rho = \exp(\rho')$, where $\rho' = (z + x, z + y, -z, z)$. We may also assume that $\mathrm{Vol}(\rho) = \det(x_1, y_1)$ is positive; replacing $x$  with $x + 2k\pi i$ for some $k\in \Z$ if necessary (or $y$ with $y + 2k\pi i$). We geometrize $\rho'$ by \cref{mineucl} and take the exponential of the associated developing map.
\end{proof}

We now suppose that $1\leqslant n_1\leqslant \ldots \leqslant n_k$ are such that $\sum_i n_i$ is even and at least $2g$.
We show that there are no more obstructions to being in $\Hol(\mathcal P(n_1, \ldots, n_k))$.
\begin{prop}
The representation $\rho$ is in $\Hol(\mathcal P(n_1, \ldots, n_k))$. 
\end{prop}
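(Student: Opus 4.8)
The plan is to reduce the whole statement to the realization of a single branch point and then invoke the surgeries already at our disposal. Since $\sum_i n_i$ is even and at least $2g$, write $d = \sum_i n_i \in \{2g, 2g+2, \dots\}$. By \cref{corexplo} it suffices to exhibit, for each such $d$, one structure in $\Hol(\mathcal P(d))$ with holonomy $\rho$: breaking up the resulting branch point with \cref{explosion} then produces every prescribed partition $(n_1,\dots,n_k)$ of $d$. So everything follows once I realize a single branch point of order $d$ for every even $d \geq 2g$.

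For $g \geq 3$ I would start from the minimal stratum. By \cref{justavan}, applied with $k=1$ (legitimate since $(g,k)\neq(2,1)$), the representation $\rho$ is already the holonomy of a structure $X_0 = \exp(Y) \in \mathcal P(2g-2)$ with a single branch point, where $Y$ is a half-translation surface with one conical point obtained from \cref{mineucl}. Exactly as in the proof of \cref{strictaffine}, I would take $Y$ among the explicit single-cone-point models of \cite{Moi}, which carry a simple closed curve $c$ based at the conical point developing injectively onto a segment of a line not parallel to the imaginary axis; choosing that segment short enough that $\exp$ is injective on it, the curve $c$ still develops injectively in $X_0 = \exp(Y)$. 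Bubbling along $c$ (\cref{bubbling}) then raises the order of the single branch point from $2g-2$ to $2g-2+2n$ for every $n\geq 1$, and since $c$ keeps developing injectively the process iterates. Together with $X_0$ this realizes $\mathcal P(d)$ for every even $d\geq 2g$.

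The genuinely new case, and the main obstacle, is $g=2$, where the exponential route breaks down. By \cref{exemple} every genus-$2$ half-translation surface lies in $\mathcal Q(2,2)$, so $\exp(Y)$ always has two branch points of order $1$; and neither \cref{bubbling} nor \cref{explosion} can merge them, since the parts of any partition reached from $\mathcal P(1,1)$ must split into two groups of odd sum (the descendants of the two order-$1$ points, after bubbling) together with even-sum groups from bubbled regular points — impossible when all $n_i$ are even, in particular for a single point. This is consistent with \cref{obstrGenre2}, which forbids $\mathcal P(2)$, but a single point of order $4$ is unobstructed and must be produced directly. I would therefore construct $\mathcal P(4)$ by an explicit cut-and-paste on $\CP$, in the spirit of Figures~\ref{trivlin} and~\ref{finitehol} but gluing the boundary arcs by the dihedral maps $z\mapsto c\,z$ and $z\mapsto c\,z^{-1}$ rather than by translations. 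Recall that after the normalization of this section $\rho$ sends $a_1$ to the inversion $z\mapsto z^{-1}$, $b_1$ to the identity, and $a_2,b_2$ to the homotheties $z\mapsto c\,z,\ z\mapsto c'\,z$; the free parameters (positions and moduli of the slits) can be tuned to match the prescribed $c,c'\in\C^\star$. The pattern is arranged so that the resulting genus-$2$ surface has a single branch point of angle $2\pi(4+1)$, holonomy $\rho$, and a distinguished slit-following curve developing injectively. Bubbling along that curve yields $\mathcal P(4+2n)$ for all $n\geq 1$, hence $\mathcal P(d)$ for every even $d\geq 4 = 2g$.

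The two delicate points I would treat with care are: (i) verifying that $\exp$ stays injective on the developed image of each bubbling curve — this is precisely where the bounded, non-vertical slit of \cite{Moi} enters, as in \cref{strictaffine}; and (ii) checking that the continuous parameters of the genus-$2$ cut-and-paste can be solved to produce the given homotheties $c,c'$ while closing up into a genuine genus-$2$ surface carrying a single cone point of the correct angle, the genus being read directly off the gluing pattern as in Figure~\ref{trivlin}. Once these are in place, \cref{corexplo} upgrades the single-point realizations to all admissible branch data, completing the proof.
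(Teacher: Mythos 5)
Your overall architecture matches the paper's: reduce to a single branch point of order $d$ for each even $d\geqslant 2g$ via \cref{explosion} and \cref{corexplo}, and for $g\geqslant 3$ start from the structure $\exp(Y)\in\mathcal P(2g-2)$ supplied by \cref{justavan}, locate a curve through the cone point whose exponential develops injectively, and bubble with \cref{bubbling}. You have also correctly isolated the crux: for $g=2$ the exponential of a half-translation surface lands in $\mathcal P(1,1)$ by \cref{exemple}, and since neither bubbling nor breaking up ever decreases the number of branch points, no combination of the available surgeries reaches $\mathcal P(4)$; your parity argument is a valid (if stronger than necessary) way of seeing this.

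The gap is that for $g=2$ you never actually build the structure in $\mathcal P(4)$. "An explicit cut-and-paste on $\CP$ \dots\ the pattern is arranged so that \dots" together with "the free parameters can be tuned to match $c,c'$" is a statement of intent, and here the construction is the entire content of the proof. Two concrete difficulties stand in the way of your sketch. First, the template of \cref{trivlin} and \cref{finitehol} works because those constructions only realize a degenerate representation lying in the closure of $\Aut(\Gamma)\times\mathrm{Aff}(\C)\cdot\rho$ and then invoke Ehresmann--Thurston; no analogous degeneration is available for a non-affine dihedral $\rho$ (conjugating by $\mathrm{diag}(\lambda,\lambda^{-1})$ makes the inversions blow up rather than simplify), so the holonomy must be hit exactly up to the $\Aut(\Gamma)$-action. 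Second, the construction that does work --- gluing the exponential of a parallelogram annulus with sides $\alpha,\beta$ (where $e^{\alpha}=c_2$, $e^{\beta}=c_3$, $\det(\alpha,\beta)>0$) into $\CP$ along the two spiral slits $\exp([1,1+\alpha])$ and $\exp([-1,-1-\alpha])$ --- produces a genus-$2$ surface with a single order-$4$ cone point but with holonomy $(c_2z,\,c_3z^{-1},\,c_2^{-1}z,\,z^{-1})$ rather than the normalized $(z^{-1},z,c_2z,c_3z)$; one then still needs \cref{lemmeGenre2}, applied to the logarithms of these representations, to see that the two lie in the same $\Aut(\Gamma)$-orbit. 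Your plan of directly "tuning the parameters" glosses over precisely this step. Until a concrete genus-$2$ gluing with a single order-$4$ cone point, the correct holonomy class, and an injectively developing bubbling curve is exhibited, the case $g=2$ remains unproved.
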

\begin{figure}[h]
    \centering    
    \def\svgwidth{\columnwidth}
  	\def\svgwidth{0.75\textwidth}

	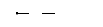
	\caption{Construction for $(g,k,n_1)=(2,1,4)$.}
    \label{diedrFig}
\end{figure}

\begin{proof}
Let us first suppose that $g\geqslant 3$.
We have seen that $\rho\in \Hol(\mathcal P(2g-2))$. We may check on the constructions we have made that we can suppose that there exists a curve based at the conical point of $Y$ whose exponential develops injectively, where $Y$ comes from the proof of \cref{justavan}. Therefore by \cref{bubbling} we have $\rho\in \Hol(\mathcal P(2d))$ for every $d\geqslant g-1$, and $\rho\in \Hol(\mathcal P(n_1, \ldots, n_k))$ by \cref{explosion}.

Let us now suppose that $g = 2$. Let us denote $\rho = (z^{-1}, z, c_2 z, c_3 z)$. We can assume that $|c_2|\neq 1$, since $\rho$ is not spherical. Let us take $\alpha$ and $\beta \in \C$ such that $e^\alpha = c_2$ and $e^\beta = c_3$. We may assume that $\det(\alpha, \beta) > 0$, replacing $\beta$ with $\beta + 2\pi i k$ if necessary. Let us consider a parallelogram in $\C$ whose sides are given by $\alpha$ and $\beta$. We glue the sides corresponding to $\beta$ with the map $z\mapsto z + \alpha$. This gives projective charts on an annulus. We define new charts on this annulus by considering the exponential of the associated developing map.
Let us consider the path $\delta_1 = \exp\circ c$, where $c$ is the path along the line segment in $\C$ joining $1$ to $1+\alpha$, and $\delta_2 = \exp\circ d$, where $d = -c$ is the path along the line segments from $-1$ to $-1-\alpha$.
Let us cut open $\CP$ along the image of $\delta_i$ for $i = 1,2$. We glue the resulting boundaries to those of the annulus, see \cref{diedrFig}.
We obtain a genus $2$ surface with holonomy $\rho_1 = (c_2 z, c_3 z^{-1}, c_2^{-1} z, z^{-1}) = \exp(\rho')$, where $\rho' = (z + \alpha, -z + \beta, z -\alpha, -z)$. It follows from \cref{lemmeGenre2} that the representation $\rho'' = (-z, z, z + \alpha, z + \beta)$ is in $\Aut(\Gamma)\cdot \rho'$. Therefore, $\rho = \exp(\rho'')$ is in $\Aut(\Gamma)\cdot\rho_1$.
Since the curve $\delta$ develops injectively, we have $\rho\in \Hol(\mathcal P(2d))$ for every $d\geqslant 2$ by \cref{bubbling}.
 Hence $\rho\in \Hol(\mathcal P(n_1, \ldots, n_k))$ by \cref{explosion}.
\end{proof}

\subsubsection{Odd Stiefel Whitney class}
In this subsection we suppose that $\rho$ does not lift to $\SL \C$, that is $c_\gamma = -1$ for $\gamma = b_1$.

\begin{prop}
We have $\rho\in \mathrm{Hol}(\mathcal P(n_1, \ldots, n_k))$ if and only if $\sum_i n_i$ is odd and at least $ 2g-1$. 
\end{prop}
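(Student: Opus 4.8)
\emph{Necessity.} Since $\rho$ does not lift to $\SL\C$, \cref{obstrSW} forces $\sum_i n_i$ to be odd. As $\rho$ is dihedral it is elementary, so \cref{obstrMin} gives $\sum_i n_i\geqslant 2g-2$; because $2g-2$ is even while $\sum_i n_i$ is odd, this improves to $\sum_i n_i\geqslant 2g-1$. Hence both conditions are necessary, and it remains to prove sufficiency.

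\emph{Reduction to a single branch point.} As in the even class, it suffices to exhibit one structure in $\mathcal P(2g-1)$ with holonomy $\rho$ together with a closed curve based at its branch point that develops injectively: \cref{bubbling} then yields $\rho\in\Hol(\mathcal P(2g-1+2d))$ for every $d\geqslant 0$, and \cref{explosion} (through \cref{corexplo}) breaks the single branch point into any prescribed $n_1,\ldots,n_k$ with $\sum_i n_i$ odd and at least $2g-1$. Thus the whole statement reduces to constructing a single structure in $\mathcal P(2g-1)$ with an injectively developing curve through its branch point.

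\emph{Construction via a twisted exponential.} Recall that $\rho(a_1)\colon z\mapsto 1/z$ while $\rho(\gamma)\colon z\mapsto c_\gamma z$ is a dilation for $\gamma\in\{b_1,a_2,\ldots,b_g\}$, with $c_{b_1}=-1$. Choose logarithms $l_\gamma$ with $\exp(l_\gamma)=c_\gamma$, taking $l_{a_1}=0$ and $l_{b_1}=\pi i$, and set $\rho'(\gamma)=\epsilon(\gamma)w+l_\gamma\in\mathrm{Isom}^+(\mathbb E^2)$. A direct computation (using that $w\mapsto -w$ and $w\mapsto w+\pi i$ do not commute, as in \cref{sw1}) shows $\rho'\!\left(\prod_i[a_i,b_i]\right)\colon w\mapsto w-2\pi i$, so $\rho'$ is not a homomorphism of $\Gamma$ but defines one of $\Gamma_{g,1}$ sending the boundary to the translation $w\mapsto w-2\pi i$; moreover $\exp\circ\rho'=\rho$ on $\Gamma$ since $\exp(w-2\pi i)$ is the identity. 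The plan is to build an even half-translation structure $Y$ on $\Sigma_{g,1}$ with holonomy $\rho'$, a single interior cone point of order $2g-1$, and a half-infinite flat cylinder end of holonomy $w\mapsto w-2\pi i$ glued along a length-$2\pi$ geodesic boundary. By \cref{GBplat}, applied with the cylinder end treated as a puncture (so that the relevant Euler characteristic is $\chi(\Sigma_{g,1})=1-2g$), the interior cone orders sum to $2g-1$, so a single cone point of order $2g-1$ is exactly what Gauss--Bonnet permits. Setting $X=\exp(Y)$ and filling the end --- which winds exactly once around $\infty\in\CP$ and is therefore a regular point --- produces a branched projective structure on the closed surface $\Sigma$ with holonomy $\rho$ and a single branch point of order $2g-1$, as required.

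\emph{Building $Y$ and the main obstacle.} Under $\rho'$ the last $g-1$ handles carry the pure translations $w\mapsto w+l_{a_i}, w\mapsto w+l_{b_i}$, while the first handle carries the order-two rotation $w\mapsto -w$ and the translation $w\mapsto w+\pi i$. Thus $Y$ is assembled exactly as in the Euclidean geometrizations of \cref{mineucl}: one realizes the translation part on the last handles as a translation surface with a single cone point and then adds the first handle by the slit-and-reglue surgery that creates an order-two rotation (the surgery used in the genus-two and genus-three figures of \cref{mineucl}), the normalization $l_{a_1}=0$, $l_{b_1}=\pi i$ being the specialization of \cref{OOOD} and \cref{lemmeGenre2} to $n=2$. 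The main difficulty is precisely this assembly with a boundary: one must realize the prescribed dilation holonomies $c_2,\ldots,c_g$ while merging all flat pieces into one cone point of order $2g-1$, arrange the distinguished handle so that the boundary monodromy is the translation $w\mapsto w-2\pi i$ whose exponential closes up to a regular point (this is what forces the odd parity), and route a curve through the cone point along a segment not parallel to the imaginary axis, so that its exponential develops injectively and \cref{bubbling} applies. The case $g=2$, where there is no lower-genus room, will need a separate explicit cut-and-paste of copies of $\CP$ along exponentials of line segments, parallel to the even-class construction; in every case \cref{corET} together with the reduction above completes the proof.
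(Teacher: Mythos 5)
Your necessity argument is correct and is exactly the paper's: oddness from \cref{obstrSW}, the bound $\sum_i n_i\geqslant 2g-2$ from \cref{obstrMin}, and parity upgrading it to $2g-1$. The reduction is also the right one and matches the paper: it suffices to produce a single structure in $\mathcal P(2g-1)$ with holonomy $\rho$ and a closed curve through the branch point that develops injectively, after which \cref{bubbling} and \cref{corexplo} give everything with $\sum_i n_i$ odd and $\geqslant 2g-1$.

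The gap is that the one thing the proof actually has to produce --- the structure in $\mathcal P(2g-1)$ --- is never constructed. Your twisted-exponential plan (a flat structure $Y$ on $\Sigma_{g,1}$ with holonomy $\rho'$, boundary monodromy $w\mapsto w-2\pi i$, a single cone point of order $2g-1$ and a half-infinite cylinder end, followed by $\exp$ and filling in a regular point) is a plausible route, but every ingredient you invoke to build $Y$ is stated in the paper only for \emph{closed} surfaces with trivial boundary holonomy: \cref{mineucl} and its surgeries produce closed half-translation surfaces where the product of commutators maps to the identity, and \cref{GBplat} is Gauss--Bonnet for closed cone surfaces, so ``treating the cylinder end as a puncture'' is not something it licenses as stated (the count $2g-1$ is right, but it needs the residue/degree computation for a flat structure with a pole, which is not in the paper). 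You explicitly flag the assembly of $Y$ --- realizing the dilations $c_2,\ldots,c_g$, forcing the boundary monodromy to be exactly $w\mapsto w-2\pi i$, merging into one cone point, and routing an injectively developing curve --- as ``the main difficulty,'' and then do not resolve it; the genus-$2$ case is likewise deferred. As it stands this is a program, not a proof. For comparison, the paper avoids flat surfaces with boundary entirely: it starts from a strictly affine structure on $\Sigma_{g-1}$ with a single branch point of order $2g-4$ (\cref{strictaffine}, itself an exponential of a translation surface), slits it along $[e^{-z},e^z]$, and glues in the closed hemisphere $\overline{\mathbb H^2}$ using the three maps $\mathrm{id}$, $z\mapsto -z$ and $z\mapsto 1/z$; this adds the handle carrying $\langle z^{-1},-z\rangle$ and exactly $3\cdot 2\pi$ of cone angle, giving order $2g-4+3=2g-1$ in one stroke and handling $g=2$ uniformly. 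If you want to keep your route, the missing step is a self-contained construction of $Y$ (for instance via explicit polygons with a marked boundary cylinder), proved for all $g\geqslant 2$.
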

\begin{proof}
There exists a branched projective structure $X$ on $\Sigma_{g-1}$ with a single branch point of order $2g-4$ and holonomy $(c_{a_2}z, c_{b_2}z, \ldots, c_{b_g}z)$ by \cref{strictaffine}, which is obtained by taking the exponential of a translation surface structure $Y$.
Observe that we may translate the charts of $Y$, so that the conical point is sent to $z\in \mathbb R_{>0}$, such that the line segment $[-z, z]$ is included in the image of this chart. We will now make a surgery in the corresponding chart of $X$ to add a handle to this surface. Let us cut $\Sigma_{g-1}$ open along the segment $[e^{-z}, e^z]$. We will glue the resulting boundaries to the boundary  of the hemisphere $\overline{ \mathbb H^2} = \mathbb H^2\cup \mathbb {RP}^1\subset \CP$. We identify the bottom boundary of the slit with the line segment $[e^{-z}, e^z]\subset \partial  {\mathbb H^2}$ with the identity. We glue the top boundary of the slit with the line segment $[-e^z, -e^{-z}]$ of $\partial \mathbb H^2$ with the map $z\mapsto -z$. Finally we glue the two line segments $[-e^{-z}, e^{-z}]$ and $(\mathbb R\cup \{\infty\})\setminus [-e^z, e^z]$ of $\partial \mathbb H^2$ with the map $z\mapsto \frac{1}{z}$, see \cref{oddsw}.

\begin{figure}[h]
    \centering    
    \def\svgwidth{\columnwidth}
	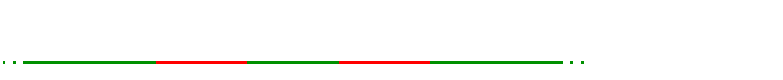
	\caption{$\overline{\mathbb H^2}$ on the left side. Its boundary is glued to those on the right side.}
    \label{oddsw}
\end{figure}

Topologically we have added a handle to $\Sigma_{g-1}$ and thus we get a genus $g$ surface. Moreover it has natural charts in $\CP$, that give a projective structure holonomy $\rho$. We have added  $3\times 2\pi$ to the total angle of the conical point thus $\rho\in \Hol(\mathcal P(2g - 1))$. The curve on which we have glued the handle develops injectively, hence $\rho\in \Hol(\mathcal P(2g-1 + 2k))$ for every $k\geqslant 0$ by \cref{bubbling}. The result follows from \cref{corexplo}.
\end{proof}

\subsection{Nonelementary holonomy}\label{SNE}

The celebrated theorem of Gallo, Kapovich and Marden proven in \cite{GKM} is in fact enhanced in the same paper, see \cite[Theorem 11.2.4]{GKM}.

\begin{theo}[Gallo-Kapovich-Marden]
Let $\rho\in \Hom(\Gamma, \PSL \C)$ be a nonelementary representation. We have $\rho\in \Hol(\mathcal P(n_1, \ldots, n_k))$ if and only if $\sum_i n_i$ has the same parity as $\sw(\rho)$.
\end{theo}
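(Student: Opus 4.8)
The necessity of the parity condition is immediate: if $\rho=\Hol(X)$ for some $X\in\mathcal P(n_1,\ldots,n_k)$, then \cref{obstrSW} says that $\sum_i n_i$ is even exactly when $\rho$ lifts to $\SL\C$, that is, exactly when $\sw(\rho)=0$; so $\sum_i n_i\equiv\sw(\rho)\pmod 2$. The real content is the converse, and the plan is to bootstrap from the two base cases supplied by the (unenhanced) theorem of Gallo, Kapovich and Marden quoted in the introduction — an unbranched structure when $\rho$ lifts, and a structure with a single branch point of order $1$ when it does not — using only the two surgeries of \cref{BPS}: bubbling along a closed curve (\cref{bubbling}) and breaking up a conical point (\cref{explosion}, in the packaged form of \cref{corexplo}). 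Note that a nonelementary $\rho$ forces $g\geqslant 2$ (for $g\leqslant 1$ the group $\Gamma$ is abelian and every image is elementary), so the base theorem applies.

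Suppose first that $\rho$ lifts to $\SL\C$, so $\sw(\rho)=0$ and $d:=\sum_i n_i$ is even. By the Gallo--Kapovich--Marden theorem there is $X\in\mathcal P()$ with holonomy $\rho$. If $d=0$ then $k=0$ and $X$ already works. If $d\geqslant 2$, I would choose a closed curve $c$ based at a regular point $c(0)$ whose developing image $f\circ\tilde c$ is injective and apply \cref{bubbling} with $n=d/2$, producing $Y\in\mathcal P(d)$ with holonomy $\rho$; then \cref{corexplo} breaks the single branch point of order $d$ into branch points of orders $n_1,\ldots,n_k$, giving the desired structure. When $\rho$ does not lift, $\sw(\rho)=1$ and $d$ is odd, so $d\geqslant 1$. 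Here the theorem gives $X\in\mathcal P(1)$ with holonomy $\rho$; if $d=1$ we finish via \cref{corexplo}, and otherwise I would take a closed curve $c$ based at the order-$1$ branch point whose development is injective, bubble with $n=(d-1)/2$ to reach $\mathcal P(d)$, and again conclude by \cref{corexplo}.

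The one delicate point, and the main obstacle, is the existence of the required curve $c$ (based at the prescribed point — a regular point in the first case, the branch point in the second) whose lift develops injectively, as \cref{bubbling} demands. I would handle this using nonelementarity: the developing map $f$ of a structure with nonelementary holonomy is a nonconstant immersion away from its branch points. I would pick $\gamma\in\Gamma$ with $\rho(\gamma)$ loxodromic and with the chosen base image not fixed by $\rho(\gamma)$, represent $\gamma$ by a loop $c$ based at the prescribed point, and take a path $\tilde c$ from $\tilde p$ to $\gamma\cdot\tilde p$. Its development is then an immersed arc in $\CP$ whose endpoints $f(\tilde p)$ and $\rho(\gamma)f(\tilde p)$ are distinct; since an immersed arc in the surface $\CP$ has only isolated transverse self-intersections, a $\gamma$-equivariant finger-move perturbation rel endpoints removes them and makes $f\circ\tilde c$ injective. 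This produces the curve needed in both cases and completes the reduction.

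In summary, once the injectively developing loop is available, the theorem follows formally by combining the Gallo--Kapovich--Marden base cases with \cref{bubbling} to attain the correct total branching order $\sum_i n_i$, and with \cref{corexplo} to redistribute it into the prescribed partition $(n_1,\ldots,n_k)$; the Ehresmann--Thurston machinery of the elementary cases is not needed here.
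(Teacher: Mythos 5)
Your reduction scheme (start from the unbranched or once-branched structure supplied by the basic Gallo--Kapovich--Marden theorem, bubble along a closed curve to reach total order $d=\sum_i n_i$, then break up the branch point via \cref{corexplo}) is formally sound, and the parity bookkeeping is correct. The problem is the step you yourself flag as delicate: the existence of a closed curve $c$ based at the prescribed point whose lift develops injectively. Your argument for it does not work. After choosing $\gamma$ with $\rho(\gamma)$ loxodromic and $f(\tilde p)\neq\rho(\gamma)f(\tilde p)$, the developed arc $f\circ\tilde c$ is indeed homotopic rel endpoints to an embedded arc in $\CP\simeq\mathbb S^2$ (the sphere is simply connected), but that homotopy takes place downstairs in $\CP$, and there is no reason it should lift through $f$ to a homotopy of $\tilde c$ in $\tilde\Sigma$: the developing map is a local homeomorphism away from the branch points but is not a covering map onto its image, so path and homotopy lifting fail in general (a lift can leave the image or escape to infinity in $\tilde\Sigma$ in finite time). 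A transverse self-intersection of an immersed arc is stable under small perturbation, and the large ``finger move'' needed to cancel it is exactly the kind of homotopy that need not lift. So you have not produced the injectively developing loop, and this is precisely the hard point: everywhere in the paper such loops are obtained by exhibiting explicit structures on which they are visible (for instance the curve following $l_0$ in the proof of \cref{trivialLin}, or the curve based at the conical point of the pentagon structure taken from \cite{Moi1}), never by general position.

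For comparison, the paper does not rederive this statement from the basic theorem: it invokes the enhanced Theorem 11.2.4 of \cite{GKM}, whose proof goes through Schottky (pants) decompositions of nonelementary representations, and then supplements it in the one situation where such a decomposition fails to exist, namely the genus-$2$ pentagon representations, using the explicit once-branched structure of \cite{Moi1} followed by exactly your bubbling-and-breaking-up argument. So your strategy is the right one for that exceptional case, but as a proof of the full theorem it replaces the Schottky-decomposition input by an unproved claim about injectively developing loops. To repair it you would either have to prove that every unbranched (respectively once-branched) projective structure with nonelementary holonomy carries such a loop based at the required point --- which is not a general-position statement --- or revert to citing the enhanced theorem and treating only the pentagon representations by surgery, as the paper does.
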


Their proof is based on the existence of Schottky decompositions of nonelementary representations, shown in \cite[Part A]{GKM}.
By a Schottky decomposition of $\rho$ we mean a pants decomposition $\Sigma = \cup_i P_i$ such that each restriction $\rho_{|P_i} : \pi_1(P_i)\to \PSL \C$ is an isomorphism onto a Schottky group.
Indeed, the techniques of construction of projective structures with given holonomy of Gallo, Kapovich and Marden allow them to prove directly this theorem when $\rho$ has a Schottky decomposition; see \cite[Part B, Theorem 11.2.4]{GKM}.
In genus $g=2$, there exist nonelementary representations $\rho\in \Hom(\Gamma, \PSL \C)$ that do not admit a Schottky decomposition: the  \textit{pentagon representations}, see \cite{Moi1}.
In the case where $\rho$ is a pentagon representation, there exists a branched projective structure on $\Sigma_2$ with a single conical point of total angle $2\pi(1 + 1)$ and holonomy $\rho$, see \cite[Section 5]{Moi1}. Moreover we can assume that there exists a curve based at the conical point that develops injectively, see the red curve in  \cite[Figure 8]{Moi1}, whose exponential develops injectively. Thus we have $\rho\in \Hol(\mathcal P(2d +1))$ for every $d\geqslant 0$ by \cref{bubbling} and $\rho\in\Hol( \mathcal P(n_1, \ldots, n_k))$ if $\sum_i n_i$ is odd by \cref{corexplo}.
Therefore \cref{obstrSW} is the only obstruction to being in $\Hol(\mathcal P(n_1, \ldots, n_k))$ for nonelementary representations.
\section{Geometrization in minimal degree}\label{last_section}
Let us conclude this article with a short section dedicated to the proof of \cref{dintro}.
For every $\rho\in \Hom(\Gamma, \PSL \C)$, we compute the minimal degree of the branching divisor of a projective structure having $\rho$ as holonomy. 
In other words, we compute the minimal value of $\sum_i n_i$ such that $\rho\in \Hol(\mathcal P(n_1 ,\ldots, n_k))$.
By \cref{corexplo}, this number $d(\rho)$ is the minimal $m\geqslant 0$ such that $\rho\in \Hol(\mathcal P(\underbrace{1, \ldots, 1}_m))$.
This number was computed by Kapovich for representations $\rho\in \Hom(\Gamma, \C)$ with values in the group of translations $\C\subset \mathrm{Aff}(\C)$ in \cite{Kapovich}, and by Gallo Kapovich and Marden for non-elementary representations and the trivial representation in \cite{GKM}.
This computation is a corollary of \cref{mainth} and is given in \cref{fonctiond}.

\bibliography{bibpapier.bib}
\bibliographystyle{alpha}

\end{document}